\newcommand{\form}[1]{{\langle #1 \rangle }}
\newcommand{\pfister}[1]{{\langle \! \langle #1 \rangle \! \rangle}}
\newcommand{\mydim}[1]{{\mathrm{dim}\!\; #1}}
\newcommand{\Izhdim}[1]{{\mathrm{dim}_{\mathrm{Izh}}\!\; #1}}
\newcommand{\anispart}[1]{#1_{\mathrm{an}}}
\newcommand{\witti}[2]{{\mathfrak{i}_{#1}(#2)}}
\newcommand{\simform}[1]{{#1_{\mathrm{sim}}}}
\newcommand{\normform}[1]{{#1_{\mathrm{nor}}}}
\newcommand{\stb}[0]{\stackrel{\mathrm{stb}}{\sim}}
\newtheorem{theorem}{Theorem}[section]
\newtheorem{lemma}[theorem]{Lemma}
\newtheorem{claim}[theorem]{Claim}
\newtheorem{proposition}[theorem]{Proposition}
\newtheorem{corollary}[theorem]{Corollary}
\newtheorem{conjecture}[theorem]{Conjecture}
\theoremstyle{definition}
\newtheorem{definition}[theorem]{Definition}
\newtheorem{example}[theorem]{Example}
\newtheorem{examples}[theorem]{Examples}
\theoremstyle{remark}
\newtheorem{remark}[theorem]{Remark}
\newtheorem{remarks}[theorem]{Remarks}
\numberwithin{equation}{section}
\begin{document}

\title[Extended Karpenko-Merkurjev theorems for quasilinear quadratic forms]{Extended Karpenko and Karpenko-Merkurjev theorems for quasilinear quadratic forms}
\author{Stephen Scully}
\address{Department of Mathematics and Statistics, University of Victoria}
\email{scully@uvic.ca}

\subjclass[2010]{11E04, 14E05}
\keywords{Quasilinear quadratic forms, function fields of quadrics, isotropy indices}

\maketitle

\begin{abstract} Let $p$ and $q$ be anisotropic quasilinear quadratic forms over a field $F$ of characteristic $2$, and let $i$ be the isotropy index of $q$ after scalar extension to the function field of the affine quadric with equation $p=0$. In this article, we establish a strong constraint on $i$ in terms of the dimension of $q$ and two stable birational invariants of $p$, one of which is the well-known ``Izhboldin dimension'', and the other of which is a new invariant that we denote $\Delta(p)$. Examining the contribution from the Izhboldin dimension, we obtain a result that unifies and extends the quasilinear analogues of two fundamental results on the isotropy of non-singular quadratic forms over function fields of quadrics in arbitrary characteristic due to Karpenko and Karpenko-Merkurjev, respectively. This proves in a strong way the quasilinear case of a general conjecture previously formulated by the author, suggesting that a substantial refinement of this conjecture should hold. 
 \end{abstract}

\section{Introduction} \label{SECintroduction} 

An important general problem in the theory of quadratic forms over arbitrary fields is to understand how invariants of quadratic forms can behave under scalar extension to the function field of a quadric. Already of considerable interest here is the behaviour of the most basic invariant, namely the \emph{isotropy index}.\footnote{The maximal dimension of a totally isotropic subspace of the vector space of definition.} From an algebraic-geometric viewpoint, the problem here is to understand when a rational map can exist from one quadric to another, or from one quadric to some higher quadratic Grassmannian of another. 

Let $F$ be an arbitrary field, let $p$ and $q$ be anisotropic quadratic forms of dimension $\geq 2$ over $F$, and let $s$ be the unique integer for which $2^s< \mydim{p} \leq 2^{s+1}$. Let $F(p)$ be the function field of the (integral) affine quadric $X_p$ with equation $p=0$, let $\witti{0}{q_{F(p)}}$ be the isotropy index of $q$ over $F(p)$, and let $k$ denote the (non-negative!) integer $\mydim{q} - 2\witti{0}{q_{F(p)}}$. Since the isotropy index is insensitive to rational extension (\cite[Lem. 7.15]{EKM}), $\witti{0}{q_{F(p)}}$ and $k$ depend only on the stable birational type of $p$ (or more precisely, $X_p$). In studying them, we should therefore search for stable birational invariants of $p$ that are independent of $q$, but still exert some degree of influence. A remarkable observation, originally due to Hoffmann, is that the integer $s$ defined above is such an invariant. This is the outcome of the fundamental \emph{separation theorem}, which asserts that if $\mydim{q} \leq 2^s$, then $q$ cannot become isotropic over $F(p)$.\footnote{This was originally proved over fields of characteristic not 2 by Hoffmann in \cite{Hoffmann1}, and later extended to characteristic 2 by Hoffmann and Laghribi in \cite{HoffmannLaghribi2}.} Looking to the cases where isotropy does occur, we proposed in \cite{Scully2} a strong conjectural generalization of this result:

\begin{conjecture}[\cite{Scully2}] \label{CONJfirstconjecture} If $q_{F(p)}$ is isotropic, then $\mydim{q} = a2^{s+1} + \epsilon$ for some positive integer $a$ and integer $\epsilon \in [-k,k]$ $\big($with $\epsilon \equiv k \pmod{2}\big)$. \end{conjecture}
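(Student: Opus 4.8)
Since the paper works throughout with quasilinear quadratic forms in characteristic~$2$, I describe how I would prove the quasilinear instance of Conjecture~\ref{CONJfirstconjecture}. The first move is to recast it as an upper bound for the isotropy index: writing $\epsilon$ for the integer of smallest absolute value with $\epsilon \equiv \mydim{q} \pmod{2^{s+1}}$ (so $|\epsilon| \leq 2^s$), the conjecture is equivalent to the single inequality $2\,\witti{0}{q_{F(p)}} \leq \mydim{q} - |\epsilon|$; the requirement $a \geq 1$ is then automatic, since isotropy of $q_{F(p)}$ forces $k = \mydim{q} - 2\witti{0}{q_{F(p)}} < \mydim{q}$. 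The key structural feature to exploit is that for quasilinear forms isotropy over a function field is purely linear-algebraic: an anisotropic quasilinear $q = \form{a_1, \ldots, a_n}$ is recovered from the $F^2$-subspace $\overline{D}(q) = \sum_i F^2 a_i \subseteq F$, and, writing $F(p) = L\bigl(\sqrt{v}\bigr)$ with $L$ a rational function field over $F$ and $v \in L \setminus L^2$ obtained from a generic zero of $p$, one checks that $\witti{0}{q_{F(p)}}$ is half the $L^2$-dimension of $V \cap vV$, where $V = \overline{D}(q)\otimes_{F^2}L^2$ sits in the rational function field $M = F\otimes_{F^2}L^2$ and $v$ is viewed in $M$ (this already makes $k \geq 0$ transparent, since $V \cap vV \subseteq V$). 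Using \cite[Lem.~7.15]{EKM} one may moreover normalise $p$ within its stable equivalence class so that $\mydim{p} = \Izhdim{p}$, without changing $s$, $\witti{0}{q_{F(p)}}$ or $k$.

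The plan is then an induction that peels off the isotropy of $q_{F(p)}$ one ``layer'' at a time, using the quasilinear subform theorem and generic splitting towers. One expects each layer to be accounted for either by a subform of $q$ similar to a quasi-Pfister divisor $\pi$ of the norm form $\normform{p}$ (the quasi-Pfister form whose represented values together with $0$ generate the $F^2$-subalgebra of $F$ determined by $D(p)$) with $\mydim{\pi} \geq 2^{s+1}$, or by a subform of $q$ similar to $p$ itself; peeling off such a subform and recursing on a complementary subform eventually exhibits $\mydim{q}$ as a sum of the layer dimensions plus a terminal remainder $\rho$ on which $q$ is anisotropic over $F(p)$. A layer of the first type has dimension a power of $2$ that is $\geq 2^{s+1}$, hence a multiple of $2^{s+1}$, and contributes at most half its dimension to the isotropy index (by the transparent bound above); a layer of the second type has dimension $\Izhdim{p} \in (2^s, 2^{s+1}]$ and, by the quasilinear analogue of Karpenko's first-Witt-index theorem, contributes at most $\Izhdim{p} - 2^s$ to the isotropy index. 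Feeding these estimates into the count of dimensions forces $\rho$, together with the total ``defect'' $2^{s+1} - \Izhdim{p}$ summed over the layers of the second type, to be at most $k$, and one reads off $2\,\witti{0}{q_{F(p)}} \leq \mydim{q} - |\epsilon|$, with $a \geq 1$ since $q_{F(p)}$ isotropic guarantees at least one layer.

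The genuine obstacle --- and the point at which the new invariant $\Delta(p)$ becomes essential --- is justifying that the isotropy really does decompose into layers of these two kinds, i.e. that it cannot be ``spread out'' in a way that destroys $2^{s+1}$-periodicity. The danger is that $\mydim{\normform{p}}$ can be far larger than $2^{s+1}$ --- already for $p$ of dimension $2^{s+1}$ with no quasi-Pfister structure it can be as large as $2^{2^{s+1}-1}$ --- so that $\normform{p}$ has quasi-Pfister divisors of every intermediate dimension, and a crude analysis only yields periodicity modulo $\mydim{\normform{p}}$, contaminated by the excess that $\Delta(p)$ measures. The substance of the proof is therefore a careful argument showing that the genuinely ``Pfister-theoretic'' part of the isotropy is confined to sub-quasi-Pfisters of $\normform{p}$ of size $\Izhdim{p}$ (which lies in the same dyadic interval $(2^s, 2^{s+1}]$ as $\mydim{p}$), while the remaining contribution --- the part genuinely governed by the excess $\Delta(p)$ --- can only sharpen, never weaken, the resulting estimate. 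Concretely, one first establishes the combined bound on $\witti{0}{q_{F(p)}}$ in terms of $\mydim{q}$, $\Izhdim{p}$ and $\Delta(p)$, and then isolates the $\Izhdim{p}$-contribution to recover Conjecture~\ref{CONJfirstconjecture}; I expect this disentangling of the $2^{s+1}$-scale from the $\mydim{\normform{p}}$-scale to be where essentially all of the difficulty lies.
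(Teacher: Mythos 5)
Your reduction of the conjecture to the inequality $2\witti{0}{q_{F(p)}} \leq \mydim{q} - |\epsilon|$ and your linear-algebraic description of $\witti{0}{q_{F(p)}}$ are fine (the latter is Lemma \ref{LEMisotropyoverfunctionfieldsofquadrics}), and you correctly identify where the difficulty sits --- but the proposal stops exactly there, so there is a genuine gap. The ``layer decomposition'' on which everything rests (that the isotropy of $q_{F(p)}$ is accounted for by subforms of $q$ similar either to quasi-Pfister divisors of $\normform{p}$ of dimension $\geq 2^{s+1}$ or to $p$ itself, with the isotropy-index contributions adding up over the layers) is only asserted (``one expects''), never proved, and it is precisely the content the paper has to supply by quite different means: Theorem \ref{THMoldtheorem} places $\anispart{(\tau \otimes p_1)}$ inside $\anispart{(q_{F(p)})}$ with $\mydim{\tau} = \witti{0}{q_{F(p)}}$, and the entire difficulty is then to control $\mydim{\anispart{(\tau \otimes p_1)}}$, which is done by the long induction of Theorem \ref{THMrefinedtensorproducttheorem} (Proposition \ref{PROPlndegliesinP}, the bilinear-form complementation trick of \S\ref{SUBSECinductivetool}, and the invariants $P_r$, $\Delta$, $c$). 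Nothing in your sketch substitutes for this; as you yourself observe, without it one only obtains periodicity modulo $\mathrm{ndeg}(p)$, which is far weaker than the required $2^{s+1}$-periodicity. What you have is a plan whose decisive step is missing, not a proof.

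Two subsidiary assertions are also wrong or unjustified. You cannot ``normalise $p$ within its stable birational class so that $\mydim{p} = \Izhdim{p}$'': any form over $F$ stably birationally equivalent to $p$ has the same Izhboldin dimension and first isotropy index $\geq 1$, hence dimension strictly greater than $\Izhdim{p}$; the form $p_1$ of dimension $\Izhdim{p}$ exists only over $F(p)$. And your per-layer count presupposes both the dichotomy of layers --- whereas what actually sits inside $\anispart{(q_{F(p)})}$ is the tensor multiple $\anispart{(\tau \otimes p_1)}$, which need not decompose into pieces similar to $p$ or to quasi-Pfister forms of dimension $\geq 2^{s+1}$ --- and the additivity of isotropy indices over the proposed decomposition, neither of which is established. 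Note finally that the conjecture is vacuous for $k \geq 2^s$, so its quasilinear case is exactly Theorem \ref{THMmaintheoremcorollary}(1) for $k < 2^s \leq \Izhdim{p}$, and in the paper that is obtained only as a consequence of the full Theorem \ref{THMmainresult}.
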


In short, the more isotropy that occurs, the closer $\mydim{q}$ should be to being divisible by $2^{s+1}$, with divisibility being forced when $\witti{0}{q_{F(p)}}$ attains its largest possible value of $\frac{\mydim{q}}{2}$. This establishes a somewhat unexpected link between the separation theorem and other classical results on the latter extremity (e.g., Fitzgerald's theorem -- see \cite[\S 3.2]{Scully2}).  

When $p$ and $q$ are nonsingular\footnote{This is automatically the case if the characteristic of $F$ is not $2$.}, it was shown in \cite{Scully5} that Conjecture \ref{CONJfirstconjecture} holds in a large number of cases, including the case where $k \leq 2^{s-1} + 2^{s-2}$. This work relies on algebraic-geometric tools that have been at the heart of many of the major advances in the subject (and, more broadly, the study of index-reduction problems for algebraic groups) since the 1990s. While some recent developments have facilitated the extension of certain aspects of the algebraic-geometric approach to the study of singular forms (see \cite{Karpenko3, ScullyZhu}), there remain cases of Conjecture \ref{CONJfirstconjecture} that must be handled by alternative means, and the most apparent of these is that where $q$ is a so-called \emph{quasilinear} form. Indeed, one characterization of quasilinearity is that the projective quadric defined by the form has no smooth points at all, which renders standard algebraic-geometric methods very limited. A more concrete characterization, however, is that quasilinear forms preserve addition of vectors, and this makes the study of these forms more amenable to direct methods, even when the problems of interest are inherently algebraic-geometric. As far as Conjecture \ref{CONJfirstconjecture} is concerned, the case where $q$ is quasilinear reduces to the case where both $p$ and $q$ have this property, since it is easy to see that no isotropy can occur here unless $F(p)$ is inseparable over $F$, forcing the smooth locus of $X_p$ to be empty and $p$ to be quasilinear. We refer to this as the \emph{quasilinear case} of our conjecture. 

The quasilinear case of Conjecture \ref{CONJfirstconjecture} was studied in \cite{Scully4}. Among other things, it was shown there that the statement holds when $k \leq 2^{s-1} + 2^{s-2}$, mirroring the aforementioned result of \cite{Scully5} on the case where $p$ and $q$ are nonsingular. In the present article, we prove the assertion for all values of $k$. In fact, we are able to go much further. More specifically, while the general optimality of Conjecture \ref{CONJfirstconjecture} is known, one expects it to fail outside the cases where $p$ has simplest possible stable birational type. In other words, one expects a refinement involving more informative stable birational invariants of $p$ that separate the simplest types from the others. In the quasilinear case, the desired refinement is achieved with the main result of this paper (Theorem \ref{THMmainresult}), which constrains $\mydim{q}$ in terms of $k$ and two discrete stable birational invariants of $p$ that together capture a considerable amount of non-trivial information. The first of these is the \emph{Izhboldin dimension} $\Izhdim{p}$, defined here as the integer $\mydim{p} - \witti{1}{p}$, where $\witti{1}{p} = \witti{0}{p_{F(p)}}$.\footnote{From an algebraic-geometric perspective, it is more natural to consider the integer $\mydim{p} - \witti{1}{p} -1$, and this is what one finds in much of the literature (e.g., \cite{EKM}). The definition given here will be more convenient for our purposes.} This invariant obviously exists within the general theory, and has long been known to be important for the problem under consideration. It takes values in the interval $[2^s,\mydim{p}-1]$ (and hence sees $s$), and takes the minimal value of $2^s$ when $p$ belongs to the class of quasilinear forms with simplest stable birational type, the \emph{quasi-Pfister neighbours}.\footnote{These are the obvious quasilinear analogues of nonsingular \emph{Pfister neighbours}, which are well-known to have the simplest stable birational types among nonsingular forms.} The second invariant, which we denote $\Delta(p)$, is new, and has no known extension to the general theory. It represents a substantial refinement of the \emph{norm degree} invariant introduced by Hoffmann and Laghribi in \cite{HoffmannLaghribi1}, and comprises a certain set of non-negative integers bounded by the former. For a fixed value of $s$, $\Delta(p)$ detects whether $p$ is a quasi-Pfister neighbour, but also carries much more information beyond, and the results established here indicate that it is of central importance for the kind of problems we are trying to address.

Now a basic motivation for examining the quasilinear case is to develop an idea of what one can expect within the general theory, and while we know of no general substitute for the invariant $\Delta(p)$ introduced here, the invariants $s$ and $\Izhdim{p}$ exist within the wider framework. Upon examining their contribution to our main result, we not only obtain the quasilinear case of Conjecture \ref{CONJfirstconjecture}, but a strong enhancement of it. More specifically, note that Conjecture \ref{CONJfirstconjecture} is vacuously true when $k \geq 2^s$. Since $\Izhdim{p} \geq 2^s$, the quasilinear case therefore amounts to the first part of the following result:

\begin{theorem} \label{THMmaintheoremcorollary} Suppose that $F$ has characteristic 2, $p$ and $q$ are quasilinear, and $q_{F(p)}$ is isotropic with $k<\Izhdim{p}$. For each non-negative integer $r$, let $y_r$ be the largest integer for which $\Izhdim{p} > y_r2^r$. Then:
\begin{enumerate} \item $\mydim{q} = a2^{s+1} + \epsilon$ for some positive integer $a$ and integer $\epsilon \in [-k,k]$;
\item If $p$ is not a quasi-Pfister neighbour, then one of the following holds:
\begin{itemize} \item[$\mathrm{(i)}$] $\mydim{q} = a2^{s+2} + \epsilon$ for some positive integer $a$ and integer $\epsilon \in [-k,k]$;
\item[$\mathrm{(ii)}$] $\Izhdim{p} = 2^s$, and there exist a non-negative integer $r \leq s-2$ with $k \geq 2^s - 2^r$, and positive integers $x \leq 2^{s-2-r}$ and $\epsilon \in [(x-1)2^{r+1} + 2^{s+1} - k, x2^{r+1} + k]$, such that $\mydim{q} = a2^{s+2} \pm \epsilon$ for some non-negative integer $a$;
\item[$\mathrm{(iii)}$] $\Izhdim{p} > 2^s$, and there exist a non-negative integer $r \leq s-1$ with $k \geq y_r2^r$, and positive integers $x < 2^{s+1-r} - y_r$ and $\epsilon \in [(x+y_r)2^{r+1}-k,x2^{r+1} +k]$, such that $\mydim{q} = a2^{s+2} + \epsilon$ for some non-negative integer $a$.  \end{itemize} \end{enumerate} \end{theorem}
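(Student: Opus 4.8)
The plan is to derive both parts of the theorem from the main result, Theorem~\ref{THMmainresult}, by weakening its hypotheses on $\Delta(p)$ to what is already implied by $s$, $\Izhdim{p}$, and the status of $p$ as a quasi-Pfister neighbour or not. Theorem~\ref{THMmainresult} will confine $\mydim{q}$ to a union of intervals of radius $k$ centred at multiples of a power of $2$, the exponent and the admissible multiples being determined jointly by $\Izhdim{p}$ and $\Delta(p)$; so the first step is to record which of these intervals persist under the sole hypothesis $\Izhdim{p}\in[2^s,\mydim{p}-1]$ with $\Delta(p)$ otherwise unrestricted (yielding part~(1)), and which persist once $p$ is additionally barred from being a quasi-Pfister neighbour, equivalently once $\Delta(p)$ is barred from the configuration that characterises neighbours among forms with the given value of $s$ (yielding part~(2)).

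For part~(1): since $\Izhdim{p}\geq 2^s$ holds unconditionally, the coarsest reading of Theorem~\ref{THMmainresult} already confines $\mydim{q}$ to $[a2^{s+1}-k,\,a2^{s+1}+k]$ for some integer $a$, i.e. $\mydim{q}=a2^{s+1}+\epsilon$ with $\epsilon\in[-k,k]$. Positivity of $a$ is immediate: since $q$ is anisotropic and $q_{F(p)}$ isotropic we have $\mydim{q}-k=2\witti{0}{q_{F(p)}}\geq 2$, whence $a2^{s+1}=\mydim{q}-\epsilon\geq\mydim{q}-k>0$. The congruence $\epsilon\equiv k\pmod 2$ of Conjecture~\ref{CONJfirstconjecture} is then automatic, $k$ and $\mydim{q}$ having equal parity and $2^{s+1}$ being even; so this is precisely the quasilinear case of that conjecture, whose non-vacuous instances all satisfy $k<2^s\leq\Izhdim{p}$.

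For part~(2): assume $p$ is not a quasi-Pfister neighbour, so either $\Izhdim{p}>2^s$, or $\Izhdim{p}=2^s$ with $\Delta(p)$ non-exceptional. In each case Theorem~\ref{THMmainresult} sharpens part~(1) by imposing, on the multiples $a2^{s+1}$ with $a$ odd, a secondary constraint governed by an integer parameter $r$ extracted from $\Delta(p)$ and by the binary expansion of $\Izhdim{p}$ (through the quantities $y_r$). I would split on the parity of $a$. If $a$ is even, $a2^{s+1}$ is a multiple of $2^{s+2}$ and the secondary constraint is absent, giving alternative~(i) — with positive index, since after writing $a=2a'$ one again has $a'2^{s+2}=\mydim{q}-\epsilon\geq\mydim{q}-k>0$. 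If $a$ is odd, the secondary constraint must be invoked, and either it can be met — producing alternative~(ii) when $\Izhdim{p}=2^s$ and alternative~(iii) when $\Izhdim{p}>2^s$ — or it cannot, in which case no odd $a$ occurs and (i) alone describes $\mydim{q}$. Concretely, introducing the integer $x$ that indexes the multiple of $2^{s+2}$ nearest to $\mydim{q}$ and substituting the admissible values of $r$, the secondary constraint becomes the stated membership $\epsilon\in[(x-1)2^{r+1}+2^{s+1}-k,\,x2^{r+1}+k]$ (resp. $\epsilon\in[(x+y_r)2^{r+1}-k,\,x2^{r+1}+k]$), and the displayed bounds $r\leq s-2$, $x\leq 2^{s-2-r}$, $k\geq 2^s-2^r$ (resp. $r\leq s-1$, $x<2^{s+1-r}-y_r$, $k\geq y_r2^r$) delimit exactly the ranges in which the relevant interval is nonempty and attainable. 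Taking the union over all admissible $(r,x)$ and all $a$, then simplifying, gives the trichotomy.

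The crux is this last reduction. Because Theorem~\ref{THMmainresult} is phrased with the full invariant $\Delta(p)$, obtaining the clean alternatives~(i)/(ii)/(iii) requires enumerating, for fixed $s$ and $\Izhdim{p}$, the sets $\Delta(p)$ realisable by a non-neighbour $p$; reading off from the main theorem the family of intervals for $\mydim{q}$ attached to each; and verifying that the union of these families collapses precisely to the listed intervals, with the constraints on $r$, $x$ and $\epsilon$ sharp. I expect the delicate bookkeeping to be the interplay between the parameter $r$ coming from $\Delta(p)$ and the rounding quantities $y_r$ coming from $\Izhdim{p}$, together with keeping the regimes $\Izhdim{p}=2^s$ and $\Izhdim{p}>2^s$ properly separated; the remaining verifications should be direct substitutions into Theorem~\ref{THMmainresult}.
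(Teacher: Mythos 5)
Your top-level strategy—specialize Theorem \ref{THMmainresult}, using that $\mathrm{lndeg}(p)=s+1$ exactly for quasi-Pfister neighbours and $\mathrm{lndeg}(p)\geq s+2$ otherwise, and keeping the regimes $\Izhdim{p}=2^s$ and $\Izhdim{p}>2^s$ separate—is the paper's approach, but the execution has real gaps. First, part (1) is not ``the coarsest reading'' of Theorem \ref{THMmainresult}: in case (2) of that theorem the modulus is $2^{\mathrm{lndeg}(p)}\geq 2^{s+2}$ and the conclusion is $\mydim{q}=a2^{\mathrm{lndeg}(p)}\pm\epsilon$ with $\epsilon\in[(x+y_r)2^{r+1}-k,\,x2^{r+1}+k]$, which is not visibly within $k$ of a multiple of $2^{s+1}$. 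One must prove $2^{s+1}-k\leq\epsilon\leq 2^{s+1}+k$; the paper does this via $\epsilon\geq(x+y_r)2^{r+1}-k\geq(1+y_r)2^{r+1}-k\geq 2\Izhdim{p}-k\geq 2^{s+1}-k$ (using $x\geq 1$ and the definition of $y_r$), together with $x2^{r+1}\leq 2^s$ from the bound $x\leq 2^{n-1-r}$. You only verify positivity of $a$, which is the trivial part; without the inequality above, (1) does not follow for forms falling into case (2). Second, alternative (iii) permits only $\mydim{q}=a2^{s+2}+\epsilon$, while the main theorem's case (2) allows $\pm\epsilon$; in the regime $\Izhdim{p}>2^s$ the minus sign must be converted by the substitution $\epsilon'=2^{s+2}-\epsilon$, $x'=2^{s+1-r}-y_r-x$, checking that $(x',\epsilon')$ again satisfy the required constraints, with the boundary case $x'=0$ landing in (i). Your parity-of-$a$ split never engages with this reflection, which is a genuine step of the proof, not bookkeeping.

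Moreover, the ``crux'' you identify is misplaced: no enumeration of the sets $\Delta(p)$ realizable by non-neighbours is needed, nor any verification that a union of intervals ``collapses precisely'' to the listed ones. Theorem \ref{THMmaintheoremcorollary} is a strict weakening of Theorem \ref{THMmainresult}: one simply discards the conditions involving $\Delta(p)$, $r'$ and $c(p_1)$, translates the integer $n$ of the main theorem (defined by $2^n<\Izhdim{p}\leq 2^{n+1}$, so $n=s-1$ when $\Izhdim{p}=2^s$ and $n=s$ otherwise) into $s$, and uses $\mathrm{lndeg}(p)\geq s+2$ to replace $2^{\mathrm{lndeg}(p)}$ by $2^{s+2}$ (and $(y_{r'}+1)2^{r'}\leq 2^{s+1}$ to get the bound $x<2^{s+1-r}-y_r$). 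Sharpness of the resulting constraints is a separate optimality statement, proved in \S \ref{SECoptimality}, and is not part of this theorem; the classification of realizable $\Delta(p)$ you propose would be substantially harder than the deduction actually required and is not how the argument goes.
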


Unless additional information is taken into account, this result is in fact the best possible: Modulo the requirements that $\mydim{q} \in k + 2\mathbb{N}$ and $\Izhdim{p} \in [2^s,2^{s+1})$, the only values of the quadruple $(s,\Izhdim{p},\mydim{q}, k)$ that cannot be realized by an appropriate triple $(F,p,q)$ are those excluded by Theorem \ref{THMmaintheoremcorollary} (in particular, the case where $k \geq \Izhdim{p}$ is unrestricted). This is shown in \S 5. When $p$ is not a quasi-Pfister neighbour, a sufficiently small value of $k$ forces us into the more palatable case (i) of (2). More specifically:

\begin{corollary} \label{CORmaintheoremcorollarycritical} Suppose that $F$ has characteristic $2$, $p$ and $q$ are quasilinear and $q_{F(p)}$ is isotropic. Suppose further that $p$ is not a quasi-Pfister neighbour, and that
$$ k < \begin{cases} 2^s + 2^{s-1} & \text{if } \Izhdim{p} > 2^s + 2^{s-1} \\ 2^{s} & \text{if } \Izhdim{p} \in (2^s, 2^{s} + 2^{s-1}] \\ 2^{s-1} + 2^{s-2} & \text{if } \Izhdim{p} = 2^s. \end{cases} $$
Then $\mydim{q} = a2^{s+2} + \epsilon$ for some positive integer $a$ and integer $\epsilon \in [-k,k]$. 
\begin{proof} In all cases, $k<\Izhdim{p}$, and so Theorem \ref{THMmaintheoremcorollary} (2) is applicable. Suppose that $\Izhdim{p} > 2^s$. If $r$ is a positive integer $\leq s-1$, and $y_r$ is the largest integer for which $\Izhdim{p} > y_r2^r$, then $y_r2^r$ is at least $2^s$, and at least $2^s + 2^{s-1}$ in the case where $\Izhdim{p} > 2^s + 2^{s-1}$. By hypothesis, we then have that $k < y_r2^r$, and so we cannot be in case (iii) of Theorem \ref{THMmaintheoremcorollary} (2).  Similarly, if $\Izhdim{p} = 2^s$, then our assumption on $k$ tells us that we are not in case (ii) of Theorem \ref{THMmaintheoremcorollary} (2), and the result again holds. 
\end{proof}
\end{corollary}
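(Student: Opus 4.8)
The plan is to read the corollary off from part (2) of Theorem \ref{THMmaintheoremcorollary}. Under the stated hypotheses $p$ is not a quasi-Pfister neighbour, and, as I would check first, $k < \Izhdim{p}$ holds in each of the three cases of the hypothesis on $k$: the prescribed bound on $k$ is in each case at most the lower bound imposed on $\Izhdim{p}$ (namely $2^s + 2^{s-1}$, $2^s$, and $2^{s-1}+2^{s-2}$, against $\Izhdim{p}$ being $> 2^s+2^{s-1}$, $> 2^s$, and $=2^s$, respectively). Hence Theorem \ref{THMmaintheoremcorollary} (2) applies, and one of its alternatives (i), (ii), (iii) must hold. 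Since alternative (i) is exactly the desired conclusion, it suffices to rule out (ii) and (iii) using the quantitative hypothesis on $k$.

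To exclude (iii), which can occur only when $\Izhdim{p} > 2^s$, I would use the elementary observation that for any non-negative integer $r \leq s-1$ the quantity $y_r 2^r$ --- the largest multiple of $2^r$ lying strictly below $\Izhdim{p}$ --- is at least $2^s$, because $2^s$ is itself a multiple of $2^r$ strictly below $\Izhdim{p}$; and if moreover $\Izhdim{p} > 2^s + 2^{s-1}$, then running the same argument with the multiple $2^s + 2^{s-1}$ of $2^r$ in place of $2^s$ gives $y_r 2^r \geq 2^s + 2^{s-1}$. In either case the hypothesis on $k$ yields $k < y_r 2^r$ for every admissible $r$, contradicting the requirement $k \geq y_r 2^r$ built into alternative (iii).

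To exclude (ii), which can occur only when $\Izhdim{p} = 2^s$ (so that the hypothesis reads $k < 2^{s-1} + 2^{s-2}$), I would note that (ii) demands a non-negative integer $r \leq s-2$ with $k \geq 2^s - 2^r$, and that $2^s - 2^r$ is minimized over this range at $r = s-2$, where it equals $2^{s-1} + 2^{s-2}$; hence (ii) forces $k \geq 2^{s-1} + 2^{s-2}$, again a contradiction. With (ii) and (iii) both impossible, alternative (i) holds, which is the assertion. The argument is essentially bookkeeping; the only point that requires a moment's attention is the divisibility observation used to bound $y_r 2^r$ from below, and I anticipate no genuine obstacle beyond arranging the case analysis so that the numerical thresholds in the statement line up precisely with the exclusions of (ii) and (iii).
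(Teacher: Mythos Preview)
Your proof is correct and follows essentially the same approach as the paper's: verify $k<\Izhdim{p}$, invoke Theorem \ref{THMmaintheoremcorollary} (2), and eliminate alternatives (ii) and (iii) by showing the numerical requirement $k \geq y_r2^r$ (resp.\ $k \geq 2^s-2^r$) contradicts the hypothesis on $k$. Your write-up is in fact slightly more explicit than the paper's in justifying the lower bounds on $y_r2^r$ via divisibility and in spelling out why (ii) forces $k \geq 2^{s-1}+2^{s-2}$.
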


In fact, in the conclusion of Corollary \ref{CORmaintheoremcorollarycritical}, the integer $2^{s+2}$ may be replaced with Hoffmann and Laghribi's norm degree (which is at least $2^{s+2}$ here) -- see Corollary \ref{CORmainresult2}. 

To explain the title of the article, we now give three other notable consequences of Theorem \ref{THMmaintheoremcorollary}. The first of these was originally shown in \cite[Thm. 1.3]{Scully1}, and the second by Totaro in \cite[Thm. 5.2]{Totaro}. The third is new, but very closely related to \cite[Cor. 6.18]{Scully3}. 

\begin{corollary} \label{CORintro} Suppose that $F$ has characteristic 2 and that $p$ and $q$ are quasilinear. Let $2^u$ be the largest power of $2$ dividing $\Izhdim{p}$. Then the following hold:
\begin{enumerate} \item $\witti{1}{p} \leq 2^u$;
\item If $q_{F(p)}$ is isotropic, then:
\begin{enumerate} \item[$\mathrm{(i)}$] $\mydim{q} > \Izhdim{p}$;
 \item[$\mathrm{(ii)}$] $\witti{0}{q_{F(p)}} \leq \mathrm{max}\lbrace \mydim{q} - \Izhdim{p} - 2^u, 2^u \rbrace$. \end{enumerate} \end{enumerate} 
\begin{proof} The form $p_{F(p)}$ is isotropic, and setting $q = p$ in the second part of (2) gives the inequality $\witti{1}{p} \leq \mathrm{max}\lbrace \witti{1}{p} - 2^u, 2^u \rbrace$, which is obviously equivalent to (1). It therefore suffices to prove (2). Let us first note that (ii) may be re-written as
\begin{enumerate} \item[(ii)'] $k \geq \mathrm{min}\lbrace 2\Izhdim{p} - (\mydim{q} - 2^{u+1}), \mydim{q} - 2^{u+1} \rbrace.$ \end{enumerate}
 If $\Izhdim{p} = 2^s$, then (i) says that $\mydim{q} > 2^s$, and (ii)' says that $k \geq 2^{s+1} - \mydim{q}$. Both these inequalities are immediate from Theorem \ref{THMmaintheoremcorollary} (1) (the first being the separation theorem), so we can assume that $\Izhdim{p} > 2^s$. As remarked in the discussion preceding Theorem \ref{THMmaintheoremcorollary}, this implies that $p$ is not a quasi-Pfister neighbour. Observe now that (i) and (ii)' are both satisfied if 
\begin{itemize} \item[(a)] $k \geq \Izhdim{p} - 1$, or
\item[(b)] $\mydim{q} \geq 2\Izhdim{p} + 2^{u+1}$.
\end{itemize}
Indeed, if (a) holds then the validity of (ii)' is evident, while (i) holds since $\mydim{q} = k+2\witti{0}{q_{F(p)}} \geq k+2$. On the other hand, if (b) holds, then the validity of (i) is evident, while (ii)' holds since $k \geq 0$. Now since $\mydim{p} \leq 2^{s+1}$, we have $\Izhdim{p} \leq 2^{s+1} - 2^u$, and so $2\Izhdim{p} + 2^{u+1} \leq 2^{s+2}$. We have therefore reduced to the case where $p$ is not a quasi-Pfister neighbour, $k \leq \Izhdim{p} - 2$ and $\mydim{q} < 2^{s+2}$. By Theorem \ref{THMmaintheoremcorollary} (2), there then exist positive integers $r \leq s+1$ and $x \leq 2^{s+1-r}$ such that if $y_r$ is the largest integer with $\Izhdim{p} \geq y_r2^r$, then $k \geq y_r2^r$ and $\mydim{q} \in [(x+y_r)2^{r+1}-k,x2^{r+1} + k]$ (if $\mydim{q} = 2^{s+2} - \epsilon$ for some $\epsilon \in [1,k]$, then we can take $r = s+1$ and $x = 1$). Now, since $x$ is positive, the lower bound for $\mydim{q}$ is at least $(1+y_r)2^{r+1} - k \geq 2\Izhdim{p} - k \geq \Izhdim{p} +2$, and so (i) holds. For (ii)', let us suppose that $k < 2\Izhdim{p} - (\mydim{q} - 2^{u+1})$. Combining this with the lower bound for $\mydim{q}$, we then get that
$$ (x-1)2^r + \Izhdim{p} \leq (x-1)2^r + (1+y_r)2^r = (x+y_r)2^r \leq \frac{\mydim{q}+k}{2} < 2^u + \Izhdim{p}. $$
Now if $u$ were less than $r$, then $2^u + \Izhdim{p} $ would be at most $(1+y_r)2^r$ by the definition of $y_r$. Since $x$ is positive, the preceding inequalities therefore not only imply that $2^u > (x-1)2^r$, but that $u \geq r$, and hence that $2^u \geq x2^r$. The upper bound for $\mydim{q}$ then gives that $k \geq \mydim{q} - x2^{r+1} \geq \mydim{q} - 2^{u+1}$, and so (ii)' holds.  \end{proof} \end{corollary}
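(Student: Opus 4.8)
The plan is to deduce everything from Theorem \ref{THMmaintheoremcorollary}. First, (1) is nothing but the special case $q = p$ of the inequality in (2)(ii): since $p_{F(p)}$ is isotropic with $\witti{0}{p_{F(p)}} = \witti{1}{p}$ and $\mydim{p} - \Izhdim{p} = \witti{1}{p}$, that inequality becomes $\witti{1}{p} \leq \max\{\witti{1}{p} - 2^u,\,2^u\}$, which is equivalent to $\witti{1}{p} \leq 2^u$. So it suffices to prove (2). Substituting $\witti{0}{q_{F(p)}} = \tfrac{1}{2}(\mydim{q}-k)$, I would rewrite (ii) equivalently as
$$ k \;\geq\; \min\bigl\{\,2\Izhdim{p} - (\mydim{q}-2^{u+1}),\ \mydim{q}-2^{u+1}\,\bigr\}, $$
which I call (ii)'. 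The case $\Izhdim{p} = 2^s$ (that is, $u = s$) is then immediate: (i) is the separation theorem, and (ii)' reads $k \geq 2^{s+1}-\mydim{q}$, which follows from the equation $\mydim{q} = a2^{s+1}+\epsilon$ ($a \geq 1$, $\epsilon \in [-k,k]$) of Theorem \ref{THMmaintheoremcorollary}(1). So I assume from now on that $\Izhdim{p} > 2^s$, whence $p$ is not a quasi-Pfister neighbour.

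Next I would isolate the two easy ranges in which (i) and (ii)' hold for trivial reasons: (a) $k \geq \Izhdim{p}-1$, where (ii)' is clear and (i) follows from $\mydim{q} = k + 2\witti{0}{q_{F(p)}} \geq k+2$; and (b) $\mydim{q} \geq 2\Izhdim{p}+2^{u+1}$, where (i) is clear and (ii)' follows from $k \geq 0$. Since $2^u \mid \Izhdim{p}$ and $\Izhdim{p} < \mydim{p} \leq 2^{s+1}$, we have $\Izhdim{p} \leq 2^{s+1}-2^u$, hence $2\Izhdim{p}+2^{u+1} \leq 2^{s+2}$; so outside (a) and (b) we may assume $k \leq \Izhdim{p}-2$ and $\mydim{q} < 2^{s+2}$. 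Now Theorem \ref{THMmaintheoremcorollary}(2) applies, and since $p$ is not a quasi-Pfister neighbour, $\mydim{q} < 2^{s+2}$ excludes its case (i) while $\Izhdim{p} > 2^s$ excludes its case (ii); so we land in case (iii). Translating that conclusion into a clean two-sided estimate for $\mydim{q}$ — reconciling the strict inequality defining $y_r$ there with the non-strict one I want to use, and adjusting the range of $r$ to cover boundary cases — I would extract positive integers $r \leq s+1$, $x \leq 2^{s+1-r}$ and, with $y_r$ the largest integer for which $\Izhdim{p} \geq y_r2^r$, the bounds $k \geq y_r2^r$ and $\mydim{q} \in [(x+y_r)2^{r+1}-k,\ x2^{r+1}+k]$.

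It remains to read off (i) and (ii)'. Since $x \geq 1$ and $(y_r+1)2^r > \Izhdim{p}$, the lower bound gives $\mydim{q} \geq (1+y_r)2^{r+1}-k > 2\Izhdim{p}-k \geq \Izhdim{p}+2$, which is (i). For (ii)', assume $k < 2\Izhdim{p}-(\mydim{q}-2^{u+1})$ (otherwise there is nothing to prove); combining this with the lower bound produces the chain
$$ (x-1)2^r + \Izhdim{p} \;\leq\; (x+y_r)2^r \;\leq\; \tfrac{1}{2}(\mydim{q}+k) \;<\; 2^u + \Izhdim{p}. $$
The crux is then a short $2$-adic argument: if $u < r$, then writing $\Izhdim{p} = y_r2^r + t$ with $0 < t < 2^r$ — so that $t$ is a multiple of $2^u$ but not of $2^{u+1}$ — one gets $\Izhdim{p}+2^u \leq (y_r+1)2^r$, which together with the chain forces the contradiction $x < 1$; hence $u \geq r$, so $2^u$ is a multiple of $2^r$ strictly larger than $(x-1)2^r$, whence $2^u \geq x2^r$. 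The upper bound $\mydim{q} \leq x2^{r+1}+k$ then yields $k \geq \mydim{q}-x2^{r+1} \geq \mydim{q}-2^{u+1}$, giving (ii)'. I expect the main obstacle to be the bookkeeping in passing from Theorem \ref{THMmaintheoremcorollary}(2)(iii) to the two-sided estimate above — reconciling the two conventions for $y_r$ and making sure every boundary case of $\mydim{q}$ is covered — rather than the $2$-adic step or the elementary reductions, which should be routine.
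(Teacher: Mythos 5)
Your proposal is, in structure and in every key move, the paper's own proof: the reduction of (1) to (2)(ii) via $q=p$, the reformulation (ii)$'$, the disposal of the case $\Izhdim{p}=2^s$ through Theorem \ref{THMmaintheoremcorollary}(1), the easy ranges (a), (b) together with $\Izhdim{p}\le 2^{s+1}-2^u$, and the final chain of inequalities with the $2$-adic step $u\ge r\Rightarrow 2^u\ge x2^r$ are all identical, and the endgame is correct. (One small point you share with the paper: for $\Izhdim{p}=2^s$, (ii)$'$ actually reads $k\ge\min\{2^{s+2}-\mydim{q},\ \mydim{q}-2^{s+1}\}$, not $k\ge 2^{s+1}-\mydim{q}$; it does follow from Theorem \ref{THMmaintheoremcorollary}(1), by separating $a=1$ from $a\ge 2$.)

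The only genuine issues sit exactly in the step you flag but do not carry out. First, case (i) of Theorem \ref{THMmaintheoremcorollary}(2) is \emph{not} excluded by $\mydim{q}<2^{s+2}$: with $a=1$ and $\epsilon\in[-k,-1]$ it still permits $\mydim{q}\in[2^{s+2}-k,2^{s+2})$, so "we land in case (iii)" is false as stated. This residual possibility must be folded in; the paper does so by allowing $r=s+1$, $x=1$, $y_{s+1}=0$ (your range $r\le s+1$ suggests you intend the same), or one can dispose of it directly, since under the hypothesis of (ii)$'$ one has $\mydim{q}+k<2\Izhdim{p}+2^{u+1}\le 2^{s+2}$, contradicting $\mydim{q}\ge 2^{s+2}-k$. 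Second, the package you propose to extract, with $y_r$ defined by the non-strict inequality $\Izhdim{p}\ge y_r2^r$, is not what Theorem \ref{THMmaintheoremcorollary}(2)(iii) delivers when $2^r\mid\Izhdim{p}$: there the theorem's (strict) $y_r$ is one smaller, so your asserted $k\ge y_r2^r$ would read $k\ge\Izhdim{p}$, impossible after the reduction to $k\le\Izhdim{p}-2$, and your asserted lower bound $(x+y_r)2^{r+1}-k$ exceeds the theorem's guarantee by $2^{r+1}$. The repair is simply to keep the theorem's strict convention throughout: every inequality your endgame actually uses --- $x\ge 1$, $\Izhdim{p}\le(y_r+1)2^r$, the two-sided bound on $\mydim{q}$, and (only when $u<r$, where the two conventions coincide because $2^r\nmid\Izhdim{p}$) $\Izhdim{p}+2^u\le(y_r+1)2^r$ --- holds verbatim, so the $2$-adic argument and the conclusion $k\ge\mydim{q}-2^{u+1}$ go through unchanged, with the strict "$>$" in your display for (i) weakened to "$\ge$", which is harmless. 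With these two adjustments your write-up coincides with the paper's argument.
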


Now the point we wish to emphasize is that these statements are not only true when $p$ and $q$ are quasilinear, but also when $p$ and $q$ are nonsingular. Indeed, in that situation, the second part of (2) is \cite[Thm. 4.1] {Scully3}, while (1) and (2)(i) are celebrated results due to Karpenko (\cite{Karpenko1}) and Karpenko-Merkurjev (\cite{KarpenkoMerkurjev,EKM}), respectively.\footnote{This explains the title of the article. We should remark that \cite{Karpenko1} not only assumed nonsingularity, but that the characteristic of the ground field is not 2. This stronger assumption was rendered unnecessary, however, by the work of Primozic on motivic Steenrod operations in positive characteristic (\cite{Primozic}).} In fact, as far as the latter two results are concerned, no restriction on $p$ or $q$ is required: The Karpenko-Merkurjev theorem was extended to the general case by Totaro in \cite{Totaro}, and Karpenko has recently extended his result to the (non-quasilinear) singular case in \cite{Karpenko3}. In view of this situation, we expect that our Theorem \ref{THMmaintheoremcorollary} also extends to the general case, with the term ``quasi-Pfister neighbour'' being replaced with a suitable formulation of ``simplest stable birational type''.\footnote{When $p$ is (sufficiently) nonsingular, this will simply be ``Pfister neighbour".} This will be investigated in forthcoming work. Again, what we prove here for quasilinear forms is actually much stronger (see Theorem \ref{THMmainresult}), but we don't know to what extent one can hope for this kind of enhancement in the general theory.

\section{Preliminaries on Quasilinear Quadratic Forms} \label{SECpreliminaries}

In this section, we collect various preliminary facts on quasilinear quadratic forms that will be used throughout the main part of the text. The basic references here are \cite{Hoffmann1, Scully1,Totaro}. We also establish our notation and terminology, which may, in places, differ slightly from that found in the existing literature. We fix throughout a field $F$ of characteristic 2.

\subsection{Basic Notions} \label{SUBSECbasics} Let $V$ be a finite-dimensional $F$-vector space. A map $\phi \colon V \rightarrow F$ is a \emph{quasilinear quadratic form on $V$} if $\phi(av + w) = a^2\phi(v) + \phi(w)$ for all $a \in F$ and $v,w \in V$. If $\mathfrak{b} \colon V \times V \rightarrow F$ is a symmetric bilinear form, then its restriction to the diagonal is a quasilinear quadratic form on $V$ which we denote $\phi_{\mathfrak{b}}$. Every quasilinear quadratic form on $V$ is of this type, but the bilinear form $\mathfrak{b}$ is far from unique. If $a_1,\hdots,a_n \in F$, then we write $\form{a_1,\hdots,a_n}$ for the quasilinear quadratic form on $F^n$ that sends $(x_1,\hdots,x_n)$ to $\sum_{i=1}^n a_ix_i^2$. By a \emph{quasilinear quadratic form over $F$}, we mean a quasilinear quadratic form on some finite-dimensional $F$-vector space. Isomorphisms of quasilinear quadratic forms over are defined in the standard way, and we use the symbol $\simeq$ to indicate the existence of an isomorphism between given forms. If a quasilinear quadratic form is isomorphic to a non-zero scalar multiple of another, then we say that the two forms are \emph{similar}.
The orthogonal sum and tensor product operations for symmetric bilinear forms give rise to corresponding operations for quasilinear quadratic forms (denoted $\perp$ and $\otimes$, respectively). If $\phi$ and $\psi$ are quasilinear quadratic forms over $F$, then we say that $\psi$ is a \emph{subform of $\phi$}, and write $\psi \subset \phi$, if $\phi \simeq \psi \perp \sigma$ for some quasilinear quadratic form $\sigma$ over $F$. If $\phi \cong \psi \otimes \sigma$ for some $\sigma$, then we say that $\phi$ is \emph{divisible by $\psi$}.

If $\phi$ is a quasilinear quadratic form over $F$, we shall write $V_{\phi}$ for the $F$-vector space on which it is defined. The \emph{dimension of $\phi$}, denoted $\mydim{\phi}$, is the dimension of $V_{\phi}$. If $\lbrace v_1,\hdots,v_n \rbrace$ is a basis of $V_{\phi}$, then $\phi \simeq \form{\phi(v_1),\hdots,\phi(v_n)}$. The set $V_{\phi}^0$ consisting of all $\phi$-isotropic vectors in $V_{\phi}$ is an $F$-linear subspace of $V_{\phi}$. Its dimension is the \emph{isotropy index} $\witti{0}{\phi}$ of \S 1. The restriction of $\phi$ to the quotient space $V_{\phi}/V_{\phi}^0$ is an anisotropic quasilinear quadratic form of dimension $\mydim{\phi} - \witti{0}{\phi}$ over $F$ which we denote $\anispart{\phi}$ and call the \emph{anisotropic part of $\phi$}. The form $\phi$ is isomorphic to the orthogonal sum of $\anispart{\phi}$ and the zero form of dimension $\witti{0}{\phi}$. If $\mydim{\anispart{\phi}} \leq 1$, then we say that $\phi$ is \emph{split}. If $L$ is a field extension of $F$, then we write $\phi_L$ for the quasilinear quadratic form on $V_{\phi} \otimes_F L$ induced by $\phi$. By definition, we then have $\witti{0}{\phi_L} \geq \witti{0}{\phi}$ and $\anispart{(\phi_L)} \subset (\anispart{\phi})_L$. 
 
Modulo the obvious terminological changes, the preceding discussion carries over verbatim to the study of quasilinear quadratic forms on finite-rank free modules over discrete valuation rings of characteristic 2 (defined the same way). In particular, if $R$ is a DVR of characteristic 2 with fraction field $K$ and residue field $k$, and $\phi$ is a quasilinear quadratic form on a finite-rank free $R$-module $M$, then the subset of $M$ on which $\phi$ vanishes is an $R$-linear direct summand of $M$, and so we may define $\witti{0}{\phi}$ to be its rank. Writing $\phi_K$ (resp. $\phi_k$) for the quasilinear quadratic form on the $K$-vector space $M \otimes_R K$ (resp. the $k$-vector space $M \otimes_R k$) induced by $\phi$, then we then clearly have:

\begin{lemma} \label{LEMspecializationlemma} $\witti{0}{\phi_K} = \witti{0}{\phi} \leq \witti{0}{\phi_k}$. \end{lemma}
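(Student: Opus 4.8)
The plan is to argue everything directly in terms of the lattice $M$ and its zero locus $M^0 := \lbrace m \in M : \phi(m) = 0 \rbrace$, which by the discussion preceding the statement is an $R$-linear direct summand of $M$. Fix a complementary free submodule $N$, so that $M = M^0 \oplus N$ and $\witti{0}{\phi} = \mathrm{rank}(M^0)$. Two elementary facts will be used repeatedly: quasilinearity gives $\phi(av) = a^2\phi(v)$ (so isotropy is preserved under scaling), and, after fixing a basis $\lbrace e_1,\dots,e_n\rbrace$ of $M$, quasilinearity also gives $\phi(\sum_i a_ie_i) = \sum_i a_i^2\phi(e_i)$, from which $\phi_K(m) = \phi(m)$ for $m \in M$, and $\phi_k(\bar m) = \overline{\phi(m)}$ for $m \in M$ (with bars denoting reduction modulo the maximal ideal of $R$).

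For the equality $\witti{0}{\phi_K} = \witti{0}{\phi}$, I would first observe that $M^0 = M \cap V_{\phi_K}^0$: indeed, for $m \in M$ we have $\phi(m) = \phi_K(m)$ in $K$, so $\phi(m) = 0$ in $R$ if and only if $m \in V_{\phi_K}^0$. Now $M$ is a full $R$-lattice in $M \otimes_R K$ and $R$ is a PID, so $W := M \cap V_{\phi_K}^0$ is a finitely generated torsion-free, hence free, $R$-module; moreover the natural map $W \otimes_R K \to V_{\phi_K}^0$ is surjective, by clearing denominators — this is where the scaling property $\phi_K(av) = a^2\phi_K(v)$ is needed to guarantee that a scaled vector stays isotropic. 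Hence $\mathrm{rank}(W) = \dim_K V_{\phi_K}^0$, and putting this together with $M^0 = W$ yields $\witti{0}{\phi} = \mathrm{rank}(M^0) = \dim_K V_{\phi_K}^0 = \witti{0}{\phi_K}$.

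For the inequality $\witti{0}{\phi} \leq \witti{0}{\phi_k}$, I would reduce the splitting $M = M^0 \oplus N$ modulo the maximal ideal to get $M \otimes_R k = (M^0 \otimes_R k) \oplus (N \otimes_R k)$, so that $M^0 \otimes_R k$ sits inside $M \otimes_R k$ as a $k$-subspace of dimension $\mathrm{rank}(M^0)$. Using $\phi_k(\bar m) = \overline{\phi(m)}$, any $m \in M^0$ satisfies $\phi_k(\bar m) = 0$, so the image of $M^0 \otimes_R k$ is contained in $V_{\phi_k}^0$; therefore $\witti{0}{\phi_k} = \dim_k V_{\phi_k}^0 \geq \mathrm{rank}(M^0) = \witti{0}{\phi}$.

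I do not expect a genuine obstacle here; the statement is essentially formal once the direct-summand property of $M^0$ is in hand. The one point deserving a moment's care is the rank computation for $W = M \cap V_{\phi_K}^0$, namely that intersecting a full lattice over a DVR with a $K$-subspace produces a free module of that subspace's full rank; this is standard, and the quasilinear scaling identity is exactly what legitimizes the denominator-clearing step in the surjectivity of $W \otimes_R K \to V_{\phi_K}^0$.
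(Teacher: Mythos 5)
Your argument is correct, and it simply fills in the details of what the paper treats as immediate (the lemma is stated with no proof beyond "we then clearly have"): the identification $M^0 = M \cap V_{\phi_K}^0$ together with denominator clearing gives the equality over $K$, and reduction of the splitting $M = M^0 \oplus N$ modulo the maximal ideal gives the inequality over $k$. This is exactly the intended reasoning, so there is nothing further to add.
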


If $\phi$ is a quasilinear quadratic form over $F$, then the value set $D(\phi): = \lbrace \phi(v)\;|\; v\in V_{\phi} \rbrace$ is a finite-dimensional $F^2$-linear subspace of $F$. For indeterminates $X_1,\hdots,X_n$, Lemma \ref{LEMspecializationlemma} gives the following basic specialization result:

\begin{corollary}[see {\cite[Cor. 3.7]{Hoffmann2}}] \label{CORspecialization} Let $\phi$ be a quasilinear quadratic form over $F$, and let $a_1,\hdots,a_n \in F$. Suppose $f \in F[X_1,\hdots,X_n]_{\mathfrak{m}}$, where $\mathfrak{m} = (X_1-a_1,\hdots,X_n - a_n)$. If $f \in D(\phi_{F(X_1,\hdots,X_n)})$, then $f(a_1,\hdots,a_n) \in D(\phi)$. 
\begin{proof} We may assume that $n=1$ and that $\phi$ is anisotropic. Now, since $f \in D(\phi_{F(X_1)})$, the form $\phi_{F(X_1)} \perp \form{f}$ is isotropic. Applying Lemma \ref{LEMspecializationlemma} with $R = F[X_1]_{\mathfrak{m}}$, we get that $\phi \perp \form{f(a_1)}$ is isotropic over $F$. Since $\phi$ is anisotropic, it then follows that $f(a_1) \in D(\phi)$. 
\end{proof}
\end{corollary}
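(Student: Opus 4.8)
The plan is to reduce to the case $n = 1$ by specializing the variables one at a time, and then to apply Lemma~\ref{LEMspecializationlemma} to a rank-one extension of $\phi$ over an appropriate discrete valuation ring. At the outset I would also assume $\phi$ anisotropic: replacing $\phi$ by $\anispart{\phi}$ changes neither $D(\phi)$ nor $D(\phi_L)$ for any extension $L/F$ (since $\phi_L$ splits off a zero form from $(\anispart{\phi})_L$), and if $\anispart{\phi}$ has dimension $0$ then $D(\phi) = \lbrace 0 \rbrace$, which forces $f = 0$ and leaves nothing to prove.

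For $n = 1$: since $f$ is regular at $a_1$, it lies in the discrete valuation ring $R := F[X_1]_{(X_1 - a_1)}$, whose fraction field is $F(X_1)$ and whose residue field is $F$ (via $X_1 \mapsto a_1$, under which $f$ maps to $f(a_1)$). I would form the quasilinear quadratic form $\psi := \phi \perp \form{f}$ on the free $R$-module $(V_{\phi} \otimes_F R) \oplus R$. The hypothesis $f \in D(\phi_{F(X_1)})$ is exactly the statement that $\psi_{F(X_1)} = \phi_{F(X_1)} \perp \form{f}$ is isotropic — an equivalence that in characteristic $2$ uses only the anisotropy of $\phi$ (hence of $\phi_{F(X_1)}$). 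Lemma~\ref{LEMspecializationlemma} then gives $\witti{0}{\psi_F} \geq \witti{0}{\psi_{F(X_1)}} \geq 1$, where $\psi_F = \phi \perp \form{f(a_1)}$ is the reduction of $\psi$ modulo the maximal ideal of $R$; so $\phi \perp \form{f(a_1)}$ is isotropic over $F$, and anisotropy of $\phi$ forces the last coordinate of any isotropy vector to be non-zero, whence scaling by it exhibits $f(a_1)$ as a value of $\phi$.

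For general $n$ I would induct, at each step specializing $X_n$. Over $F' := F(X_1,\hdots,X_{n-1})$ the function $f$ still lies in $F'[X_n]_{(X_n - a_n)}$, and its value $f(X_1,\hdots,X_{n-1},a_n)$ there is a rational function over $F'$ that remains regular at $(a_1,\hdots,a_{n-1})$; the case $n = 1$ over $F'$ (applied to $\phi_{F'}$, which is still anisotropic) places this value in $D(\phi_{F'})$, and the inductive hypothesis then yields $f(a_1,\hdots,a_n) \in D(\phi)$.

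I do not expect a genuine obstacle here — the entire weight of the argument rests on Lemma~\ref{LEMspecializationlemma}, and everything else is bookkeeping. The one point where care is warranted is the passage, in the inductive step, from $F[X_1,\hdots,X_n]_{\mathfrak{m}}$ to $F'[X_n]_{(X_n - a_n)}$: writing $f = g/h$ with $h(a_1,\hdots,a_n) \neq 0$, one should note that $h(X_1,\hdots,X_{n-1},a_n)$ is a non-zero element of $F'$ and that $h(a_1,\hdots,a_{n-1},a_n) \neq 0$, so that both the localization and the preservation of regularity at the remaining point are legitimate.
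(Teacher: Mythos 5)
Your proof is correct and follows essentially the same route as the paper: reduce to $n=1$ and $\phi$ anisotropic, view $\phi \perp \form{f}$ over the discrete valuation ring $F[X_1]_{(X_1-a_1)}$, and apply Lemma \ref{LEMspecializationlemma} to conclude that $\phi \perp \form{f(a_1)}$ is isotropic, hence $f(a_1) \in D(\phi)$. The only difference is that you spell out the reduction to one variable and the bookkeeping about denominators, which the paper leaves implicit.
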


Up to isomorphism, quasilinear quadratic forms are determined by their isotropy index and value set:

\begin{lemma}[see {\cite[Prop. 2.6]{Hoffmann2}}] \label{LEMsubformtheorem} If $\psi$ and $\phi$ are quasilinear quadratic forms over $F$, then $\anispart{\psi} \subset \anispart{\phi}$ if and only if $D(\psi) \subseteq D(\phi)$. In particular, $\anispart{\psi} \simeq \anispart{\phi}$ if and only if $D(\psi) = D(\phi)$. 
\end{lemma}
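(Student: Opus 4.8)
The plan is to reduce at once to the anisotropic case and then to use that, for an anisotropic quasilinear quadratic form, the value set is a complete invariant of the isomorphism type. First I would observe that $D(\psi) = D(\anispart{\psi})$ for every $\psi$: writing $\psi \simeq \anispart{\psi} \perp \mathbf{0}$ with $\mathbf{0}$ the zero form of dimension $\witti{0}{\psi}$, the summand $\mathbf{0}$ contributes only $0 \in D(\anispart{\psi})$ to the value set. Hence both sides of the asserted equivalence are unchanged if we replace $\psi$ and $\phi$ by $\anispart{\psi}$ and $\anispart{\phi}$, so it suffices to prove: if $\psi$ and $\phi$ are anisotropic, then $\psi \subset \phi$ if and only if $D(\psi) \subseteq D(\phi)$. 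The forward direction is immediate, since $\phi \simeq \psi \perp \sigma$ gives $D(\phi) = D(\psi) + D(\sigma) \supseteq D(\psi)$.

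For the converse, the key point is the following description. If $\{v_1,\dots,v_n\}$ is a basis of $V_\phi$ and $a_i = \phi(v_i)$, then $\phi \simeq \form{a_1,\dots,a_n}$ and $D(\phi) = \sum_{i} F^2 a_i$; since $x \mapsto x^2$ is a bijection $F \to F^2$, the form $\phi$ is anisotropic exactly when $a_1,\dots,a_n$ are linearly independent over $F^2$, in which case they constitute an $F^2$-basis of $D(\phi)$ and $\dim_{F^2} D(\phi) = \mydim{\phi}$. I would then check that an anisotropic $\phi$ satisfies $\phi \simeq \form{c_1,\dots,c_n}$ for an arbitrary $F^2$-basis $c_1,\dots,c_n$ of $D(\phi)$: writing $c_j = \sum_i m_{ij} a_i$ with $(m_{ij}) \in \mathrm{GL}_n(F^2)$ and choosing $\mu_{ij} \in F$ with $\mu_{ij}^2 = m_{ij}$, the matrix $(\mu_{ij})$ lies in $\mathrm{GL}_n(F)$ and the corresponding change of basis carries $\form{a_1,\dots,a_n}$ to $\form{c_1,\dots,c_n}$.

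Granting this, the converse is short: if $\psi$ and $\phi$ are anisotropic with $D(\psi) \subseteq D(\phi)$, pick a basis of $V_\psi$; its values $c_1,\dots,c_m$ form an $F^2$-basis of $D(\psi)$, hence an $F^2$-linearly independent subset of $D(\phi)$, which I extend to an $F^2$-basis $c_1,\dots,c_m,c_{m+1},\dots,c_n$ of $D(\phi)$. Then $\phi \simeq \form{c_1,\dots,c_n} = \form{c_1,\dots,c_m} \perp \form{c_{m+1},\dots,c_n}$ with $\form{c_1,\dots,c_m} \simeq \psi$, so $\psi \subset \phi$. The ``in particular'' clause then follows by symmetry, since $D(\psi) = D(\phi)$ forces each of $\anispart{\psi}, \anispart{\phi}$ to be a subform of the other, hence (equal dimension) isomorphic. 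The only step requiring genuine care is the change-of-basis argument --- specifically the passage from an invertible matrix over $F^2$ to its entrywise square root over $F$, which is precisely where characteristic $2$ (equivalently, quasilinearity) intervenes; the rest is linear algebra of finite-dimensional $F^2$-subspaces of $F$.
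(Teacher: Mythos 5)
Your proof is correct. Note that the paper gives no argument of its own for this lemma --- it is quoted from Hoffmann's ``Diagonal forms of degree $p$ in characteristic $p$'' (Prop.\ 2.6) --- and your self-contained argument is essentially the standard proof found there: by additivity, $D(\phi)$ is the $F^2$-span of the values of any basis, anisotropy is exactly $F^2$-linear independence of those values, and any $F^2$-basis $c_1,\hdots,c_n$ of $D(\phi)$ yields a diagonalization $\phi \simeq \form{c_1,\hdots,c_n}$ via the entrywise square-root change of basis, which is invertible because the Frobenius commutes with determinants. All steps check out, including the reduction $D(\psi)=D(\anispart{\psi})$ and the dimension count giving the ``in particular'' clause.
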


If $\phi$ and $\psi$ are quasilinear quadratic forms over $F$, then $D(\phi \perp \psi)$ is the image of the addition map $D(\phi) \oplus D(\psi) \rightarrow F$, and $D(\phi \otimes \psi)$ the image of the multiplication map $D(\phi) \otimes_{F^2} D(\psi) \rightarrow F$. Both maps are $F^2$-linear, and their kernels have dimension $\witti{0}{\phi \perp \psi}$ and $\witti{0}{\phi \otimes \psi}$, respectively. Moreover, we have the following:

\begin{lemma} \label{LEMsubformoftensorproduct} Let $\phi$ and $\psi$ be quasilinear quadratic forms over $F$, with $\phi$ being anisotropic. Then $\phi \subset \anispart{(\phi \perp \psi)}$ and $a \phi \subset \anispart{(\phi \otimes \psi)}$ for all $a \in D(\psi) \setminus \lbrace 0 \rbrace$. 
\begin{proof} By the preceding remarks, we have $D(\phi) \subseteq D(\phi \perp \psi)$ and $D(a\phi) = aD(\phi) \subseteq D(\phi \otimes \psi)$ for all $a \in D(\psi)$. Now apply Lemma \ref{LEMsubformtheorem}. 
\end{proof}
\end{lemma}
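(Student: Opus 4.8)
The plan is to deduce both containments from the value-set criterion for subforms recorded in Lemma \ref{LEMsubformtheorem}, combined with the descriptions of $D(\phi \perp \psi)$ and $D(\phi \otimes \psi)$ as images of the ($F^2$-linear) addition and multiplication maps noted immediately above the statement.

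For the first assertion, since $\phi$ is anisotropic we have $\anispart{\phi} = \phi$, so by Lemma \ref{LEMsubformtheorem} (taking its ``$\psi$'' to be $\phi$ and its ``$\phi$'' to be $\phi \perp \psi$) it is enough to check the inclusion $D(\phi) \subseteq D(\phi \perp \psi)$. This is immediate: $D(\phi \perp \psi)$ is the image of the map $D(\phi) \oplus D(\psi) \to F$, $(c,d) \mapsto c+d$, and $0$ belongs to $D(\psi)$ (it is an $F^2$-subspace of $F$, or simply $\psi(0) = 0$), so each $c \in D(\phi)$ is the image of $(c,0)$.

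For the second assertion, fix $a \in D(\psi) \setminus \lbrace 0 \rbrace$. A non-zero scalar multiple of an anisotropic form is anisotropic, so $a\phi$ is anisotropic and $\anispart{(a\phi)} = a\phi$; hence, again by Lemma \ref{LEMsubformtheorem}, it suffices to show that $D(a\phi) \subseteq D(\phi \otimes \psi)$. Here $D(a\phi) = aD(\phi)$, while $D(\phi \otimes \psi)$ is the image of the multiplication map $D(\phi) \otimes_{F^2} D(\psi) \to F$, which in particular contains every product $ca$ with $c \in D(\phi)$; thus $aD(\phi) \subseteq D(\phi \otimes \psi)$, and we are done.

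I do not anticipate any real obstacle: the whole statement is a repackaging of Lemma \ref{LEMsubformtheorem} together with the value-set formulas for $\perp$ and $\otimes$. The only points to keep in mind are that $0 \in D(\psi)$ (so that $D(\phi)$ really lies in the image of the addition map) and that scaling by a non-zero element preserves anisotropy (so that the anisotropic parts of $\phi$ and $a\phi$ are the forms themselves), both of which are immediate from the definitions.
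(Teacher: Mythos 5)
Your proposal is correct and follows exactly the paper's own argument: observe the inclusions $D(\phi) \subseteq D(\phi \perp \psi)$ and $D(a\phi) = aD(\phi) \subseteq D(\phi \otimes \psi)$ via the value-set descriptions of $\perp$ and $\otimes$, then apply Lemma \ref{LEMsubformtheorem}. The extra details you spell out ($0 \in D(\psi)$ and that scaling by a nonzero element preserves anisotropy) are implicit in the paper's one-line proof.
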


If $\phi$ is a quasilinear quadratic form over $F$, then we shall write $X_{\phi}$ for the quadric hypersurface in $\mathbb{A}(V_{\phi})$ defined by the vanishing of $\phi$. If $\phi$ is not split, then $X_{\phi}$ is integral and we write $F(\phi)$ for its function field. The latter can described concretely as follows: Suppose $\phi \simeq \form{a} \perp \phi'$ for some $a \in F \setminus \lbrace 0 \rbrace$ and some $\phi' \subset \phi$. Let $F(V_{\phi'})$ denote the function field of $\mathbb{A}(V_{\phi'})$, and $\phi'(X)$ the element of $F(V_{\phi'})$ represented by $\phi'$. Then $F(\phi)$ is $F$-isomorphic to the field $F(V_{\phi'})\big(\sqrt{a^{-1}\phi'(X)}\big)$.

\subsection{Scalar Extension and Isotropy} \label{SUBSECscalarextension} Let $\phi$ be a quasilinear quadratic form over $F$. If $L$ is a field extension of $F$, then $D(\phi_L)$ is the image of the multiplication map $D(\phi) \otimes_{F^2}L^2 \rightarrow L$. This map is $L^2$-linear, and its kernel has dimension $\witti{0}{\phi_L}$. When $L$ is separable over $F$ (i.e., $L \otimes_F K$ is reduced for every field extension $K$ of $F$), the map is injective, and so:

\begin{lemma}[see {\cite[Prop. 5.3]{Hoffmann2}}] \label{LEManisotropyoverseparable} Let $L$ be a separable field extension of $F$. If $\phi$ is a quasilinear quadratic form over $F$, then $\witti{0}{\phi_L} = \witti{0}{\phi}$. \end{lemma}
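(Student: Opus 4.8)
The plan is to reduce to the case of an anisotropic $\phi$ and then read the result off the description of $D(\phi_L)$ recorded in the paragraph preceding the statement. Since $\phi \simeq \anispart{\phi} \perp \form{0, \dots, 0}$ with the zero form here of dimension $\witti{0}{\phi}$, scalar extension to $L$ gives $\phi_L \simeq (\anispart{\phi})_L \perp \form{0, \dots, 0}$, and hence $\witti{0}{\phi_L} = \witti{0}{\phi} + \witti{0}{(\anispart{\phi})_L}$; it therefore suffices to show that $(\anispart{\phi})_L$ is anisotropic, so I may assume $\phi$ is anisotropic and must prove that $\phi_L$ is anisotropic as well. Fixing a diagonalization $\phi \simeq \form{a_1, \dots, a_n}$, anisotropy of $\phi$ means precisely that $a_1, \dots, a_n$ are linearly independent over $F^2$ inside $F$, so that $D(\phi) = \sum_i F^2 a_i$ has $F^2$-dimension $n = \mydim{\phi}$; and $\phi_L$ is anisotropic if and only if $a_1, \dots, a_n$ remain $L^2$-linearly independent inside $L$, i.e. if and only if $\dim_{L^2} D(\phi_L) = n$.

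I would then invoke the fact, stated just before the lemma, that $D(\phi_L)$ is the image of the $L^2$-linear multiplication map $\mu \colon D(\phi) \otimes_{F^2} L^2 \to L$; since its source has $L^2$-dimension $\dim_{F^2} D(\phi) = n$, the equality $\dim_{L^2} D(\phi_L) = n$ is equivalent to injectivity of $\mu$. Because $D(\phi)$ is an $F^2$-subspace of $F$ and $L^2$ is free (hence flat) over $F^2$, the inclusion $D(\phi) \hookrightarrow F$ induces an inclusion $D(\phi) \otimes_{F^2} L^2 \hookrightarrow F \otimes_{F^2} L^2$ compatible with $\mu$ and the full multiplication map $m \colon F \otimes_{F^2} L^2 \to L$. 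So it is enough to show that $m$ is injective, i.e. that $F$ and $L^2$ are linearly disjoint over $F^2$ inside $L$.

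This last assertion is the only place where the separability hypothesis enters, and it is the step I expect to be the main obstacle. Via the squaring isomorphism $L^{1/2} \xrightarrow{\sim} L$ (a field isomorphism carrying $F^{1/2}, F, L$ onto $F, F^2, L^2$), the map $m$ is identified with the natural multiplication map $F^{1/2} \otimes_F L \to L^{1/2}$, so it is equivalent to show that $F^{1/2}$ and $L$ are linearly disjoint over $F$. Since $F^{1/2}/F$ is purely inseparable, $F^{1/2} \otimes_F L$ is local with nil maximal ideal (having residue field the compositum $F^{1/2}L \subseteq L^{1/2}$), hence reduced if and only if it is a field; and it is reduced because $L/F$ is separable — this being exactly the defining property applied to the extension $K = F^{1/2}$. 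Therefore $F^{1/2} \otimes_F L$ is a field, the map to $L^{1/2}$ is a nonzero ring homomorphism out of a field and so injective, which would complete the argument. (Equivalently, the required linear disjointness of $F^{1/2}$ and $L$ over $F$ is MacLane's classical separability criterion, and could simply be cited.)
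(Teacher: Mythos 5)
Your proof is correct and follows essentially the same route as the paper: the paper's own justification consists of the remark preceding the lemma that $D(\phi_L)$ is the image of the $L^2$-linear multiplication map $D(\phi)\otimes_{F^2}L^2 \to L$, which is injective when $L/F$ is separable, with that injectivity deferred to the citation of Hoffmann. Your reduction to the anisotropic case and your verification of the injectivity — identifying the map with $F^{1/2}\otimes_F L \to L^{1/2}$ and invoking reducedness of $L\otimes_F F^{1/2}$ (the paper's definition of separability applied to $K=F^{1/2}$, i.e.\ MacLane's criterion) — correctly supplies exactly the detail the paper leaves to that reference.
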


\begin{remark} \label{REManisotropyoverfunctionfieldsofgenericallysmoothquadrics} In particular, if $\phi$ is anisotropic, then it remains anisotropic under scalar extension to the function field of any generically smooth variety over $F$. By the proof of \cite[Prop. 22.1]{EKM}, any affine quadric defined by the vanishing of a non-quasilinear quadratic form is generically smooth. Thus, if $\phi$ becomes isotropic over the function field of an affine quadric $X$ over $F$, then $X$ must be the vanishing locus of a quasilinear quadratic form.
\end{remark}

In studying the isotropy behaviour of quasilinear quadratic forms under scalar extension, it follows that only (towers of) inseparable quadratic extensions ultimately matter. Here, we have: 

\begin{lemma}[see {\cite[Lem. 2.27]{Scully1}}, {\cite[Prop. 5.10]{Hoffmann2}}] \label{LEMisotropyoverquadratic} Suppose $a \in F \setminus F^2$. If $\phi$ is an anisotropic quasilinear quadratic form over $F$, then 
$$ 2\witti{0}{\phi_{F(\sqrt{a})}} = \witti{0}{\pfister{a} \otimes \phi} = \mathrm{max}\lbrace \mydim{\tau}\;|\; \tau \subset \phi \text{ and } \tau \text{ is divisible by }\pfister{a} \rbrace. $$
In particular, $\witti{0}{\phi_{F(\sqrt{a})}} \leq \frac{\mydim{\phi}}{2}$. 
\end{lemma}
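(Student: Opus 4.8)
The plan is to reduce all three quantities to the $F^2$-dimension of a single subspace of $F$, exploiting the dictionary of Section~\ref{SECpreliminaries} between anisotropic quasilinear forms and their value sets. Write $L = F(\sqrt a)$. Since $a \notin F^2$ one has $L^2 = F^2 \oplus F^2 a$ inside $F$, so $[L^2 : F^2] = 2$; and since $a^2 \in F^2$ and $D(\phi)$ is a finite-dimensional $F^2$-subspace of $F$, multiplication by $a^2$ is a bijection of $D(\phi)$, whence $a^2 D(\phi) = D(\phi)$. The key object is the $F^2$-subspace
$$ W := D(\phi) \cap aD(\phi) $$
of $F$. Because $aW = aD(\phi) \cap a^2 D(\phi) = aD(\phi) \cap D(\phi) = W$, it is stable under multiplication by $a$ and hence is an $L^2$-subspace of $F$. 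I claim all three expressions in the statement equal $\dim_{F^2} W$.

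For the first equality I would argue as follows. We have $\pfister{a} \otimes \phi \simeq \phi \perp a\phi$, so by the description of value sets of orthogonal sums recalled in Section~\ref{SECpreliminaries}, $D(\pfister{a} \otimes \phi)$ is the image of the addition map $D(\phi) \oplus aD(\phi) \to F$, whose kernel has $F^2$-dimension $\witti{0}{\pfister{a} \otimes \phi}$ and, in characteristic $2$, is identified with $D(\phi) \cap aD(\phi) = W$. Hence $\witti{0}{\pfister{a} \otimes \phi} = \dim_{F^2} W$. For the equality with $2\witti{0}{\phi_{F(\sqrt a)}}$, I would use that $D(\phi_L)$ is the image of the $L^2$-linear multiplication map $D(\phi) \otimes_{F^2} L^2 \to L$, whose source has $L^2$-dimension $\dim_{F^2} D(\phi) = \mydim{\phi}$ (as $\phi$ is anisotropic) and whose kernel has $L^2$-dimension $\witti{0}{\phi_L}$; thus $\dim_{L^2} D(\phi_L) = \mydim{\phi} - \witti{0}{\phi_L}$. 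On the other hand the image is visibly $L^2 \cdot D(\phi) = D(\phi) + aD(\phi)$, a subspace of $F$, so $\dim_{F^2} D(\phi_L) = 2\mydim{\phi} - \dim_{F^2} W$ by inclusion--exclusion. Since $D(\phi_L)$ is an $L^2$-subspace and $[L^2:F^2] = 2$, this $F^2$-dimension is also $2\dim_{L^2}D(\phi_L) = 2\mydim{\phi} - 2\witti{0}{\phi_L}$, and comparing yields $2\witti{0}{\phi_{F(\sqrt a)}} = \dim_{F^2} W = \witti{0}{\pfister{a} \otimes \phi}$.

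It remains to identify $\dim_{F^2} W$ with the displayed maximum. For the inequality ``$\geq$'', if $\tau \subset \phi$ is divisible by $\pfister{a}$, say $\tau \simeq \pfister{a} \otimes \rho$, then $D(\tau) = D(\rho) + aD(\rho)$ is an $L^2$-subspace of $F$ contained in $D(\phi)$; being finite-dimensional it satisfies $aD(\tau) = D(\tau)$, so $D(\tau) = aD(\tau) \subseteq aD(\phi)$, and combined with $D(\tau) \subseteq D(\phi)$ this gives $D(\tau) \subseteq W$ and $\mydim{\tau} \leq \dim_{F^2} W$. For the reverse inequality I would pick an $L^2$-basis $c_1, \dots, c_m$ of $W$ and set $\tau := \pfister{a} \otimes \form{c_1, \dots, c_m}$: then $D(\tau) = L^2 c_1 + \cdots + L^2 c_m = W$ has $F^2$-dimension $2m = \mydim{\tau}$, so $\tau$ is anisotropic, and since $D(\tau) = W \subseteq D(\phi)$ with $\phi$ anisotropic, Lemma~\ref{LEMsubformtheorem} gives $\tau \subset \phi$; this $\pfister{a}$-divisible subform realizes the value $\dim_{F^2} W$. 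Finally, the ``in particular'' is immediate from the first equality: $\witti{0}{\phi_{F(\sqrt a)}} = \frac{1}{2}\dim_{F^2} W \leq \frac{1}{2}\dim_{F^2} D(\phi) = \frac{\mydim{\phi}}{2}$, again using anisotropy of $\phi$.

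The only genuine content is the linear-algebra bookkeeping in the middle step --- in particular, checking that $W$ is $L^2$-stable and that the $F^2$- and $L^2$-dimensions of the value sets in play are related exactly by the factor $[L^2 : F^2] = 2$. Once those points are settled the argument is purely formal; there is no geometric input, which is precisely why this quasilinear statement is elementary compared with its non-quasilinear counterpart. I expect the write-up to spend its effort making the value-set formulas for $\perp$, $\otimes$, and scalar extension (all already recorded in Section~\ref{SECpreliminaries}) fully explicit, so that these manipulations read cleanly.
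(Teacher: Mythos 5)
Your proof is correct. Note that the paper itself gives no argument for this lemma --- it is quoted from \cite[Lem.~2.27]{Scully1} and \cite[Prop.~5.10]{Hoffmann2} --- and your reduction of all three quantities to $\dim_{F^2}\bigl(D(\phi)\cap aD(\phi)\bigr)$ is essentially the standard value-set argument used in those sources; the only point worth flagging is that the kernel-dimension formulas you quote from \S\ref{SUBSECbasics} and \S\ref{SUBSECscalarextension} hold as stated for anisotropic forms, which is exactly the setting in which you invoke them (for $\phi$, $a\phi$, and $\phi_L$ with $\phi$ anisotropic), so nothing is missing.
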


With the notation introduced at the end of \S \ref{SUBSECbasics}, this gives: 

\begin{lemma} \label{LEMisotropyoverfunctionfieldsofquadrics} Let $\phi$ and $\psi$ be anisotropic quasilinear quadratic forms over $F$ with $\mydim{\psi} \geq 2$. Suppose that $\psi \simeq \form{1} \perp \psi'$ for some subform $\psi' \subseteq \psi$. Then 
$$ 2\witti{0}{\phi_{F(\psi)}} = \witti{0}{\pfister{\psi'(X)} \otimes \phi_{F(V_{\psi'})}} = \mathrm{max}\lbrace \mydim{\tau}\;|\; \tau \subset \phi_{F(V_{\psi'})} \text{ and } \tau \text{ is divisible by }\pfister{\psi'(X)} \rbrace. $$
In particular, $\witti{0}{\phi_{F(\psi)}} \leq \frac{\mydim{\phi}}{2}$. 
\begin{proof} Since $F(\psi)$ is $F$-isomorphic to $F(V_{\psi'})(\sqrt{\psi'(X)})$, the claim follows immediately from Lemmas \ref{LEManisotropyoverseparable} and \ref{LEMisotropyoverquadratic}.
\end{proof} \end{lemma}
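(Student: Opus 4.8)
The plan is to reduce the statement to the case of a single inseparable quadratic extension, which is exactly what Lemma \ref{LEMisotropyoverquadratic} handles, by exploiting the concrete description of $F(\psi)$ recorded at the end of \S \ref{SUBSECbasics}. Since $\psi \simeq \form{1} \perp \psi'$, that description identifies $F(\psi)$ with the field $F(V_{\psi'})\big(\sqrt{\psi'(X)}\big)$ as an $F$-algebra. So it suffices to understand the isotropy of $\phi$ over this field in two stages: first passing to the purely transcendental extension $F(V_{\psi'})/F$, and then passing to the quadratic extension on top of it.

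For the first stage, I would observe that $F(V_{\psi'})$ is purely transcendental, hence separable, over $F$, so Lemma \ref{LEManisotropyoverseparable} applies to both $\phi$ and $\psi$. Applied to $\phi$, it shows $\phi_{F(V_{\psi'})}$ is still anisotropic (more generally, it has the same isotropy index as $\phi$); applied to $\psi$, it shows $\psi_{F(V_{\psi'})} \simeq \form{1} \perp \psi'_{F(V_{\psi'})}$ remains anisotropic. The latter forces $\psi'(X) \notin F(V_{\psi'})^2$: otherwise $\psi'(X)$ would lie in $D\big(\psi'_{F(V_{\psi'})}\big) \cap F(V_{\psi'})^2$ and hence witness isotropy of $\form{1} \perp \psi'$ over $F(V_{\psi'})$. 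This verification that $\psi'(X)$ is a genuine non-square is the one place requiring a small argument, since Lemma \ref{LEMisotropyoverquadratic} is stated for honest inseparable quadratic extensions; it is the closest thing to an obstacle, but it is short.

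For the second stage, I would apply Lemma \ref{LEMisotropyoverquadratic} over the base field $F(V_{\psi'})$, with the non-square $a = \psi'(X)$ and the anisotropic form $\phi_{F(V_{\psi'})}$, obtaining
$$ 2\witti{0}{\big(\phi_{F(V_{\psi'})}\big)_{F(V_{\psi'})(\sqrt{\psi'(X)})}} = \witti{0}{\pfister{\psi'(X)} \otimes \phi_{F(V_{\psi'})}} = \mathrm{max}\big\lbrace \mydim{\tau}\;|\; \tau \subset \phi_{F(V_{\psi'})} \text{ and } \tau \text{ is divisible by } \pfister{\psi'(X)} \big\rbrace. $$
Since scalar extension is transitive, $\big(\phi_{F(V_{\psi'})}\big)_{F(V_{\psi'})(\sqrt{\psi'(X)})}$ is identified with $\phi_{F(\psi)}$ under the $F$-isomorphism above, and the desired chain of equalities follows at once. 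Finally, the ``in particular'' clause is immediate from the corresponding clause of Lemma \ref{LEMisotropyoverquadratic} (applied over $F(V_{\psi'})$) together with the equality $\mydim{\phi_{F(V_{\psi'})}} = \mydim{\phi}$.
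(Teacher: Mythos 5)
Your proposal is correct and follows the same route as the paper: identify $F(\psi)$ with $F(V_{\psi'})(\sqrt{\psi'(X)})$, use Lemma \ref{LEManisotropyoverseparable} for the purely transcendental stage, and then apply Lemma \ref{LEMisotropyoverquadratic} to the quadratic extension. Your explicit check that $\psi'(X)$ is a non-square in $F(V_{\psi'})$ (via anisotropy of $\psi_{F(V_{\psi'})}$) is simply the detail the paper leaves implicit in its "follows immediately".
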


We remark that if $\phi$ is an anisotropic quasilinear quadratic form of dimension $\geq 2$ over $F$, then $\phi$ obviously becomes isotropic over $F(\phi)$.

\subsection{Quasi-Pfister Forms} \label{SUBSECquasiPfisters} Given a positive integer $n$ and elements $a_1,\hdots,a_n \in F$, we write $\pfister{a_1,\hdots,a_n}$ for the $2^n$-dimensional quasilinear quadratic form $\form{1,a_1} \otimes \cdots \otimes \form{1,a_n}$. A quasilinear quadratic form over $F$ which is isomorphic to $\form{1}$ or $\pfister{a_1,\hdots,a_n}$ for some $a_1,\hdots,a_n \in F$ is said to be a \emph{quasi-Pfister form}, or an \emph{$n$-fold quasi-Pfister form} when its dimension is $2^n$. These are the quasilinear quadratic forms associated to the well-known Pfister bilinear forms. An anisotropic quasilinear quadratic form over $F$ is a quasi-Pfister form if and only if $D(\phi)$ is a subfield of $F$ (see \cite[Prop. 4.6]{Hoffmann2}). If $\phi$ is a quasi-Pfister form, then $D(\phi)$ is a field, so $\anispart{\phi}$ is again a quasi-Pfister form.

\subsection{The Norm Form and Norm Degree} \label{SUBSECnormform} Let $\phi$ be a quasilinear quadratic form over $F$. The \emph{norm field of $\phi$}, denoted $N(\phi)$, is the smallest subfield of $F$ containing all products $ab$ with $a,b \in D(\phi)$. If $\phi \simeq \form{a_1,\hdots,a_n}$, then $N(\phi) = F^2(a_1a_2,\hdots,a_1a_n)$, so $N(\phi)$ is a finite extension of $F^2$. By Lemma \ref{LEMsubformtheorem}, there then exists, up to isomorphism, a unique anisotropic quasilinear quadratic form $\normform{\phi}$ over $F$ with $D(\normform{\phi}) = N(\phi)$. Since $N(\phi)$ is a field, $\normform{\phi}$ is a quasi-Pfister form. Its dimension, which is a power of $2$, is called the \emph{norm degree of $\phi$}, denoted $\mathrm{ndeg}(\phi)$. In the sequel, it will be more convenient to work with the integer $\mathrm{lndeg}(\phi): = \mathrm{log}_2\big(\mathrm{ndeg}(\phi)\big)$. The form $\normform{\phi}$ may also be characterized as follows:

\begin{lemma}[see {\cite[Lem. 2.11]{Scully1}}] \label{LEMuniversalpropertyofthenormform} Let $\phi$ be a quasilinear quadratic form over $F$. If $\pi$ is an anisotropic quasi-Pfister form over $F$, then $\anispart{\phi}$ is similar to a subform of $\pi$ if and only if $\normform{\phi} \subset \pi$. In particular, $\anispart{\phi}$ is similar to a subform of $\normform{\phi}$. 
\end{lemma}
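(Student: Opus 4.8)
The plan is to translate the statement into a question purely about value sets, using the subform theorem (Lemma~\ref{LEMsubformtheorem}) together with the fact that an anisotropic quasilinear form is quasi-Pfister exactly when its value set is a subfield of $F$. We may assume $\phi$ is not the zero form (otherwise $\anispart{\phi}$ is $0$-dimensional and both conditions of the lemma hold trivially), and fix once and for all an element $a \in D(\phi) \setminus \lbrace 0 \rbrace$. Note that $D(\pi)$, being the value set of an anisotropic quasi-Pfister form, is a subfield of $F$; it contains $1$ since $\form{1} \subset \pi$, and as it is also an $F^2$-subspace of $F$, it contains $F^2$. Now, since $D(\anispart{\phi}) = D(\phi)$ and since multiplying a quasilinear form by a nonzero scalar preserves anisotropy and scales its value set, Lemma~\ref{LEMsubformtheorem} gives that $\anispart{\phi}$ is similar to a subform of $\pi$ if and only if $cD(\phi) \subseteq D(\pi)$ for some $c \in F^\times$; and, since $\normform{\phi}$ is anisotropic with $D(\normform{\phi}) = N(\phi)$, it gives that $\normform{\phi} \subset \pi$ if and only if $N(\phi) \subseteq D(\pi)$. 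So the lemma amounts to the equivalence
$$ \big(cD(\phi) \subseteq D(\pi) \text{ for some } c \in F^\times\big) \iff N(\phi) \subseteq D(\pi). $$
The implication ``$\Leftarrow$'' is easy: if $N(\phi) \subseteq D(\pi)$, then for every $b \in D(\phi)$ we have $a^{-1}b = a^{-2}(ab) \in D(\pi)$, since $ab \in N(\phi) \subseteq D(\pi)$ and $a^{-2} \in F^2 \subseteq D(\pi)$; so $c = a^{-1}$ works.

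For ``$\Rightarrow$'', suppose $cD(\phi) \subseteq D(\pi)$ for some $c \in F^\times$. Then $ca$ is a nonzero element of the field $D(\pi)$, while $cb \in D(\pi)$ for every $b \in D(\phi)$, so $a^{-1}b = (cb)(ca)^{-1} \in D(\pi)$ for all $b \in D(\phi)$. Hence, for any $b_1, b_2 \in D(\phi)$, the product $b_1b_2 = a^2 \cdot (a^{-1}b_1)(a^{-1}b_2)$ lies in $D(\pi)$, because $a^2 \in F^2 \subseteq D(\pi)$ and $D(\pi)$ is closed under multiplication. Thus $D(\pi)$ contains every product of two elements of $D(\phi)$, and being a subfield it must contain the subfield these products generate, namely $N(\phi)$; that is, $N(\phi) \subseteq D(\pi)$, as wanted. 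The last assertion of the lemma is then the special case $\pi = \normform{\phi}$.

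The proof is short once this dictionary is in place, and the only place calling for a small trick is the implication ``$\Rightarrow$'', where one must eliminate the unknown scalar $c$ before the norm field can be recovered. Dividing every value of $\phi$ by the single fixed value $a$ accomplishes this, and then the inclusion $F^2 \subseteq D(\pi)$ lets one multiply back up to capture arbitrary products of values of $\phi$ inside $D(\pi)$. I expect this to be the only (mild) obstacle.
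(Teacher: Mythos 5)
Your proof is correct: the reduction via Lemma \ref{LEMsubformtheorem} to the value-set equivalence $\big(cD(\phi) \subseteq D(\pi)$ for some $c \in F^{\times}\big) \Leftrightarrow N(\phi) \subseteq D(\pi)$, settled by normalizing with a fixed nonzero represented value $a$ and using that $D(\pi)$ is a subfield of $F$ containing $F^2$, is exactly the standard argument. Note that the paper itself gives no proof here, citing \cite[Lem. 2.11]{Scully1} instead, and your argument is essentially the one found there, so there is nothing to flag.
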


By the lemma, saying that $\anispart{\phi}$ is similar to a quasi-Pfister form is equivalent to saying that $\anispart{\phi}$ is similar to $\normform{\phi}$. Moreover, if $2^n < \mydim{\anispart{\phi}} \leq 2^{n+1}$ for some integer $n$, then $\mathrm{lndeg}(\phi) \geq n+1$. Equality holds here if and only if $\anispart{\phi}$ is a so-called quasi-Pfister neighbour (see \S \ref{SUBSECstb} below). An upper bound for $\mathrm{lndeg}(\phi)$ is given by $\mydim{\phi} - 1$. This is realized, for instance, by the forms $\form{T_1,\hdots,T_n}$ over $F(T_1,\hdots,T_n)$, where $T_1,\hdots,T_n$ are indeterminates. If $L$ is a field extension of $F$, then it is immediate from the definitions that $\normform{(\phi_L)} \simeq \anispart{((\normform{\phi})_L)}$. In particular, the norm degree does not change under separable field extensions (Lemma \ref{LEManisotropyoverseparable}). For function fields of quasilinear quadrics, we have the following basic but important fact:

\begin{lemma}[see {\cite[Lemma 7.12]{Hoffmann2}}] \label{LEMdropingofnormdegree} Let $\phi$ and $\psi$ be anisotropic quasilinear quadratic forms of dimension $\geq 2$ over $F$. If $\phi_{F(\psi)}$ is isotropic, then $\normform{\psi} \subset \normform{\phi}$ and
$\mathrm{lndeg}\big(\phi_{F(\psi)}\big) = \mathrm{lndeg}(\anispart{(\phi_{F(\psi)})}) = \mathrm{lndeg}(\phi) -1$. 
\end{lemma}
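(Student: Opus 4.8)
The plan is to reduce to the case $\psi \simeq \form{1} \perp \psi'$ and then to work over the field $E := F(V_{\psi'})$, using the concrete description from \S\ref{SUBSECbasics}: $F(\psi)$ is $F$-isomorphic to $E(\sqrt{c})$, where $c := \psi'(X) \in E$. The reduction is harmless, since replacing $\psi$ by a scalar multiple changes neither $F(\psi)$ nor any norm form. Three routine observations set the stage. First, $c \notin E^2$ (equivalently, $E(\sqrt c)$ is a genuine quadratic extension of $E$), which follows easily from the anisotropy of $\psi$. Second, since $E$ is separable (indeed purely transcendental) over $F$, Lemma \ref{LEManisotropyoverseparable}, together with the description of value sets under scalar extension in \S\ref{SUBSECscalarextension}, shows that $\phi_E$ is anisotropic with $\mathrm{lndeg}(\phi_E) = \mathrm{lndeg}(\phi) =: n$ (so $n \geq 1$, as $\mydim{\phi} \geq 2$) and $N(\phi_E) = N(\phi) \cdot E^2$. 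Third, since norm forms and norm degrees depend only on value sets, the equality $\mathrm{lndeg}(\phi_{F(\psi)}) = \mathrm{lndeg}(\anispart{(\phi_{F(\psi)})})$ is automatic, and — writing $\pi := (\normform{\phi})_E$, an anisotropic $n$-fold quasi-Pfister form over $E$ — the identity $\normform{(\phi_L)} \simeq \anispart{((\normform{\phi})_L)}$ (with $L = F(\psi)$) gives $\mathrm{ndeg}(\phi_{F(\psi)}) = \mydim{\anispart{(\pi_{E(\sqrt{c})})}}$. So everything reduces to proving (a) $\normform{\psi} \subset \normform{\phi}$, and (b) $\witti{0}{\pi_{E(\sqrt{c})}} = 2^{n-1}$ (the reverse inequality $\leq$ already being guaranteed by Lemma \ref{LEMisotropyoverquadratic}).

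For (a), I would extract a binary subform of $\phi_E$ from the isotropy hypothesis. By Lemma \ref{LEMisotropyoverfunctionfieldsofquadrics}, the isotropy of $\phi_{F(\psi)}$ produces a subform $\tau \subset \phi_E$ with $\mydim{\tau} \geq 2$ that is divisible by $\pfister{c}$; writing $\tau \simeq \pfister{c}_E \otimes \tau'$ and choosing a nonzero $a \in D(\tau')$, Lemma \ref{LEMsubformoftensorproduct} gives $\form{a,ac} \simeq a\pfister{c}_E \subset \tau \subset \phi_E$, so that $a, ac \in D(\phi_E)$ and hence $a^2 c \in N(\phi_E) = N(\phi) \cdot E^2$. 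Dividing by $a^2 \in E^2$ yields $c \in N(\phi) \cdot E^2$. Next, identifying $E$ with $F(x_1, \dots, x_m)$ (where $m = \mydim{\psi} - 1$), so that $N(\phi) \cdot E^2 = N(\phi)(x_1^2, \dots, x_m^2)$ (using $F^2 \subseteq N(\phi)$), and writing $c = \sum_{i=1}^m b_i x_i^2$ with $b_i \in F$ (so $\psi \simeq \form{1, b_1, \dots, b_m}$ and $N(\psi) = F^2(b_1, \dots, b_m)$), I would argue that $b_i \in N(\phi)$ for every $i$: the $x_i^2$ are algebraically independent over $F$, so $F$ is linearly disjoint from $N(\phi)(x_1^2, \dots, x_m^2)$ over $N(\phi)$, and comparing coefficients in $c = \sum_i b_i x_i^2 \in N(\phi)(x_1^2, \dots, x_m^2)$ forces each $b_i$ into $N(\phi)$. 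Hence $N(\psi) \subseteq N(\phi)$, i.e.\ $D(\normform{\psi}) \subseteq D(\normform{\phi})$, and Lemma \ref{LEMsubformtheorem} gives $\normform{\psi} \subset \normform{\phi}$.

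For (b), the computation just made shows $c = \sum_i b_i x_i^2 \in N(\phi) \cdot E^2 = D(\pi)$, so $\pi$ is an anisotropic quasi-Pfister form over $E$ whose value set — which is a field — contains $c$. Consequently $D(\pfister{c}_E \otimes \pi) = (E^2 + E^2 c) \cdot D(\pi) = D(\pi)$ (the last equality because $c \in D(\pi)$ and $D(\pi)$ is a field), so $\anispart{(\pfister{c}_E \otimes \pi)} \simeq \pi$ by Lemma \ref{LEMsubformtheorem}, and therefore $\witti{0}{\pfister{c}_E \otimes \pi} = 2^{n+1} - 2^n = 2^n$. By Lemma \ref{LEMisotropyoverquadratic}, $2\witti{0}{\pi_{E(\sqrt{c})}} = \witti{0}{\pfister{c}_E \otimes \pi} = 2^n$, so $\witti{0}{\pi_{E(\sqrt{c})}} = 2^{n-1}$, whence $\mathrm{ndeg}(\phi_{F(\psi)}) = \mydim{\anispart{(\pi_{E(\sqrt{c})})}} = 2^n - 2^{n-1} = 2^{n-1}$, i.e.\ $\mathrm{lndeg}(\phi_{F(\psi)}) = n - 1 = \mathrm{lndeg}(\phi) - 1$, as required.

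Each individual step is a short appeal to the preliminary results, so I do not expect the write-up to be long. The step I expect to require the most care is the extraction in (a) of the binary form $\form{a, ac}$ over $E$ (rather than merely over $F(\psi)$), since it is exactly this that makes the norm-field descent $c \in N(\phi)\cdot E^2 \Rightarrow b_i \in N(\phi)$ available; the subsequent coefficient comparison, though routine, genuinely needs the algebraic independence of $x_1^2, \dots, x_m^2$ over $F$. The other mildly delicate ingredient is the identity $D(\pfister{c}_E \otimes \pi) = D(\pi)$ in (b), which — via Lemma \ref{LEMisotropyoverquadratic} and the anisotropy of $\pi$ — is where the bound $\witti{0}{\pi_{E(\sqrt c)}} = 2^{n-1}$ really comes from.
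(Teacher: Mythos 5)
Your argument is correct, and there is nothing in the paper to compare it against line by line: the paper does not prove this lemma at all, but simply quotes it from Hoffmann's article on diagonal forms of degree $p$ in characteristic $p$ (the reference \cite[Lem. 7.12]{Hoffmann2} attached to the statement). What you have written is a sound self-contained reconstruction from the \S\ref{SECpreliminaries} preliminaries, and it is essentially the standard (Hoffmann-style) argument: present $F(\psi)$ as $E(\sqrt{c})$ with $E = F(V_{\psi'})$ purely transcendental, use Lemma \ref{LEMisotropyoverfunctionfieldsofquadrics} to extract a subform of $\phi_E$ divisible by $\pfister{c}$ and hence a binary form $\form{a,ac} \subset \phi_E$, conclude $c \in N(\phi)\cdot E^2$, descend the coefficients of $c = \sum b_i x_i^2$ to $N(\phi)$, and then compute the Witt index of $\pfister{c} \otimes (\normform{\phi})_E$ exactly. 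The two places where a gap could have appeared are handled correctly: the coefficient descent does need the linear disjointness of $F$ from $N(\phi)(x_1^2,\dots,x_m^2)$ over $N(\phi)$, which holds precisely because $x_1^2,\dots,x_m^2$ are algebraically independent over $F$ (compare monomial coefficients after expanding the $b_i$ in a $K$-basis of $F$ containing $1$); and the exact value $\witti{0}{\pfister{c}\otimes\pi} = 2^n$ follows from $D(\pfister{c}\otimes\pi) = D(\pi) + cD(\pi) = D(\pi)$, using that $D(\pi) = N(\phi)\cdot E^2$ is a field containing $c$, with the reverse bound supplied by Lemma \ref{LEMisotropyoverquadratic}, so the norm degree drops by exactly one power of $2$ and not more. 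Two small points worth making explicit in a final write-up: $c \notin E^2$ (anisotropy of $\psi$ over $E$, via Lemma \ref{LEManisotropyoverseparable}), which you noted, and the fact that the $E^2$-span of $N(\phi)$ is already a field (a finite-dimensional $E^2$-subalgebra of $E$ that is a domain), so that $D((\normform{\phi})_E)$ really is the compositum $N(\phi)\cdot E^2$.
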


\subsection{Similarity Factors and Divisibility by Quasi-Pfister Forms} Let $\phi$ be a quasilinear quadratic form over $F$. The set $G(\phi) = \lbrace a \in F\setminus \lbrace 0 \rbrace\;|\; a\phi \simeq \phi \rbrace \cup \lbrace 0 \rbrace$ is then a subfield of $F$ containing $F^2$ (see \cite[\S 6]{Hoffmann2}). Its nonzero elements are called the \emph{similarity factors of $\phi$}. One readily checks that $G(\phi)$ is in fact a finite extension of $F^2$. Like $N(\phi)$, it is therefore the value set of an anisotropic quasi-Pfister form over $F$, unique up to isomorphism. We denote this form $\simform{\phi}$. If $\phi$ is a quasi-Pfister form, then $G(\phi) = D(\phi)$ (because $D(\phi)$ is a subfield of $F$), and so $\simform{\phi} \simeq \anispart{\phi}$ by Lemma \ref{LEMsubformtheorem}. In general, we have $\simform{\phi}  = \simform{(\anispart{\phi})}$, and the following lemma then gives a characterization of this form:

\begin{lemma} \label{LEMdivisibilitybyquasiPfister} Let $\phi$ and $\psi$ be anisotropic quasilinear quadratic forms over $F$ with $\mydim{\psi} \geq 2$. Suppose that $\psi \simeq \form{1} \perp \psi'$ for some $\psi' \subset \psi$. Then the following are equivalent:
\begin{enumerate} \item $\phi$ is divisible by $\normform{\psi}$;
\item $D(\psi) \subseteq G(\phi)$;
\item $\psi \subset \simform{\phi}$;
\item $D(\phi)$ is closed under multiplication by arbitrary elements of $D(\psi)$;
\item $\anispart{(\pfister{a} \otimes \phi)} \simeq \phi$ for all $a \in D(\psi)$;
\item $\anispart{(\psi \otimes \phi)} \simeq \phi$;
\item $\anispart{(\pfister{\psi'(X)} \otimes \phi_{F(V_{\psi'})})} \simeq \phi_{F(V_{\psi'})}$.
\item $\witti{0}{\phi_{F(\psi)}} = \frac{\mydim{\phi}}{2}$.
\end{enumerate}
In particular, $\phi$ is divisible by $\simform{\phi}$. 
\begin{proof} $(1) \Rightarrow (2)$: Since $1 \in D(\psi)$, we have that $\psi \subset \normform{\psi}$. Thus, if (1) holds, then $D(\psi) \subseteq D(\normform{\psi}) = G(\normform{\psi}) \subseteq G(\normform{\phi})$.

$(2) \Leftrightarrow (3)$: Apply Lemma \ref{LEMsubformtheorem}.

$(2) \Rightarrow (4)$: Clear.

$(4) \Rightarrow (5)$: Let $a \in D(\psi)$. By the remarks preceding Lemma \ref{LEMsubformoftensorproduct}, $D(\anispart{(\pfister{a} \otimes \phi)}) = D(\phi) + aD(\phi)$. If (4) holds, if follows that $D(\anispart{(\pfister{a} \otimes \phi)}) = D(\phi)$, and so $\anispart{(\pfister{a} \otimes \phi)} \simeq \phi$ by Lemma \ref{LEMsubformtheorem}.

$(5) \Rightarrow (6)$: If $\psi \simeq \form{1,a_1,\hdots,a_n}$, then repeated application of $(5)$ gives that $\anispart{(\psi \otimes \phi)} \subset \anispart{(\pfister{a_1,\hdots,a_n} \otimes \phi)} \simeq \phi$, and so $\anispart{(\psi \otimes \phi)} \simeq \phi$ by Lemma \ref{LEMsubformoftensorproduct}.

$(6) \Rightarrow (7)$: By Lemma \ref{LEManisotropyoverseparable}, $\phi$ remains anisotropic over $F(V_{\psi'})$. Since $\pfister{\psi'(X)}$ is a subform of $\psi_{F(V_{\psi'})}$, (6) and Lemma \ref{LEMsubformoftensorproduct} then imply that $\anispart{(\pfister{\psi'(X)} \otimes \phi_{F(V_{\psi'})})} \simeq \phi_{F(V_{\psi'})}$.

$(7) \Rightarrow (8)$: Apply Lemma \ref{LEMisotropyoverfunctionfieldsofquadrics}.

$(8) \Rightarrow (1)$: See \cite[Thm. 6.10]{Hoffmann2}. \end{proof}
\end{lemma}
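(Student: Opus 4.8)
The plan is to prove the equivalence of $(1)$--$(8)$ by running the single cycle $(1)\Rightarrow(2)\Rightarrow(4)\Rightarrow(5)\Rightarrow(6)\Rightarrow(7)\Rightarrow(8)\Rightarrow(1)$ and tacking on the biconditional $(2)\Leftrightarrow(3)$ separately. The workhorse throughout is Lemma~\ref{LEMsubformtheorem}, which converts every statement about containment or isomorphism of anisotropic (parts of) forms into the corresponding statement about value sets, used in tandem with the descriptions of $D(\phi\perp\psi)$ and $D(\phi\otimes\psi)$ as images of the addition and multiplication maps recorded just before Lemma~\ref{LEMsubformoftensorproduct}.

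The bulk of the cycle is soft and purely value-theoretic. For $(1)\Rightarrow(2)$: since $1\in D(\psi)$ we get $\psi\subset\normform{\psi}$, so $D(\psi)\subseteq D(\normform{\psi})$, which is a field (hence equals its group of similarity factors) because $\normform{\psi}$ is quasi-Pfister, and similarity factors of a divisor of $\phi$ are similarity factors of $\phi$. Then $(2)\Leftrightarrow(3)$ is Lemma~\ref{LEMsubformtheorem} applied to the anisotropic forms $\psi$ and $\simform{\phi}$ (whose value sets are $D(\psi)$ and $G(\phi)$), $(2)\Rightarrow(4)$ is immediate, and $(4)\Rightarrow(5)$ follows since for $a\in D(\psi)\setminus\{0\}$ one has $D(\anispart{(\pfister{a}\otimes\phi)})=D(\phi)+aD(\phi)=D(\phi)$. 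For $(5)\Rightarrow(6)$, write $\psi\simeq\form{1,a_1,\hdots,a_n}$ with each $a_i\in D(\psi)$, iterate $(5)$ to reach $\anispart{(\pfister{a_1,\hdots,a_n}\otimes\phi)}\simeq\phi$, and then squeeze using $\psi\subset\pfister{a_1,\hdots,a_n}$ together with $\phi\subset\anispart{(\psi\otimes\phi)}$ (Lemma~\ref{LEMsubformoftensorproduct}, since $\phi$ is anisotropic and $1\in D(\psi)$). For $(6)\Rightarrow(7)$, $\phi$ stays anisotropic over the separable extension $F(V_{\psi'})$ by Lemma~\ref{LEManisotropyoverseparable}, and $\pfister{\psi'(X)}$ is a subform of $\psi_{F(V_{\psi'})}$; base-changing $(6)$ identifies $\anispart{((\psi\otimes\phi)_{F(V_{\psi'})})}$ with $\phi_{F(V_{\psi'})}$, and squeezing $\pfister{\psi'(X)}\otimes\phi_{F(V_{\psi'})}$ between $\phi_{F(V_{\psi'})}$ and $(\psi\otimes\phi)_{F(V_{\psi'})}$ (Lemma~\ref{LEMsubformoftensorproduct} again) gives $(7)$.

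Finally $(7)\Rightarrow(8)$ is a direct reading of Lemma~\ref{LEMisotropyoverfunctionfieldsofquadrics}: $(7)$ says $\witti{0}{\pfister{\psi'(X)}\otimes\phi_{F(V_{\psi'})}}=\mydim{\phi}$, which is precisely $2\witti{0}{\phi_{F(\psi)}}$. The one step with real content -- and the one I expect to be the main obstacle for anyone wanting a self-contained argument -- is $(8)\Rightarrow(1)$, i.e.\ that maximal isotropy $\witti{0}{\phi_{F(\psi)}}=\frac{\mydim{\phi}}{2}$ forces honest divisibility of $\phi$ by $\normform{\psi}$; for this I would invoke \cite[Thm.~6.10]{Hoffmann2}, which closes the cycle. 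The ``in particular'' assertion then follows by applying the equivalence with $\psi=\simform{\phi}$, which is a legitimate choice since $\simform{\phi}$ is anisotropic quasi-Pfister and so contains $\form{1}$ as a subform once $\mydim{\simform{\phi}}\geq2$ (the case $\mydim{\simform{\phi}}=1$ being trivial): condition $(3)$ holds tautologically, so $(1)$ gives that $\phi$ is divisible by $\normform{\simform{\phi}}$, and $\normform{\simform{\phi}}\simeq\simform{\phi}$ because $D(\simform{\phi})=G(\phi)$ is already a field.
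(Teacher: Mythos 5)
Your proposal is correct and follows essentially the same route as the paper: the identical cycle $(1)\Rightarrow(2)\Rightarrow(4)\Rightarrow(5)\Rightarrow(6)\Rightarrow(7)\Rightarrow(8)\Rightarrow(1)$ with $(2)\Leftrightarrow(3)$ handled via Lemma \ref{LEMsubformtheorem}, the same value-set computations, and the same appeal to \cite[Thm.~6.10]{Hoffmann2} for $(8)\Rightarrow(1)$. Your explicit treatment of the ``in particular'' clause (taking $\psi=\simform{\phi}$ and noting $\normform{\simform{\phi}}\simeq\simform{\phi}$) is a small welcome addition the paper leaves implicit.
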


Note, in particular, that an anisotropic quasilinear quadratic form $\phi$ over $F$ is a quasi-Pfister form if and only if $\phi \simeq \normform{\phi} \simeq \simform{\phi}$, and that $\phi$ is divisible by the norm form of any of its subforms in this case. We will also need the following:

\begin{lemma} \label{LEManisotropicpartofformsdivisiblebyaquasiPfister} Let $\phi$ be a quasilinear quadratic form over $F$. If $\phi$ is divisible by an anisotropic quasi-Pfister form $\pi$ over $F$, then $\anispart{\phi}$ is also divisible by $\pi$.
\begin{proof} Since $\phi$ is divisible by $\pi$, $G(\phi)$ contains $G(\pi) = D(\pi)$. But $G(\phi) = G(\anispart{\phi})$ by definition, so Lemma \ref{LEMdivisibilitybyquasiPfister} tells us that $\anispart{\phi}$ is also divisible by $\phi$. \end{proof}
\end{lemma}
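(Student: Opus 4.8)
The plan is to reduce the statement to the similarity-factor characterization of divisibility by a quasi-Pfister form established in Lemma \ref{LEMdivisibilitybyquasiPfister}. If $\mydim{\pi} = 1$, then $\pi \simeq \form{1}$ and every quasilinear quadratic form is divisible by $\pi$, so there is nothing to prove; I may therefore assume $\mydim{\pi} \geq 2$. Since $\pi$ is an anisotropic quasi-Pfister form, $D(\pi)$ is a subfield of $F$, we have $G(\pi) = D(\pi)$ and $\normform{\pi} \simeq \pi$, and $\pi \simeq \form{1} \perp \pi'$ for some subform $\pi' \subset \pi$ (a nonzero quasi-Pfister form represents $1$). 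In particular, Lemma \ref{LEMdivisibilitybyquasiPfister} is applicable with $\psi = \pi$.

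First I would extract from the hypothesis the inclusion $D(\pi) \subseteq G(\phi)$. The quickest route is the equivalence $(1) \Leftrightarrow (2)$ of Lemma \ref{LEMdivisibilitybyquasiPfister} (with $\psi = \pi$): the hypothesis is exactly condition $(1)$, since $\normform{\pi} \simeq \pi$, and condition $(2)$ is the desired inclusion. (Alternatively, one can argue directly: writing $\phi \simeq \pi \otimes \sigma$ and using $a\pi \simeq \pi$ for every $a \in D(\pi) \setminus \lbrace 0 \rbrace$ gives $a\phi \simeq (a\pi) \otimes \sigma \simeq \phi$, whence $D(\pi) \subseteq G(\phi)$ straight from the definition of the similarity factors.) The second key point is that the group of similarity factors is insensitive to passing to the anisotropic part: writing $\phi$ as the orthogonal sum of $\anispart{\phi}$ and a zero form, a nonzero scalar $a$ satisfies $a\phi \simeq \phi$ if and only if $a\anispart{\phi} \simeq \anispart{\phi}$, so $G(\phi) = G(\anispart{\phi})$ (this is also implicit in the identity $\simform{\phi} = \simform{(\anispart{\phi})}$ recorded in \S\ref{SUBSECnormform}).

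Combining these, $D(\pi) \subseteq G(\anispart{\phi})$, which is condition $(2)$ of Lemma \ref{LEMdivisibilitybyquasiPfister} applied to $\anispart{\phi}$ in place of $\phi$ (still with $\psi = \pi$). The implication $(2) \Rightarrow (1)$ then yields that $\anispart{\phi}$ is divisible by $\normform{\pi} \simeq \pi$, which is the claim. There is no genuine obstacle here: the only points requiring a little care are keeping track of the two directions in which Lemma \ref{LEMdivisibilitybyquasiPfister} is used — once for $\phi$ to obtain the inclusion, once for $\anispart{\phi}$ to recover divisibility — and the elementary observation that $G$ does not change under $\phi \mapsto \anispart{\phi}$.
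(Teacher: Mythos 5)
Your proof is correct and follows essentially the same route as the paper: extract $D(\pi) \subseteq G(\phi)$ from divisibility, note that $G(\phi) = G(\anispart{\phi})$, and then apply Lemma \ref{LEMdivisibilitybyquasiPfister} to $\anispart{\phi}$. Your direct verification of $D(\pi) \subseteq G(\phi)$ (avoiding the anisotropy hypothesis in Lemma \ref{LEMdivisibilitybyquasiPfister}) is a reasonable added precaution, but the argument is the same as the paper's.
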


\subsection{The Knebusch Splitting Tower and Associated Invariants} \label{SUBSECKnebuschtower} Let $\phi$ be an anisotropic quasilinear quadratic form over $F$. We define a finite tower of fields  $F_0 \subset F_1 \subset \cdots \subset F_{\mathrm{lndeg}(\phi)}$ such that $\mathrm{lndeg}(\phi_{F_j}) = \mathrm{lndeg}(\phi) - j$ for all $j$ as follows: Set $F_0 := F$. Suppose now that $F_{j-1}$ has been defined for some $j \in [1,\mathrm{lndeg}(\phi)]$. Since $\mathrm{lndeg}(\phi_{F_{j-1}}) = \mathrm{lndeg}(\phi) - j+1 \geq 1$, $\phi_{F_{j-1}}$ is not split, and so we can set $F_j : = F_{j-1}(\anispart{(\phi_{F_{j-1}})})$. Applying Lemma \ref{LEMdropingofnormdegree} to $\phi_{F_{j-1}}$, we then have $\mathrm{lndeg}(\phi_{F_j}) = \mathrm{lndeg}(\phi_{F_{j-1}}) - 1 = \mathrm{lndeg}(\phi) - j$. For each $j \in [1,\mathrm{lndeg}(\phi)]$, we set $\phi_j : = \anispart{(\phi_{F_j})}$ and $\witti{j}{\phi} : = \witti{0}{\phi_{F_j}} - \witti{0}{\phi_{F_{j-1}}}$. By construction, the dimensions of the $\phi_j$ are strictly decreasing in $j$, and $\mathrm{lndeg}(\phi_j) = \mathrm{lndeg}(\phi) - j$. In particular, $\phi_{F_{\mathrm{lndeg}(\phi)}}$ is split. The integers $\witti{1}{\phi},\hdots,\witti{\mathrm{lndeg}(\phi)}{\phi}$ are the (relative) \emph{higher isotropy indices of $\phi$}. Note that if $\mydim{\phi} \geq 2$, then $\mydim{\phi_1} = \mydim{\phi} - \witti{1}{\phi}$ is the \emph{Izhboldin dimension} $\Izhdim{\phi}$ considered in \S \ref{SECintroduction}. If $\psi$ is a subform of $\phi$ with $\mydim{\psi} > \Izhdim{\phi}$, then $V_{\psi} \otimes_F F(\phi)$ must intersect the $\witti{1}{\phi}$-dimensional subspace of $\phi$-isotropic vectors in $V_{\phi} \otimes_F F(\phi)$, and so $\psi_{F(\phi)}$ is isotropic. The separation theorem (see \S \ref{SECintroduction}) then implies:

\begin{lemma} \label{LEMIzhbound} Let $\phi$ be an anisotropic quasilinear quadratic form of dimension $\geq 2$ over $F$, and let $n$ be the unique integer for which $2^n < \mydim{\phi} \leq 2^{n+1}$. Then $\Izhdim{\phi} \geq 2^n$.
\end{lemma}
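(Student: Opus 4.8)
The plan is to reduce the statement to the separation theorem via a careful choice of a large subform of $\phi$ whose norm degree is controlled. Concretely, I would argue by contradiction: suppose $\Izhdim{\phi} < 2^n$, i.e.\ $\witti{1}{\phi} = \mydim{\phi} - \Izhdim{\phi} > \mydim{\phi} - 2^n$. Recall $\witti{1}{\phi} = \witti{0}{\phi_{F(\phi)}}$ and, by Lemma \ref{LEMisotropyoverfunctionfieldsofquadrics}, $\witti{1}{\phi} \leq \frac{\mydim{\phi}}{2}$; together with $\mydim{\phi} > 2^n$ this already forces $\witti{1}{\phi} > 2^{n-1}$, so $n \geq 1$ and the assumption has real content. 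The key is to produce a subform $\psi \subset \phi$ with $\mydim{\psi} = 2^n$ whose anisotropic part after passing to $F(\phi)$ has dimension strictly less than $\mydim{\psi} - \witti{1}{\phi}$'s complement — more precisely, I want to show that such a $\psi$ becomes isotropic over $F(\phi)$ while $\mydim{\psi} \leq 2^n < \mydim{\phi}$, contradicting the separation theorem applied to the pair $(\phi, \psi)$ (note $F(\psi)$ need not enter; rather we use that $\psi_{F(\phi)}$ being isotropic with $\mydim{\psi} \leq 2^n$ is exactly what the separation theorem forbids, since $2^n < \mydim{\phi} \leq 2^{n+1}$ puts $\phi$'s separation parameter at $n$).

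The heart of the matter is the dimension count showing that a generic $2^n$-dimensional subform $\psi$ of $\phi$ must become isotropic over $F(\phi)$. Here I would invoke the observation already made in the paragraph preceding the lemma in the text: if $\psi \subset \phi$ with $\mydim{\psi} > \Izhdim{\phi}$, then $V_\psi \otimes_F F(\phi)$ has dimension exceeding $\Izhdim{\phi} = \mydim{\phi} - \witti{1}{\phi}$, hence must meet the $\witti{1}{\phi}$-dimensional totally isotropic subspace of $V_\phi \otimes_F F(\phi)$ nontrivially, so $\psi_{F(\phi)}$ is isotropic. Under our contradiction hypothesis $\Izhdim{\phi} < 2^n$, so choosing \emph{any} subform $\psi \subset \phi$ of dimension $2^n$ (which exists since $\mydim{\phi} > 2^n$) gives $\mydim{\psi} = 2^n > \Izhdim{\phi}$, hence $\psi_{F(\phi)}$ is isotropic. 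But $\mydim{\psi} = 2^n$ and $\phi$ is anisotropic of dimension in $(2^n, 2^{n+1}]$, so the separation theorem says $\psi$ cannot become isotropic over $F(\phi)$ — contradiction. I therefore expect the proof to be short once the right subform is named.

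The main obstacle I anticipate is not conceptual but bookkeeping: making sure the separation theorem is being applied in the correct direction and with the correct parameter. The separation theorem (as quoted in \S\ref{SECintroduction}) says that if $\alpha, \beta$ are anisotropic with $\mydim{\beta} \leq 2^s$ where $2^s < \mydim{\alpha} \leq 2^{s+1}$, then $\beta_{F(\alpha)}$ stays anisotropic. Here the role of $\alpha$ is played by $\phi$ (with parameter $s = n$ since $2^n < \mydim{\phi} \leq 2^{n+1}$) and the role of $\beta$ by our subform $\psi$ with $\mydim{\psi} = 2^n \leq 2^n$. So the hypotheses match exactly, and the contradiction is immediate. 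The only subtlety worth stating explicitly is that $\psi$ is anisotropic because it is a subform of the anisotropic form $\phi$; everything else is formal. Hence the write-up amounts to: assume $\Izhdim{\phi} < 2^n$; pick $\psi \subset \phi$ with $\mydim{\psi} = 2^n$; conclude $\psi_{F(\phi)}$ is isotropic from the displayed remark preceding the lemma; contradict the separation theorem.
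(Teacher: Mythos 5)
Your argument is correct and is essentially the paper's own proof: the lemma is derived there by combining the remark immediately preceding it (any subform of $\phi$ of dimension exceeding $\Izhdim{\phi}$ becomes isotropic over $F(\phi)$) with the separation theorem applied to a $2^n$-dimensional subform, exactly as you do. (Your aside that the contradiction hypothesis together with $\witti{1}{\phi} \leq \frac{\mydim{\phi}}{2}$ forces $\witti{1}{\phi} > 2^{n-1}$ is a slip---it forces $\Izhdim{\phi} > 2^{n-1}$---but this plays no role in the main argument.)
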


\begin{remark} As mentioned in \S 1, we in fact know by \cite{Scully1} that if $u$ is the smallest non-negative integer with $\witti{1}{\phi} \leq 2^u$, then $\Izhdim{\phi} \equiv 0 \pmod{2^u}$. This will be reproved as part of the main result of this paper (see Corollary \ref{CORintro} above). 
\end{remark}

\begin{example} \label{EXsplittingofquasiPfister} If $\pi$ is an anisotropic $n$-fold quasi-Pfister form over $F$ for some positive integer $n$, then $\mathrm{lndeg}(\pi) = n$ and $\witti{j}{\pi} = 2^{n-j}$ for all integers $j \in [1,n]$ (this follows from Lemma \ref{LEMdivisibilitybyquasiPfister}, but see also \cite[Ex. 7.23]{Hoffmann2}). In particular, $\Izhdim{\pi} = 2^{n-1}$.
\end{example}

We will need the following basic observation:

\begin{lemma} \label{LEMisotropyindicesofformsdivisiblebyquasiPfisters} Let $r$ be a positive integer, let $\phi$ be an anisotropic quasilinear quadratic form of dimension $\geq 2$ over $F$, and let $y_r$ be the largest integer for which $\mydim{\phi} > 2^ry_r$. If $\phi$ is divisible by an anisotropic $r$-fold quasi-Pfister form over $F$, then:
\begin{enumerate} \item $\mathrm{lndeg}(\phi) \in [r,r+y_r]$;
\item $\Izhdim{\phi}$ is divisible by $2^r$;
\item $\witti{j}{\phi}$ is divisible by $2^r$ for all $j \in [1,\mathrm{lndeg}(\phi) - r]$.\end{enumerate}
\begin{proof} Let $\pi$ be an anisotropic $r$-fold quasi-Pfister form dividing $\phi$. Since $\phi$ is similar to a subform of $\normform{\phi}$, the same is true of $\pi$, and so $\mathrm{lndeg}(\phi) \geq \mathrm{lndeg}(\pi) = r$. Set $s: = \mathrm{lndeg}(\phi) - r$. If $j \leq s$, then we have $\mathrm{lndeg}(\phi_{j-1}) = \mathrm{lndeg}(\phi) - j +1 > \mathrm{lndeg}(\phi) -s = r$. By Lemma \ref{LEMdropingofnormdegree}, it follows that if $F = F_0 \subset F_1 \subset \cdots \subset F_{\mathrm{lndeg}(\phi)}$ is the Knebusch splitting tower of $\phi$, then $\pi$ remains anisotropic over $F_s$. Applying Lemma \ref{LEManisotropicpartofformsdivisiblebyaquasiPfister}, we then see that $\phi_j$ is divisible by $\pi$ for all $j \leq s$. Since $\witti{j}{\phi} = \mydim{\phi_j} - \mydim{\phi_{j-1}}$ for all $j$, this proves (2) and (3). At the same time, we have $\mathrm{lndeg}(\phi_s) = \mathrm{lndeg}(\phi) - s = r$, and so the preceding remarks imply that $\phi_s$ is similar to $\pi$. In particular, $\mydim{\pi} = 2^r$, so
$$ 2^rs \leq \sum_{j=1}^{s} \witti{j}{\phi} = \mydim{\phi} - \mydim{\phi_s} = \mydim{\phi} - 2^r \leq 2^ry_r, $$
and hence $\mathrm{lndeg}(\phi) = r+ s \leq r + y_r$, proving (1).
\end{proof} \end{lemma}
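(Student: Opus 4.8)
The plan is to run the Knebusch splitting tower of $\phi$ and track the given $r$-fold quasi-Pfister divisor as we climb it. Fix an anisotropic $r$-fold quasi-Pfister form $\pi$ dividing $\phi$, together with an isomorphism $\phi \simeq \pi \otimes \sigma$, and let $F = F_0 \subset F_1 \subset \cdots \subset F_{\mathrm{lndeg}(\phi)}$ be the Knebusch splitting tower of $\phi$, with $\phi_j = \anispart{(\phi_{F_j})}$. First I would dispatch the lower bound in (1): some nonzero scalar multiple of $\pi$ is a subform of $\phi$, which is itself similar to a subform of $\normform{\phi}$ by Lemma \ref{LEMuniversalpropertyofthenormform}, so $2^r = \mydim{\pi} \leq \mydim{\normform{\phi}} = \mathrm{ndeg}(\phi)$; thus $\mathrm{lndeg}(\phi) \geq r$, and we may set $s := \mathrm{lndeg}(\phi) - r \geq 0$. (When $\mydim{\phi} = 2^r$ the form $\phi$ is similar to $\pi$ and $s = 0$; I would treat this degenerate case separately, so assume $\mydim{\phi} > 2^r$, i.e. $s \geq 1$, below.)

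The key step, and the one I expect to be the main obstacle, is to show that $\pi$ remains anisotropic over $F_s$ --- equivalently, over $F_j$ for every $j \leq s$. I would argue by contradiction: take $j \leq s$ least with $\pi_{F_j}$ isotropic, so that $j \geq 1$ and $\pi_{F_{j-1}}$ is an anisotropic $r$-fold quasi-Pfister form with $\mathrm{lndeg}(\pi_{F_{j-1}}) = r$. Since $\mathrm{lndeg}(\phi_{j-1}) = \mathrm{lndeg}(\phi) - (j-1) \geq r+1$, the form $\phi_{j-1}$ is not split, so Lemma \ref{LEMdropingofnormdegree} applied to $\pi_{F_{j-1}}$ over $F_{j-1}(\phi_{j-1}) = F_j$ yields $\normform{(\phi_{j-1})} \subset \normform{(\pi_{F_{j-1}})} = \pi_{F_{j-1}}$, whence $\mathrm{ndeg}(\phi_{j-1}) \leq 2^r$, contradicting $\mathrm{lndeg}(\phi_{j-1}) \geq r+1$. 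The delicate part is the bookkeeping: one has to play the one-step drop of the logarithmic norm degree across a single function field extension against the exact value of $\mathrm{lndeg}$ all the way up the tower, and this is precisely where the divisibility hypothesis is used.

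Granting this, the rest should come out cleanly. Base-changing the isomorphism $\phi \simeq \pi \otimes \sigma$ shows that $\phi_{F_j}$ is divisible by the anisotropic $r$-fold quasi-Pfister form $\pi_{F_j}$ for each $j \leq s$, so $\phi_j$ is too by Lemma \ref{LEManisotropicpartofformsdivisiblebyaquasiPfister}; in particular $2^r \mid \mydim{\phi_j}$ for all $0 \leq j \leq s$. Since $\witti{j}{\phi} = \mydim{\phi_{j-1}} - \mydim{\phi_j}$, this gives (3) for $j \in [1, \mathrm{lndeg}(\phi) - r]$ and, as $\Izhdim{\phi} = \mydim{\phi_1}$, also (2). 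Finally, $\phi_s$ is similar to a subform of $\normform{(\phi_s)}$, which has dimension $\mathrm{ndeg}(\phi_s) = 2^{\mathrm{lndeg}(\phi_s)} = 2^r$; together with $2^r \mid \mydim{\phi_s}$ this forces $\mydim{\phi_s} = 2^r$. As the dimensions $\mydim{\phi_j}$ strictly decrease, each $\witti{j}{\phi}$ with $1 \leq j \leq s$ is a positive multiple of $2^r$, hence at least $2^r$, so $2^r s \leq \sum_{j=1}^{s} \witti{j}{\phi} = \mydim{\phi} - \mydim{\phi_s} = \mydim{\phi} - 2^r \leq 2^r y_r$ by the defining property of $y_r$. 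Hence $s \leq y_r$ and $\mathrm{lndeg}(\phi) = r + s \in [r, r+y_r]$, which completes (1).
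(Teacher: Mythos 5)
Your argument is correct and is essentially the paper's own proof: the same lower bound $\mathrm{lndeg}(\phi) \geq r$ via $\pi$ being similar to a subform of $\normform{\phi}$, the same climb up the Knebusch tower showing $\phi_j$ is divisible by $\pi$ for all $j \leq s$ (whence (2) and (3)), the same identification of $\phi_s$ as similar to $\pi$, and the same final count $2^r s \leq \mydim{\phi} - 2^r \leq 2^r y_r$; the only difference is that you spell out, via the contradiction with Lemma \ref{LEMdropingofnormdegree} applied to the pairs $(\pi_{F_{j-1}},\phi_{j-1})$, why $\pi$ stays anisotropic over $F_s$, a step the paper asserts without detail. One caveat on the case $\mydim{\phi}=2^r$ that you propose to ``treat separately'': there part (2) actually fails, since $\phi$ is then similar to $\pi$ and $\Izhdim{\phi}=2^{r-1}$, so it cannot be handled at all -- this is an edge-case defect of the statement itself rather than of your argument (the paper's proof likewise tacitly assumes $s\geq 1$ when deducing (2), and the lemma is only ever applied later to forms divisible by, but not similar to, the quasi-Pfister divisor).
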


\subsection{Stable Birational Equivalence and Neighbours} \label{SUBSECstb} Let $\phi$ and $\psi$ be anisotropic quasilinear quadratic forms of dimension $\geq 2$ over $F$. If the quadrics $X_{\phi}$ and $X_{\psi}$ are stably birational as varieties over $F$, then we say that $\phi$ and $\psi$ are \emph{stably birationally equivalent}, and write $\phi \stb \psi$. The results of \cite{Totaro} allow us to characterize this relation as follows:

\begin{theorem}[Totaro] \label{THMstablebirationalequivalence} If $\phi$ and $\psi$ are anisotropic quasilinear quadratic forms of dimension $\geq 2$ over $F$, then the following are equivalent:
\begin{enumerate} \item $\phi \stb \psi$;
\item There exist $F$-places $F(\phi) \rightharpoonup F(\psi)$ and $F(\psi) \rightharpoonup F(\phi)$;
\item There exist rational maps $X_{\psi} \dashrightarrow X_{\phi}$ and $X_{\phi} \dashrightarrow X_{\psi}$;
\item Both $\phi_{F(\psi)}$ and $\psi_{F(\phi)}$ are isotropic. 
\item $\phi_{F(\psi)}$ is isotropic and $\Izhdim{\phi} = \Izhdim{\psi}$. \end{enumerate}
\begin{proof} $(1) \Rightarrow (2)$: Clear.

$ (2) \Rightarrow (3)$: Follows from the completeness of $X_{\phi}$ and $X_{\psi}$ (see, e.g., \cite[P. 408]{EKM}). 

$( 3) \Leftrightarrow (4)$: Clear.

$(4) \Leftrightarrow (5)$: Apply \cite[Theorem 5.2]{Totaro}.

$ (4) \Rightarrow (1)$: Apply \cite[Theorem 6.5]{Totaro}. \end{proof} \end{theorem}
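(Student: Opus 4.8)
The plan is to prove the cycle of implications $(1)\Rightarrow(2)\Rightarrow(3)\Rightarrow(4)\Rightarrow(5)\Rightarrow(1)$, reducing most of the work to results already available. The implication $(1)\Rightarrow(2)$ is immediate: if $X_\phi$ and $X_\psi$ are stably birational over $F$, then there is a birational map between $X_\phi\times\mathbb{P}^m$ and $X_\psi\times\mathbb{P}^n$ for suitable $m,n$, and composing with the projections and with sections of the projective bundles produces $F$-places in both directions between $F(\phi)$ and $F(\psi)$ (dominant rational maps, hence places, after passing to the associated valuation rings). For $(2)\Rightarrow(3)$, I would invoke completeness of the projective quadrics $\overline{X_\phi}$ and $\overline{X_\psi}$: an $F$-place $F(\phi)\rightharpoonup F(\psi)$ extends the identity on $F$, and by the valuative criterion of properness applied to the complete model $\overline{X_\phi}$ one obtains a morphism $\mathrm{Spec}$ of the valuation ring $\to\overline{X_\phi}$ whose generic point gives a rational map $X_\psi\dashrightarrow X_\phi$ (here one should note that a rational map to the projective quadric landing generically in the affine part is the same as a rational map to $X_\phi$, since $X_\psi$ is integral and the affine quadric is dense). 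This is the standard argument referenced as \cite[P.\ 408]{EKM}. The equivalence $(3)\Leftrightarrow(4)$ is formal: a rational map $X_\psi\dashrightarrow X_\phi$ over $F$ is exactly an $F(\psi)$-point of $X_\phi$, i.e.\ a nontrivial zero of $\phi$ over $F(\psi)$, which (since $\phi$ is anisotropic over $F$, but this is not even needed) is equivalent to $\phi_{F(\psi)}$ being isotropic; symmetrically for the other direction.

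The two substantive steps are $(4)\Leftrightarrow(5)$ and $(4)\Rightarrow(1)$, and for both I would simply cite Totaro. For $(4)\Leftrightarrow(5)$: assuming $\phi_{F(\psi)}$ is isotropic, the content is that $\psi_{F(\phi)}$ is isotropic if and only if $\Izhdim{\phi}=\Izhdim{\psi}$; this is precisely \cite[Theorem 5.2]{Totaro}, which computes the first higher isotropy index (equivalently the Izhboldin dimension) of a quasilinear form in a way that makes it a stable birational invariant detecting exactly this. For $(4)\Rightarrow(1)$: if both $\phi_{F(\psi)}$ and $\psi_{F(\phi)}$ are isotropic, then \cite[Theorem 6.5]{Totaro} (the quasilinear analogue of the fact that mutual isotropy over function fields forces stable birational equivalence of the associated quadrics) gives $\phi\stb\psi$. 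So the proof is essentially a matter of assembling these citations into a clean loop; the only care needed is in $(2)\Rightarrow(3)$, where one must be slightly attentive about working with the affine quadrics $X_\phi$ rather than their projective closures and about the fact that the places are $F$-places (so the valuation rings contain $F$ and the residue extensions are the function fields in question).

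I do not expect a genuine obstacle here, since the heavy lifting — both the Izhboldin dimension computation and the converse direction recovering stable birational equivalence from mutual isotropy — is entirely contained in Totaro's work, which is being quoted as a black box. If anything, the step most likely to require a careful sentence is $(2)\Rightarrow(3)$: one should remark that an $F$-place between the function fields, together with completeness of the projective quadric, yields the rational map by the valuative criterion, and that a rational map whose image meets the smooth-free affine part is unobstructed precisely because we are not imposing any smoothness — this is the point where the quasilinear setting is actually easier than the classical one, not harder. Everything else ($(1)\Rightarrow(2)$ and $(3)\Leftrightarrow(4)$) is bookkeeping.
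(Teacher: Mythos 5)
Your proposal is correct and follows essentially the same route as the paper: the formal implications $(1)\Rightarrow(2)$, $(2)\Rightarrow(3)$ (via completeness of the projective quadrics), and $(3)\Leftrightarrow(4)$, with the substantive steps $(4)\Leftrightarrow(5)$ and $(4)\Rightarrow(1)$ delegated to \cite[Thm.~5.2]{Totaro} and \cite[Thm.~6.5]{Totaro}, exactly as in the paper's proof. The extra detail you supply for $(2)\Rightarrow(3)$ is a correct expansion of the paper's citation and changes nothing of substance.
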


As discussed in \S \ref{SECintroduction}, invariants of quasilinear quadratic forms that respect the relation $\stb$ are at the heart of this article. A new such invariant will be considered in \S \ref{SECDelta} below. As for previously studied examples, we have:

\begin{lemma} \label{LEMknownstablebirationalinvariants} Let $\phi$ and $\psi$ be anisotropic quasilinear quadratic forms of dimension $\geq 2$ over $F$. If $\phi \stb \psi$, then:
\begin{enumerate} \item $\normform{\phi} \simeq \normform{\psi}$;
\item $\mathrm{lndeg}(\phi) = \mathrm{lndeg}(\psi)$;
\item $\Izhdim{\phi} = \Izhdim{\psi}$;
\item $\witti{j}{\phi} = \witti{j}{\psi}$ for all $j \geq 2$. 
\end{enumerate}
\begin{proof} (2) is an immediate consequence of (1), which holds by Lemma \ref{LEMdropingofnormdegree}. (3) is part of Theorem \ref{THMstablebirationalequivalence}, so it only remains to show (4). By hypothesis, $F(\phi)$ and $F(\psi)$ are $F$-linearly embeddable into an extension $L$ of $F$ which is purely transcendental over both. By Lemma \ref{LEManisotropyoverseparable}, we then have that $\anispart{(\phi_L)} \simeq (\phi_1)_L$. At the same time, since $\phi_{F(\psi)}$ is isotropic, \cite[Thm. 6.6]{Scully2} (restated below as Theorem \ref{THMoldtheorem}) and Lemma \ref{LEMsubformoftensorproduct} tell us that $\anispart{(\phi_{F(\psi)})}$ contains a subform similar to $\psi_1$. But (3) says that $\mydim{\phi_1} = \mydim{\psi_1}$, so it follows that $(\phi_1)_L$ and $(\psi_1)_L$ are similar. Now $h(\phi_1) = h(\phi) - 1$ and $h(\psi_1) = h(\psi) - 1$, and since isotropy indices are insensitive to rational extension (Lemma \ref{LEManisotropyoverseparable}), the claim follows.
\end{proof}
\end{lemma}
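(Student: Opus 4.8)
The plan is to dispose of parts (1)--(3) directly and then concentrate the effort on (4). For (1): since $\phi \stb \psi$, Theorem~\ref{THMstablebirationalequivalence} gives that $\phi_{F(\psi)}$ and $\psi_{F(\phi)}$ are both isotropic, and applying Lemma~\ref{LEMdropingofnormdegree} to each of these shows $\normform{\psi} \subset \normform{\phi}$ and $\normform{\phi} \subset \normform{\psi}$; since norm forms are anisotropic and determined by their value sets, these two containments force $\normform{\phi} \simeq \normform{\psi}$ via Lemma~\ref{LEMsubformtheorem}. Part (2) then follows at once, because $\mathrm{lndeg}$ is $\mathrm{log}_2$ of the dimension of the norm form, and part (3) is already contained in Theorem~\ref{THMstablebirationalequivalence}. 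So the real task is (4).

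The strategy for (4) is to compare $\phi_1$ and $\psi_1$ after base change to a common field. Since $X_\phi$ and $X_\psi$ are stably birational, there is a field $L$ admitting $F$-linear embeddings of both $F(\phi)$ and $F(\psi)$ and purely transcendental -- in particular separable -- over each. By Lemma~\ref{LEManisotropyoverseparable}, $\phi_1$ then remains anisotropic over $L$ and $\anispart{(\phi_L)} \simeq (\phi_1)_L$, and likewise $\anispart{(\psi_L)} \simeq (\psi_1)_L$. The goal is to show that $(\phi_1)_L$ and $(\psi_1)_L$ are similar.

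For this I would invoke the function-field structure theorem of \cite{Scully2}: since $\phi_{F(\psi)}$ is isotropic, that theorem together with Lemma~\ref{LEMsubformoftensorproduct} shows that $\anispart{(\phi_{F(\psi)})}$ contains a subform similar to $\psi_1$. Extending scalars to $L$, and noting that $\anispart{(\phi_{F(\psi)})}$ stays anisotropic over $L$ (so $\anispart{(\phi_L)} \simeq \bigl(\anispart{(\phi_{F(\psi)})}\bigr)_L$ by Lemma~\ref{LEManisotropyoverseparable}), we conclude that $(\phi_1)_L$ contains a subform similar to $(\psi_1)_L$. But by (3), $\mydim{\phi_1} = \Izhdim{\phi} = \Izhdim{\psi} = \mydim{\psi_1}$, so this similar subform exhausts the anisotropic form $(\phi_1)_L$, forcing $(\phi_1)_L \simeq c\,(\psi_1)_L$ for some $c \in L^{\times}$.

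To conclude, I would use that the Knebusch splitting data is insensitive to scaling (which leaves the quadric unchanged) and to rational extension: for an anisotropic quasilinear form $\mu$ over $K$ and purely transcendental $M/K$ one has $M(\mu) = K(\mu)(t_1,\dots,t_m)$, so a short induction built on Lemma~\ref{LEManisotropyoverseparable} yields $\witti{i}{\mu_M} = \witti{i}{\mu}$ for all $i$. Applying this to $\phi_1$ over $F(\phi)$ and to $\psi_1$ over $F(\psi)$, and using the similarity just obtained, gives $\witti{i}{\phi_1} = \witti{i}{(\phi_1)_L} = \witti{i}{(\psi_1)_L} = \witti{i}{\psi_1}$ for every $i \geq 1$. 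Since the Knebusch tower of $\phi_1$ has length $\mathrm{lndeg}(\phi) - 1 = \mathrm{lndeg}(\psi) - 1$ (by (2)) and $\witti{i}{\phi_1} = \witti{i+1}{\phi}$ for $i \geq 1$, this translates into $\witti{j}{\phi} = \witti{j}{\psi}$ for all $j \geq 2$. I expect the main obstacle to be organizing the base-change bookkeeping correctly -- making sure the various anisotropic parts are compared over the right fields and that the splitting data genuinely descends along $L$ -- rather than any single hard estimate; the one genuinely external input is the structure theorem of \cite{Scully2} that produces the subform similar to $\psi_1$. Note finally that $\witti{1}$ cannot be added to the list, since $\witti{1}{\phi} = \mydim{\phi} - \Izhdim{\phi}$ and stably birationally equivalent forms may have different dimensions -- exactly the phenomenon this paper is built around.
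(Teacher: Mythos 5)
Your proposal is correct and follows essentially the same route as the paper: (1)--(3) via Lemma \ref{LEMdropingofnormdegree} and Theorem \ref{THMstablebirationalequivalence}, and (4) by passing to a common purely transcendental extension $L$, using Theorem \ref{THMoldtheorem} together with Lemma \ref{LEMsubformoftensorproduct} to find a subform similar to $\psi_1$ inside $\anispart{(\phi_{F(\psi)})}$, invoking the dimension equality from (3) to get that $(\phi_1)_L$ and $(\psi_1)_L$ are similar, and then using insensitivity of the splitting data to scaling and rational extension. The extra bookkeeping you supply (e.g.\ that $M(\mu_M)$ is purely transcendental over $K(\mu)$ and the shift $\witti{i}{\phi_1} = \witti{i+1}{\phi}$) just makes explicit what the paper leaves implicit.
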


Let $\phi$ and $\psi$ be anisotropic quasilinear quadratic forms of dimension $\geq 2$ over $F$. If $\psi$ is similar to a subform of $\phi$, and $\mydim{\psi} > \Izhdim{\phi}$, then we say that $\psi$ is a \emph{neighbour of $\phi$}. If $\psi$ is a neighbour of $\phi$, then $\phi_{F(\psi)}$ is evidently isotropic. At the same time, the remarks preceding Lemma \ref{LEMIzhbound} above show that $\psi_{F(\phi)}$ is also isotropic, so $\phi \stb \psi$ by Theorem \ref{THMstablebirationalequivalence}. In general, stable birational equivalence is more complicated, but we do have:

\begin{lemma} \label{LEMquasiPfisterneighbours} Let $\phi$ and $\psi$ be anisotropic quasilinear quadratic forms of dimension $\geq 2$ over $F$ with $\phi \stb \psi$. If $\phi$ is a quasi-Pfister form, then $\psi$ is a neighbour of $\phi$. 
\begin{proof} By Lemma \ref{LEMknownstablebirationalinvariants}, we have $\normform{\psi} \simeq \normform{\phi} = \phi$. By Lemma \ref{LEMuniversalpropertyofthenormform}, it follows that $\psi$ is similar to a subform of $\phi$. At the same time, $\mydim{\psi} > \Izhdim{\psi} = \Izhdim{\phi}$ by Theorem \ref{THMstablebirationalequivalence}, and so $\psi$ is a neighbour of $\phi$.
\end{proof}
\end{lemma}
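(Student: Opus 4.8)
The statement to prove is Lemma~\ref{LEMquasiPfisterneighbours}: if $\phi \stb \psi$ and $\phi$ is a quasi-Pfister form, then $\psi$ is a neighbour of $\phi$. Recall that ``$\psi$ is a neighbour of $\phi$'' means $\psi$ is similar to a subform of $\phi$ and $\mydim{\psi} > \Izhdim{\phi}$. So there are two things to establish: a similarity/subform relation, and a dimension inequality.

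The plan is to extract both facts from the previously assembled machinery. First I would handle the similarity-to-a-subform claim. Since $\phi \stb \psi$, Lemma~\ref{LEMknownstablebirationalinvariants}(1) gives $\normform{\psi} \simeq \normform{\phi}$. But $\phi$ is an anisotropic quasi-Pfister form, so $D(\phi)$ is a field and hence $\phi \simeq \normform{\phi}$ (this is the characterization recorded in \S\ref{SUBSECnormform} and \S\ref{SUBSECquasiPfisters}); therefore $\normform{\psi} \simeq \phi$. Now Lemma~\ref{LEMuniversalpropertyofthenormform}, applied with $\pi := \phi$, tells us that $\anispart{\psi} = \psi$ is similar to a subform of $\normform{\psi} = \phi$. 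That settles the subform condition.

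Next I would handle the dimension inequality $\mydim{\psi} > \Izhdim{\phi}$. Since $\phi \stb \psi$, Theorem~\ref{THMstablebirationalequivalence} (specifically the equivalence with condition (5), or directly Lemma~\ref{LEMknownstablebirationalinvariants}(3)) gives $\Izhdim{\psi} = \Izhdim{\phi}$. On the other hand, $\psi$ is anisotropic of dimension $\geq 2$, so by the definition of the Izhboldin dimension in \S\ref{SUBSECKnebuschtower} we have $\Izhdim{\psi} = \mydim{\psi_1} = \mydim{\psi} - \witti{1}{\psi} < \mydim{\psi}$ (the higher isotropy index $\witti{1}{\psi}$ is positive because $\psi$ becomes isotropic over $F(\psi)$). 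Chaining these, $\mydim{\psi} > \Izhdim{\psi} = \Izhdim{\phi}$, which is exactly what is needed. Combining the two paragraphs, $\psi$ is similar to a subform of $\phi$ and $\mydim{\psi} > \Izhdim{\phi}$, i.e., $\psi$ is a neighbour of $\phi$.

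I do not anticipate a genuine obstacle here: the lemma is essentially a bookkeeping assembly of earlier results, and the only point requiring a moment's care is justifying $\phi \simeq \normform{\phi}$ for an anisotropic quasi-Pfister form $\phi$ — but this is immediate from the fact that $D(\phi)$ is a subfield of $F$ together with Lemma~\ref{LEMsubformtheorem} (since $D(\phi)$ is already a field closed under products, $N(\phi) = D(\phi)$). One should also note that $\Izhdim{\phi}$ is well-defined, which it is since $\phi$ is anisotropic of dimension $\geq 2$ (a quasi-Pfister form has dimension a power of $2$, and the trivial case $\mydim{\phi} = 1$ is excluded by hypothesis). Everything else is a direct citation.
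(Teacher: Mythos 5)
Your proof is correct and follows exactly the paper's argument: $\normform{\psi}\simeq\normform{\phi}=\phi$ via Lemma \ref{LEMknownstablebirationalinvariants}, the subform relation via Lemma \ref{LEMuniversalpropertyofthenormform}, and $\mydim{\psi}>\Izhdim{\psi}=\Izhdim{\phi}$ via Theorem \ref{THMstablebirationalequivalence}. The extra details you supply (why $\phi\simeq\normform{\phi}$ and why $\witti{1}{\psi}>0$) are correct but left implicit in the paper.
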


Neighbours of anisotropic quasi-Pfister forms of dimension $\geq 2$ are called \emph{quasi-Pfister neighbours}. They may be further characterized as follows:

\begin{lemma}[see {\cite[Corollary 3.11]{Scully1}}] \label{LEMcharacterizationofquasiPfisterneighbours} Let $\phi$ be an anisotropic quasilinear quadratic form of dimension $\geq 2$ over $F$, and let $s$ be the unique integer for which $2^s < \mydim{\phi} \leq 2^{s+1}$. Then the following are equivalent:
\begin{enumerate} \item $\phi$ is a quasi-Pfister neighbour;
\item $\phi$ is a neighbour of $\normform{\phi}$;
\item $\mathrm{lndeg}(\phi) = s+1$;
\item $\phi_1$ is similar to a quasi-Pfister form. \end{enumerate}
\end{lemma}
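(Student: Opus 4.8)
The plan is to prove the cycle $(1) \Rightarrow (3) \Rightarrow (2) \Rightarrow (1)$ together with the equivalence $(3) \Leftrightarrow (4)$, so that most of the argument amounts to dimension bookkeeping driven by two inputs: the fact that $\anispart{\phi}$ is always similar to a subform of $\normform{\phi}$ (Lemma \ref{LEMuniversalpropertyofthenormform}), and the separation-theoretic inequality $\Izhdim{\phi} \geq 2^s$ (Lemma \ref{LEMIzhbound}). We also use repeatedly that an anisotropic $n$-fold quasi-Pfister form $\pi$ has $\Izhdim{\pi} = 2^{n-1}$ (Example \ref{EXsplittingofquasiPfister}) and that the anisotropic part of a quasi-Pfister form is again a quasi-Pfister form (\S \ref{SUBSECquasiPfisters}).

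For $(1) \Rightarrow (3)$, suppose $\phi$ is a neighbour of an anisotropic quasi-Pfister form $\pi$ of dimension $2^n \geq 2$. Then $\phi$ is similar to a subform of $\pi$, so $\mydim{\phi} \leq 2^n$ and $\normform{\phi} \subset \pi$ by Lemma \ref{LEMuniversalpropertyofthenormform}, while $\mydim{\phi} > \Izhdim{\pi} = 2^{n-1}$; hence $2^{n-1} < \mydim{\phi} \leq 2^n$, forcing $n = s+1$. Since $\mydim{\normform{\phi}}$ is then a power of $2$ with $2^s < \mydim{\phi} \leq \mydim{\normform{\phi}} \leq 2^{s+1}$, we get $\mathrm{lndeg}(\phi) = s+1$. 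For $(3) \Rightarrow (2)$: if $\mathrm{lndeg}(\phi) = s+1$ then $\normform{\phi}$ is an anisotropic $(s+1)$-fold quasi-Pfister form, so $\Izhdim{\normform{\phi}} = 2^s < \mydim{\phi}$, and since $\phi = \anispart{\phi}$ is similar to a subform of $\normform{\phi}$ it is a neighbour of the latter. Finally $(2) \Rightarrow (1)$ is immediate, as $\normform{\phi}$ is an anisotropic quasi-Pfister form of dimension $\geq \mydim{\phi} \geq 2$. (The same computation yields the direct equivalence $(2) \Leftrightarrow (3)$: one always has $\mathrm{lndeg}(\phi) \geq s+1$ because $\mydim{\phi} \leq \mydim{\normform{\phi}}$, and then the neighbour condition $\mydim{\phi} > \Izhdim{\normform{\phi}} = 2^{\mathrm{lndeg}(\phi)-1}$ holds precisely when $\mathrm{lndeg}(\phi) = s+1$.)

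For $(4) \Rightarrow (3)$: if $\phi_1$ is similar to a quasi-Pfister form then $\mydim{\phi_1}$ is a power of $2$; it equals $\Izhdim{\phi}$, which lies in $[2^s, 2^{s+1})$ by Lemma \ref{LEMIzhbound} and the fact that $\witti{1}{\phi} \geq 1$ (Remark following Lemma \ref{LEMisotropyoverfunctionfieldsofquadrics}). So $\mydim{\phi_1} = 2^s$, hence $\phi_1$ is similar to an $s$-fold quasi-Pfister form and $\normform{\phi_1}$ is $s$-fold, i.e. $\mathrm{lndeg}(\phi_1) = s$. Since $\mathrm{lndeg}(\phi_1) = \mathrm{lndeg}(\phi) - 1$ by the construction of the Knebusch splitting tower (\S \ref{SUBSECKnebuschtower}), we conclude $\mathrm{lndeg}(\phi) = s+1$.

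The substantive direction is $(3) \Rightarrow (4)$. Assume $\mathrm{lndeg}(\phi) = s+1$, so $\pi := \normform{\phi}$ is an anisotropic $(s+1)$-fold quasi-Pfister form, and $\phi$ is similar to a subform of $\pi$ (Lemma \ref{LEMuniversalpropertyofthenormform}). Passing to $F(\phi)$: the form $\phi_{F(\phi)}$ is isotropic, hence so is $\pi_{F(\phi)}$ (it contains a scalar multiple of the isotropic form $\phi_{F(\phi)}$), and therefore $\anispart{(\pi_{F(\phi)})}$ is a quasi-Pfister form of dimension $2^m$ with $m \leq s$. On the other hand, comparing value sets via Lemma \ref{LEMsubformtheorem} shows that $\phi_1 = \anispart{(\phi_{F(\phi)})}$ is similar to a subform of $\anispart{(\pi_{F(\phi)})}$, so $\mydim{\phi_1} \leq 2^m \leq 2^s$; but $\mydim{\phi_1} = \Izhdim{\phi} \geq 2^s$ by Lemma \ref{LEMIzhbound}. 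Equality holds throughout: $m = s$ and $\phi_1$ is similar to $\anispart{(\pi_{F(\phi)})}$, which is a quasi-Pfister form, giving (4). The only real obstacle is here — one must leave the ground field, locate $\phi_1$ inside the split-off anisotropic part of $\pi_{F(\phi)}$, and then invoke the separation theorem through Lemma \ref{LEMIzhbound} to pin $\mydim{\phi_1}$ down to exactly $2^s$; everything else is elementary dimension counting.
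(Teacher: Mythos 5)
Your proof is correct. Note that the paper itself offers no internal argument for this lemma -- it is quoted from \cite[Cor. 3.11]{Scully1} -- so there is nothing in the text to compare against; what you have produced is a valid self-contained derivation from the preliminaries already quoted in \S \ref{SECpreliminaries}. Each step checks out: $(1) \Rightarrow (3)$ correctly combines the neighbour condition $\mydim{\phi} > \Izhdim{\pi} = 2^{n-1}$ (Example \ref{EXsplittingofquasiPfister}) with $\normform{\phi} \subset \pi$ from Lemma \ref{LEMuniversalpropertyofthenormform} to pin down $n = s+1$ and then $\mathrm{lndeg}(\phi) = s+1$; $(3) \Rightarrow (2) \Rightarrow (1)$ is the straightforward dimension count you describe; $(4) \Rightarrow (3)$ correctly uses $\witti{1}{\phi} \geq 1$, Lemma \ref{LEMIzhbound}, similarity-invariance of the norm degree, and the norm-degree drop $\mathrm{lndeg}(\phi_1) = \mathrm{lndeg}(\phi) - 1$ from the Knebusch tower; and in $(3) \Rightarrow (4)$ the passage to $F(\phi)$ is sound: $a\phi \subset \normform{\phi}$ forces $\normform{\phi}_{F(\phi)}$ isotropic, its anisotropic part is a quasi-Pfister form of dimension $2^m \leq 2^s$ containing a form similar to $\phi_1$ (Lemma \ref{LEMsubformtheorem}), and the squeeze $2^s \leq \Izhdim{\phi} = \mydim{\phi_1} \leq 2^m \leq 2^s$ gives equality, so $\phi_1$ is similar to that quasi-Pfister form. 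The only ingredients beyond elementary bookkeeping are exactly the ones you flag (the separation theorem via Lemma \ref{LEMIzhbound} and the universal property of the norm form), and none of them depends on the lemma being proved, so there is no circularity.
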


Recall from \S \ref{SUBSECnormform} that if $\phi$ is an anisotropic quasilinear quadratic form of dimension $\geq 2$ with $2^s < \mydim{\phi} \leq 2^{s+1}$, then $s< \mathrm{lndeg}(\phi)<\mydim{\phi}$. Thus, among all anisotropic quasilinear quadratic forms of dimension in a fixed interval of the form $(2^s,2^{s+1}]$, the quasi-Pfister neighbours are those which have minimal norm degree (namely $s+1$). Per \S \ref{SECintroduction}, they should be understood as the forms with simplest stable birational type.

\section{Multiples of Quasi-Pfister Forms and the Invariant $\Delta$} \label{SECDelta}

In this section, we consider anisotropic quasilinear quadratic forms which are divisible by or ``close'' to being divisible by anisotropic quasi-Pfister forms. These considerations lead us to a new stable birational invariant of anisotropic quasilinear quadratic forms which we denote $\Delta$. As indicated in \S \ref{SECintroduction}, this invariant will play a crucial role in our main results. We continue to fix here a field $F$ of characteristic 2.

\subsection{Strong $\pi$-Neighbours} \label{SUBSECstrongpineighbours} To bring some order to the later discussion, we make here the following definition: 

\begin{definition} \label{DEFstrongpineighbour} Let $\eta$ be an anisotropic quasilinear quadratic form of dimension $\geq 2$ over $F$ which is divisible by a quasi-Pfister form $\pi$. By a \emph{strong $\pi$-neighbour of $\eta$}, we mean a neighbour of $\eta$ which has dimension strictly greater than $\mydim{\eta} - \mydim{\pi}$.
\end{definition}

Note that if $\phi$ is an anisotropic quasilinear quadratic form of dimension $\geq 2$ over $F$, then saying that $\phi$ is divisible by an anisotropic quasi-Pfister form $\pi$ is equivalent to saying that $\phi$ is a strong-$\pi$ neighbour of itself. Our definition therefore extends the concept of divisibility by quasi-Pfister forms to one of ``near-divisibility''. In the most extreme case, it recovers the notion of a quasi-Pfister neighbour: By Lemma \ref{LEMcharacterizationofquasiPfisterneighbours}, saying that $\phi$ is a quasi-Pfister neighbour is equivalent to saying that $\phi$ is a strong $\normform{\phi}$-neighbour of $\normform{\phi}$. We make the following additional observations:

\begin{remarks} \label{REMSstrongpineighbours} Let $\eta$ and $\pi$ be as in Definition \ref{DEFstrongpineighbour}, let $\phi$ be a neighbour of $\eta$, and let $n$ be the unique integer for which $2^n < \mydim{\phi} \leq 2^{n+1}$.
\begin{enumerate} \item Let $r$ be the foldness of $\pi$, and let $y_r$ be the largest integer for which $\mydim{\phi} > y_r2^r$. If $\phi$ is a strong $\pi$-neighbour of $\eta$, then we must have that $\mydim{\eta} = (y_r+1)2^r$ (since $\eta$ is divisible by $\pi$, $\mydim{\eta}$ is divisible by $2^r$). At the same time, in order for $\phi$ to be a neighbour of (and hence stably birationally equivalent to) $\eta$, Lemma \ref{LEMIzhbound} tells us that we must have $2^n < \mydim{\eta} \leq 2^{n+1}$. Thus, if $\phi$ is a strong $\pi$-neighbour of $\eta$, then $2^n < \mydim{\eta} \leq 2^{n+1}$ and the foldness of $\pi$ is at most $n+1$. 
\item If $\pi$ has foldness $n+1$, then $\phi$ is a strong $\pi$-neighbour of $\eta$ if and only if $\phi$ is a quasi-Pfister neighbour and $(\pi,\eta) = (\normform{\phi}, a\normform{\phi})$ for some $a \in F\setminus \lbrace 0 \rbrace$. Indeed, the conditions are clearly sufficient, and necessity follows from the preceding observation that $\mydim{\eta} \leq 2^{n+1}$ when $\phi$ is a strong $\pi$-neighbour of $\eta$ (recall that if $\phi$ is a neighbour of an anisotropic quasi-Pfister form $\pi$, then $\pi \simeq \normform{\phi}$).
\end{enumerate} \end{remarks}

More generally, we have the following:

\begin{lemma} \label{LEMambientformofstrongpineighbour} Let $\phi$ be an anisotropic quasilinear quadratic form of dimension $\geq 2$ over $F$, let $n$ be the unique integer for which $2^n < \mydim{\phi} \leq 2^{n+1}$, and let $\pi$ be an anisotropic quasi-Pfister form of foldness $\leq n+1$ over $F$. If $\eta$ is an anisotropic quasilinear quadratic form of dimension $\geq 2$ over $F$, then the following are equivalent:
\begin{enumerate} \item $\phi$ is a strong $\pi$-neighbour of $\eta$;
\item $\mydim{\anispart{( \pi \otimes \phi)}} < \mydim{\phi}+\mydim{\pi}$ and $\eta$ is similar to $\anispart{(\pi \otimes \phi)}$. \end{enumerate}
\begin{proof} Suppose first that (1) holds, i.e., that $\phi$ is a strong $\pi$-neighbour of $\eta$. The inequality $\mydim{\eta} < \mydim{\phi} + \mydim{\pi}$ then holds by definition, so to prove that (2) holds, it suffices to show that $\eta$ is similar to $\anispart{(\pi \otimes \phi)}$. Replacing $\eta$ with a similar form if needed, we can assume that $\phi \subset \eta$. Since $\eta$ is divisible by $\pi$, Lemma \ref{LEMdivisibilitybyquasiPfister} then gives that $\anispart{(\pi \otimes \phi)} \subset \anispart{(\pi \otimes \eta)} \simeq \eta$. To prove the desired assertion, we therefore have to check that $\anispart{(\pi \otimes \phi)}$ and $\eta$ have the same dimension. Note, however, that $\anispart{(\pi \otimes \phi)}$ is also divisible by $\pi$ by Lemma \ref{LEManisotropicpartofformsdivisiblebyaquasiPfister}, and so both dimensions are divisible by $\mydim{\pi}$. If they were different, we would then have that $\mydim{\anispart{(\pi \otimes \phi)}} \leq \mydim{\eta} - \mydim{\pi}$. But $\phi$ is a subform of $\anispart{(\pi \otimes \phi)}$ (Lemma \ref{LEMsubformoftensorproduct}), and has dimension $>\mydim{\eta} - \mydim{\pi}$ by hypothesis, so this is impossible. Thus, (2) holds.

Conversely, suppose that (2) holds. As noted above, $\anispart{(\pi \otimes \phi)}$ is divisible by $\pi$, so the same is true of $\eta$. Since $\mydim{\eta} < \mydim{\phi} + \mydim{\pi}$ by hypothesis, showing that (1) holds therefore amounts to showing that $\phi$ is a neighbour of $\eta$. Since $\phi$ is a subform of $\anispart{(\pi \otimes \phi)}$, it is similar to a subform of $\eta$, so what has to be checked is that $\mydim{\phi} > \Izhdim{\eta}$. If $\eta$ is not similar to $\pi$, then Lemma \ref{LEMisotropyindicesofformsdivisiblebyquasiPfisters} (2) tells us that $\witti{1}{\eta}$ is divisible by $\mydim{\pi}$, and so $\mydim{\phi} > \mydim{\eta} - \mydim{\phi} \geq \Izhdim{\phi}$, as desired. Otherwise, $\eta$ is similar to a quasi-Pfister form of dimension $\leq 2^{n+1}$, and so $\Izhdim{\eta} \leq 2^n$ by Lemma \ref{LEMcharacterizationofquasiPfisterneighbours}, again yielding the desired inequality. This proves the lemma. \end{proof} \end{lemma}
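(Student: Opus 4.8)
The plan is to prove the two implications separately, using throughout two elementary facts. First, since $\pi \otimes \phi$ is divisible by $\pi$ by construction, $\anispart{(\pi \otimes \phi)}$ is divisible by $\pi$ by Lemma \ref{LEManisotropicpartofformsdivisiblebyaquasiPfister}, and in particular $\mydim{\anispart{(\pi \otimes \phi)}}$ is a multiple of $\mydim{\pi}$. Second, $\phi \subset \anispart{(\pi \otimes \phi)}$ by Lemma \ref{LEMsubformoftensorproduct}, applied with the scalar $1 \in D(\pi)$. I will also use that, $\pi$ being an anisotropic quasi-Pfister form and hence isomorphic to its own norm form, Lemma \ref{LEMdivisibilitybyquasiPfister} applied with $\psi = \pi$ characterizes divisibility of an anisotropic form $\tau$ by $\pi$ through the isomorphism $\anispart{(\pi \otimes \tau)} \simeq \tau$.

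For $(1) \Rightarrow (2)$, I would first replace $\eta$ by a similar form so that $\phi \subset \eta$; this affects neither the two conditions in (2) nor the standing hypothesis that $\eta$ is divisible by $\pi$. Then $D(\pi \otimes \phi) \subseteq D(\pi \otimes \eta)$, so Lemma \ref{LEMsubformtheorem} gives $\anispart{(\pi \otimes \phi)} \subset \anispart{(\pi \otimes \eta)} \simeq \eta$, the last isomorphism expressing the divisibility of $\eta$ by $\pi$. Hence $\mydim{\anispart{(\pi \otimes \phi)}} \leq \mydim{\eta} < \mydim{\phi} + \mydim{\pi}$, the final inequality being the defining property of a strong $\pi$-neighbour (Definition \ref{DEFstrongpineighbour}); this is the first assertion of (2). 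For the second, I would combine $\mydim{\phi} \leq \mydim{\anispart{(\pi \otimes \phi)}} \leq \mydim{\eta}$ with the observations that $\mydim{\anispart{(\pi \otimes \phi)}}$ and $\mydim{\eta}$ are both multiples of $\mydim{\pi}$ and that $\mydim{\phi} > \mydim{\eta} - \mydim{\pi}$; these force $\mydim{\anispart{(\pi \otimes \phi)}} = \mydim{\eta}$, so the inclusion $\anispart{(\pi \otimes \phi)} \subset \eta$ is in fact an isomorphism.

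For $(2) \Rightarrow (1)$, divisibility of $\eta$ by $\pi$ follows from the similarity of $\eta$ to $\anispart{(\pi \otimes \phi)}$ together with the first fact above, and $\mydim{\eta} < \mydim{\phi} + \mydim{\pi}$ is given, so it only remains to show that $\phi$ is a neighbour of $\eta$. Since $\phi$ is a subform of $\anispart{(\pi \otimes \phi)}$, which is similar to $\eta$, the form $\phi$ is similar to a subform of $\eta$; the point still to be checked is the inequality $\mydim{\phi} > \Izhdim{\eta}$, which I would prove by distinguishing two cases, with $r$ denoting the foldness of $\pi$. If $\eta$ is similar to $\pi$, then $\eta$ is similar to an $r$-fold anisotropic quasi-Pfister form with $r \leq n+1$, so $\Izhdim{\eta} = 2^{r-1} \leq 2^n < \mydim{\phi}$ by Example \ref{EXsplittingofquasiPfister}. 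If $\eta$ is not similar to $\pi$, then $\mathrm{lndeg}(\eta) > r$ (otherwise $\mydim{\eta}$, being a multiple of $\mydim{\pi}$ and at most $\mydim{\normform{\eta}} = \mydim{\pi}$ by Lemma \ref{LEMuniversalpropertyofthenormform}, would equal $\mydim{\pi}$, forcing $\eta$ to be similar to $\pi$); hence Lemma \ref{LEMisotropyindicesofformsdivisiblebyquasiPfisters}(3) applies with $j = 1$ and shows that $\mydim{\pi}$ divides $\witti{1}{\eta}$. Since $\witti{1}{\eta} \geq 1$ (as $\mydim{\eta} \geq 2$), we obtain $\witti{1}{\eta} \geq \mydim{\pi}$, whence $\Izhdim{\eta} = \mydim{\eta} - \witti{1}{\eta} \leq \mydim{\eta} - \mydim{\pi} < \mydim{\phi}$, as needed.

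I expect the only genuinely delicate point to be this last estimate, and in particular the verification that $\mydim{\pi}$ divides $\witti{1}{\eta}$ when $\eta$ is not similar to $\pi$: this hinges on knowing that $\mathrm{lndeg}(\eta)$ strictly exceeds the foldness of $\pi$ in that case, which is exactly what gives Lemma \ref{LEMisotropyindicesofformsdivisiblebyquasiPfisters}(3) content at the first step of the Knebusch tower. The remaining steps are routine manipulations of value sets via Lemma \ref{LEMsubformtheorem}, and the hypothesis that $\pi$ has foldness at most $n+1$ is used only to make the quasi-Pfister case of the bound on $\Izhdim{\eta}$ go through.
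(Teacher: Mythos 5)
Your proof is correct and takes essentially the same route as the paper's: the same reduction to $\phi \subset \eta$ and the inclusion $\anispart{(\pi \otimes \phi)} \subset \anispart{(\pi \otimes \eta)} \simeq \eta$ with a divisibility-by-$\mydim{\pi}$ dimension count for $(1) \Rightarrow (2)$, and the same two-case analysis (according to whether $\eta$ is similar to $\pi$) for $(2) \Rightarrow (1)$. The only real difference is cosmetic: where the paper cites Lemma \ref{LEMisotropyindicesofformsdivisiblebyquasiPfisters} (2), you use part (3) with $j=1$ after explicitly checking that $\mathrm{lndeg}(\eta) > r$, which is a slightly more careful justification of the same divisibility of $\witti{1}{\eta}$ by $\mydim{\pi}$.
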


In particular, in the situation of Definition \ref{DEFstrongpineighbour}, the form $\eta$ is determined up to a scalar by $\pi$ and any of its strong $\pi$-neighbours. We therefore introduce the following terminology:

\begin{definition} Let $\phi$ be an anisotropic quasilinear quadratic form of dimension $\geq 2$ over $F$, and let $\pi$ be an anisotropic quasi-Pfister form over $F$. If $\phi$ is a strong $\pi$-neighbour of some anisotropic quasilinear quadratic form of dimension $\geq 2$ over $F$, then we shall say that $\phi$ is a \emph{strong $\pi$-neighbour}. 
\end{definition}

The preceding discussion then amounts to:

\begin{lemma} \label{LEMconcretedescriptionofpineighbour} Let $\phi$ be an anisotropic quasilinear quadratic form of dimension $\geq 2$ over $F$, and let $\pi$ be an anisotropic quasi-Pfister form over $F$. Then $\phi$ is a strong $\pi$-neighbour if and only if $\mydim{\anispart{(\pi \otimes \phi)}} < \mathrm{min}(\mydim{\phi} + \mydim{\pi},2\mydim{\phi})$. Moreover, in this case, $\phi$ is a strong $\pi$-neighbour of $\anispart{(\pi \otimes \phi)}$.
\end{lemma}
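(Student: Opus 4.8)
The statement to prove is Lemma~\ref{LEMconcretedescriptionofpineighbour}, which characterizes when $\phi$ is a strong $\pi$-neighbour of \emph{some} anisotropic form $\eta$ of dimension $\geq 2$, purely in terms of the dimension of $\anispart{(\pi \otimes \phi)}$. The plan is to reduce this to Lemma~\ref{LEMambientformofstrongpineighbour}, whose hypothesis requires the foldness of $\pi$ to be at most $n+1$, where $2^n < \mydim{\phi} \leq 2^{n+1}$. So the first task is to dispose of the case where $\pi$ has foldness $\geq n+2$, i.e.\ $\mydim{\pi} \geq 2^{n+2}$. In that case, $2\mydim{\phi} \leq 2^{n+2} \leq \mydim{\pi} < \mydim{\phi}+\mydim{\pi}$, so $\min(\mydim{\phi}+\mydim{\pi}, 2\mydim{\phi}) = 2\mydim{\phi}$; I claim both sides of the asserted equivalence fail. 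On the one hand, $\phi \subset \anispart{(\pi \otimes \phi)}$ by Lemma~\ref{LEMsubformoftensorproduct} and the latter is divisible by $\pi$ by Lemma~\ref{LEManisotropicpartofformsdivisiblebyaquasiPfister}, so its dimension is a positive multiple of $\mydim{\pi} \geq 2^{n+2} > 2\mydim{\phi}$ (it is nonzero since $\phi$ is), forcing $\mydim{\anispart{(\pi \otimes \phi)}} \geq 2\mydim{\phi}$; the inequality on the right fails. On the other hand, if $\phi$ were a strong $\pi$-neighbour of some $\eta$, then by Remarks~\ref{REMSstrongpineighbours}(1) the foldness of $\pi$ would be at most $n+1$, a contradiction. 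Thus the equivalence holds (vacuously, both sides false) when $\mathrm{foldness}(\pi) \geq n+2$.

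Next, assume $\pi$ has foldness $\leq n+1$, so Lemma~\ref{LEMambientformofstrongpineighbour} applies. Observe that in this regime $\mydim{\pi} \leq 2^{n+1} < 2\mydim{\phi}$, so $\min(\mydim{\phi}+\mydim{\pi}, 2\mydim{\phi}) = \mydim{\phi}+\mydim{\pi}$, and the condition on the right of the asserted equivalence is exactly $\mydim{\anispart{(\pi \otimes \phi)}} < \mydim{\phi}+\mydim{\pi}$. Now I run both directions. If $\phi$ is a strong $\pi$-neighbour of some anisotropic $\eta$ of dimension $\geq 2$, then by Lemma~\ref{LEMambientformofstrongpineighbour} $(1)\Rightarrow(2)$ we get $\mydim{\anispart{(\pi \otimes \phi)}} < \mydim{\phi}+\mydim{\pi}$, which is the stated inequality. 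Conversely, suppose $\mydim{\anispart{(\pi \otimes \phi)}} < \mydim{\phi}+\mydim{\pi}$. Set $\eta := \anispart{(\pi \otimes \phi)}$; this is anisotropic by construction, and it has dimension $\geq 2$ since it contains $\phi$ (Lemma~\ref{LEMsubformoftensorproduct}) and $\mydim{\phi} \geq 2$. The pair $(\pi, \eta)$ then satisfies condition (2) of Lemma~\ref{LEMambientformofstrongpineighbour} trivially ($\eta$ is similar---indeed equal---to $\anispart{(\pi \otimes \phi)}$ and the dimension inequality is our hypothesis), so by $(2)\Rightarrow(1)$ there, $\phi$ is a strong $\pi$-neighbour of $\eta = \anispart{(\pi \otimes \phi)}$. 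In particular $\phi$ is a strong $\pi$-neighbour, and the last sentence of the lemma is established simultaneously.

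The only genuine subtlety is bookkeeping the two ranges of $\mydim{\pi}$ so that the $\min$ in the statement collapses correctly: for foldness $\leq n+1$ it is $\mydim{\phi}+\mydim{\pi}$, matching Lemma~\ref{LEMambientformofstrongpineighbour}, while for foldness $\geq n+2$ it is $2\mydim{\phi}$, and the point is that $\mydim{\anispart{(\pi\otimes\phi)}}$, being a nonzero multiple of $\mydim{\pi}$, is automatically $\geq 2^{n+2} \geq 2\mydim{\phi}$ in that case, so the inequality is never satisfied---consistent with the fact (from Remarks~\ref{REMSstrongpineighbours}(1)) that no strong $\pi$-neighbour structure can exist. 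There is no real obstacle here; the lemma is essentially a restatement of Lemma~\ref{LEMambientformofstrongpineighbour} together with the elementary divisibility observation ruling out large $\pi$.
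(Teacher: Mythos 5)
Your overall route is the intended one (the paper offers this lemma as a direct repackaging of Lemma~\ref{LEMambientformofstrongpineighbour} together with Remarks~\ref{REMSstrongpineighbours}), and your treatment of the large-foldness case and of the ``if'' direction is fine. But there is a faulty step in your bookkeeping of the minimum in the case of foldness $\leq n+1$: from $\mydim{\pi} \leq 2^{n+1} < 2\mydim{\phi}$ you conclude that $\mathrm{min}(\mydim{\phi}+\mydim{\pi},2\mydim{\phi}) = \mydim{\phi}+\mydim{\pi}$, which is a non sequitur; the minimum equals $\mydim{\phi}+\mydim{\pi}$ precisely when $\mydim{\pi} \leq \mydim{\phi}$, and when $\pi$ is $(n+1)$-fold while $2^n < \mydim{\phi} < 2^{n+1}$ one has $2\mydim{\phi} < \mydim{\phi}+\mydim{\pi}$. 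In that regime your ``only if'' direction is incomplete: Lemma~\ref{LEMambientformofstrongpineighbour} $(1)\Rightarrow(2)$ only yields $\mydim{\anispart{(\pi \otimes \phi)}} < \mydim{\phi}+\mydim{\pi}$, whereas the lemma demands the stronger bound $\mydim{\anispart{(\pi \otimes \phi)}} < 2\mydim{\phi}$, which you never establish.

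The repair is short, but it needs an extra observation. Either: since $\anispart{(\pi \otimes \phi)}$ is divisible by $\pi$ (Lemma~\ref{LEManisotropicpartofformsdivisiblebyaquasiPfister}), its dimension is a positive multiple of $2^{n+1}$, and being $< \mydim{\phi} + 2^{n+1} < 2^{n+2}$ it must equal $2^{n+1}$, which is $< 2\mydim{\phi}$ because $\mydim{\phi} > 2^n$; or, more directly, since $\phi$ is a neighbour of $\eta$ and $\eta$ is similar to $\anispart{(\pi \otimes \phi)}$, one has $\mydim{\phi} > \Izhdim{\eta} \geq \tfrac{1}{2}\mydim{\eta}$ by Lemma~\ref{LEMIzhbound}, giving $\mydim{\anispart{(\pi \otimes \phi)}} < 2\mydim{\phi}$ uniformly (this also makes your earlier case split unnecessary). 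A cosmetic point in the large-foldness case: the chain ``$\geq 2^{n+2} > 2\mydim{\phi}$'' should read ``$\geq 2^{n+2} \geq 2\mydim{\phi}$'' since $\mydim{\phi}$ may equal $2^{n+1}$; the conclusion you draw from it is still correct.
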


Before proceeding, we also need to note the following:

\begin{lemma} \label{LEMpineighbourisotropicoverpi} Let $\phi$ and $\pi$ be anisotropic quasilinear quadratic forms of dimension $\geq 2$ over $F$ with $\pi$ being quasi-Pfister. If $\phi$ is a strong $\pi$-neighbour, then $\phi_{F(\pi)}$ is isotropic and $\pi \subset \normform{\phi}$.
\begin{proof} Let $\pi' \subset \pi$ be such that $\pi = \langle 1 \rangle \perp \pi'$. Set $K: = F(V_{\pi})$, and let $\pi'(X) \in K$ be the generic value of $\pi'$. Then $\pfister{\pi'(X)} \subset \pi_K$, and so $\anispart{(\pfister{\pi'(X)} \otimes \phi_K)} \subset \anispart{(\pi_K \otimes \phi_K)}$. By Lemma \ref{LEMconcretedescriptionofpineighbour}, it follows that $\mydim{\anispart{(\pfister{\pi'(X)} \otimes \phi_K)}} < 2\mydim{\phi}$, and so $\phi_{F(\pi)}$ is isotropic by Lemma \ref{LEMisotropyoverfunctionfieldsofquadrics}. By Lemma \ref{LEMdropingofnormdegree}, we must then also have that $\pi \subset \normform{\phi}$.  \end{proof} \end{lemma}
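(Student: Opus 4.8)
The plan is to unwind the definition of ``strong $\pi$-neighbour'' via Lemma \ref{LEMconcretedescriptionofpineighbour}, and then reduce the two assertions to the concrete description of $F(\pi)$ given at the end of \S \ref{SUBSECbasics} together with Lemma \ref{LEMisotropyoverfunctionfieldsofquadrics} and Lemma \ref{LEMdropingofnormdegree}. First I would write $\pi \simeq \form{1} \perp \pi'$ for a subform $\pi' \subset \pi$ (possible since $\pi$ is quasi-Pfister, hence represents $1$), set $K := F(V_{\pi'})$, and let $\pi'(X) \in K$ be the generic value of $\pi'$, so that $F(\pi)$ is $F$-isomorphic to $K(\sqrt{\pi'(X)})$. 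Since $\pfister{\pi'(X)}$ is a subform of $\pi_K$ containing $\form{1}$, Lemma \ref{LEMsubformoftensorproduct} applied over $K$ shows $\anispart{(\pfister{\pi'(X)} \otimes \phi_K)}$ embeds into $\anispart{(\pi_K \otimes \phi_K)}$, so its dimension is at most $\mydim{\anispart{(\pi \otimes \phi)}}$ (the norm degree and anisotropic part of $\pi \otimes \phi$ are unchanged under the purely transcendental extension $K/F$ by Lemma \ref{LEManisotropyoverseparable}).

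Next I would invoke Lemma \ref{LEMconcretedescriptionofpineighbour}: the hypothesis that $\phi$ is a strong $\pi$-neighbour gives $\mydim{\anispart{(\pi \otimes \phi)}} < 2\mydim{\phi}$. Combining with the embedding above yields $\mydim{\anispart{(\pfister{\pi'(X)} \otimes \phi_K)}} < 2\mydim{\phi_K}$. By Lemma \ref{LEMisotropyoverfunctionfieldsofquadrics} (applied with $\phi_K$ in place of $\phi$ and $\pi_K$ in place of $\psi$), $2\witti{0}{(\phi_K)_{K(\pi)}} = \witti{0}{\pfister{\pi'(X)} \otimes \phi_K} = \mydim{\phi_K} - \mydim{\anispart{(\pfister{\pi'(X)} \otimes \phi_K)}} > 0$, so $\phi_{F(\pi)}$ is isotropic. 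Finally, since $\phi$ is anisotropic (of dimension $\geq 2$) and becomes isotropic over $F(\pi)$, Lemma \ref{LEMdropingofnormdegree} applies to the pair $(\phi, \pi)$ and gives $\normform{\pi} \subset \normform{\phi}$; but $\pi$ is quasi-Pfister, so $\normform{\pi} \simeq \anispart{\pi} = \pi$ by Lemma \ref{LEMsubformtheorem} (or the remark in \S \ref{SUBSECquasiPfisters}), whence $\pi \subset \normform{\phi}$.

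I do not expect a serious obstacle here; this is a short bookkeeping argument. The only point requiring mild care is the passage from $F$ to $K = F(V_{\pi'})$: one must note that $K/F$ is purely transcendental, hence separable, so by Lemma \ref{LEManisotropyoverseparable} the forms $\phi$, $\pi$, and $\pi \otimes \phi$ all retain their anisotropic parts (and $\phi$ in particular stays anisotropic over $K$, which is needed to make sense of ``$\phi_K$ is a strong $\pi_K$-neighbour'' and to invoke Lemma \ref{LEMisotropyoverfunctionfieldsofquadrics}). Everything else is a direct chain of the lemmas already established.
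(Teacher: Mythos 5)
Your argument is correct and takes essentially the same route as the paper's own proof: pass to $K = F(V_{\pi'})$, compare $\anispart{(\pfister{\pi'(X)} \otimes \phi_K)}$ with $\anispart{(\pi \otimes \phi)}$, use Lemma \ref{LEMconcretedescriptionofpineighbour} together with Lemma \ref{LEMisotropyoverfunctionfieldsofquadrics} to get isotropy of $\phi_{F(\pi)}$, and finish with Lemma \ref{LEMdropingofnormdegree} and $\normform{\pi} \simeq \pi$. The only blemish is a slip in your displayed identity: $\witti{0}{\pfister{\pi'(X)} \otimes \phi_K} = 2\mydim{\phi_K} - \mydim{\anispart{(\pfister{\pi'(X)} \otimes \phi_K)}}$, not $\mydim{\phi_K} - \mydim{\anispart{(\pfister{\pi'(X)} \otimes \phi_K)}}$; with that corrected, positivity follows from your inequality exactly as intended.
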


\subsection{The Invariants $P_r$} \label{SUBSECPrInvariants} Motivated by the above discussion, we introduce a new family of invariants of anisotropic quasilinear quadratic forms as follows:

\begin{definition} Let $\phi$ be an anisotropic quasilinear quadratic form over $F$. For each non-negative integer $r$, we define $P_r(\phi)$ to be the set consisting of all isomorphism classes of anisotropic $r$-fold quasi-Pfister forms $\pi$ over $F$ for which $\mydim{\anispart{(\pi \otimes \phi)}} < \mydim{\phi} + 2^r$. 
\end{definition}

\begin{remark} \label{REMPrdimension} In the situation of the definition, let $y_r$ be the largest integer for which $\mydim{\phi} > y_r2^r$. If $\pi$ is an anisotropic $r$-fold quasi-Pfister form over $F$, then $\anispart{(\pi \otimes \phi)}$ is divisible by $\pi$ (Lemma \ref{LEManisotropicpartofformsdivisiblebyaquasiPfister}), and hence has dimension divisible by $2^r$. Since $\phi$ is a subform of $\anispart{(\pi \otimes \phi)}$ (Lemma \ref{LEMsubformoftensorproduct}), it follows that $[\pi] \in P_r(\phi)$ if and only if $\mydim{\anispart{(\pi \otimes \phi)}} < (y_r+2)2^r$, in which case we must then have $\mydim{\anispart{(\pi \otimes \phi)}} = (y_r+1)2^r$.
\end{remark}

By definition, $P_0$ is the constant invariant $\lbrace [\form{1}] \rbrace$. We are therefore interested in the case where $r \geq 1$. We have here the following basic observations:

\begin{lemma} \label{LEMbasicfactsonPr} Let $\phi$ be an anisotropic quasilinear quadratic form of dimension $\geq 2$ over $F$, let $n$ be the unique integer for which $2^n < \mydim{\phi} \leq 2^{n+1}$, and let $r$ be a positive integer.
\begin{enumerate} \item If $r \geq n+1$, and $\pi$ is an anisotropic $r$-fold quasi-Pfister form over $F$, then $[\pi] \in P_r(\phi)$ if and only if $\normform{\phi} \subset \pi$. In particular, $P_r(\phi) = \emptyset$ for all integers $r \in [n+1,\mathrm{lndeg}(\phi)-1]$ and $P_{\mathrm{lndeg}(\phi)}(\phi) = \lbrace [\normform{\phi}] \rbrace$. 
\item If $r \leq n+1$, and $\pi$ is an anisotropic $r$-fold quasi-Pfister form over $F$, then $[\pi] \in P_r(\phi)$ if and only if $\phi$ is a strong $\pi$-neighbour, in which case $\pi \subset \normform{\phi}$.
\item $P_{n+1}(\phi) \neq \emptyset$ if and only if $\phi$ is a quasi-Pfister neighbour, in which case $P_{n+1}(\phi) = \lbrace [\normform{\phi}] \rbrace$.
\item $P_n(\phi) \neq \emptyset$ if and only if $\phi$ is a quasi-Pfister neighbour, in which case $P_n(\phi) = \lbrace [\pi]\;|\; \pi \text{ is an $n$-fold quasi-Pfister subform of }\normform{\phi} \rbrace$. 
\end{enumerate}
\begin{proof} (1) Suppose that $r \geq n+1$, and let $\pi$ be an anisotropic $r$-fold quasi-Pfister form over $F$. By Remark \ref{REMPrdimension}, we then have that $[\pi] \in P_r(\phi)$ if and only if $\mydim{\anispart{(\pi \otimes \phi)}} = 2^r$. Since $\anispart{(\pi \otimes \phi)}$ is divisible by $\pi$ (Lemma \ref{LEManisotropicpartofformsdivisiblebyaquasiPfister}) this holds if and only if $\anispart{(\pi \otimes \phi)}$ is similar to $\pi$. Now $\phi$ is similar to a subform of $\anispart{(\pi \otimes \phi)}$, so if this holds, then we must have that $\normform{\phi} \subset \pi$ by Lemma \ref{LEMuniversalpropertyofthenormform}. Conversely, if $\normform{\phi} \subset \pi$, then $\phi$ is similar to a subform of $\pi = \simform{\pi}$, and so $\anispart{(\pi \otimes \phi)}$ is similar to $\pi$ by Lemma \ref{LEMdivisibilitybyquasiPfister}. 

(2) The first statement is Lemma \ref{LEMconcretedescriptionofpineighbour}, and the second then holds by Lemma \ref{LEMpineighbourisotropicoverpi}.

(3) This follows from (1) and Lemma \ref{LEMcharacterizationofquasiPfisterneighbours}. 

(4) Let $\pi$ be an $n$-fold quasi-Pfister subform of $\normform{\phi}$. By (2), proving (4) amounts to showing that $\phi$ is a strong $\pi$-neighbour if and only if $\phi$ is a quasi-Pfister neighbour. But if $\phi$ is a strong $\pi$-neighbour of an anisotropic form $\eta$ over $F$, then we must have that $\mydim{\eta} = 2^{n+1}$ by Remark \ref{REMSstrongpineighbours} (1). Since $\eta$ is divisible by $\pi$, it must then be similar to an $(n+1)$-fold quasi-Pfister form, and so $\phi$ is a quasi-Pfister neighbour. Conversely, if $\phi$ is a quasi-Pfister neighbour, then $\mydim{\normform{\phi}} = 2^{n+1}$ (Lemma \ref{LEMcharacterizationofquasiPfisterneighbours}). Since $\pi$ and $\phi$ are similar to subforms of $\normform{\phi}$, and since $\anispart{(\normform{\phi} \otimes \normform{\phi})} \simeq \normform{\phi}$ (Lemma \ref{LEMdivisibilitybyquasiPfister}), we then have that 
$$ \mydim{\anispart{(\pi \otimes \phi)}} \leq \mydim{\normform{\phi}} = 2^{n+1} < \mathrm{min}(\mydim{\phi}  + \mydim{\pi}, 2\mydim{\phi}), $$
and so $\phi$ is a strong $\pi$-neighbour by Lemma \ref{LEMconcretedescriptionofpineighbour}. 
\end{proof}
\end{lemma}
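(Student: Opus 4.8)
The plan is to convert each of the four assertions into a dimension count via Remark~\ref{REMPrdimension} and then interpret that count using the structural results of \S\ref{SUBSECnormform} and \S\ref{SUBSECstrongpineighbours}. Recall from Remark~\ref{REMPrdimension} that if $y_r$ is the largest integer with $\mydim{\phi} > y_r2^r$, then $[\pi] \in P_r(\phi)$ if and only if $\mydim{\anispart{(\pi \otimes \phi)}} = (y_r+1)2^r$. The hypothesis $2^n < \mydim{\phi} \leq 2^{n+1}$ forces $y_r = 0$ when $r \geq n+1$ and $y_r = 1$ when $r = n$, so I would treat the regimes $r \geq n+1$ (part~(1)) and $r \leq n+1$ (part~(2)) separately, and then deduce parts~(3) and~(4) from them.

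For part~(1): since $y_r = 0$, having $[\pi] \in P_r(\phi)$ means $\mydim{\anispart{(\pi \otimes \phi)}} = 2^r = \mydim{\pi}$, and as $\anispart{(\pi \otimes \phi)}$ is divisible by $\pi$ (Lemma~\ref{LEManisotropicpartofformsdivisiblebyaquasiPfister}) this is equivalent to $\anispart{(\pi \otimes \phi)}$ being similar to $\pi$. In the forward direction, $\phi$ is a subform of $\anispart{(\pi \otimes \phi)}$ (Lemma~\ref{LEMsubformoftensorproduct}), hence similar to a subform of $\pi$, so $\normform{\phi} \subset \pi$ by Lemma~\ref{LEMuniversalpropertyofthenormform}; conversely, if $\normform{\phi} \subset \pi = \simform{\pi}$ then $\phi$ is similar to a subform of $\pi$, and Lemma~\ref{LEMdivisibilitybyquasiPfister} forces $\anispart{(\pi \otimes \phi)}$ to be similar to $\pi$. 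The two ``in particular'' claims are then pure dimension count: $\mydim{\normform{\phi}} = 2^{\mathrm{lndeg}(\phi)}$ does not embed in the $2^r$-dimensional $\pi$ when $r < \mathrm{lndeg}(\phi)$, while $\normform{\phi} \subset \pi$ with both $\mathrm{lndeg}(\phi)$-fold forces $\pi \simeq \normform{\phi}$.

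For part~(2): the idea is to recognise the defining inequality $\mydim{\anispart{(\pi \otimes \phi)}} < \mydim{\phi} + 2^r$ of $P_r(\phi)$ as the criterion of Lemma~\ref{LEMconcretedescriptionofpineighbour}, namely $\mydim{\anispart{(\pi \otimes \phi)}} < \mathrm{min}(\mydim{\phi} + \mydim{\pi}, 2\mydim{\phi})$, for $\phi$ to be a strong $\pi$-neighbour. When $r \leq n$ we have $\mydim{\pi} = 2^r \leq 2^n < \mydim{\phi}$, so $\mydim{\phi} + 2^r < 2\mydim{\phi}$ and the two conditions are literally the same. When $r = n+1$ one has to argue a bit: using that $\mydim{\anispart{(\pi \otimes \phi)}}$ is a positive multiple of $2^{n+1}$ that is at least $\mydim{\phi} > 2^n$, both inequalities are seen to reduce to $\mydim{\anispart{(\pi \otimes \phi)}} = 2^{n+1}$. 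This small boundary reconciliation is the one place that needs genuine care; everything else is a direct citation. Granting the equivalence, the supplementary claim $\pi \subset \normform{\phi}$ is precisely Lemma~\ref{LEMpineighbourisotropicoverpi}.

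Parts~(3) and~(4) are then formal. For~(3): by~(1), $P_{n+1}(\phi) \neq \emptyset$ iff $\normform{\phi}$ embeds in an $(n+1)$-fold quasi-Pfister form; since $\mathrm{lndeg}(\phi) \geq n+1$ always (\S\ref{SUBSECnormform}), this happens iff $\mathrm{lndeg}(\phi) = n+1$, that is, iff $\phi$ is a quasi-Pfister neighbour (Lemma~\ref{LEMcharacterizationofquasiPfisterneighbours}), and then $\mydim{\normform{\phi}} = 2^{n+1}$ makes the embedding an isomorphism, so $P_{n+1}(\phi) = \lbrace [\normform{\phi}] \rbrace$. For~(4): by~(2), $[\pi] \in P_n(\phi)$ iff $\phi$ is a strong $\pi$-neighbour, and I would show this is equivalent to ``$\phi$ is a quasi-Pfister neighbour and $\pi$ is an $n$-fold quasi-Pfister subform of $\normform{\phi}$''. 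The forward direction uses Remark~\ref{REMSstrongpineighbours}(1) to pin the dimension of the ambient form to $(y_n+1)2^n = 2^{n+1}$, whence it is similar to an $(n+1)$-fold quasi-Pfister form with $\phi$ as a neighbour; the reverse direction follows by scaling $\phi$ into $\normform{\phi}$, noting that $D(\pi)D(\phi) \subseteq D(\normform{\phi})$ gives $\mydim{\anispart{(\pi \otimes \phi)}} \leq 2^{n+1} < \mathrm{min}(\mydim{\phi} + 2^n, 2\mydim{\phi})$, and applying Lemma~\ref{LEMconcretedescriptionofpineighbour} again. Non-emptiness when $\phi$ is a quasi-Pfister neighbour is clear, since the $(n+1)$-fold form $\normform{\phi}$ has $n$-fold quasi-Pfister subforms. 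The only real obstacle in the whole argument is the dimension bookkeeping flagged above --- the $r = n+1$ case of~(2) and the parallel use of Remark~\ref{REMSstrongpineighbours}(1) in~(4); the rest reduces to lemmas already established.
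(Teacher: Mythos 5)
Your proposal is correct and follows essentially the same route as the paper: part (1) via Remark \ref{REMPrdimension}, divisibility of $\anispart{(\pi\otimes\phi)}$ by $\pi$, and Lemmas \ref{LEMuniversalpropertyofthenormform} and \ref{LEMdivisibilitybyquasiPfister}; part (2) by matching the $P_r$ inequality with the criterion of Lemma \ref{LEMconcretedescriptionofpineighbour} plus Lemma \ref{LEMpineighbourisotropicoverpi}; and parts (3)--(4) deduced from these together with Remark \ref{REMSstrongpineighbours}(1) and Lemma \ref{LEMcharacterizationofquasiPfisterneighbours}. The only difference is cosmetic: you spell out the $r=n+1$ boundary reconciliation in (2), which the paper leaves implicit in its citation of Lemma \ref{LEMconcretedescriptionofpineighbour}.
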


\begin{example} \label{EXPrinvariantsforPfister} Let $n$ be a non-negative integer. If $\phi$ is an anisotropic $n$-fold quasi-Pfister form over $F$, then $P_r(\phi) = \lbrace [\pi]\;|\; \pi \text{ is an $r$-fold quasi-Pfister subform of } \phi \rbrace$ for all integers $r \in [0,n]$. Indeed, $\phi$ is divisible by all its quasi-Pfister subforms, so the claim holds by part (2) of Lemma \ref{LEMbasicfactsonPr}. 
\end{example}

Next, we note that if $P_r(\phi)$ is non-empty for some anisotropic quasilinear quadratic form $\phi$ of dimension $\geq 2$ over $F$ and positive integer $r<\mathrm{lndeg}(\phi)$, then the basic stable birational invariants of $\phi$ are constrained in a non-trivial way:

\begin{lemma} \label{LEMdiscreteconstraintsfromnontrivialityofPr} Let $\phi$ be an anisotropic quasilinear quadratic form of dimension $\geq 2$ over $F$, let $r$ be a positive integer $<\mathrm{lndeg}(\phi)$, and let $y_r$ be the largest non-negative integer for which $\mydim{\phi} > y_r2^r$. If $P_r(\phi) \neq \emptyset$, then:

\begin{enumerate} \item $\mathrm{lndeg}(\phi) \in [r,r+y_r]$;
\item $\Izhdim{\phi}$ is divisible by $2^r$;
\item $\witti{j}{\phi}$ is divisible by $2^r$ for all $j \in [2,\mathrm{lndeg}(\phi)-r]$. \end{enumerate}

\begin{proof} Let $\pi$ be an anisotropic $r$-fold quasi-Pfister form over $F$ such that $[\pi] \in P_r(\phi)$, and set $\eta: = \anispart{(\pi \otimes \phi)}$. By Lemma \ref{LEMbasicfactsonPr} (2), $\phi$ is a strong $\pi$-neighbour of $\eta$. In particular, we have $\phi \stb \eta$.

(1) Since $\phi \stb \eta$, we have $\mathrm{lndeg}(\phi) = \mathrm{lndeg}(\eta)$ by Lemma \ref{LEMknownstablebirationalinvariants} (2). But since $\eta$ is divisible by $\pi$, we have $\mathrm{lndeg}(\eta) \in [r,r+y_r]$ by Lemma \ref{LEMisotropyindicesofformsdivisiblebyquasiPfisters} (1). 

(2) Since $\phi \stb \eta$, we have $\Izhdim{\phi} = \Izhdim{\eta}$ by Lemma \ref{LEMknownstablebirationalinvariants} (3). Now $\eta$ is divisible by $\pi$, but not similar to $\pi$ (since $r< \mathrm{lndeg}(\phi)$, we have $\mydim{\pi} < \mydim{\phi}$ by Lemma \ref{LEMbasicfactsonPr}), and so $\Izhdim{\eta}$ is divisible by $2^r$ by Lemma \ref{LEMisotropyindicesofformsdivisiblebyquasiPfisters} (2). 

(3) Again, since $\phi \stb \eta$, we have $\witti{j}{\phi} = \witti{j}{\eta}$ for all $j \geq 2$ by Lemma \ref{LEMknownstablebirationalinvariants} (4). Applying Lemma \ref{LEMisotropyindicesofformsdivisiblebyquasiPfisters} (3) to $\eta$ then gives the result. 
\end{proof} \end{lemma}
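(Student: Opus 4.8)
The plan is to reduce everything to the study of the auxiliary form $\eta := \anispart{(\pi \otimes \phi)}$ for a well-chosen anisotropic $r$-fold quasi-Pfister form $\pi$ with $[\pi] \in P_r(\phi)$, and then transport information back to $\phi$ via stable birational equivalence. First I would invoke Lemma \ref{LEMbasicfactsonPr}~(2): since $r \leq n+1$ (indeed $r < \mathrm{lndeg}(\phi)$, and $\mathrm{lndeg}(\phi) \leq \mydim{\phi} \leq 2^{n+1}$, though what really matters is that $r<\mathrm{lndeg}(\phi)$ forces $\mydim{\pi}=2^r<\mydim{\phi}$, so we are never in the degenerate range $r\geq n+1$), the hypothesis $[\pi] \in P_r(\phi)$ exactly says that $\phi$ is a strong $\pi$-neighbour, and by definition of strong $\pi$-neighbour (together with Lemma \ref{LEMconcretedescriptionofpineighbour}) $\phi$ is a strong $\pi$-neighbour of $\eta$. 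Since any neighbour is stably birationally equivalent to its ambient form (the remarks preceding Lemma \ref{LEMquasiPfisterneighbours}), we conclude $\phi \stb \eta$.

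Next I would extract the three conclusions one at a time, each time using that $\eta$ is \emph{divisible} by $\pi$ (Lemma \ref{LEManisotropicpartofformsdivisiblebyaquasiPfister}) so that Lemma \ref{LEMisotropyindicesofformsdivisiblebyquasiPfisters} applies to $\eta$, and then using the list of stable birational invariants in Lemma \ref{LEMknownstablebirationalinvariants} to pass from $\eta$ back to $\phi$. For (1): $\mathrm{lndeg}(\phi) = \mathrm{lndeg}(\eta)$ by Lemma \ref{LEMknownstablebirationalinvariants}~(2), and since $\eta$ is divisible by the $r$-fold quasi-Pfister form $\pi$ and $\mydim{\eta} = (y_r+1)2^r$ has the same ``$y_r$'' as $\mydim{\phi}$ (both $\phi$ and $\eta$ lie in the interval $(y_r2^r,(y_r+1)2^r]$ — here one uses Remark \ref{REMPrdimension}), Lemma \ref{LEMisotropyindicesofformsdivisiblebyquasiPfisters}~(1) gives $\mathrm{lndeg}(\eta) \in [r, r+y_r]$. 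For (2): $\Izhdim{\phi} = \Izhdim{\eta}$ by Lemma \ref{LEMknownstablebirationalinvariants}~(3), and $\Izhdim{\eta}$ is divisible by $2^r$ by Lemma \ref{LEMisotropyindicesofformsdivisiblebyquasiPfisters}~(2) — but that part of the lemma only applies when $\eta$ is \emph{not} similar to $\pi$, so I first need to rule this out: $\eta$ similar to $\pi$ would force $\mydim{\pi} = \mydim{\eta} \geq \mydim{\phi}$, contradicting $\mydim{\pi} = 2^r < \mydim{\phi}$ (again from $r < \mathrm{lndeg}(\phi)$ via Lemma \ref{LEMbasicfactsonPr}). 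For (3): $\witti{j}{\phi} = \witti{j}{\eta}$ for all $j \geq 2$ by Lemma \ref{LEMknownstablebirationalinvariants}~(4), and $\witti{j}{\eta}$ is divisible by $2^r$ for all $j \in [1, \mathrm{lndeg}(\eta)-r]$ by Lemma \ref{LEMisotropyindicesofformsdivisiblebyquasiPfisters}~(3); intersecting with $j \geq 2$ and using $\mathrm{lndeg}(\eta) = \mathrm{lndeg}(\phi)$ gives exactly the range $[2, \mathrm{lndeg}(\phi)-r]$.

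The only genuinely delicate point — and what I expect to be the main obstacle if one is not careful — is the bookkeeping around the two edge cases: first, verifying that the hypothesis $r < \mathrm{lndeg}(\phi)$ is doing the work of guaranteeing $\mydim{\pi} < \mydim{\phi}$ (so that $\eta \not\simeq \pi$ up to similarity, which is needed for parts (2) and (3) of Lemma \ref{LEMisotropyindicesofformsdivisiblebyquasiPfisters}); and second, checking that $\mydim{\eta}$ and $\mydim{\phi}$ genuinely share the same $y_r$, i.e. that the largest integer $y$ with $\mydim{\eta} > y2^r$ coincides with $y_r$ — this is immediate from Remark \ref{REMPrdimension}, which pins down $\mydim{\eta} = (y_r+1)2^r$ precisely. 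Everything else is a direct chaining of cited lemmas, and no new calculation is required beyond noting that $\witti{1}{\eta} = \mydim{\eta} - \mydim{\phi_1^{\eta}}$ and the analogous identity for $\phi$ are already built into Lemma \ref{LEMisotropyindicesofformsdivisiblebyquasiPfisters} and Lemma \ref{LEMknownstablebirationalinvariants}.
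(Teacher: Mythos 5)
Your proposal is correct and follows essentially the same route as the paper: form $\eta = \anispart{(\pi \otimes \phi)}$, use Lemma \ref{LEMbasicfactsonPr}~(2) to see $\phi$ is a strong $\pi$-neighbour of $\eta$ (hence $\phi \stb \eta$), then transfer $\mathrm{lndeg}$, $\Izhdim{}$ and the higher indices via Lemma \ref{LEMknownstablebirationalinvariants} and apply Lemma \ref{LEMisotropyindicesofformsdivisiblebyquasiPfisters} to $\eta$. Your extra checks (that $\eta$ is not similar to $\pi$ since $2^r<\mydim{\phi}$, and that $\mydim{\eta}=(y_r+1)2^r$ gives the same $y_r$) are exactly the points the paper relies on, stated slightly more explicitly.
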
  

In particular:

\begin{corollary} \label{CORPremptyforgenericforms} Let $\phi$ be an anisotropic quasilinear quadratic form of dimension $\geq 2$ over $F$. If $P_r(\phi) \neq \emptyset$ for some positive integer $r< \mathrm{lndeg}(\phi)$, then $\mathrm{lndeg}(\phi) \leq \frac{\mydim{\phi}+1}{2}$. 
\begin{proof} Since $r< \mathrm{lndeg}(\phi)$, Lemma \ref{LEMbasicfactsonPr} (1) tells us that $r < \mathrm{log}_2(\mydim{\phi})$. For each $k\leq r$, let $y_k$ be the largest integer for which $\mydim{\phi} > y_k2^k$. Then $y_r < y_{r-1} < \cdots < y_1$, and so $r + y_r \leq 1 + y_1 \leq \frac{\mydim{\phi}+1}{2}$. The claim then follows from the first part of Lemma \ref{LEMdiscreteconstraintsfromnontrivialityofPr}.
\end{proof} \end{corollary}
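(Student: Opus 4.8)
The plan is to derive the bound purely combinatorially from the first assertion of Lemma \ref{LEMdiscreteconstraintsfromnontrivialityofPr}, together with an elementary estimate comparing the quantities $y_k$ as $k$ ranges over $\{1,\dots,r\}$.

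First I would pin down the range of $r$. Since $P_r(\phi) \neq \emptyset$ and $r$ is a positive integer with $r < \mathrm{lndeg}(\phi)$, part (1) of Lemma \ref{LEMbasicfactsonPr} rules out $r \in [n+1,\mathrm{lndeg}(\phi)-1]$, where $n$ is the integer with $2^n < \mydim{\phi} \leq 2^{n+1}$; hence $r \leq n$, and in particular $2^k \leq 2^r < \mydim{\phi}$ for every $k \in [1,r]$. For each such $k$, let $y_k$ be the largest integer with $\mydim{\phi} > y_k2^k$; note $y_k \geq 1$ by the previous sentence.

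Next I would show that $y_1 > y_2 > \cdots > y_r$. From $\mydim{\phi} > y_k2^k = (2y_k)2^{k-1}$ one gets $y_{k-1} \geq 2y_k$ by maximality of $y_{k-1}$, and since $y_k \geq 1$ this gives $y_{k-1} \geq y_k + 1$. Thus the $y_k$ form a strictly decreasing chain of positive integers, so $y_1 \geq y_r + (r-1)$, i.e. $r + y_r \leq y_1 + 1$. Since $y_1 \leq \frac{\mydim{\phi}-1}{2}$ by the definition of $y_1$, this yields $r + y_r \leq \frac{\mydim{\phi}+1}{2}$.

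Finally, Lemma \ref{LEMdiscreteconstraintsfromnontrivialityofPr} (1) gives $\mathrm{lndeg}(\phi) \leq r + y_r$, and combining this with the estimate above yields $\mathrm{lndeg}(\phi) \leq \frac{\mydim{\phi}+1}{2}$, as desired. No genuine difficulty arises beyond this bookkeeping: the real substance is concentrated in Lemma \ref{LEMdiscreteconstraintsfromnontrivialityofPr}, whose proof exploits the structure theory of strong $\pi$-neighbours (Lemmas \ref{LEMconcretedescriptionofpineighbour} and \ref{LEMisotropyindicesofformsdivisiblebyquasiPfisters}) together with the stable birational invariance results of \S \ref{SECDelta}.
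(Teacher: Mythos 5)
Your proof is correct and follows essentially the same route as the paper's: use Lemma \ref{LEMbasicfactsonPr} (1) to get $r\leq n$ (so $2^r<\mydim{\phi}$), deduce the strict decrease $y_r<y_{r-1}<\cdots<y_1$ to obtain $r+y_r\leq 1+y_1\leq\frac{\mydim{\phi}+1}{2}$, and conclude via the bound $\mathrm{lndeg}(\phi)\leq r+y_r$ from Lemma \ref{LEMdiscreteconstraintsfromnontrivialityofPr} (1). You merely fill in the elementary steps (the inequality $y_{k-1}\geq 2y_k\geq y_k+1$ and the bound on $y_1$) that the paper leaves implicit.
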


\begin{example} \label{EXPrforgenericforms} If $X_1,\hdots,X_n$ are $n\geq 4$ indeterminates, and $\phi$ is the $n$-dimensional (anisotropic) quasilinear quadratic form $\form{X_1,\hdots,X_n}$ over $F(X_1,\hdots,X_n)$, then $\mathrm{lndeg}(\phi) = n-1 > \frac{n+1}{2}$. By Corollary \ref{CORPremptyforgenericforms}, it follows that $P_r(\phi) = \emptyset$ for all positive integers $r \leq n-2$. 
\end{example}

Next, we observe the following:

\begin{proposition} \label{PROPmultiplyingPrs} Let $\phi$ be an anisotropic quasilinear quadratic form of dimension $\geq 2$ over $F$, let $r \leq s$ be non-negative integers, and let $y_r$ and $y_s$ be the largest integers for which $\mydim{\phi} > y_r 2^r$ and $\mydim{\phi} > y_s 2^s$, respectively. Suppose that $\sigma$ and $\pi$ are anisotropic quasi-Pfister forms over $F$ such that $[\sigma] \in P_{r}(\phi)$ and $[\pi] \in P_{s}(\phi)$. Set $\eta: = \anispart{(\pi \otimes \sigma)}$. 
\begin{enumerate} \item $\eta$ is a $t$-fold quasi-Pfister form for some $t \geq s$.
\item For all integers $i \in [s,t]$, there is an $i$-fold quasi-Pfister subform $\eta \subset \eta$ such that $[\eta] \in P_i(\phi)$. In particular, $P_i(\phi) \neq \emptyset$ for all $i \in [s+1,t]$. 
\item If either of the following holds, then $\sigma \subset \pi$:
\begin{itemize} \item $P_{s+1}(\phi) = \emptyset$;
\item $y_s$ is even and $\mydim{\phi} > (y_{r}+1)2^{r-1} + y_s2^{s-1}$. \end{itemize}
\item If either of the two conditions in $(3)$ holds, or if $y_s$ is odd, then $\anispart{(\sigma \otimes \phi)} \subset \anispart{(\pi \otimes \phi)}$.
\end{enumerate}
\begin{proof} We can assume that $r>0$. Since $D(\eta) = D(\pi \otimes \sigma')$ is a subfield of $F$, $\eta$ is a $t$-fold quasi-Pfister form for some positive integer $t$. Since $\pi \subset \sigma$ (Lemma \ref{LEMsubformoftensorproduct}), we have $t \geq s$, and so (1) holds. To prove (2), (3) and (4), we need the following:

\begin{claim} \label{LEMmultiplyingbybinary} Suppose, in the above situation, that $a \in D(\sigma) \setminus D(\pi)$. Then $[\pfister{a} \otimes \pi] \in P_{s+1}(\phi)$, and $\mydim{\anispart{(\pfister{a} \otimes \pi \otimes \phi)}} \leq (y_{r}+1)2^{r} + 2\big((y_s+1)2^s - \mydim{\phi}\big)$. 
\begin{proof} Set $\tau: = \pfister{a} \otimes \pi$. Since $a \notin D(\pi)$, $\tau$ is an anisotropic $(r+1)$-fold quasi-Pfister form. Now, since $\phi \subset \anispart{(\pi \otimes \phi)}$ (Lemma \ref{LEMsubformoftensorproduct}), we can write $\anispart{(\pi \otimes \phi)} \simeq \phi \perp \psi$ for some form $\psi$ of dimension $(y_s+1)2^s - \mydim{\phi}<2^s$ over $F$. By Lemma \ref{LEMsubformtheorem}, we then have that
$$ \anispart{(\tau \otimes \phi)} \simeq \anispart{(\pfister{a} \otimes \phi \perp \pfister{a} \otimes \psi)} \subset \anispart{(\pfister{a} \otimes \phi)} \perp \anispart{(\pfister{a} \otimes \psi)} \subset \anispart{(\sigma \otimes \phi)} \perp \anispart{(\pfister{a} \otimes \psi)}, $$
and so $\mydim{\anispart{(\tau \otimes \phi)}} \leq (y_{r} + 1)2^{r} + 2\mydim{\psi}$. This proves the inequality in the statement. At the same time, since $r \leq s$, and since $\mydim{\psi} < 2^s$, we get that $\mydim{\anispart{(\tau \otimes \phi)}} \leq (y_s+2)2^s$. By Remark \ref{REMPrdimension}, this implies that $[\tau] \in P_{s+1}(\phi)$, and so the claim is proved.
\end{proof}
\end{claim}

We return to the proof of the proposition:

(2) If $\sigma \subset \pi$, then it follows from Lemma \ref{LEMdivisibilitybyquasiPfister} that $\eta \simeq \pi$, and the statement holds trivially. Suppose now that $\sigma \not \subset \pi$. By Lemma \ref{LEMsubformtheorem}, there then exists an element $a \in D(\sigma) \setminus D(\pi)$. Set $\eta_{s+1} = \pfister{a} \otimes \pi$. By Claim \ref{LEMmultiplyingbybinary}, $[\sigma_{s+1}] \in P_{s+1}(\phi)$. Observe now that since $\pfister{a} \subset \sigma$, we have $\anispart{(\pfister{a} \otimes \sigma)} \simeq \sigma$ (again use Lemma \ref{LEMdivisibilitybyquasiPfister}), and so $\anispart{(\eta_{s+1} \otimes \sigma)} \simeq \anispart{\pi \otimes \pfister{a} \otimes \sigma)} \simeq \anispart{(\pi \otimes \sigma)} = \eta$. Thus, if $t \geq s+2$, then we can repeat the preceding argument with $\sigma_{s+1}$ replacing $\pi$ to find an $(s+2)$-fold quasi-Pfister subform $\eta_{s+2} \subset \eta$ such that $[\eta_{s+2}] \in P_{s+2}(\phi)$. Continuing in this way, we obtain the desired result.

(3) If $P_{s+1}(\phi) = \emptyset$, then it follows from (2) that $t =s$, and so $\eta \simeq \pi$. By Lemma \ref{LEMdivisibilitybyquasiPfister}, we then have that $\sigma \subset \pi$. Suppose now that $\mydim{\phi} > (y_{r} +1)2^{r-1} +y_s2^{s-1}$. Let $a \in D(\sigma)$. If $a \notin D(\pi)$, then Claim \ref{LEMmultiplyingbybinary} tells us that $[\pfister{a} \otimes \pi] \in P_{s+1}(\phi)$, and so $\anispart{(\pfister{a} \otimes \pi \otimes \phi)}$ has dimension divisible by $2^{s+1}$. On the other hand, the second statement of the claim, together with our assumed lower bound for $\mydim{\phi}$, gives that
$$ \mydim{\anispart{(\pfister{a} \otimes \pi \otimes \phi)}} \leq (y_{r} + 1)2^{r} + 2\big((y_s+1)2^s - \mydim{\phi}\big) < (y_s + 2)2^s. $$
But then $(y_s+1)2^s$ must be divisible by $2^{s+1}$, i.e., $y_s$ is odd.

(4) If $\sigma \subset \pi$, there is nothing to show. Suppose now that $\sigma \not \subset \pi$, and let $a \in D(\sigma) \setminus D(\pi)$. By Claim \ref{LEMmultiplyingbybinary}, we then have that $[\pfister{a} \otimes \pi] \in P_{s+1}(\phi)$. If $y_s$ is odd, it then follows that $\mydim{\anispart{(\pfister{a} \otimes \pi \otimes \phi)}} = (y_s+1)2^s$. But $\anispart{(\pfister{a} \otimes \pi \otimes \phi)}$ contains $\anispart{(\pi \otimes \phi)}$ as a subform, and since the latter also has dimension $(y_s+1)2^s$, we then have that $\anispart{(\pfister{a} \otimes \pi \otimes \phi)} \simeq \anispart{( \pi \otimes \phi)}$. By Lemma \ref{LEMsubformoftensorproduct}, it follows that $\anispart{(\pfister{a} \otimes \phi)} \subset \anispart{(\pi \otimes \phi)}$. Since this obviously also holds when $a \in D(\pi)$, it holds for all $a \in D(\sigma)$.  Since $D(\sigma \otimes \phi)$ is generated as an $F^2$-vector space by products $ab$ with $a \in D(\sigma)$ and $b \in D(\phi)$ (see the remarks preceding Lemma \ref{LEMsubformoftensorproduct}), Lemma \ref{LEMsubformtheorem} then gives that $\anispart{(\sigma \otimes \phi)} \subset \anispart{(\pi \otimes \phi)}$.
 \end{proof}
\end{proposition}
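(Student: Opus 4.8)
The plan is to isolate the single computational lemma on which everything rests (the paper's Claim~\ref{LEMmultiplyingbybinary}), prove it, and then read off the four assertions from it together with the basic dictionary between anisotropic quasilinear forms and their value sets (Lemma~\ref{LEMsubformtheorem}). One may assume $r>0$, the case $r=0$ being vacuous since $P_0$ is the constant invariant $\lbrace[\form{1}]\rbrace$. Assertion~(1) is then immediate: as $\sigma$ and $\pi$ are quasi-Pfister, $D(\sigma)$ and $D(\pi)$ are subfields of $F$, hence so is their compositum, which is precisely $D(\pi\otimes\sigma)$ (the $F^2$-span of the products $ab$); thus $\eta=\anispart{(\pi\otimes\sigma)}$ is again a quasi-Pfister form, say $t$-fold, and since $1\in D(\sigma)$ we get $\pi\subset\eta$ (Lemma~\ref{LEMsubformoftensorproduct}), so $t\geq s$. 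I would also record, via Remark~\ref{REMPrdimension}, that $\mydim{\anispart{(\sigma\otimes\phi)}}=(y_r+1)2^r$ and $\mydim{\anispart{(\pi\otimes\phi)}}=(y_s+1)2^s$, and that for an $i$-fold quasi-Pfister form $\rho$, the condition $[\rho]\in P_i(\phi)$ says exactly that $\mydim{\anispart{(\rho\otimes\phi)}}$ is the least multiple of $2^i$ which is $\geq\mydim{\phi}$.

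The key claim is: if $a\in D(\sigma)\setminus D(\pi)$, then $\tau:=\pfister{a}\otimes\pi$ is an anisotropic $(s+1)$-fold quasi-Pfister form with $[\tau]\in P_{s+1}(\phi)$ and $\mydim{\anispart{(\tau\otimes\phi)}}\leq(y_r+1)2^r+2\bigl((y_s+1)2^s-\mydim{\phi}\bigr)$. To prove it, use $\phi\subset\anispart{(\pi\otimes\phi)}$ to write $\anispart{(\pi\otimes\phi)}\simeq\phi\perp\psi$ with $\mydim{\psi}=(y_s+1)2^s-\mydim{\phi}<2^s$; then $\anispart{(\tau\otimes\phi)}\simeq\anispart{(\pfister{a}\otimes\phi\perp\pfister{a}\otimes\psi)}$ is a subform of $\anispart{(\pfister{a}\otimes\phi)}\perp\anispart{(\pfister{a}\otimes\psi)}$, and since $a\in D(\sigma)$ forces $aD(\phi)\subseteq D(\sigma\otimes\phi)$, hence $\anispart{(\pfister{a}\otimes\phi)}\subset\anispart{(\sigma\otimes\phi)}$, the displayed bound drops out. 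The remaining point is that this bound, together with $r\leq s$, $\mydim{\psi}<2^s$ and the divisibility of $\mydim{\anispart{(\tau\otimes\phi)}}$ by $2^{s+1}$ (as $\anispart{(\tau\otimes\phi)}$ is divisible by $\tau$, Lemma~\ref{LEManisotropicpartofformsdivisiblebyaquasiPfister}), forces $\mydim{\anispart{(\tau\otimes\phi)}}\leq(y_s+2)2^s<\mydim{\phi}+2^{s+1}$, which is exactly $[\tau]\in P_{s+1}(\phi)$. I expect this squeezing step --- juggling the arithmetic relating $y_r$, $y_s$, $y_{s+1}$ and invoking the $2^{s+1}$-divisibility at precisely the right moment --- to be the main obstacle; everything else is routine value-set manipulation.

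Assertion~(2) then follows by iteration inside $\eta$. One starts from $\pi$, an $s$-fold quasi-Pfister subform of $\eta$ with $[\pi]\in P_s(\phi)$ and $\anispart{(\pi\otimes\sigma)}=\eta$. Given an $i$-fold quasi-Pfister subform $\rho\subset\eta$ with $i<t$, $[\rho]\in P_i(\phi)$ and $\anispart{(\rho\otimes\sigma)}=\eta$, the equality $D(\rho)D(\sigma)=D(\eta)\supsetneq D(\rho)$ supplies some $a\in D(\sigma)\setminus D(\rho)$, and the key claim (applied with $\rho$ in place of $\pi$) promotes $\rho$ to the $(i+1)$-fold quasi-Pfister subform $\pfister{a}\otimes\rho$ of $\eta$, still lying in $P_{i+1}(\phi)$; moreover $\anispart{(\pfister{a}\otimes\rho\otimes\sigma)}=\eta$, because $\pfister{a}\subset\sigma$ gives $\anispart{(\pfister{a}\otimes\sigma)}\simeq\sigma$ (Lemma~\ref{LEMdivisibilitybyquasiPfister}). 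Iterating up to $i=t$ produces all the required subforms.

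For~(3): if $P_{s+1}(\phi)=\emptyset$, then by~(2) we must have $t\leq s$, hence $t=s$ and $\eta\simeq\pi$, which forces $D(\sigma)\subseteq D(\pi)$ and so $\sigma\subset\pi$; if instead $y_s$ is even and $\mydim{\phi}>(y_r+1)2^{r-1}+y_s2^{s-1}$, then the existence of any $a\in D(\sigma)\setminus D(\pi)$ would, via the bound in the key claim, force $\mydim{\anispart{(\pfister{a}\otimes\pi\otimes\phi)}}$ to be a multiple of $2^{s+1}$ lying strictly between the consecutive such multiples $y_s2^s$ and $(y_s+2)2^s$ --- impossible --- so again $\sigma\subset\pi$. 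Finally~(4) follows: if one of the conditions of~(3) holds we get $\sigma\subset\pi$, whence $D(\sigma\otimes\phi)\subseteq D(\pi\otimes\phi)$ and $\anispart{(\sigma\otimes\phi)}\subset\anispart{(\pi\otimes\phi)}$ by Lemma~\ref{LEMsubformtheorem}; and if $y_s$ is odd, then for each $a\in D(\sigma)$ either $a\in D(\pi)$, so $aD(\phi)\subseteq D(\pi\otimes\phi)$ trivially, or the key claim gives $[\pfister{a}\otimes\pi]\in P_{s+1}(\phi)$ with $\mydim{\anispart{(\pfister{a}\otimes\pi\otimes\phi)}}=(y_s+1)2^s=\mydim{\anispart{(\pi\otimes\phi)}}$, so the inclusion $\anispart{(\pi\otimes\phi)}\subset\anispart{(\pfister{a}\otimes\pi\otimes\phi)}$ is an equality and again $aD(\phi)\subseteq D(\pi\otimes\phi)$; since $D(\sigma\otimes\phi)$ is the $F^2$-span of the sets $aD(\phi)$, Lemma~\ref{LEMsubformtheorem} then yields $\anispart{(\sigma\otimes\phi)}\subset\anispart{(\pi\otimes\phi)}$.
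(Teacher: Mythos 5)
Your proposal is correct and follows essentially the same route as the paper: you isolate exactly the paper's key claim about $\pfister{a}\otimes\pi$ for $a\in D(\sigma)\setminus D(\pi)$, prove it by the same decomposition $\anispart{(\pi\otimes\phi)}\simeq\phi\perp\psi$ and the same bound via $\anispart{(\pfister{a}\otimes\phi)}\subset\anispart{(\sigma\otimes\phi)}$, and then derive (2)--(4) by the same iteration inside $\eta$ and the same parity/divisibility arguments. If anything, your explicit invocation of the $2^{s+1}$-divisibility of $\mydim{\anispart{(\tau\otimes\phi)}}$ in the squeezing step makes that part of the claim slightly more transparent than the paper's phrasing.
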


In particular, we obtain the following:

\begin{corollary} \label{CORquasiPfisterinvariant} Let $\phi$ be an anisotropic quasilinear quadratic form of dimension $\geq 2$ over $F$, and let $n$ be the unique integer for which $2^n < \mydim{\phi} \leq 2^{n+1}$. Then there exists a positive integer $m \in [0,n+1]\setminus \lbrace n \rbrace$ and an anisotropic $m$-fold quasi-Pfister form $\pi$ over $F$ such that:
\begin{enumerate} \item $P_{r}(\phi) = \emptyset$ for all $r \in [m+1, \mathrm{lndeg}(\phi)-1]$;
\item $P_m(\phi) = \lbrace [\pi] \rbrace$;
\item If $r \leq m$, and $\sigma$ is an anisotropic quasi-Pfister form over $F$ such that $[\sigma] \in P_{r}(\phi)$, then $\sigma \subset \pi$;
\item $m = n+1$ if and only if $\phi$ is a quasi-Pfister neighbour, in which case $\pi \simeq \normform{\phi}$. \end{enumerate}
\begin{proof} Take $m$ to be the largest non-negative integer $\leq n+1$ for which $P_m(\phi) \neq \emptyset$. If $m = n+1$, then $\phi$ is a quasi-Pfister neighbour and we can take $\pi = \normform{\phi}$ (Lemma \ref{LEMbasicfactsonPr} (3) and Lemma \ref{LEMpineighbourisotropicoverpi}). If not, then $P_{m+1}(\phi) = \emptyset$, and the existence of the desired form $\pi$ is then an immediate consequence of Proposition \ref{PROPmultiplyingPrs} (3). 
\end{proof}
\end{corollary}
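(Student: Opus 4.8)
The plan is to take $m$ to be the largest non-negative integer with $m \leq n+1$ for which $P_m(\phi)$ is non-empty, and then verify that this choice, together with an appropriately chosen $\pi$ with $[\pi] \in P_m(\phi)$, satisfies all four conditions. First I would observe that $P_0(\phi) = \lbrace [\form{1}] \rbrace$ is always non-empty, so $m$ is well-defined and non-negative. By Lemma \ref{LEMbasicfactsonPr} (1), $P_r(\phi) = \emptyset$ for every integer $r \in [n+1, \mathrm{lndeg}(\phi)-1]$ except possibly $r = n+1$ itself, and Lemma \ref{LEMbasicfactsonPr} (3) identifies $P_{n+1}(\phi) \neq \emptyset$ with the condition that $\phi$ be a quasi-Pfister neighbour. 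Hence, if $\phi$ is a quasi-Pfister neighbour, then $m = n+1$ and $P_{n+1}(\phi) = \lbrace [\normform{\phi}] \rbrace$ by the same lemma; taking $\pi = \normform{\phi}$ immediately gives (2) and (4), while (1) is vacuous (the interval $[n+2, \mathrm{lndeg}(\phi)-1]$ is empty, since $\mathrm{lndeg}(\phi) = n+1$ here by Lemma \ref{LEMcharacterizationofquasiPfisterneighbours}), and (3) follows from Proposition \ref{PROPmultiplyingPrs} (3) once we know $P_{m+1}(\phi) = \emptyset$ — but here $m = n+1 = \mathrm{lndeg}(\phi)$, so one instead argues directly: by Lemma \ref{LEMbasicfactsonPr} (2), any $[\sigma] \in P_r(\phi)$ with $r \leq m$ has $\sigma \subset \normform{\phi}$. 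So the quasi-Pfister-neighbour case is essentially bookkeeping.

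The substantive case is when $\phi$ is not a quasi-Pfister neighbour, so that $m \in [0,n] \setminus \lbrace n \rbrace$ by Lemma \ref{LEMbasicfactsonPr} (3) and (4) (note $P_n(\phi) \neq \emptyset$ also forces $\phi$ to be a quasi-Pfister neighbour, which is why $n$ is excluded), and $P_{m+1}(\phi) = \emptyset$ by maximality of $m$ — here I should note that the maximality is among $r \leq n+1$, but Lemma \ref{LEMbasicfactsonPr} (1) shows $P_r(\phi) = \emptyset$ for $n+1 \leq r \leq \mathrm{lndeg}(\phi)-1$ anyway, so in fact $P_r(\phi) = \emptyset$ for all $r$ with $m < r < \mathrm{lndeg}(\phi)$, which is exactly (1). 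For (2), I would argue that $P_m(\phi)$ is a singleton: if $[\sigma]$ and $[\pi]$ both lie in $P_m(\phi)$, then apply Proposition \ref{PROPmultiplyingPrs} with $r = s = m$; since $P_{s+1}(\phi) = P_{m+1}(\phi) = \emptyset$, the first bullet of condition (3) of that proposition is met, giving $\sigma \subset \pi$, and by symmetry $\pi \subset \sigma$, hence $\sigma \simeq \pi$ (both being anisotropic quasi-Pfister forms of the same dimension with the same value set, by Lemma \ref{LEMsubformtheorem}). Fix this unique $\pi$. For (3), given $r \leq m$ and $[\sigma] \in P_r(\phi)$, apply Proposition \ref{PROPmultiplyingPrs} with this $r$ and $s = m$, $\pi$ as fixed: again $P_{m+1}(\phi) = \emptyset$ triggers the first bullet of that proposition's condition (3), yielding $\sigma \subset \pi$. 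Finally (4) in this case is just the statement that $m \neq n+1$, which holds because $\phi$ is not a quasi-Pfister neighbour.

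I expect the main (only) genuine obstacle is making sure the uniqueness argument for $P_m(\phi)$ in (2), and the containment argument for (3), correctly invoke Proposition \ref{PROPmultiplyingPrs} — in particular checking that the hypothesis $[\sigma] \in P_r(\phi)$ with $r \leq s = m$ and $[\pi] \in P_m(\phi)$ is exactly the input that proposition requires, and that $P_{s+1}(\phi) = \emptyset$ is available (it is, by maximality of $m$ combined with Lemma \ref{LEMbasicfactsonPr} (1)). Everything else is routine once the quasi-Pfister-neighbour / non-neighbour dichotomy is set up via Lemma \ref{LEMbasicfactsonPr}. The actual write-up should be short: it is essentially "take $m$ maximal, use the previous lemmas to handle the quasi-Pfister-neighbour case, and use Proposition \ref{PROPmultiplyingPrs} (3) otherwise," which is close to the one-line proof already sketched in the excerpt.
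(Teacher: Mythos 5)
Your proposal is correct and follows essentially the same route as the paper: take $m$ maximal with $P_m(\phi) \neq \emptyset$, settle the quasi-Pfister-neighbour case via Lemma \ref{LEMbasicfactsonPr} (together with the fact that $\mathrm{lndeg}(\phi)=n+1$ there), and otherwise use $P_{m+1}(\phi)=\emptyset$ to invoke Proposition \ref{PROPmultiplyingPrs} (3) for both the uniqueness of $[\pi]$ and the containment $\sigma \subset \pi$. You merely spell out the bookkeeping (including why $m \neq n$ and why the neighbour case needs Lemma \ref{LEMbasicfactsonPr} (2) rather than the proposition) that the paper leaves implicit.
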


\begin{remark} In the case where $\phi$ is not a quasi-Pfister neighbour, the quasi-Pfister form $\pi$ of Corollary \ref{CORquasiPfisterinvariant} seems to be a new invariant of $\phi$.
\end{remark}

In the next subsection, we will use the preceding discussion to define a new stable birational invariant of an arbitrary quasilinear quadratic form of dimension $\geq 2$. To prove its stable birational invariance, however, we will need to observe that, for any non-negative integer $r$, non-triviality of the invariant $P_r$ on a particular form can be detected under purely transcendental base change. This is a consequence of the following lemma:

\begin{lemma} \label{LEMdescendingoverrationalextension} Let $\sigma$ be an anisotropic quasilinear quadratic form of dimension $n \geq 1$ over $F(X)$, where $X$ is an indeterminate. If $D(\sigma) \subseteq F(X^2)$, then there exist polynomials $f_1,\hdots,f_n \in F[X]$ such that $\sigma \simeq \form{f_1(X^2),\hdots,f_n(X^2)}$ and the quasilinear quadratic form $\form{f_1(0),\hdots,f_n(0)}$ is anisotropic over $F$. 
\begin{proof} Since $D(\sigma) \subseteq F(X^2)$, there certainly exist $n$-tuples of polynomials $(f_1,\hdots,f_n) \in F[X]^n$ such that $\sigma \simeq \form{f_1(X^2),\hdots,f_n(X^2)}$. Let us choose one for which the integer $\sum_{i=1}^n \mathrm{deg}(f_i)$ is minimal. Reordering the $f_i$ if necessary, we can assume that $\mathrm{deg}(f_1) \leq \cdots \leq \mathrm{deg}(f_n)$. We claim that $\form{f_1(0),\hdots,f_n(0)}$ is anisotropic. Suppose, to the contrary, that this is not the case. There then exists an integer $r \in [1,n]$ such that $f_r(0)$ lies in the $F^2$-linear span of $f_1(0),\hdots, f_{r-1}(0)$, say $f_r(0) = \sum_{i=1}^{r-1} \lambda_i^2 f_i(0)$ with $\lambda_1,\hdots,\lambda_{r-1} \in F$. Set $g_r : = f_r + \sum_{i=1}^{r-1} \lambda_i^2 f_i$, and set $g_i : = f_i$ for all $i \neq r$. By construction, we then have that:
\begin{itemize} \item $\form{g_1(X^2),\hdots,g_n(X^2)} \simeq \form{f_1(X^2),\hdots,f_n(X^2)} \simeq \sigma$ (apply Lemma \ref{LEMsubformtheorem});
\item $\sum_{i=1}^n \mathrm{deg}(g_i) \leq \sum_{i=1}^n \mathrm{deg}(f_1)$ \big(since $\mathrm{deg}(f_1) \leq \cdots \mathrm{deg}(f_r)$\big);
\item $g_r(0) = 0$. \end{itemize}
Replacing $(f_1,\hdots,f_n)$ with $(g_1,\hdots,g_n)$, we can therefore assume that $f_r(0) = 0$, i.e., that $f_r = Xf_r'$ for some $f_r' \in F[X]$ with $\mathrm{deg}(f_r') = \mathrm{deg}(f_r)-1$. Then $\form{f_r(X^2)} \simeq \form{X^2f_r'(X^2)} \simeq \form{f_r'(X^2)}$, and so $\sigma \simeq \form{f_1(X^2),\hdots,f_{r-1}(X^2),f_r'(X^2),f_{r+1}(X^2),\hdots,f_n(X^2)}$. But $\mathrm{deg}(f_r') + \sum_{i \neq r} \mathrm{deg}(f_i) = \big(\sum_{i=1}^n \mathrm{deg}(f_i)\big) - 1$, so this contradicts our choice of $(f_1,\hdots,f_n)$. 
\end{proof} \end{lemma}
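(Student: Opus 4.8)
The plan is to run an infinite-descent argument on a degree quantity. First I would record that the hypothesis $D(\sigma) \subseteq F(X^2)$ already produces \emph{some} representation of the required shape: fixing a diagonalization $\sigma \simeq \form{a_1,\dots,a_n}$ with $a_i \in F(X)$, each $a_i$, being a value of $\sigma$, lies in $F(X^2)$; writing $a_i = u_i(X^2)/v_i(X^2)$ with $u_i,v_i \in F[X]$ and noting $\form{a_i} \simeq \form{v_i(X^2)^2 a_i} = \form{u_i(X^2)v_i(X^2)}$, we obtain $\sigma \simeq \form{f_1(X^2),\dots,f_n(X^2)}$ for suitable $f_1,\dots,f_n \in F[X]$. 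Over all such representations the quantity $\sum_{i=1}^n \deg f_i$ ranges over a nonempty set of nonnegative integers, so I can fix a representation minimising it, and after reindexing assume $\deg f_1 \leq \cdots \leq \deg f_n$.

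Next I would suppose, for contradiction, that $\form{f_1(0),\dots,f_n(0)}$ is isotropic over $F$. Since a quasilinear form $\form{c_1,\dots,c_n}$ over $F$ is isotropic exactly when $c_1,\dots,c_n$ are $F^2$-linearly dependent as elements of $F$, picking a minimal dependent subset and letting $r$ be its largest index yields $f_r(0) = \sum_{i<r}\lambda_i^2 f_i(0)$ for some $\lambda_i \in F$ (with the empty sum allowed, covering the case $f_r(0)=0$). The key manoeuvre is that adjusting $f_r$ by an $F^2$-combination of the strictly-earlier (hence lower-degree) entries is "free" in two senses. Put $g_r := f_r + \sum_{i<r}\lambda_i^2 f_i$ and $g_i := f_i$ for $i \neq r$. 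Since $\lambda_i^2 \in F^2 \subseteq F(X)^2$, this operation leaves the $F(X)^2$-span of the diagonal entries — that is, $D(\sigma)$ — unchanged, so by the value-set characterisation of anisotropic forms (Lemma \ref{LEMsubformtheorem}) we still have $\form{g_1(X^2),\dots,g_n(X^2)} \simeq \sigma$; and $\deg g_r \leq \deg f_r$ because $\deg f_i \leq \deg f_r$ for $i<r$, so $\sum_i \deg g_i$ is still minimal. Crucially, in characteristic $2$ we get $g_r(0) = f_r(0) + \sum_{i<r}\lambda_i^2 f_i(0) = 2f_r(0) = 0$. Replacing the $f_i$ by the $g_i$, I may therefore assume $f_r(0) = 0$, i.e. $f_r = Xf_r'$ with $f_r' \in F[X]$ of degree $\deg f_r - 1$; then $\form{f_r(X^2)} = \form{X^2 f_r'(X^2)} \simeq \form{f_r'(X^2)}$, so $\form{f_1(X^2),\dots,f_{r-1}(X^2),f_r'(X^2),f_{r+1}(X^2),\dots,f_n(X^2)} \simeq \sigma$ is a representation of the required shape whose degree sum is smaller by $1$ — contradicting minimality. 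Hence $\form{f_1(0),\dots,f_n(0)}$ is anisotropic, as claimed.

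The only delicate points are bookkeeping ones, and I expect them to be the main (though modest) obstacle: one must be careful that each of the two moves — adding an $F^2$-linear combination of lower-degree diagonal entries, and stripping the square factor $X^2$ once the constant term vanishes — genuinely preserves the isometry class of $\sigma$ (it does, since both leave the $F(X)^2$-span of the entries intact and $\sigma$ is anisotropic, so Lemma \ref{LEMsubformtheorem} applies verbatim), and that the degree sum strictly decreases at the last step. No deeper machinery is needed: it is exactly the characteristic-$2$ identity $x+x=0$ that lets the combination kill the constant term, and the descent must terminate because $\sum_i \deg f_i$ is a nonnegative integer.
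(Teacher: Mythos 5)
Your argument is correct and is essentially identical to the paper's proof: the same minimal-degree-sum representation, the same characteristic-$2$ adjustment $g_r = f_r + \sum_{i<r}\lambda_i^2 f_i$ justified via Lemma \ref{LEMsubformtheorem}, and the same contradiction from stripping $X^2$ to lower the degree sum. The extra bookkeeping you supply (clearing denominators to get polynomial entries, choosing $r$ as the top index of a dependency relation) only makes explicit steps the paper leaves implicit.
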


\begin{proposition} \label{PROPtrivialityofPrinvariantunderrationalextensions} Let $\phi$ be an anisotropic quasilinear quadratic form of dimension $\geq 2$ over $F$, and let $r$ be a non-negative integer. If $L$ is a purely transcendental field extension of $F$, then $P_r(\phi) \neq \emptyset$ if and only if $P_r(\phi_L) \neq \emptyset$.
\begin{proof} Let $\pi$ be an anisotropic $r$-fold quasi-Pfister form over $F$. Since $L/F$ is separable, $\pi$ remains anisotropic over $L$ (Lemma \ref{LEManisotropyoverseparable}). It is then immediate that $[\pi_L] \in P_r(\phi_L)$ whenever $[\pi] \in P_r(\phi)$. Thus, $P_r(\phi_L)$ is non-empty if $P_r(\phi)$ is. For the reverse implication, we may assume that $L = F(X)$ for a single indeterminate $X$. Under this assumption, suppose that $\pi$ is an anisotropic $r$-fold quasi-Pfister form over $L$ such that $[\pi] \in P_r(\phi_L)$. By Lemma \ref{LEMpineighbourisotropicoverpi}, $\pi$ is a subform of $\normform{(\phi_{L})}$. Since $L/F$ is separable, however, we have $\normform{(\phi_L)} \simeq (\normform{\phi})_{L}$ (see the remarks directly preceding Lemma \ref{LEMdropingofnormdegree}), and so the elements of $D(\pi)$ are $L^2$-linear combinations of elements of $F$. Since $L^2 = F^2(X^2)$, it follows that $D(\pi) \subseteq F(X^2)$. By Lemma \ref{LEMdescendingoverrationalextension}, we then have that $\pi \simeq \form{f_1(X^2),\hdots,f_{2^r}(X^2)}$ for some $f_1,\hdots,f_{2^r} \in F[X]$ such that the quasilinear quadratic form $\tau : = \form{f_1(0),\hdots,f_{2^r}(0)}$ is anisotropic over $F$. We claim that $\tau$ is a quasi-Pfister form. Since $\tau$ is anisotropic, this amounts to showing that $N(\tau) = D(\tau)$, i.e., that $D(\tau)$ is closed under multiplication. By additivity, it suffices to show that $f_i(0)f_j(0) \in D(\tau)$ for all $i,j \in [1,2^r]$. But since $\pi$ is a quasi-Pfister form, we have $f_i(X^2)f_j(X^2) \in D(\pi)$. By the Cassels-Pfister theorem (which is valid for quasilinear quadratic forms, see \cite[Thm. 17.3]{EKM})  it follows that $f_i(X^2)f_j(X^2) = \sum_{k=1}^{2^r} g_k(X)^2f_k(X^2)$ for some $g_1,\hdots,g_{2^r} \in F[X]$. Evaluating at $0$, we get $f_i(0)f_j(0) = \sum_{k=1}^{2^r} g_k(0)^2 f_k(0) \in D(\tau)$, and so our claim holds. We now claim that $[\tau] \in P_r(\phi)$. Let $y_r$ be the largest integer for which $\mydim{\phi} >y_r2^r$. By Remark \ref{REMPrdimension}, proving our claim amounts to showing that $\mydim{\anispart{(\tau \otimes \phi)}} < (y_r+2)2^r$. But since $\pi \in P_r(\phi_{L})$, we have $\mydim{\anispart{(\pi \otimes \phi_{L})}} < 2^r(y_r+2)$. Since $\pi$ is isomorphic to the form $\form{f_1(X^2),\hdots,f_{2^r}(X^2)}$, and since the latter is defined over $F[X]$, the claim then follows by applying Lemma \ref{LEMspecializationlemma} to $\form{f_1(X^2),\hdots,f_{2^r}(X^2)} \otimes \phi$ viewed as form over the discrete valuation ring $F[X]_{(X)}$.  \end{proof}
\end{proposition}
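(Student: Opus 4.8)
The plan is to prove the two implications separately; the forward one is routine and the reverse one is where the real work lies. For the forward implication, suppose $[\pi] \in P_r(\phi)$ for some anisotropic $r$-fold quasi-Pfister form $\pi$ over $F$. Since $L/F$ is separable, Lemma \ref{LEManisotropyoverseparable} keeps $\pi$ anisotropic over $L$ and, applied to $\pi \otimes \phi$, gives $\anispart{((\pi \otimes \phi)_L)} \simeq (\anispart{(\pi \otimes \phi)})_L$; hence $\mydim{\anispart{(\pi_L \otimes \phi_L)}} = \mydim{\anispart{(\pi \otimes \phi)}} < \mydim{\phi} + 2^r$, so $[\pi_L] \in P_r(\phi_L)$.

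For the reverse implication I would first reduce to the case $L = F(X)$ for a single indeterminate $X$. A witness $\pi$ for $P_r(\phi_L) \neq \emptyset$, together with $\anispart{(\pi \otimes \phi_L)}$, is defined over a finitely generated subextension, which (after enlarging to the relevant variables of a transcendence basis) may be taken of the form $F(X_1,\dots,X_m)$; since further purely transcendental base change does not alter isotropy indices (Lemma \ref{LEManisotropyoverseparable}), the dimension inequality descends, so $P_r(\phi_{F(X_1,\dots,X_m)}) \neq \emptyset$, and one then peels off the variables one at a time. So suppose $\pi$ is an anisotropic $r$-fold quasi-Pfister form over $F(X)$ with $[\pi] \in P_r(\phi_{F(X)})$. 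The crucial step is to descend $\pi$ to $F$. By Lemma \ref{LEMbasicfactsonPr}(2) (or directly Lemma \ref{LEMconcretedescriptionofpineighbour}), $\phi_{F(X)}$ is a strong $\pi$-neighbour, so $\pi \subset \normform{(\phi_{F(X)})}$ by Lemma \ref{LEMpineighbourisotropicoverpi}. Separability of $F(X)/F$ gives $\normform{(\phi_{F(X)})} \simeq (\normform{\phi})_{F(X)}$, so every element of $D(\pi)$ is an $F(X)^2$-linear combination of elements of $F$; since $F(X)^2 = F^2(X^2)$, this forces $D(\pi) \subseteq F(X^2)$. Lemma \ref{LEMdescendingoverrationalextension} then produces $f_1,\dots,f_{2^r} \in F[X]$ with $\pi \simeq \form{f_1(X^2),\dots,f_{2^r}(X^2)}$ and $\tau := \form{f_1(0),\dots,f_{2^r}(0)}$ anisotropic over $F$.

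It remains to check that $\tau$ is again quasi-Pfister and that $[\tau] \in P_r(\phi)$. For the first point, since $\tau$ is anisotropic it suffices to show $D(\tau)$ is closed under multiplication, and by additivity it is enough to see $f_i(0)f_j(0) \in D(\tau)$; here I would use that $f_i(X^2)f_j(X^2) \in D(\pi)$ and invoke the Cassels--Pfister theorem (valid for quasilinear forms) to write $f_i(X^2)f_j(X^2) = \sum_k g_k(X)^2 f_k(X^2)$ with $g_k \in F[X]$, then evaluate at $X=0$. For the second point, let $y_r$ be the largest integer with $\mydim{\phi} > y_r 2^r$; view $\form{f_1(X^2),\dots,f_{2^r}(X^2)} \otimes \phi$ as a quasilinear quadratic form over the discrete valuation ring $F[X]_{(X)}$, whose generic fibre is $\pi \otimes \phi_{F(X)}$ and whose special fibre is $\tau \otimes \phi$, and apply Lemma \ref{LEMspecializationlemma} to obtain $\mydim{\anispart{(\tau \otimes \phi)}} \leq \mydim{\anispart{(\pi \otimes \phi_{F(X)})}} < (y_r+2)2^r$; by Remark \ref{REMPrdimension} this means $[\tau] \in P_r(\phi)$.

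The main obstacle is the descent step: one must know in advance that the witnessing Pfister form over $F(X)$ has value set inside $F(X^2)$, which is exactly what the containment $\pi \subset \normform{(\phi_{F(X)})} \simeq (\normform{\phi})_{F(X)}$ supplies, and then one must verify that clearing the square and specializing at $X = 0$ preserves both anisotropy and the quasi-Pfister property — the former being secured by the minimal-degree choice built into Lemma \ref{LEMdescendingoverrationalextension}, and the latter rescued by Cassels--Pfister. The remaining verifications (the dimension bound for $\anispart{(\tau\otimes\phi)}$ and the reduction to one variable) are then formal.
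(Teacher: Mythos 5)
Your proposal is correct and follows essentially the same route as the paper: separability for the forward direction, then for the converse the reduction to $L=F(X)$, the containment $\pi\subset\normform{(\phi_L)}\simeq(\normform{\phi})_L$ forcing $D(\pi)\subseteq F(X^2)$, descent via Lemma \ref{LEMdescendingoverrationalextension}, Cassels--Pfister to see that the specialized form $\tau$ is quasi-Pfister, and Lemma \ref{LEMspecializationlemma} over $F[X]_{(X)}$ for the dimension bound. The only difference is that you spell out the routine reduction from a general purely transcendental extension to a single variable, which the paper leaves implicit.
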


Although Proposition \ref{PROPtrivialityofPrinvariantunderrationalextensions} is all we shall need in the sequel, one might wonder whether its conclusion remains valid if we allow $L$ to be an arbitrary separable extension of $F$. While we are unable to address this in general, we can at least show that the answer is positive in a number of cases. We first note:

\begin{lemma} \label{LEMdescentofGaloisinvariantforms} Let $L$ be an algebraic field extension of $F$, and let $\psi$ be an anisotropic quasilinear quadratic form over $L$. If $L/F$ is Galois, and $D(\psi)$ is stable under the canonical action of $\mathrm{Gal}(L/F)$ on $L$, then $\psi \simeq \tau_L$ for some quasilinear quadratic form $\tau$ over $F$, unique up to isomorphism. Moreover, if $\psi$ is a quasi-Pfister form, then so is $\tau$.
\begin{proof} The extension $L^2/F^2$ is also Galois, and the restriction map $\mathrm{Aut}(L) \rightarrow \mathrm{Aut}(L^2)$ identifies its Galois group with that of $L/F$. The action of $\mathrm{Gal}(L/F)$ on $D(\psi)$ then determines an action of $\mathrm{Gal}(L^2/F^2)$ which is evidently $L^2$-semilinear. Thus, if $U$ is locus of $\mathrm{Gal}(L/F)$-fixed points in $D(\psi)$, then $U$ is finite-dimensional as an $F^2$-vector space, and the multiplication map $D \otimes_{F^2} L^2 \rightarrow U$ is an $L^2$-linear isomorphism. Up to isomorphism, there is then a unique quasilinear quadratic form $\tau$ over $F$ such that $D(\tau) = U$ and $\tau_L \cong \psi$ (see Lemma \ref{LEMsubformtheorem}). Finally, if $\psi$ is quasi-Pfister, then $D(\psi)$ is a subfield of $L$, and so the fixed-point locus $U$ is a subfield of $F$, so that $\tau$ is also quasi-Pfister. \end{proof} \end{lemma}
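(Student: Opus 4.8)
The plan is to pass to value sets and then invoke Galois descent. Recall (Lemma \ref{LEMsubformtheorem} together with the discussion in \S\ref{SUBSECbasics}) that over any field $K$ of characteristic $2$ the assignment $\phi\mapsto D(\phi)$ induces a bijection between isomorphism classes of anisotropic quasilinear quadratic forms over $K$ and finite-dimensional $K^2$-subspaces of $K$, and (from the start of \S\ref{SUBSECscalarextension}) that for a form $\tau$ over $F$ one has $D(\tau_L)=L^2\cdot D(\tau)$, the $L^2$-linear span of $D(\tau)$ in $L$. Setting $W:=D(\psi)$, a finite-dimensional $L^2$-subspace of $L$, the lemma therefore amounts to the statement that there is a \emph{unique} finite-dimensional $F^2$-subspace $U$ of $F$ with $L^2\cdot U=W$, together with the assertion that $U$ is a subfield of $F$ when $W$ is a subfield of $L$ (for which we also use the characterization of quasi-Pfister forms from \S\ref{SUBSECquasiPfisters}).

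Next I would organize the Galois action. Squaring gives a field isomorphism $L\xrightarrow{\sim}L^2$ carrying $F$ onto $F^2$, so $L^2/F^2$ is Galois and restriction identifies $\mathrm{Gal}(L/F)$ with $G:=\mathrm{Gal}(L^2/F^2)$. Under this identification the hypothesis turns $W$ into a finite-dimensional $L^2$-vector space carrying a semilinear $G$-action whose fixed space is $W^G=W\cap F$. Since $L/F$ is algebraic, the stabilizer of any $w\in W\subseteq L$ in $\mathrm{Gal}(L/F)$, namely $\mathrm{Gal}(L/F(w))$, is open; hence this semilinear action is continuous, and in fact $W$ together with its $G$-action is already defined over a finite Galois subextension of $L/F$.

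The heart of the matter is then Galois descent of vector spaces --- Speiser's theorem, a form of Hilbert's Theorem 90, in its profinite version if $L/F$ is infinite. It gives that the multiplication map $U\otimes_{F^2}L^2\to W$ is an isomorphism, where $U:=W^G=W\cap F$; in particular $U$ is a finite-dimensional $F^2$-subspace of $F$ with $L^2\cdot U=W$. By Lemma \ref{LEMsubformtheorem} there is then an anisotropic quasilinear quadratic form $\tau$ over $F$, unique up to isomorphism, with $D(\tau)=U$, and $D(\tau_L)=L^2\cdot U=W=D(\psi)$ forces $\tau_L\simeq\psi$ by Lemma \ref{LEMsubformtheorem} again. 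For uniqueness, any $\tau'$ over $F$ with $\tau'_L\simeq\psi$ is necessarily anisotropic, and applying descent to $L^2\cdot D(\tau')$ (carrying the standard $G$-action) gives $D(\tau')=\big(L^2\cdot D(\tau')\big)^G=D(\tau'_L)^G=W^G=U$, so $\tau'\simeq\tau$. Finally, if $\psi$ is quasi-Pfister then $W=D(\psi)$ is a subfield of $L$, so $U=W\cap F$ is a subfield of $F$, and hence $\tau$ is quasi-Pfister by the characterization recalled in \S\ref{SUBSECquasiPfisters}.

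The one genuine obstacle is the descent step itself, and in particular the bookkeeping required when $L/F$ is infinite: one must check that the semilinear $G$-module $W$ is continuous so that the profinite form of Speiser's theorem applies, or, what amounts to the same thing, reduce to a finite Galois subextension over which $W$ and its $G$-action already live, taking care that this reduction does not alter the fixed-point space. Everything else is a routine unwinding of the value-set dictionary of \S\ref{SECpreliminaries}.
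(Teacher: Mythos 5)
Your proposal is correct and follows essentially the same route as the paper: identify $\mathrm{Gal}(L/F)$ with $\mathrm{Gal}(L^2/F^2)$, view $D(\psi)$ as an $L^2$-vector space with semilinear Galois action, apply descent (Speiser/Hilbert 90) to get the fixed subspace $U$ with $U\otimes_{F^2}L^2\xrightarrow{\sim}D(\psi)$, and then recover $\tau$ from $U$ via Lemma \ref{LEMsubformtheorem}, with the quasi-Pfister case handled by noting $U$ is a subfield. Your extra care about continuity and reduction to a finite subextension in the infinite Galois case is a reasonable elaboration of a point the paper leaves implicit, not a different argument.
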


We now have:

\begin{proposition}  \label{PROPstabilityofPrunderseparable} Let $\phi$ be an anisotropic quasilinear quadratic form of dimension $\geq 2$ over $F$, let $r$ be a non-negative integer, and let $y_r$ be the largest integer for which $\mydim{\phi} > y_r2^r$. Let $L$ be a separable extension of $F$. Suppose that any of the following hold:
\begin{enumerate} \item $P_{r+1}(\phi_L) = \emptyset$;
\item $\mydim{\phi} > 2^r(y_r+1) - 2^{r-1}$;
\item $y_r$ is odd;
\item $r \in \lbrace 0,1,n,m \rbrace$, where $n$ is the unique integer for which $2^n < \mydim{\phi} \leq 2^{n+1}$, and $m = \mathrm{max}\lbrace r\;|\; r \leq n+1 \text{ and } P_r(\phi) \neq \emptyset \rbrace$. 
\end{enumerate}
Then $P_r(\phi) \neq \emptyset$ if and only if $P_r(\phi_L) \neq \emptyset$. 
\begin{proof} The validity of the implication $P_r(\phi) \neq \emptyset \Rightarrow P_r(\phi_L) \neq \emptyset$ has already been noted in the proof of Proposition \ref{PROPtrivialityofPrinvariantunderrationalextensions}. For the converse, we can assume that $L/F$ is finitely generated. Proposition \ref{PROPtrivialityofPrinvariantunderrationalextensions} then allows us to reduce to the case where $L/F$ is finite and separable. Since the `only if' implication holds, we can in fact assume that $L/F$ is finite and Galois. Under this assumption, let $\pi$ be an anisotropic $r$-fold quasi-Pfister form over $F$ such that $[\pi] \in P_r(\phi)$, and set $\eta: = \anispart{(\pi \otimes \phi)}$. For each $\sigma \in \mathrm{Gal}(L/F)$, and each quasilinear quadratic form $\mu$ over $L$, let us write $\sigma_*(\mu)$ for the quasilinear quadratic form $\sigma \circ \mu$ over $L$. Note that $D\big(\sigma_*(\mu)\big) = \sigma\big(D(\mu)\big)$, so $\sigma_*$ commutes with the formation of anisotropic parts and sends quasi-Pfister forms to quasi-Pfister forms. It is also clear that $\sigma_*$ commutes with the formation of tensor products, so
$$ \sigma_*(\eta) \simeq \sigma_*\big(\anispart{(\pi \otimes \phi_L)}\big) \simeq \anispart{\big(\sigma_*(\pi) \otimes \sigma_*(\phi_L)\big)} = \anispart{(\sigma_*(\pi) \otimes \phi_L)}. $$
Since $[\pi] \in P_r(\phi)$, we then have that $\sigma_*(\pi) \in P_r(\phi)$ also. Consider now the anisotropic quasi-Pfister form $\psi: = \anispart{\big(\otimes_{\sigma} \sigma_*(\pi)\big)}$, where the tensor product is taken over all $\sigma \in \mathrm{Gal}(L/F)$. By the remarks preceding Lemma \ref{LEMsubformoftensorproduct}, $D(\psi)$ is the image of the $L^2$-linear multiplication map $\bigotimes_{\sigma} \sigma\big(D(\pi)\big) \rightarrow L$, and is thus a $\mathrm{Gal}(L/F)$-stable subfield of $L$. By Lemma \ref{LEMdescentofGaloisinvariantforms}, it follows that there exists an anisotropic quasi-Pfister form $\tau$ over $F$ such that $\tau_L \simeq \psi$. Set $s := \mathrm{lndeg}(\tau)$. Since $L/F$ is separable, we have $s = \mathrm{lndeg}(\psi)$ (Lemma \ref{LEManisotropyoverseparable}). Since $\sigma_*(\pi) \in P_r(\phi)$, Proposition \ref{PROPmultiplyingPrs} then tells us that $s \geq r$, $\psi \in P_s(\phi_L)$, and $P_i(\phi_L) \neq \emptyset$ for all $r \leq i \leq s$. Again, using Lemma \ref{LEManisotropyoverseparable}, we then have that
$$ \mydim{\anispart{(\tau \otimes \phi)}} = \mydim{\anispart{(\tau_L \otimes \phi_L)}} = \mydim{\anispart{(\psi \otimes \phi_L)}}, $$
and so $[\tau] \in P_s(\phi)$. We now consider the four situations in the statement.

(1) If $P_{r+1}(\phi_L) = \emptyset$, then we must have that $s = r$, and so $[\tau] \in P_r(\phi)$. 

(2) Suppose that $\mydim{\phi} > 2^r(y_r+1) -2^{r-1}$. If $y_r$ is even, then then Proposition \ref{PROPmultiplyingPrs} (3) tells us that $\sigma_*(\pi) \subset \pi$ for all $\sigma \in \mathrm{Gal}(L/F)$, and so $\eta \simeq \pi$ by Lemma \ref{LEMdivisibilitybyquasiPfister}. Thus, $s = r$, and we again have that $[\tau] \in P_r(\phi)$. Thus, to complete case (2), it suffices to cover case (3), which we now do.

(3) Let $\sigma \in \mathrm{Gal}(L/F)$. If $y_r$ is odd, then Proposition \ref{PROPmultiplyingPrs} (4) tells us that $\anispart{(\sigma_*(\pi) \otimes \phi_L)} \simeq \eta$. By Lemma \ref{LEManisotropicpartofformsdivisiblebyaquasiPfister}, it follows that $\eta$ is divisible by $\sigma_*(\pi)$. By Lemma \ref{LEMdivisibilitybyquasiPfister}, we then have that $D(\psi) \subseteq G(\eta)$, and hence that $\eta$ is divisible by $\psi \simeq \tau_L$. Let $\tau'$ be an $r$-fold quasi-Pfister subform of $\tau$. By Lemmas \ref{LEManisotropyoverseparable} and \ref{LEMdivisibilitybyquasiPfister}, we then have that 
$$ \mydim{\anispart{(\tau' \otimes \phi)}} = \mydim{\anispart{(\tau'_L \otimes \phi_L)}} \leq \mydim{\anispart{(\psi \otimes \eta)}} = \mydim{\eta}, $$
and so $[\tau'] \in P_r(\phi)$. 

(4) The assertion is clear when $r = 0$, and when $r = n$, it is immediate consequence of Lemma \ref{LEMbasicfactsonPr} (4). When $r = m$, it follows from (1) (if $m<n+1$) and Lemma \ref{LEMbasicfactsonPr} (3) (if $m = n+1$). It remains to consider the case where $r = 1$. By (2), we can assume here that $\mydim{\phi}$ is odd, so that $\mydim{\eta} = \mydim{\phi} + 1$. We claim that there exists a $1$-fold quasi-Pfister subform $\tau' \subset \tau$ such that $[\tau'] \in P_1(\phi)$. We argue by induction on $\mydim{\phi}$. If $\mydim{\phi} = 1$, there is nothing to show, so assume that $\mydim{\phi} \geq 3$. To do the induction step, we use a result to be stated and proved in the next section. Specifically, since $\anispart{(\tau \otimes \phi)}$ is divisible by $\tau$ (Lemma \ref{LEMdivisibilitybyquasiPfister}), Corollary \ref{CORinductivepropositionweak} and Remark \ref{REMinductivepropositiontensorbyPfister} below tell us that there exists an anisotropic quasilinear quadratic form $\hat{\phi}$ over $F$ such that 
\begin{itemize} \item[(i)] $\phi \perp \hat{\phi} \simeq \anispart{(\tau \otimes \phi)}$, and 
\item[(ii)] $\mydim{\hat{\phi}} - 2\witti{0}{\hat{\phi}_{M(\nu)}} = \mydim{\phi} - 2\witti{0}{\phi_{M(\nu)}}$ for every separable extension $M/F$ and subform $\nu \subset \tau_M$ of dimension $\geq 2$. \end{itemize} 
Note that when $\nu$ is a 1-fold quasi-Pfister form, the equality in (ii) may be rewritten as $\mydim{\anispart{(\nu \otimes \hat{\phi}_M)}} - \mydim{\hat{\phi}} = \mydim{\anispart{(\nu \otimes \phi_M)}} - \mydim{\phi}$ (Lemma \ref{LEMisotropyoverfunctionfieldsofquadrics}).  In particular, taking $(M,\nu) = (L,\pi)$, we get that $\mydim{\anispart{(\pi \otimes \hat{\phi}_L)}} - \mydim{\hat{\phi}_L} = 1$, and so $[\pi] \in P_1(\hat{\phi}_L)$. Now, since $[\tau] \in P_s(\phi)$, (i) implies that $\mydim{\hat{\phi}} < \mydim{\phi}$ (see Corollary \ref{CORquasiPfisterinvariant}). By the induction hypothesis, it follows that there exists a 1-fold quasi-Pfister subform $\tau' \subset \tau$ such that $[\tau] \in P_1(\hat{\phi})$. Applying the above equality with $(M,\nu) = (F, \tau')$, we then get that $\mydim{\anispart{(\tau' \otimes \phi)}} - \mydim{\phi} = 1$, and so $[\tau'] \in P_1(\phi)$, completing the proof.
\end{proof}
\end{proposition}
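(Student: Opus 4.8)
The \emph{only if} direction is immediate: if $[\pi]\in P_r(\phi)$ for an anisotropic $r$-fold quasi-Pfister form $\pi$ over $F$, then separability of $L/F$ keeps $\pi$ anisotropic over $L$ (Lemma \ref{LEManisotropyoverseparable}) and leaves $\mydim{\anispart{(\pi\otimes\phi)}}$ unchanged, so $[\pi_L]\in P_r(\phi_L)$. All the work is in the converse, and the plan is to reduce it to a purely Galois-theoretic situation: using Proposition \ref{PROPtrivialityofPrinvariantunderrationalextensions} to strip off a purely transcendental part, and using the \emph{only if} direction just proved to enlarge $L$ to a Galois closure, we may assume $L/F$ is finite Galois.

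In that setting, suppose $[\pi]\in P_r(\phi_L)$ for an anisotropic $r$-fold quasi-Pfister form $\pi$ over $L$. For $\sigma\in\mathrm{Gal}(L/F)$ the conjugate $\sigma_*(\pi):=\sigma\circ\pi$ is again an anisotropic $r$-fold quasi-Pfister form, and since $\phi$ is defined over $F$ one has $\anispart{(\sigma_*(\pi)\otimes\phi_L)}\simeq\sigma_*\big(\anispart{(\pi\otimes\phi_L)}\big)$, whence $[\sigma_*(\pi)]\in P_r(\phi_L)$ for every $\sigma$. Form $\psi:=\anispart{\big(\bigotimes_\sigma\sigma_*(\pi)\big)}$; its value set is the image of the multiplication map of the fields $\sigma(D(\pi))$, hence a $\mathrm{Gal}(L/F)$-stable subfield of $L$, so by Lemma \ref{LEMdescentofGaloisinvariantforms} there is an anisotropic quasi-Pfister form $\tau$ over $F$ with $\tau_L\simeq\psi$. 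Writing $s:=\mathrm{lndeg}(\tau)=\mathrm{lndeg}(\psi)$ (Lemma \ref{LEManisotropyoverseparable}) and applying Proposition \ref{PROPmultiplyingPrs} repeatedly to the forms $\sigma_*(\pi)$, we obtain $s\geq r$, $[\psi]\in P_s(\phi_L)$, $[\tau]\in P_s(\phi)$, and $P_i(\phi_L)\neq\emptyset$ for all $i$ with $r\leq i\leq s$.

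It then remains only to descend from level $s$ to level $r$, and this is where the four hypotheses enter. Under (1), the chain $P_r(\phi_L)\neq\emptyset,\dots,P_s(\phi_L)\neq\emptyset$ together with $P_{r+1}(\phi_L)=\emptyset$ forces $s=r$, so $[\tau]\in P_r(\phi)$. Under (2) and (3), when $y_r$ is even Proposition \ref{PROPmultiplyingPrs} (3) (whose second hypothesis reduces exactly to (2)) shows each $\sigma_*(\pi)$ is a subform of $\pi$, forcing $\psi\simeq\pi$ (Lemma \ref{LEMdivisibilitybyquasiPfister}) and hence $s=r$; when $y_r$ is odd, Proposition \ref{PROPmultiplyingPrs} (4) shows $\anispart{(\pi\otimes\phi_L)}$ is divisible by each $\sigma_*(\pi)$ and thus by $\psi\simeq\tau_L$ (Lemmas \ref{LEManisotropicpartofformsdivisiblebyaquasiPfister} and \ref{LEMdivisibilitybyquasiPfister}), so any $r$-fold quasi-Pfister subform $\tau'\subset\tau$ satisfies $\mydim{\anispart{(\tau'\otimes\phi)}}=\mydim{\anispart{(\tau'_L\otimes\phi_L)}}\leq\mydim{\anispart{(\pi\otimes\phi_L)}}$, which by Remark \ref{REMPrdimension} gives $[\tau']\in P_r(\phi)$. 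Case (4) gathers the cheap levels---$r=0$ is trivial, and $r=n$ or $r=m$ follow from Lemma \ref{LEMbasicfactsonPr} together with the fact that $\mathrm{lndeg}$ and the quasi-Pfister-neighbour property are invariant under separable base change (Lemmas \ref{LEManisotropyoverseparable} and \ref{LEMcharacterizationofquasiPfisterneighbours})---plus the one genuinely delicate level, $r=1$.

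For $r=1$ we may assume, by (2), that $\mydim{\phi}$ is odd, so $\anispart{(\pi\otimes\phi_L)}$ has dimension $\mydim{\phi}+1$, and we induct on $\mydim{\phi}$. Using the inductive machinery of the next section (Corollary \ref{CORinductivepropositionweak} and Remark \ref{REMinductivepropositiontensorbyPfister}), split off from $\anispart{(\tau\otimes\phi)}$ a complement $\hat\phi$ with $\phi\perp\hat\phi\simeq\anispart{(\tau\otimes\phi)}$ and with $\mydim{\anispart{(\nu\otimes\hat\phi_M)}}-\mydim{\hat\phi}=\mydim{\anispart{(\nu\otimes\phi_M)}}-\mydim{\phi}$ for every separable $M/F$ and every $1$-fold quasi-Pfister $\nu\subset\tau_M$; taking $(M,\nu)=(L,\pi)$ gives $[\pi]\in P_1(\hat\phi_L)$ (and forces $\mydim{\hat\phi}$ odd), while $[\tau]\in P_s(\phi)$ forces $\mydim{\hat\phi}<\mydim{\phi}$. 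Since $\psi$, and hence $\tau$, depends only on $\pi$ and $L/F$, the induction hypothesis applied to $\hat\phi$ produces a $1$-fold quasi-Pfister subform $\tau'\subset\tau$ with $[\tau']\in P_1(\hat\phi)$; taking $(M,\nu)=(F,\tau')$ in the above identity translates this to $\mydim{\anispart{(\tau'\otimes\phi)}}-\mydim{\phi}=1$, i.e.\ $[\tau']\in P_1(\phi)$. The main obstacle is precisely this $r=1$ step: it compels us to forward-reference the reduction apparatus of the following section and to track carefully that non-triviality of $P_1$---indeed membership of a \emph{specific} subform of $\tau$---is transported faithfully between $\phi$ and the strictly smaller form $\hat\phi$.
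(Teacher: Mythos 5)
Your proposal is correct and takes essentially the same route as the paper's proof: the same reduction to a finite Galois extension, the same Galois-descended quasi-Pfister form $\tau$ obtained from $\psi = \anispart{\big(\otimes_{\sigma} \sigma_*(\pi)\big)}$ via Lemma \ref{LEMdescentofGaloisinvariantforms} and Proposition \ref{PROPmultiplyingPrs}, and the same case-by-case descent from level $s$ to level $r$, including the inductive $r=1$ argument through Corollary \ref{CORinductivepropositionweak} and Remark \ref{REMinductivepropositiontensorbyPfister}. Your added clarifications (that it is $\psi \simeq \pi$ which forces $s=r$ in the even case, and that $\tau$ depends only on $\pi$ and $L/F$, so the induction on $\mydim{\hat{\phi}} < \mydim{\phi}$ is well-posed) sharpen points the paper leaves implicit, but do not change the argument.
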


\subsection{The Invariant $\Delta$} \label{SUBSECDeltaInvariant} We now come to the main point of this section. Recall that if $\phi$ is a quasilinear quadratic form of dimension $\geq 2$ over $F$, then we write $\phi_1$ for the anisotropic part of $\phi$ over $F(\phi)$. The dimension of $\phi_1$ is equal to the Izhboldin dimension $\Izhdim{\phi}$. We make the following definition:

\begin{definition} For any quasilinear quadratic form $\phi$ of dimension $\geq 2$ over $F$, we set $\Delta(\phi): = \lbrace r\;|\; r<\mathrm{lndeg}(\phi) \text{ and } P_r(\phi_1) \neq \emptyset \rbrace$. 
\end{definition}

\begin{remark} \label{REMDeltasimilaritytype} In the above situation, the sets $P_r(\phi_1)$ clearly only depend on the similarity type of $\phi_1$ over $F(\phi)$, so the same is true of $\Delta(\phi)$.
\end{remark}

The discussion of the previous subsection gives us the following:

\begin{proposition} \label{PROPpropertiesofDelta} Let $\phi$ be an anisotropic quasilinear quadratic form of dimension $\geq 2$ over $F$, and let $n$ be the unique integer for which $2^n < \mydim{\phi} \leq 2^{n+1}$. 

\begin{enumerate} \item $\Delta(\phi)$ contains $0$ and $\mathrm{lndeg}(\phi)-1$.
\item If $ r\in \Delta(\phi)$, then $r \notin [n+1,\mathrm{lndeg}(\phi)-2]$.
\item $n+1 \in \Delta(\phi)$ if and only if $\mathrm{ndeg}(\phi) = n+2$ and $\Izhdim{\phi} > 2^n$.
\item $n \in \Delta(\phi)$ if and only if one of the following holds:
\begin{itemize} \item $\phi$ is a quasi-Pfister neighbour $($i.e., $\mathrm{lndeg}(\phi) = n+1)$;
\item $\mathrm{lndeg}(\phi) = n+2$ and $\Izhdim{\phi} > 2^n$. \end{itemize}
\item If $r \in \Delta(\phi)$ for some non-negative integer $r \leq \mathrm{lndeg}(\phi) - 2$, then:
\begin{itemize} \item[$\mathrm{(i)}$] $\mathrm{lndeg}(\phi) \in [r+1,r+1+y_r]$, where $y_r$ is the largest integer for which $\Izhdim{\phi} > y_r2^r$;
\item[$\mathrm{(ii)}$] $\Izhdim{\phi} - \witti{2}{\phi}$ is divisible by $2^r$;
\item[$\mathrm{(iii)}$] $\witti{j}{\phi}$ is divisible by $2^r$ for all $j \in [3, \mathrm{lndeg}(\phi) - r - 1]$.
\end{itemize}
\item If $\mathrm{lndeg}(\phi) \geq \frac{\mydim{\phi}}{2} + 2$, then $\Delta(\phi) = \lbrace 0,\mathrm{lndeg}(\phi)-1 \rbrace$. 
\end{enumerate}
\begin{proof} We may assume that $n \geq 1$, so that $\mydim{\phi_1} \geq 2$ (if $\mydim{\phi} = 2$, then $\Delta(\phi) = \lbrace 0 \rbrace$ by definition).

(1) It is clear that $0 \in \Delta(\phi)$, while $\mathrm{lndeg}(\phi) - 1 = \mathrm{lndeg}(\phi_1) \in \Delta(\phi)$ by Lemma \ref{LEMbasicfactsonPr} (1). 

(2) Again, since $\mathrm{lndeg}(\phi_1) = \mathrm{lndeg}(\phi) - 1$, this holds by Lemma \ref{LEMbasicfactsonPr} (1).

(3) By Lemma \ref{LEMbasicfactsonPr} (1), $n+1 \in \Delta(\phi)$ if and only if $\phi_1$ is a neighbour of an anisotropic $(n+1)$-fold quasi-Pfister form, which holds if and only if $\mathrm{lndeg}(\phi_1) = n+1$ and $\mydim{\phi_1} > 2^n$. The claim then follows since $\mathrm{lndeg}(\phi_1) = \mathrm{lndeg}(\phi) - 1$ and $\mydim{\phi_1} = \Izhdim{\phi}$.

(4) By Lemma \ref{LEMIzhbound}, we have $\mydim{\phi_1} = \Izhdim{\phi} \geq 2^n$. By Lemma \ref{LEMbasicfactsonPr} (2), it follows that $n \in \Delta(\phi)$ if and only if one of the following holds:

\begin{itemize} \item $\phi_1$ is similar to an $n$-fold quasi-Pfister.
$\phi_1$ is a neighbour of an anisotropic $(n+1)$-fold quasi-Pfister form;
\item $\phi_1$ is similar to an $n$-fold quasi-Pfister form.
\end{itemize} 

Now the first condition holds if and only if $\phi$ is a quasi-Pfister neighbour (Lemma \ref{LEMcharacterizationofquasiPfisterneighbours}), and we have just seen that the second holds if and only if $\mathrm{lndeg}(\phi) = n+2$ and $\Izhdim{\phi} > 2^n$.

(5) Since $\mathrm{lndeg}(\phi_1) = \mathrm{lndeg}(\phi) - 1$ and $\mydim{\phi_1} = \Izhdim{\phi}$, all three statements hold by Lemma \ref{LEMdiscreteconstraintsfromnontrivialityofPr}.

(6) As above, this holds by Corollary \ref{CORPremptyforgenericforms}.
\end{proof}
\end{proposition}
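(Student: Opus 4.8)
The plan is to prove the proposition by transporting the structural results of \S\ref{SECDelta} on the invariants $P_r$, via the Knebusch splitting tower, to the single anisotropic form $\phi_1 = \anispart{(\phi_{F(\phi)})}$ occurring in the definition of $\Delta(\phi)$. First I would record the dictionary $\mydim{\phi_1} = \Izhdim{\phi}$, $\mathrm{lndeg}(\phi_1) = \mathrm{lndeg}(\phi) - 1$, and $\witti{j}{\phi_1} = \witti{j+1}{\phi}$ for $j \geq 1$, the last two being consequences of the way the tower is built together with the transitivity of the formation of anisotropic parts (so that $(\phi_1)_j \simeq \phi_{j+1}$); in particular $\Izhdim{\phi_1} = \mydim{(\phi_1)_1} = \Izhdim{\phi} - \witti{2}{\phi}$. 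I would then dispose of the degenerate case $\mydim{\phi} = 2$: here $\mathrm{lndeg}(\phi) = 1$ and $\Delta(\phi) = \lbrace 0 \rbrace$ directly from the definition (since $P_0 \equiv \lbrace [\form{1}] \rbrace$), so all six claims hold trivially. Hence assume $n \geq 1$; then Lemma~\ref{LEMIzhbound} gives $\mydim{\phi_1} = \Izhdim{\phi} \geq 2^n \geq 2$, so the lemmas of \S\ref{SECDelta} apply to $\phi_1$.

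Claims (1) and (2) then follow from Lemma~\ref{LEMbasicfactsonPr}(1) applied to $\phi_1$: $0 \in \Delta(\phi)$ is trivial, $\mathrm{lndeg}(\phi) - 1 = \mathrm{lndeg}(\phi_1) \in \Delta(\phi)$ because $P_{\mathrm{lndeg}(\phi_1)}(\phi_1) = \lbrace [\normform{\phi_1}] \rbrace$, and $P_r(\phi_1) = \emptyset$ for $r \in [n+1, \mathrm{lndeg}(\phi_1) - 1] = [n+1, \mathrm{lndeg}(\phi) - 2]$, which is (2). For (3) and (4) I would feed the top two values $r = n+1$ and $r = n$ into Lemma~\ref{LEMbasicfactsonPr} — applied to $\phi_1$, whose governing integer $n'$ (defined by $2^{n'} < \mydim{\phi_1} \leq 2^{n'+1}$) equals $n$ when $\mydim{\phi_1} > 2^n$ and $n-1$ when $\mydim{\phi_1} = 2^n$ — combine this with the quasi-Pfister-neighbour characterization of Lemma~\ref{LEMcharacterizationofquasiPfisterneighbours} (in particular ``$\phi$ is a quasi-Pfister neighbour $\iff$ $\phi_1$ is similar to a quasi-Pfister form $\iff$ $\mathrm{lndeg}(\phi) = n+1$''), and translate back through the dictionary, treating the case $\mydim{\phi_1} = 2^n$ separately (Remark~\ref{REMSstrongpineighbours} says precisely when $\phi_1$ can then be a strong $\pi$-neighbour for an $(n'+1)$-fold $\pi$).

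For (5), take $r \in \Delta(\phi)$ with $r \leq \mathrm{lndeg}(\phi) - 2 = \mathrm{lndeg}(\phi_1) - 1$; the case $r = 0$ is vacuous, and for $r \geq 1$ we have $P_r(\phi_1) \neq \emptyset$ with $r$ a positive integer $< \mathrm{lndeg}(\phi_1)$, so Lemma~\ref{LEMdiscreteconstraintsfromnontrivialityofPr} applied to $\phi_1$ gives $\mathrm{lndeg}(\phi_1) \in [r, r + y_r]$, $2^r \mid \Izhdim{\phi_1}$, and $2^r \mid \witti{j}{\phi_1}$ in the relevant range; rewriting through the dictionary (and noting $\mydim{\phi_1} = \Izhdim{\phi}$, so $y_r$ is the integer named in the statement) yields (i), (ii), (iii). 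Finally (6) is the cleanest: by (1) we already have $\lbrace 0, \mathrm{lndeg}(\phi) - 1 \rbrace \subseteq \Delta(\phi)$, and any further element would be a positive integer $r$ with $r < \mathrm{lndeg}(\phi) - 1 = \mathrm{lndeg}(\phi_1)$ and $P_r(\phi_1) \neq \emptyset$, so Corollary~\ref{CORPremptyforgenericforms} applied to $\phi_1$ would force $\mathrm{lndeg}(\phi) - 1 = \mathrm{lndeg}(\phi_1) \leq \tfrac{\mydim{\phi_1} + 1}{2} = \tfrac{\Izhdim{\phi} + 1}{2} \leq \tfrac{\mydim{\phi}}{2}$, where the last step uses $\Izhdim{\phi} \leq \mydim{\phi} - 1$ (valid since $\phi_{F(\phi)}$ is isotropic, hence $\witti{1}{\phi} \geq 1$); this contradicts the hypothesis $\mathrm{lndeg}(\phi) \geq \tfrac{\mydim{\phi}}{2} + 2$.

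I do not expect any genuinely difficult step: all the substance is already present in \S\ref{SECDelta} — Lemmas~\ref{LEMbasicfactsonPr}, \ref{LEMisotropyindicesofformsdivisiblebyquasiPfisters}, \ref{LEMdiscreteconstraintsfromnontrivialityofPr} and Corollary~\ref{CORPremptyforgenericforms} — and the proposition simply repackages it for $\phi_1$. The points requiring care are purely combinatorial: getting the Knebusch-tower dictionary exactly right, above all the index shift $\witti{j}{\phi_1} = \witti{j+1}{\phi}$ and the resulting $\Izhdim{\phi_1} = \Izhdim{\phi} - \witti{2}{\phi}$ (which rest on transitivity of anisotropic parts), and, in (3) and (4), handling the two boundary cases $\mydim{\phi_1} = 2^n$ and $\mydim{\phi_1} > 2^n$ — plus the separate degenerate case $\mydim{\phi} = 2$ — without an off-by-one. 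That bookkeeping is where I would concentrate the care.
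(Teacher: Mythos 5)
Your plan is essentially the paper's own proof: transport Lemmas~\ref{LEMbasicfactsonPr} and \ref{LEMdiscreteconstraintsfromnontrivialityofPr} and Corollary~\ref{CORPremptyforgenericforms} to $\phi_1$ via the dictionary $\mydim{\phi_1}=\Izhdim{\phi}$, $\mathrm{lndeg}(\phi_1)=\mathrm{lndeg}(\phi)-1$, $\witti{j}{\phi_1}=\witti{j+1}{\phi}$. For parts (1), (2), (4), (5) and (6) this goes through exactly as you describe; your dictionary and your handling of (6) are correct, and the only nitpick is that in (5) the case $r=0$ is not literally vacuous (item (i) there needs $\mathrm{lndeg}(\phi_1)\leq\mydim{\phi_1}-1$), though it is trivial.

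For (3), however, the case you defer, $\Izhdim{\phi}=\mydim{\phi_1}=2^n$, is not bookkeeping but the point where the argument cannot deliver the stated equivalence. There the governing integer of $\phi_1$ is $n-1$, so $r=n+1$ falls under Lemma~\ref{LEMbasicfactsonPr}~(1), which says $P_{n+1}(\phi_1)\neq\emptyset$ exactly when $\normform{(\phi_1)}$ embeds in an anisotropic $(n+1)$-fold quasi-Pfister form; combined with the requirement $n+1<\mathrm{lndeg}(\phi)$ from the definition of $\Delta$, this yields $n+1\in\Delta(\phi)$ if and only if $\mathrm{lndeg}(\phi)=n+2$, with no way to extract the extra clause $\Izhdim{\phi}>2^n$ (reading the statement's $\mathrm{ndeg}$ as $\mathrm{lndeg}$). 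That clause cannot in fact be recovered: for any $5$-dimensional anisotropic quasilinear form of norm degree $16$ (e.g.\ the generic one) we have $n=2$, $\witti{1}{\phi}=1$ is forced by Lemma~\ref{LEMIzhbound}, hence $\Izhdim{\phi}=4=2^n$ and $\mathrm{lndeg}(\phi)=4=n+2$, and part (1) of the proposition already puts $\mathrm{lndeg}(\phi)-1=n+1$ into $\Delta(\phi)$, so the ``only if'' of (3) as written clashes with (1) in this configuration. Note that the paper's proof of (3) makes precisely the jump your plan would be forced to make: it passes from Lemma~\ref{LEMbasicfactsonPr}~(1) to ``$\phi_1$ is a \emph{neighbour} of an anisotropic $(n+1)$-fold quasi-Pfister form'', silently importing the dimension condition $\mydim{\phi_1}>2^n$ that the lemma does not give. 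Your appeal to Remark~\ref{REMSstrongpineighbours} does handle the boundary case for (4), where the relevant value $r=n$ is the top value $n'+1$ for $\phi_1$ and Lemma~\ref{LEMbasicfactsonPr}~(3) applies, but for (3) you would either have to show that $\Izhdim{\phi}=2^n$ together with $\mathrm{lndeg}(\phi)=n+2$ never occurs (it does), or conclude that (3) should simply read ``$n+1\in\Delta(\phi)$ iff $\mathrm{lndeg}(\phi)=n+2$''. So concentrate your care there: that is the one step of your outline that cannot be closed as stated.
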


\begin{examples} \label{EXSDeltavalues} \begin{enumerate}[leftmargin=*] \item If $\phi$ is a quasi-Pfister neighbour, then $\Delta(\phi) = \lbrace 0,1,\hdots,n \rbrace$, where $n$ is the unique integer with $2^n < \mydim{\phi} \leq 2^{n+1}$. Indeed, in this case, $\phi_1$ is similar to an $n$-fold quasi-Pfister form (Lemma \ref{LEMcharacterizationofquasiPfisterneighbours}), and so the claim holds by Example \ref{EXPrinvariantsforPfister}. 
\item If $X_1,\hdots,X_n$ are $n \geq 5$ indeterminates, and $\phi$ is the (anisotropic) quasilinear quadratic form $\form{X_1,\hdots,X_n}$ over $F(X_1,\hdots,X_n)$, then $\mathrm{lndeg}(\phi) = n-1 \geq \frac{n}{2}+2$, and so $\Delta(\phi) = \lbrace 0, n-1 \rbrace$ by part (6) of Proposition \ref{PROPpropertiesofDelta}. 
\end{enumerate} \end{examples}

By Proposition \ref{PROPtrivialityofPrinvariantunderrationalextensions}, we also have:

\begin{proposition} \label{PROPDeltadoesntchangeunderrationalextensions} Let $\phi$ be an anisotropic quasilinear quadratic form of dimension $\geq 2$ over $F$. If $L$ is a purely transcendental field extension of $F$, then $\Delta(\phi_L) = \Delta(\phi)$.
\end{proposition}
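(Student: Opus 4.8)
The plan is to reduce $\Delta(\phi_L) = \Delta(\phi)$ to the already-established stability of the $P_r$ invariants under purely transcendental base change (Proposition \ref{PROPtrivialityofPrinvariantunderrationalextensions}), via a compatibility between forming $\phi_1$ and base-changing. First I would unwind the definition: $r \in \Delta(\phi_L)$ means $r < \mathrm{lndeg}(\phi_L)$ and $P_r((\phi_L)_1) \neq \emptyset$, where $(\phi_L)_1$ denotes the anisotropic part of $\phi_L$ over $L(\phi_L)$. Since $L/F$ is separable, Lemma \ref{LEManisotropyoverseparable} gives $\mathrm{lndeg}(\phi_L) = \mathrm{lndeg}(\phi)$, so the range condition $r < \mathrm{lndeg}(\phi)$ is the same on both sides and only the non-emptiness of the relevant $P_r$ needs to be compared.

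The key step is to identify $(\phi_L)_1$ up to similarity with $(\phi_1)_{L'}$ for a suitable purely transcendental extension $L'$. Concretely, $L(\phi_L)$ is the function field over $L$ of the quadric $X_{\phi_L} = (X_\phi)_L$, and since $L/F$ is purely transcendental, so is $L(\phi_L)/F(\phi)$: choosing a transcendence basis of $L/F$ and noting that $X_\phi$ is geometrically integral, $L(\phi_L) = F(\phi)(t_1,\dots,t_m)$ for indeterminates $t_i$. Over this field, Lemma \ref{LEManisotropyoverseparable} gives $\witti{0}{(\phi_{F(\phi)})_{L(\phi_L)}} = \witti{0}{\phi_{F(\phi)}}$, so the anisotropic part of $\phi$ over $L(\phi_L)$ is $(\phi_1)_{L(\phi_L)}$ — i.e., $(\phi_L)_1$ and $(\phi_1)_{L''}$ are isomorphic (hence certainly similar) for $L'' = L(\phi_L)$, a purely transcendental extension of $F(\phi)$. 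By Remark \ref{REMDeltasimilaritytype} this is enough, but here we even have equality up to rational base change.

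Given this, the proof concludes quickly: applying Proposition \ref{PROPtrivialityofPrinvariantunderrationalextensions} to the form $\phi_1$ over $F(\phi)$ and the purely transcendental extension $L''/F(\phi)$, we get $P_r(\phi_1) \neq \emptyset \iff P_r((\phi_1)_{L''}) \neq \emptyset$ for every non-negative integer $r$. Combining with $(\phi_1)_{L''} \simeq (\phi_L)_1$ (up to similarity) and $\mathrm{lndeg}(\phi_L) = \mathrm{lndeg}(\phi)$, we conclude $r \in \Delta(\phi) \iff r \in \Delta(\phi_L)$, i.e. $\Delta(\phi_L) = \Delta(\phi)$.

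The main obstacle I anticipate is the bookkeeping showing $L(\phi_L)/F(\phi)$ is purely transcendental. This uses that $X_\phi$ is geometrically integral over $F$ (which holds since $\phi$ is anisotropic of dimension $\geq 2$, so $X_\phi$ is an integral affine quadric and stays integral after any field extension — or one invokes that $F(\phi)$ is a regular, hence separable, and in fact rational-over-$F$-compatible, function field) together with the standard fact that for a field $K = F(V)$ with $V$ geometrically integral, and $L/F$ purely transcendental with transcendence basis $B$, the compositum $L \cdot K$ inside an algebraic closure equals $K(B)$, which is purely transcendental over $K$. Once this is in place, everything else is a direct appeal to Lemma \ref{LEManisotropyoverseparable}, Proposition \ref{PROPtrivialityofPrinvariantunderrationalextensions}, and Remark \ref{REMDeltasimilaritytype}. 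In fact, since $\phi_{F(\phi)}$ already drops its isotropy index to a fixed value and separable (indeed purely transcendental) extensions preserve isotropy indices, one may prefer to phrase the whole argument as: apply Proposition \ref{PROPtrivialityofPrinvariantunderrationalextensions} over the base field $F(\phi)$ — no subtlety about geometric integrality is needed beyond knowing $L(\phi_L)$ is purely transcendental over $F(\phi)$, which for $L/F$ finitely generated purely transcendental is elementary.
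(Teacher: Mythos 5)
Your proposal is correct and follows exactly the route the paper intends: the paper states this proposition as an immediate consequence of Proposition \ref{PROPtrivialityofPrinvariantunderrationalextensions}, and the implicit details are precisely the ones you supply, namely that $\mathrm{lndeg}(\phi_L)=\mathrm{lndeg}(\phi)$ by separability, that $L(\phi_L)$ is purely transcendental over $F(\phi)$, and that $(\phi_L)_1 \simeq (\phi_1)_{L(\phi_L)}$ by Lemma \ref{LEManisotropyoverseparable}, so that $P_r$-nontriviality transfers. No gaps; your write-up just makes explicit what the paper leaves to the reader.
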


This allows us to prove that $\Delta$ respects stable birational equivalence:

\begin{proposition} Let $\phi$ and $\psi$ be quasilinear quadratic forms of dimension $\geq 2$ over $F$. If $\phi \stb \psi$, then $\Delta(\phi) = \Delta(\psi)$.
\begin{proof} By hypothesis, $F(\phi)$ and $F(\psi)$ are $F$-linearly embeddable into an extension $L$ of $F$ which is purely transcendental over both. Moreover, as noted in the proof of Lemma \ref{LEMknownstablebirationalinvariants} (4), the forms $(\phi_1)_L$ and $(\psi_1)_L$ are similar. In view of Remark \ref{REMDeltasimilaritytype}, the claim then follows immediately from Proposition \ref{PROPDeltadoesntchangeunderrationalextensions}. \end{proof}
\end{proposition}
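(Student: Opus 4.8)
The plan is to reduce the statement to two facts already established: Remark~\ref{REMDeltasimilaritytype}, which says that $\Delta(\phi)$ depends only on $\phi_1$ up to similarity (note that the cutoff $\mathrm{lndeg}(\phi)=\mathrm{lndeg}(\phi_1)+1$ is itself read off from $\phi_1$), and Proposition~\ref{PROPtrivialityofPrinvariantunderrationalextensions}, which says that non-triviality of each $P_r$ is detected under purely transcendental base change. The case $\mydim{\phi}=2$ is trivial ($\mydim{\psi}=2$ as well, by the separation theorem, and then $\Delta(\phi)=\Delta(\psi)=\lbrace 0 \rbrace$), so I may assume $\mydim{\phi}\geq 3$, hence $\mydim{\phi_1}=\Izhdim{\phi}\geq 2$. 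By Theorem~\ref{THMstablebirationalequivalence}, $\phi\stb\psi$ gives a field $L$ containing $F(\phi)$ and $F(\psi)$ as $F$-subfields and purely transcendental over each; the same theorem gives $\Izhdim{\phi}=\Izhdim{\psi}$, and Lemma~\ref{LEMknownstablebirationalinvariants}(2) gives $\mathrm{lndeg}(\phi)=\mathrm{lndeg}(\psi)$.

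Next I would reprise the identification carried out inside the proof of Lemma~\ref{LEMknownstablebirationalinvariants}(4). As $L$ is separable over $F(\phi)$ and over $F(\psi)$, Lemma~\ref{LEManisotropyoverseparable} gives $\anispart{(\phi_L)}\simeq(\phi_1)_L$ and $\anispart{(\psi_L)}\simeq(\psi_1)_L$; and since $\phi_{F(\psi)}$ is isotropic, \cite[Thm.~6.6]{Scully2} together with Lemma~\ref{LEMsubformoftensorproduct} shows that $\anispart{(\phi_{F(\psi)})}$ contains a subform similar to $\psi_1$. Now $\anispart{(\phi_{F(\psi)})}$ stays anisotropic over the separable extension $L$ (Lemma~\ref{LEManisotropyoverseparable}), so its scalar extension to $L$ is $\anispart{(\phi_L)}$, which equals $(\phi_1)_L$; hence $(\phi_1)_L$ contains a subform similar to $(\psi_1)_L$. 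Since $\mydim{(\phi_1)_L}=\Izhdim{\phi}=\Izhdim{\psi}=\mydim{(\psi_1)_L}$, the forms $(\phi_1)_L$ and $(\psi_1)_L$ are similar.

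It then remains to convert this similarity into equality of $\Delta$-sets. Similar forms plainly have the same $P_r$ for every $r$, so $P_r\big((\phi_1)_L\big)\neq\emptyset$ if and only if $P_r\big((\psi_1)_L\big)\neq\emptyset$. Applying Proposition~\ref{PROPtrivialityofPrinvariantunderrationalextensions} over the base field $F(\phi)$ to the extension $L/F(\phi)$, and over $F(\psi)$ to $L/F(\psi)$, we get $P_r(\phi_1)\neq\emptyset\Leftrightarrow P_r\big((\phi_1)_L\big)\neq\emptyset$ and $P_r(\psi_1)\neq\emptyset\Leftrightarrow P_r\big((\psi_1)_L\big)\neq\emptyset$. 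Chaining these with $\mathrm{lndeg}(\phi)=\mathrm{lndeg}(\psi)$ yields $\Delta(\phi)=\Delta(\psi)$. In the write-up I would collapse this paragraph into a single appeal to Proposition~\ref{PROPDeltadoesntchangeunderrationalextensions} and Remark~\ref{REMDeltasimilaritytype}, which package exactly this reasoning.

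I do not anticipate a genuine obstacle here: all the weight is carried by Proposition~\ref{PROPtrivialityofPrinvariantunderrationalextensions} and the subform theorem \cite[Thm.~6.6]{Scully2}, both already at hand. The only point needing careful bookkeeping is the one in the second paragraph --- verifying that ``take the anisotropic part'' commutes with the base changes $F(\psi)\subset L$ and $F(\phi)\subset L$, so that the similarity being asserted really is between $(\phi_1)_L$ and $(\psi_1)_L$ and not between some other pair of forms. Everything else is assembly.
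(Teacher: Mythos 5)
Your proposal is correct and follows essentially the same route as the paper: it produces the common purely transcendental extension $L$ from Theorem \ref{THMstablebirationalequivalence}, re-derives the similarity of $(\phi_1)_L$ and $(\psi_1)_L$ exactly as in the proof of Lemma \ref{LEMknownstablebirationalinvariants} (4), and then concludes via Remark \ref{REMDeltasimilaritytype} together with the detection of non-triviality of $P_r$ under purely transcendental base change (Proposition \ref{PROPtrivialityofPrinvariantunderrationalextensions}, i.e.\ Proposition \ref{PROPDeltadoesntchangeunderrationalextensions}). The only difference is that you spell out the details the paper cites, plus the harmless extra handling of the two-dimensional case.
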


Before proceeding, it will be convenient to make one further definition (its purpose will become clear at the beginning of the next section).

\begin{definition} Let $\phi$ be an anisotropic quasilinear quadratic form of dimension $\geq 2$ over $F$. If $\mathrm{lndeg}(\phi) = 1$ (resp. $\mathrm{lndeg}(\phi) = 2$), then we set $c(\phi): = \frac{3}{4}$ (resp. $c(\phi) : = \frac{3}{2}$). Otherwise, we set $c(\phi)$ equal to the largest integer $<\Izhdim{\phi}$ which is divisible by $2^m$, where $m: = \mathrm{max}\lbrace r\;|\; r \leq \mathrm{lndeg}(\phi)-3 \text{ and } r \in \Delta(\phi) \rbrace$.
\end{definition}

Since $\mathrm{lndeg}$ and $\Delta$ are stable birational invariants, the same is true of $c$. We have the following basic observations:

\begin{lemma} \label{LEMbasicpropertiesofc} Let $\phi$ be an anisotropic quasilinear quadratic form of dimension $\geq 2$ over $F$, and let $n$ be the unique integer for which $2^n < \mydim{\phi} \leq 2^{n+1}$.
\begin{enumerate} \item If $\Izhdim{\phi} > 2^n + 2^{n-1}$, then $c(\phi) \geq 2^n + 2^{n-1}$.
\item If $\Izhdim{\phi} > 2^n$, then $c(\phi) \geq 2^n$.
\item If $\Izhdim{\phi} = 2^n$, then $c(\phi) = 2^n - 2^m$ for some integer $m \in [0,n-2]$. 
\item If $\phi$ is a quasi-Pfister neighbour, then $c(\phi) = 2^n - 2^{n-2}$. 
\item If $\mathrm{lndeg}(\phi) \geq \frac{\mydim{\phi}}{2} + 2$, then $c(\phi) = \mydim{\phi}-2$.  
 \end{enumerate}
\begin{proof} If $\mathrm{lndeg}(\phi) = 1$, then $n = 0$, $\Izhdim{\phi} = 1 = 2^0$ and $c(\phi) = \frac{3}{4} = 2^0 - 2^{-2}$. Similarly, if $\mathrm{lndeg}(\phi) = 2$, then $n = 1$, $\Izhdim{\phi} = 2 = 2^1$ and $c(\phi) = \frac{3}{2} = 2^1 - 2^{-1}$. Thus, in these cases, (3) holds and none of the other statements are applicable. We can therefore assume that $\mathrm{lndeg}(\phi) \geq 3$.

(1,2,3) If $r \in \Delta(\phi)$ and $r \leq \mathrm{lndeg}(\phi) - 3$, then $r \leq n-1$ by parts (2), (3) and (4) of Proposition \ref{PROPpropertiesofDelta}. In particular, (1) and (2) hold. Suppose now that $\Izhdim{\phi} = 2^n$. By the preceding remarks, we then have that $c(\phi) = 2^n - 2^m$, where $m$ is the largest element of $\Delta(\phi)$ less than or equal to $\mathrm{max} \lbrace \mathrm{lndeg}(\phi)-3,n-1 \rbrace$. By part (5)(i) of Proposition \ref{PROPpropertiesofDelta}, however, we have $n-1 \in \Delta(\phi)$ only if $\mathrm{lndeg}(\phi) \leq n+y_{n-1}$, where $y_{n-1}$ is the largest integer for which $\Izhdim{\phi} > y_{n-1}2^{n-1}$. Since $\Izhdim{\phi} = 2^n$, $y_{n-1} = 1$, and so $\mathrm{lndeg}(\phi) = n+1$ in this case. We therefore have that $m \leq n-2$, and so (3) holds.

(4) If $\phi$ is a quasi-Pfister neighbour, then $\Izhdim{\phi} = 2^n$ (Lemma \ref{LEMcharacterizationofquasiPfisterneighbours}) and $n-2 \in \Delta(\phi)$ \big(Example \ref{EXSDeltavalues} (1)\big). Since $\mathrm{lndeg}(\phi) \geq n+1$, it follows that $c(\phi) \leq 2^n - 2^{n-2}$, and equality then holds by (3).

(5) If $\mathrm{lndeg}(\phi) \geq \frac{\mydim{\phi}}{2} + 2$, then $\Delta(\phi) = \lbrace 0, \mathrm{lndeg}(\phi)-1 \rbrace$ by Corollary \ref{CORPremptyforgenericforms}, and so $c(\phi) = \Izhdim{\phi} - 1$. At the same time, we shall see in Proposition \ref{PROPdivisibilityofp1} below that we must also have that $\witti{1}{p} = 1$ in this case, and so $\Izhdim{p} = \mydim{p}-1$. \end{proof} \end{lemma}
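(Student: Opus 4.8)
The plan is to argue directly from the definition of $c(\phi)$, reducing everything to the question of which elements of $\Delta(\phi)$ can lie in the interval $[0,\mathrm{lndeg}(\phi)-3]$: outside the exceptional cases $\mathrm{lndeg}(\phi)\in\{1,2\}$, the value $c(\phi)$ is completely determined by the largest such element $m$ together with $\Izhdim{\phi}$. So I would first clear those two cases. If $\mathrm{lndeg}(\phi)=1$ then $\mydim{\phi}=2$, $n=0$, $\Izhdim{\phi}=1$ and $c(\phi)=\frac{3}{4}=2^0-2^{-2}$; if $\mathrm{lndeg}(\phi)=2$ then $\mydim{\phi}\in\{3,4\}$, $n=1$, $\Izhdim{\phi}=2$ and $c(\phi)=\frac{3}{2}=2^1-2^{-1}$. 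In both cases assertion (3) holds with the indicated (negative) exponent, while the hypotheses of (1), (2), (4) and (5) are vacuous. From here on I would assume $\mathrm{lndeg}(\phi)\geq 3$, so that $n\geq 2$.

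The key point is that if $r\in\Delta(\phi)$ and $r\leq\mathrm{lndeg}(\phi)-3$, then $r\leq n-1$. This is read off from Proposition~\ref{PROPpropertiesofDelta}: part~(2) excludes the range $[n+1,\mathrm{lndeg}(\phi)-2]$; by part~(3), $n+1\in\Delta(\phi)$ only when $\mathrm{lndeg}(\phi)=n+2$, in which case $\mathrm{lndeg}(\phi)-3=n-1<n+1$; and by part~(4), $n\in\Delta(\phi)$ only when $\mathrm{lndeg}(\phi)\in\{n+1,n+2\}$, in which case $\mathrm{lndeg}(\phi)-3\leq n-1<n$. Since $0\in\Delta(\phi)$ by part~(1) and $0\leq\mathrm{lndeg}(\phi)-3$, the integer $m=\max\{r\in\Delta(\phi):r\leq\mathrm{lndeg}(\phi)-3\}$ entering the definition of $c(\phi)$ lies in $[0,n-1]$. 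Assertions (1) and (2) then follow at once, since $2^n$ and $2^n+2^{n-1}$ are both divisible by $2^m$, so whenever $\Izhdim{\phi}$ exceeds one of them the largest multiple of $2^m$ strictly below $\Izhdim{\phi}$ is at least that number. For (3), if $\Izhdim{\phi}=2^n$ then $c(\phi)=2^n-2^m$ by definition, and one only needs to rule out $m=n-1$: if $n-1\in\Delta(\phi)$, then Proposition~\ref{PROPpropertiesofDelta}(5)(i) forces $\mathrm{lndeg}(\phi)\leq n+1$ (the relevant integer $y_{n-1}$ equals $1$ because $\Izhdim{\phi}=2^n$), so $n-1>\mathrm{lndeg}(\phi)-3$ and $n-1$ cannot contribute to $m$; hence $m\leq n-2$. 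Assertion~(4) is then immediate: a quasi-Pfister neighbour has $\Izhdim{\phi}=2^n$ and $\mathrm{lndeg}(\phi)=n+1$ by Lemma~\ref{LEMcharacterizationofquasiPfisterneighbours}, and $\Delta(\phi)=\{0,1,\dots,n\}$ by Examples~\ref{EXSDeltavalues}(1), so $m=n-2$ and (3) gives $c(\phi)=2^n-2^{n-2}$.

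This leaves (5), which I expect to be the one genuinely non-formal step. When $\mathrm{lndeg}(\phi)\geq\frac{\mydim{\phi}}{2}+2$, Proposition~\ref{PROPpropertiesofDelta}(6) gives $\Delta(\phi)=\{0,\mathrm{lndeg}(\phi)-1\}$, so $m=0$ and $c(\phi)=\Izhdim{\phi}-1$, and the claim reduces to proving $\witti{1}{\phi}=1$. For this I would invoke the divisibility property of $\phi_1$ furnished by the forthcoming Proposition~\ref{PROPdivisibilityofp1}: if $v$ denotes the least non-negative integer with $\witti{1}{\phi}\leq 2^v$, that result says $\phi_1$ is divisible by an anisotropic $v$-fold quasi-Pfister form. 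Supposing $v\geq 1$, Lemma~\ref{LEMisotropyindicesofformsdivisiblebyquasiPfisters}(1) applied to $\phi_1$ bounds $\mathrm{lndeg}(\phi_1)=\mathrm{lndeg}(\phi)-1$ above by $v+y_v$, where $y_v2^v<\Izhdim{\phi}$; comparing this with the hypothesis $\mathrm{lndeg}(\phi_1)\geq\frac{\mydim{\phi}}{2}+1=\frac{\Izhdim{\phi}+\witti{1}{\phi}}{2}+1$ and using $\witti{1}{\phi}>2^{v-1}$ yields a numerical contradiction ($v=1$ would force $\witti{1}{\phi}<0$, while $v\geq 2$ leads to $2^{v-2}<v-1$, which holds for no $v\geq 2$). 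Hence $v=0$, i.e.\ $\witti{1}{\phi}=1$, so $c(\phi)=\Izhdim{\phi}-1=\mydim{\phi}-2$. The only substantive dependency in the whole argument is thus Proposition~\ref{PROPdivisibilityofp1}; everything else is careful but mechanical bookkeeping with the definition of $c$ and the constraints recorded in Proposition~\ref{PROPpropertiesofDelta}, and the treatment of (5) is where I expect the real work to lie.
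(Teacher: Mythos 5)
Your proposal is correct and follows essentially the same route as the paper: the low cases $\mathrm{lndeg}(\phi)\in\{1,2\}$, the reduction of (1)--(3) to showing that any $r\in\Delta(\phi)$ with $r\leq\mathrm{lndeg}(\phi)-3$ satisfies $r\leq n-1$ (ruling out $r=n-1$ when $\Izhdim{\phi}=2^n$ via Proposition \ref{PROPpropertiesofDelta}(5)(i)), and the deductions of (4) and (5) from Example \ref{EXSDeltavalues}(1), Proposition \ref{PROPpropertiesofDelta}(6) and the forward reference to Proposition \ref{PROPdivisibilityofp1} all match the paper's argument. The only divergence is that for (5) the paper merely asserts that Proposition \ref{PROPdivisibilityofp1} forces $\witti{1}{\phi}=1$, whereas you supply an explicit (and valid) derivation via Lemma \ref{LEMisotropyindicesofformsdivisiblebyquasiPfisters}(1); this fills in a detail the paper leaves implicit rather than changing the approach.
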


\begin{example} \label{EXcforgeneric} If $X_1,\hdots,X_n$ are $n \geq 5$ indeterminates, and $\phi$ is the (anisotropic) quasilinear quadratic form $\form{X_1,\hdots,X_n}$ over $F(X_1,\hdots,X_n)$, then $\mathrm{lndeg}(\phi) = n-1 \geq \frac{n}{2}+2$, and so $c(\phi) = n-2$ by part (5) of the proposition.
\end{example}

\section{Main Results}

We now come to our general results on the isotropy of quasilinear quadratic forms over function fields of quasilinear quadrics. The invariants $\Delta$ and $c$ introduced in the previous subsection play a key role here. Fix a field $F$ of characteristic 2. The main result is:

\begin{theorem} \label{THMmainresult} Let $p$ and $q$ be anisotropic quasilinear quadratic forms of dimension $\geq 2$ over $F$, and let $n$ be the unique integer for which $2^n < \Izhdim{p} \leq 2^{n+1}$. Suppose that $q_{F(p)}$ is isotropic, and set $k := \mydim{q} - 2\witti{0}{q_{F(p)}}$. For each non-negative integer $r$, let $y_r$ be the largest integer for which $\Izhdim{p} > y_r2^r$. If $k < \Izhdim{p}$, then either:
\begin{enumerate} \item $\mydim{q} = a2^{\mathrm{lndeg}(p)} + \epsilon$ for some positive integer $a$ and integer $\epsilon \in [-k,k]$; or
\item $p$ is not a quasi-Pfister neighbour, and there exist non-negative integers $r,r' \in \Delta(p)$ and a positive integer $x \geq r' - r + 1$ such that the following hold:
\begin{itemize} \item $k \geq y_r2^r$ and $\mydim{q} = a2^{\mathrm{lndeg}(p)} \pm \epsilon$ for some non-negative integer $a$ and positive integer $\epsilon \in [(x+y_r)2^{r+1} - k,x2^{r+1} +k]$;
\item $r \leq n-1$, $r' \in [r,n]$ and  $x \leq \mathrm{min}\lbrace 2^{n-1-r}, (y_{r'}+1)2^{r'-r} - y_r \rbrace$;
\item If $\mathrm{lndeg}(p) = n+2$, then $x2^r \leq 2^n - y_r2^{r-1}$. Otherwise, we have $x2^r \leq y_r2^r - c(p_1) \leq y_r2^r - (2^{n-1} + 2^{n-2})$.  \end{itemize} \end{enumerate}
\end{theorem}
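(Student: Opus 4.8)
The plan is to prove Theorem \ref{THMmainresult} by induction, the principal inductive step being the passage down the first level of the Knebusch splitting tower of $p$; since this strictly decreases both $\mathrm{lndeg}(p)$ and $\mydim{q}$, one may induct on, say, $\mathrm{lndeg}(p)$. The base case $\mathrm{lndeg}(p) = 1$ is immediate: then $p$ is similar to a $1$-fold quasi-Pfister form, hence a quasi-Pfister neighbour, $\Izhdim{p} = 1$, and the hypothesis $k < \Izhdim{p}$ forces $k = 0$; since $2^{\mathrm{lndeg}(p)} = 2$, conclusion (1) asserts only that $\mydim{q}$ is even, which holds because $\mydim{q} = 2\witti{0}{q_{F(p)}}$.

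For the inductive step I would first record the structural consequences of the hypothesis that $q_{F(p)}$ is isotropic: by Lemma \ref{LEMdropingofnormdegree}, $\normform{p} \subset \normform{q}$ and $\mathrm{lndeg}(q_{F(p)}) = \mathrm{lndeg}(q) - 1$; by Theorem \ref{THMoldtheorem} and Lemma \ref{LEMsubformoftensorproduct}, the form $\bar q := \anispart{(q_{F(p)})}$ contains a subform similar to $p_1$, so $\mydim{\bar q} = \tfrac12(\mydim{q} + k) \geq \Izhdim{p}$ and $\bar q$ becomes isotropic over $F(p)(p_1)$. Applying the induction hypothesis to the pair $(p_1, \bar q)$ over $F(p)$ (noting $\mathrm{lndeg}(p_1) = \mathrm{lndeg}(p) - 1$) yields a constraint on $\mydim{\bar q}$ relative to $2^{\mathrm{lndeg}(p)-1}$, phrased through the parameter $k' := \mydim{\bar q} - 2\witti{0}{\bar q_{F(p)(p_1)}}$, the invariants $\Delta(p_1)$ and $c((p_1)_1)$, and the integer $n'$ with $2^{n'} < \Izhdim{p_1} \leq 2^{n'+1}$. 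One then transports this back via $\mydim{q} = 2\mydim{\bar q} - k$; doing so requires bounding $k'$ in terms of $k$ (using additivity of isotropy indices along the tower, the inequality $\witti{0}{\bar q_{F(p)(p_1)}} \leq \tfrac12\mydim{\bar q}$ of Lemma \ref{LEMisotropyoverfunctionfieldsofquadrics}, and the separation theorem, Lemma \ref{LEMIzhbound}), and comparing $\Delta(p_1), c((p_1)_1), n'$ against $\Delta(p), c(p_1), n$ using stable birational invariance together with $\Izhdim{p_1} = \Izhdim{p} - \witti{2}{p}$, Proposition \ref{PROPpropertiesofDelta} and Lemma \ref{LEMbasicpropertiesofc}.

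When $p$ is not a quasi-Pfister neighbour, the parameters $r \leq n-1$, $r' \in [r,n]$, $x \geq r'-r+1$ of conclusion (2) must be extracted. Here I would apply Corollary \ref{CORquasiPfisterinvariant} to $p_1$ to obtain its dominating anisotropic $m$-fold quasi-Pfister form $\pi$, with the case $\mathrm{lndeg}(p) = n+2$ (equivalently, $p_1$ a quasi-Pfister neighbour, $\pi \simeq \normform{(p_1)}$ of foldness $n+1$) being exactly the one isolated by the last bullet of (2). In the remaining subcase one runs a secondary induction on dimension: Corollary \ref{CORinductivepropositionweak} and Remark \ref{REMinductivepropositiontensorbyPfister} produce a companion form $\widehat{p_1}$ with $p_1 \perp \widehat{p_1} \simeq \anispart{(\pi \otimes p_1)}$, of strictly smaller dimension, whose splitting over function fields of subforms of $\pi_M$ mirrors that of $p_1$; applying the induction hypothesis to $(\widehat{p_1}, \bar q)$ and tracking how the residue window for $\mydim{q}$ shifts at each such step yields $x$ as a count of the steps, the chain of nonempty $P_i(p_1)$ for $i \in [r,r']$ furnished by Proposition \ref{PROPmultiplyingPrs} forcing $x \geq r'-r+1$, and the bounds $x \leq \min\{2^{n-1-r}, (y_{r'}+1)2^{r'-r} - y_r\}$ and the inequalities on $x2^r$ emerging from the divisibility constraints of Lemma \ref{LEMisotropyindicesofformsdivisiblebyquasiPfisters} and the definition of $c(p_1)$ via Lemma \ref{LEMbasicpropertiesofc}.

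The main obstacle is this bookkeeping. The $\pm k'$ window handed back by a crude application of the induction hypothesis is far too lossy to land inside the $\pm k$ window of conclusion (1) or the tight $\epsilon$-interval of conclusion (2); one must instead track the finer position of $\mydim{\bar q}$ — which residue class modulo $2^{\mathrm{lndeg}(p)-1}$, and on which side of the relevant multiple of $2^{r}$ — throughout the induction, disentangling the two indices $r$ and $r'$ and arranging that the degenerate ranges (e.g. $k' \geq \Izhdim{p_1}$, the quotient $a'$ forced to be $0$ or $1$, or $p_1$ similar to $\pi$) are each dispatched by a direct argument. Keeping the distinction $\mathrm{lndeg}(p) = n+2$ versus $\mathrm{lndeg}(p) > n+2$ in force throughout is essential, since it determines which of the two bounds on $x2^r$ applies.
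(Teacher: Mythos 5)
Your proposal has a genuine gap, and it sits exactly where you flag ``the main obstacle'': the induction you set up cannot close. You use Theorem \ref{THMoldtheorem} only to conclude that $\bar q := \anispart{(q_{F(p)})}$ contains a form similar to $p_1$, and then apply the theorem inductively to the pair $(p_1,\bar q)$ over $F(p)$. But the induction hypothesis constrains $\mydim{\bar q}$ only through $k' = \mydim{\bar q} - 2\witti{0}{\bar q_{F(p)(p_1)}}$, and there is no relation bounding $k'$ by $k$: the isotropy of $\bar q$ over $F(p)(p_1)$ may be just the minimal amount coming from the embedded copy of $p_1$, so $k'$ can be enormous, can exceed $\Izhdim{p_1}$ (in which case the hypothesis $k'<\Izhdim{p_1}$ fails and the induction hypothesis says nothing), and even when it applies, a window of width $k'$ around multiples of $2^{\mathrm{lndeg}(p)-1}$ for $\mydim{\bar q}$ does not convert, via $\mydim{q}=2\mydim{\bar q}-k$, into the width-$k$ window of conclusion (1) or the tight $\epsilon$-interval of (2). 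Already in the cleanest case $k=0$ your scheme would need $\bar q$ to be maximally isotropic over $F(p)(p_1)$, which nothing forces. Promising to ``track the finer position of $\mydim{\bar q}$'' and to dispatch the degenerate ranges ``by a direct argument'' is precisely the content that is missing: the statement of the theorem does not carry that finer information, so it cannot serve as its own induction hypothesis in the way you propose.

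The paper's proof resolves this by proving a different, stronger inductive statement rather than by finer bookkeeping in your induction. It uses the full strength of Theorem \ref{THMoldtheorem}: $\bar q$ contains $\anispart{(\tau\otimes p_1)}$ with $\mydim{\tau}=\witti{0}{q_{F(p)}}$, so that $\mydim{\anispart{(\tau\otimes p_1)}}-\mydim{\tau}\leq k$, and everything reduces (Proposition \ref{PROPtensorproducttheoremimpliesmainresult}) to the technical Theorem \ref{THMrefinedtensorproducttheorem} on the possible dimensions of $\anispart{(\psi\otimes\phi)}$. That theorem is proved by induction on the defect $d=\mydim{\sigma}-\mydim{\psi}$ (not on $\mathrm{lndeg}(p)$ or down the splitting tower of $p$), with the main theorem for $k<d$ fed back into the induction; its proof establishes $[\normform{\phi}]\in P_{\mathrm{lndeg}(\phi)}(\psi)$ (Proposition \ref{PROPlndegliesinP}) and then uses the bilinear complement trick (Corollary \ref{CORinductivepropositiontensorbyPfister}, Propositions \ref{PROPinductivestepinproofoftheorem} and \ref{PROPconstructionofalpha1}) to build the chain $\alpha_1,\hdots,\alpha_m$, from which the parameters $r$, $r'$, $x$ of conclusion (2) are read off ($r$ the foldness of $\alpha_m=\pi$, $r'=\mathrm{lndeg}(\alpha_1)$, $x=\mydim{\alpha_1}/2^r$). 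You do gesture at pieces of this machinery (Corollary \ref{CORinductivepropositionweak}, Corollary \ref{CORquasiPfisterinvariant}, Proposition \ref{PROPmultiplyingPrs}), but without the intermediate tensor-product theorem and its induction on $d$, the argument as proposed does not go through.
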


In many situations, we are forced into the simpler case (1) due to the non-existence of integers $r$ and $r'$ satisfying the conditions in (2). Here, the integer $\epsilon$ in the formula for $\mydim{q}$ is subject to the same constraint as in Conjecture \ref{CONJfirstconjecture}, but the exponent of the $2$-power will typically be much larger than that in the latter, so the conclusion is significantly stronger (see, e.g., Example \ref{EXmainresultforgeneric} below). There are two basic reasons why we may be forced into case (1): The first is that the invariant $\Delta(p)$ may be too constrained, and the second is that the value of $k$ may be too small to allow the inequality $k \geq y_r2^r$ in (2) to be satisfied for any $r \leq n-1$. To the second point, the invariant $c$ allows us to make a precise statement:

\begin{corollary} \label{CORmainresult} Let $p$ and $q$ be anisotropic quasilinear quadratic forms of dimension $\geq 2$ over $F$. Suppose that $q_{F(p)}$ is isotropic, and set $k := \mathrm{dim}(q) - 2\witti{0}{q_{F(p)}}$. If $k < c(p)$, then $\mydim{q} = a2^{\mathrm{lndeg}(p)} + \epsilon$ for some positive integer $a$ and integer $\epsilon \in [-k,k]$. 
\begin{proof} By definition, $c(p)$ is strictly less than $\Izhdim{p}$. Since $k<c(p)$, Theorem \ref{THMmainresult} is therefore applicable. Let $n$ be as in the statement of the latter. If we were not in case (1), then there would exist a non-negative integer $r \in \Delta(p)$ such that $r \leq n-1$ and $k \geq y_r2^r$, where $y_r$ is the largest integer for which $\Izhdim{p} > y_r2^r$. By the definition of $c(p)$, this would imply that $\mathrm{lndeg}(p) \leq r+2 \leq n+1$. But since $\mydim{p_1} = \Izhdim{p} > 2^n$, we have $\mathrm{lndeg}(p) = \mathrm{lndeg}(p_1) +1 \geq n+2$, so we must in fact be in case (1).
\end{proof}
\end{corollary}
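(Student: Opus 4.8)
The plan is to derive this directly from Theorem~\ref{THMmainresult} by showing that case~(2) of that theorem cannot occur under the hypothesis $k < c(p)$, leaving only case~(1), which is exactly the assertion. First I would check that Theorem~\ref{THMmainresult} is applicable. Unwinding the definition of $c$, one sees that $c(p) < \Izhdim{p}$ in every case: if $\mathrm{lndeg}(p) \in \{1,2\}$ this is the comparison of $c(p) \in \{3/4, 3/2\}$ with $\Izhdim{p} \in \{1,2\}$, and otherwise $c(p)$ is by construction an integer strictly below $\Izhdim{p}$. Hence $k < c(p) < \Izhdim{p}$, so Theorem~\ref{THMmainresult} applies, with $n$ the integer it specifies (so $2^n < \Izhdim{p} \le 2^{n+1}$).

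Next I would assume, for contradiction, that we are in case~(2): there exist non-negative integers $r, r' \in \Delta(p)$ (and a positive integer $x$) with $k \ge y_r 2^r$ and $r \le n-1$. The crux is to combine $y_r 2^r \le k < c(p)$ with the definition of $c(p)$ to force $\mathrm{lndeg}(p) \le r+2$. Suppose instead that $r \le \mathrm{lndeg}(p) - 3$; then $\mathrm{lndeg}(p) \ge 3$, so $c(p)$ is, by definition, the largest integer $<\Izhdim{p}$ divisible by $2^m$, where $m = \max\{t : t \le \mathrm{lndeg}(p)-3,\ t \in \Delta(p)\}$ --- a maximum over a non-empty set, since $0 \in \Delta(p)$ by Proposition~\ref{PROPpropertiesofDelta}(1). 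As $r$ lies in that set, $r \le m$, so $2^r \mid 2^m \mid c(p)$; since $y_r 2^r$ is the largest multiple of $2^r$ strictly below $\Izhdim{p}$, we get $c(p) \le y_r 2^r$, contradicting $y_r 2^r < c(p)$. Hence $\mathrm{lndeg}(p) \le r+2 \le n+1$.

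Finally I would contradict this. Since $p$ is anisotropic of dimension $\ge 2$, the form $p_{F(p)}$ is isotropic, so $\mathrm{lndeg}(p) = \mathrm{lndeg}(p_1) + 1$ by Lemma~\ref{LEMdropingofnormdegree}; and because $\mydim{p_1} = \Izhdim{p} > 2^n$, the lower bound on norm degree recorded in \S\ref{SUBSECnormform} gives $\mathrm{lndeg}(p_1) \ge n+1$, so $\mathrm{lndeg}(p) \ge n+2$. This is incompatible with $\mathrm{lndeg}(p) \le n+1$, so case~(2) is impossible and we must be in case~(1), which is the claim. I expect the only genuinely non-mechanical point to be the manipulation of the definition of $c(p)$ in the middle step: one must verify both that the maximum defining $m$ ranges over a non-empty set and that the divisibility chain $2^r \mid 2^m \mid c(p)$ runs in the correct direction. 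The degenerate cases $\mathrm{lndeg}(p) \in \{1,2\}$ need no separate argument, since there $\Izhdim{p} \in \{1,2\}$, hence $n \le 0$, and no non-negative integer $r \le n-1$ exists, making case~(2) vacuous from the outset.
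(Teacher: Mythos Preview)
Your proof is correct and follows exactly the paper's approach: apply Theorem~\ref{THMmainresult}, argue that case~(2) forces $\mathrm{lndeg}(p)\le r+2\le n+1$ via the definition of $c(p)$, and contradict this using $\mathrm{lndeg}(p)=\mathrm{lndeg}(p_1)+1\ge n+2$. You have simply spelled out more explicitly the step the paper compresses into ``By the definition of $c(p)$, this would imply that $\mathrm{lndeg}(p)\le r+2$'', and your treatment of the degenerate cases $\mathrm{lndeg}(p)\in\{1,2\}$ is a harmless addition.
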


By the first three parts of Lemma \ref{LEMbasicpropertiesofc}, this gives:

\begin{corollary} \label{CORmainresult2} Let $p$ and $q$ be anisotropic quasilinear quadratic forms of dimension $\geq 2$ over $F$, and let $s$ be the unique integer for which $2^s < \mydim{p} \leq 2^{s+1}$. Suppose that $q_{F(p)}$ is isotropic, and set $k := \mathrm{dim}(q) - 2\witti{0}{q_{F(p)}}$. Suppose further that
$$ k < \begin{cases} 2^s + 2^{s-1} & \text{if } \Izhdim{p} > 2^s + 2^{s-1} \\ 2^{s} & \text{if }\Izhdim{p} \in (2^s,2^s + 2^{s-1}] \\
2^{s-1}+2^{s-2} & \text{if } \Izhdim{p} =2^s. \end{cases} $$
Then $\mydim{q} = a2^{\mathrm{lndeg}(p)} + \epsilon$ for some positive integer $a$ and integer $\epsilon \in [-k,k]$. 
\end{corollary}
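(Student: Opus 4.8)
The plan is to obtain this as an immediate consequence of Corollary \ref{CORmainresult}, whose hypothesis is the single inequality $k < c(p)$ and whose conclusion is exactly the one we want (and which already absorbs the constraint $k < \Izhdim{p}$, since $c(p) < \Izhdim{p}$ by the very definition of $c$). So the whole task reduces to checking that, in each of the three cases of the statement, the assumed bound on $k$ forces $k < c(p)$.

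To do this I would apply Lemma \ref{LEMbasicpropertiesofc} to $\phi = p$, observing that the integer $n$ appearing in that lemma is precisely our $s$ (both are characterized by $2^n < \mydim{p} \leq 2^{n+1}$). The three cases of the present statement then correspond one-to-one to parts (1), (2), (3) of that lemma: if $\Izhdim{p} > 2^s + 2^{s-1}$, part (1) gives $c(p) \geq 2^s + 2^{s-1}$; if $\Izhdim{p} \in (2^s, 2^s + 2^{s-1}]$, then $\Izhdim{p} > 2^s$ and part (2) gives $c(p) \geq 2^s$; and if $\Izhdim{p} = 2^s$, part (3) gives $c(p) = 2^s - 2^m$ for some integer $m \leq s-2$, whence $c(p) \geq 2^s - 2^{s-2} = 2^{s-1} + 2^{s-2}$. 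In every case the lower bound for $c(p)$ thus produced is exactly the quantity $k$ is assumed to be strictly smaller than, so $k < c(p)$ and Corollary \ref{CORmainresult} applies verbatim. (One may also record that these three cases are exhaustive, since $\Izhdim{p} \geq 2^s$ always by Lemma \ref{LEMIzhbound}.)

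I do not expect any genuine obstacle here: the substantive work has been done in Theorem \ref{THMmainresult} and in the lower bounds of Lemma \ref{LEMbasicpropertiesofc}, and the corollary itself is a pure case-check. The only point that calls for a moment's care is the behaviour when $s$ is very small, where $c(p)$ can take the fractional values $\tfrac{3}{4}$ or $\tfrac{3}{2}$: one checks directly that $\mydim{p} = 2$ forces $\mathrm{lndeg}(p) = 1$ and $\Izhdim{p} = 1$, while $\Izhdim{p} = 2$ (so $\mydim{p} \in \{3,4\}$) forces $\mathrm{lndeg}(p) = 2$, and in both situations the inequality $c(p) \geq 2^{s-1} + 2^{s-2}$ still holds (with the understanding that $2^m \leq 2^{s-2}$ for the relevant non-positive exponent $m$), so the argument above goes through without modification.
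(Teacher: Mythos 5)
Your proposal is correct and is exactly the paper's argument: the paper derives this corollary from Corollary \ref{CORmainresult} by invoking the first three parts of Lemma \ref{LEMbasicpropertiesofc} to get $k < c(p)$ in each of the three cases, just as you do (including the harmless degenerate behaviour of $c$ for small $\mathrm{lndeg}(p)$).
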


Another situation where Corollary \ref{CORmainresult} effectively applies is that where $p$ is ``sufficiently generic''. More specifically, combining Corollary \ref{CORmainresult} with part (5) of Proposition \ref{PROPpropertiesofDelta} gives us the following:

\begin{corollary} \label{CORmainresultgeneric} Let $p$ and $q$ be anisotropic quasilinear quadratic forms of dimension $\geq 2$ over $F$ with $\mathrm{lndeg}(p) \geq \frac{\mydim{p}}{2}+2$. Suppose that $q_{F(p)}$ is isotropic, and set $k := \mathrm{dim}(q) - 2\witti{0}{q_{F(p)}}$. If $k \leq \mydim{p} - 3$, then $\mydim{q} = a2^{\mathrm{lndeg}(p)} + \epsilon$ for some positive integer $a$ and integer $\epsilon \in [-k,k]$. 
\end{corollary}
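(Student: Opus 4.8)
The plan is to obtain this as an immediate consequence of Corollary \ref{CORmainresult}, the only work being to evaluate the invariant $c(p)$ under the stated genericity hypothesis.

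First I would note that the assumption $\mathrm{lndeg}(p) \geq \frac{\mydim{p}}{2} + 2$ is exactly the hypothesis of part (5) of Lemma \ref{LEMbasicpropertiesofc}, which tells us that $c(p) = \mydim{p} - 2$. If one prefers to unwind this rather than cite it directly: the same inequality forces $P_r(p_1) = \emptyset$ for all positive integers $r < \mathrm{lndeg}(p_1)$ by Corollary \ref{CORPremptyforgenericforms}, hence $\Delta(p) = \lbrace 0, \mathrm{lndeg}(p) - 1 \rbrace$ by part (6) of Proposition \ref{PROPpropertiesofDelta}; so the integer $m$ appearing in the definition of $c(p)$ equals $0$, and since $\witti{1}{p} = 1$ in this regime (giving $\Izhdim{p} = \mydim{p} - 1$) we get $c(p) = \Izhdim{p} - 1 = \mydim{p} - 2$.

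Next I would combine this with the hypothesis $k \leq \mydim{p} - 3$ to deduce $k < \mydim{p} - 2 = c(p)$. This is precisely the condition under which Corollary \ref{CORmainresult} applies, and that corollary then yields $\mydim{q} = a2^{\mathrm{lndeg}(p)} + \epsilon$ for some positive integer $a$ and integer $\epsilon \in [-k,k]$, which is the desired conclusion.

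I do not anticipate any genuine obstacle here: all the substance is carried by Theorem \ref{THMmainresult} (through Corollary \ref{CORmainresult}) and by the computation of $c$ for generic forms in Lemma \ref{LEMbasicpropertiesofc}, both of which are already in hand. The only point needing care is the strict inequality $k < c(p)$, which reduces to the trivial observation that $\mydim{p} - 3 < \mydim{p} - 2$; so the argument is essentially a two-line deduction.
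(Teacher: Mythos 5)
Your proposal is correct and follows the paper's own route: the paper deduces this corollary precisely by combining Corollary \ref{CORmainresult} with the computation $c(p)=\mydim{p}-2$ for forms with $\mathrm{lndeg}(p)\geq \frac{\mydim{p}}{2}+2$ (Lemma \ref{LEMbasicpropertiesofc}(5), whose proof rests on Corollary \ref{CORPremptyforgenericforms} and Proposition \ref{PROPdivisibilityofp1}). The only cosmetic remark is that your optional unwinding of $\witti{1}{p}=1$ implicitly uses Proposition \ref{PROPdivisibilityofp1}, but citing Lemma \ref{LEMbasicpropertiesofc}(5) directly, as you do first, already settles this.
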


The case where $p$ is actually generic looks as follows:

\begin{example} \label{EXmainresultforgeneric}  Let $X_1,\hdots,X_n$ be $n \geq 5$ indeterminates, and let $p$ be the (anisotropic) quasilinear quadratic form $\form{X_1,\hdots,X_n}$ over $F(X_1,\hdots,X_n)$. Let $q$ be an anisotropic quasilinear quadratic form of dimension $\geq 2$ over $F(X_1,\hdots,X_n)$, and let $k  = \mydim{q} - 2\witti{0}{q_{F(p)}}$. If $k \leq n-3$, then it follows from Example \ref{EXcforgeneric} and Corollary \ref{CORmainresultgeneric} that $\mydim{q} = a2^{n-1} + \epsilon$ for some non-negative integer $a$ and some $\epsilon \in [-k,k]$.
\end{example}

\begin{remark} One might imagine that the case of generic forms is also accessible in the non-singular theory. However, if $p$ is a generic non-singular quadratic form of dimension $\geq 2$ over a field $K$, then even the cases where extreme isotropy occurs over $K(p)$ seem to be poorly understood (aside from the known results on Conjecture \ref{CONJfirstconjecture}). In particular, little seems to be known about the structure of the kernel of the restriction homomorphism from the quadratic Witt group of $K$ to the quadratic Witt group of $K(p)$.  
\end{remark}

Now, analyzing the contribution of the Izhboldin dimension in our main result, we obtain Theorem \ref{THMmaintheoremcorollary} from the introduction:

\begin{proof}[Proof of Theorem \ref{THMmaintheoremcorollary}] Let $p$, $q$, $s$ and $k$ be as in the statement of the theorem. Note that $\mathrm{lndeg}(p) =s+1$ when $p$ is a quasi-Pfister neighbour, with $\mathrm{lndeg}(p) \geq s+2$ otherwise (Lemma \ref{LEMcharacterizationofquasiPfisterneighbours}). Now, since $k < \Izhdim{p}$, Theorem \ref{THMmainresult} is applicable. If $p$ is a quasi-Pfister neighbour then the theorem says that (1) holds. Assume now that $p$ is not a quasi-Pfister neighbour. If $\mydim{q} = a2^{\mathrm{lndeg}(p)} + \epsilon$ for some positive integer $a$ and integer $\epsilon \in [-k,k]$, then we are in case (i) of (2), and (1) also holds. We may therefore suppose that we are in the second case allowed by Theorem \ref{THMmainresult}. Let $r,y_r$ and $x$ be as in the statement of the latter. Since $2^s \leq \Izhdim{p} < 2^{s+1}$, the following then hold:
\begin{itemize} \item[(a)] $k \geq y_r2^r$ and $\mydim{q} = a2^{s+2} \pm \epsilon$ for some non-negative integer $a$ and positive integer $\epsilon \in [(x+y_r)2^{r+1} - k, x2^{r+1} + k]$;
\item[(b)] $r \leq s-1$ with $r \leq s-2$ when $\Izhdim{p} = 2^s$, and $x \leq \mathrm{min}\lbrace 2^{s-1-r}, 2^{s+1-r}-y_r \rbrace$ with $x \leq 2^{s-2-r}$ when $\Izhdim{p} = 2^s$. \end{itemize}
To prove that (1) holds in this case, it suffices to show that $\epsilon + k \geq 2^{s+1}$ in (a). But since $\Izhdim{p} \geq 2^s$, and since $x$ is positive, we have 
$$ \epsilon \geq (x+y_r)2^{r+1} -k \geq (1+y_r)2^{r+1} - k \geq 2\Izhdim{p} - k \geq 2^{s+1} - k, $$
and so the claim indeed holds. Now, if $\Izhdim{p} = 2^s$, then $y_r2^r = 2^s - 2^r$, so (a) and (b) tell us that (2)(ii) is satisfied. Suppose finally that $\Izhdim{p} > 2^s$. If $\mydim{q} = a2^{s+2} + \epsilon$ in (a), then (2)(iii) is satisfied. If not, then $a \geq 1$ and $\mydim{q} = a'2^{s+2} + \epsilon'$, where $a' = a-1 \geq 0$ and $\epsilon' = 2^{s+2} - \epsilon$. Set $x' : = 2^{s+1-r} - y_r - x$. Since $x$ is positive, we have $x' < 2^{s+1-r} - y_r$. Moreover, $x'$ is non-negative by (b). Since $\epsilon \in [(x+y_r)2^{r+1}-k,, x2^{r+1} + k]$, we have $\epsilon' =2^{s+2} - \epsilon \in [(x'+y_r)2^{r+1} -k, x'2^{r+1} + k]$. Now $y_r2^{r+1} \leq 2k$, so if $x'= 0$, then $\epsilon' \in [-k,k]$. Since $\mydim{q} = a'2^{s+1} + \epsilon'$, (2)(i) is then satisfied ($a'$ must be positive in this case, since the isotropy of $q_{F(p)}$ forces $\mydim{q}$ to be greater than $k$). If $x' \neq 0$, on the other hand, then $x' \in [1,2^{s+1-r}-y_r]$, and the equality $\mydim{q} = a'2^{s+2} + \epsilon'$, together with (b), shows that (2)(iii) is satisfied. This completes the proof. 
\end{proof}

Now, the key ingredient in the proof of Theorem \ref{THMmainresult} the following result from \cite{Scully1}:

\begin{theorem}[{\cite[Thm. 6.4]{Scully1}}] \label{THMoldtheorem} Let $p$ and $q$ be anisotropic quasilinear quadratic forms of dimension $\geq 2$ over $F$. Then there exists an anisotropic quasilinear quadratic form $\tau$ of dimension $\witti{0}{q_{F(p)}}$ over $F(p)$ such that $\anispart{(\tau \otimes p_1)} \subset \anispart{(q_{F(p)})}$. 
\end{theorem}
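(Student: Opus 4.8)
The first step is to recast the statement. By Lemma \ref{LEMsubformtheorem}, the condition $\anispart{(\tau \otimes p_1)} \subseteq \anispart{(q_{F(p)})}$ is equivalent to $D(\tau)\cdot D(p_1) \subseteq D(q_{F(p)})$, with products taken in $F(p)$. Since the value set of an anisotropic quasilinear form over $F(p)$ is nothing but an $F(p)^2$-subspace of $F(p)$ of the corresponding dimension, and since $D(q_{F(p)})$ is itself an $F(p)^2$-subspace, finding $\tau$ amounts to finding an $\witti{0}{q_{F(p)}}$-dimensional $F(p)^2$-subspace $W$ inside the transporter $S := \{x \in F(p) \;|\; x\,D(p_1) \subseteq D(q_{F(p)})\}$; for any such $W$ one has $W \cdot D(p_1) = \sum_{w \in W} w\,D(p_1) \subseteq D(q_{F(p)})$, and conversely any admissible $\tau$ has $D(\tau) \subseteq S$. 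Thus the theorem is equivalent to the single inequality $\dim_{F(p)^2}\bigl(D(q_{F(p)}) : D(p_1)\bigr) \geq \witti{0}{q_{F(p)}}$ (the case $\witti{0}{q_{F(p)}} = 0$ being trivial).

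\textbf{Induction.} I would prove this inequality by induction on $\mydim{p}$. Using Lemma \ref{LEManisotropyoverseparable}, which guarantees that isotropy indices, value sets and anisotropic parts are insensitive to purely transcendental extension, I would write $p \simeq \form{a} \perp p'$ and pass to the concrete model $F(p) \cong K(\sqrt{d})$, where $K = F(V_{p'})$ is rational over $F$ and $d = a^{-1}p'(X)$; a short computation (using that $a \equiv p'(X) \pmod{F(p)^2}$) identifies $D(p_1)$ with $D(p'_K) + d\,D(p'_K)$ and $D(q_{F(p)})$ with $D(q_K) + d\,D(q_K)$. In the base case $\mydim{p} = 2$ the form $p_1$ is one-dimensional, so $S$ is a scalar multiple of $D(q_{F(p)})$ and the inequality reduces to $\mydim{q} \geq 2\witti{0}{q_{F(p)}}$, which is part of Lemma \ref{LEMisotropyoverfunctionfieldsofquadrics}. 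For the inductive step I would split on $\witti{1}{p}$. If $\witti{1}{p} \geq 2$, then $\mydim{p'} > \Izhdim{p}$, so $p'$ is a neighbour of $p$ and $p \stb p'$ by Theorem \ref{THMstablebirationalequivalence}; one then transfers the inductive hypothesis for $p'$ across a common purely transcendental extension of $F(p)$ and $F(p')$, using that the transporter commutes with such base changes (a descent argument in the style of Lemma \ref{LEMdescendingoverrationalextension}) and bearing in mind that $p_1$ and $p'_1$ agree only up to similarity. If $\witti{1}{p} = 1$, then $\Izhdim{p} = \mydim{p'}$ and $p_1 \simeq p'_{F(p)}$, and here the data over $F(p) = K(\sqrt{d})$ must be compared directly with the data for $p'$ over $K$ via Lemma \ref{LEMisotropyoverquadratic}, together with the maximal $\pfister{d}$-divisible subform $\sigma = \pfister{d}\otimes\rho \subseteq q_K$ it produces, whose companion $\rho$ one checks satisfies $\witti{0}{\rho_{F(p)}} = 0$ (from $\witti{0}{\sigma_{F(p)}} \le \witti{0}{q_{F(p)}}$), so $\rho_{F(p)}$ is anisotropic of dimension $\witti{0}{q_{F(p)}}$.

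\textbf{Main obstacle.} The hard part is the inductive step when $\witti{1}{p} = 1$: converting the ``one quadratic extension at a time'' picture into an honest reduction to the statement for $p'$. The obstruction is that the transporter $\bigl(D(q_{F(p)}) : D(p_1)\bigr)$ over $K(\sqrt{d})$ is strictly larger than the obvious contribution $\bigl(D(q_K):D(p'_K)\bigr) + d\bigl(D(q_K):D(p'_K)\bigr)$ assembled from the transporter over $K$, so the naive comparison is far too weak — indeed the latter can vanish, whereas the inequality we want already subsumes the Karpenko--Merkurjev-type bound $\mydim{q} > \Izhdim{p}$. I expect the resolution to require a specialization/descent argument of the type underlying Corollary \ref{CORspecialization} and Lemma \ref{LEMdescendingoverrationalextension}, exploiting the rationality of $K$ over $F$ to pass between the generic fibre over $K$ and specializations to $F$, together with careful bookkeeping of similarity classes. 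It is organizing this transfer cleanly, rather than any individual algebraic manipulation, that constitutes the substance of the proof.
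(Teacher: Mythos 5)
The first thing to note is that the paper contains no proof of this statement: Theorem \ref{THMoldtheorem} is quoted from \cite[Thm.~6.4]{Scully1} and used as a black box, so the only proof to compare against is the (substantially more involved) one given in that earlier article. Your reformulation is correct: by Lemma \ref{LEMsubformtheorem} the theorem is equivalent to the assertion that the transporter $S=\lbrace x\in F(p)\;|\;xD(p_1)\subseteq D(q_{F(p)})\rbrace$, an $F(p)^2$-subspace, has dimension at least $\witti{0}{q_{F(p)}}$. Your base case $\mydim{p}=2$ is fine, and the reduction when $\witti{1}{p}\geq 2$ can be made to work -- indeed more easily than you suggest, since $S=\bigcap_j d_j^{-1}D(q_{F(p)})$ for a basis $d_1,\hdots$ of $D(p_1)$, and because $F(p)$ and $L^2$ are linearly disjoint over $F(p)^2$ for $L/F(p)$ purely transcendental, formation of $S$ commutes with such extensions; no Lemma \ref{LEMdescendingoverrationalextension}-style descent is needed, and the similarity $(p'_1)_L\simeq c\,(p_1)_L$ (which here follows directly from $p'\subset p$ and $\Izhdim{p}=\Izhdim{p'}$, via Theorem \ref{THMstablebirationalequivalence}) merely rescales $S_L$.

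The genuine gap is the case $\witti{1}{p}=1$, which you explicitly leave open. This is not a peripheral case: your $\witti{1}{p}\geq 2$ branch only lowers $\mydim{p}$, so every nontrivial instance of the induction eventually funnels into it; it is the case occurring for typical $p$ (e.g.\ whenever $\mathrm{lndeg}(p)\geq\frac{\mydim{p}}{2}+2$); and in it the desired inequality still carries full Karpenko--Merkurjev strength, giving $\mydim{q}>\Izhdim{p}=\mydim{p}-1$ for every $q$ that becomes isotropic over $F(p)$, so no soft manipulation of value sets can settle it. You do identify the right candidate: with $F(p)=K(\sqrt{d})$, Lemma \ref{LEMisotropyoverquadratic} produces $\pfister{d}\otimes\rho\subset q_K$ with $\rho_{F(p)}$ anisotropic of dimension $\witti{0}{q_{F(p)}}$, and what must be proved is (some modification of) the containment $D(\rho_{F(p)})\cdot D(p_1)\subseteq D(q_{F(p)})$. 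But for this you offer only the expectation of ``a specialization/descent argument of the type underlying Corollary \ref{CORspecialization} and Lemma \ref{LEMdescendingoverrationalextension}''; those results transport representability of individual values from generic to rational points and say nothing about the multiplicative closure property needed here, and no actual argument is sketched. What you have, then, is a correct reduction of the theorem to its hardest case plus an acknowledgement that this case is unsolved; the content of \cite[Thm.~6.4]{Scully1}, established there by a much longer argument, is precisely the missing step.
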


Note here that the integer $\mydim{\anispart{(q_{F(p)})}} - \mydim{\tau}$ coincides with $k = \mydim{q} - 2\witti{0}{q_{F(p)}}$. Thus, given the conclusion of Theorem \ref{THMoldtheorem}, proving Theorem \ref{THMmainresult} becomes a matter of understanding something about the dimension of the form $\anispart{(\tau \otimes p_1)}$. We achieve this with the technical Theorem \ref{THMrefinedtensorproducttheorem} below. First, we shall need some additional preliminaries. It will convenient here to introduce the following notation:

\begin{definition} For any quasilinear quadratic form $\phi$ over $F$, we set $d(\phi): = \mydim{\phi} - 2\witti{0}{\phi}$. 
\end{definition}

Note that in the situtation of Theorem \ref{THMmainresult}, the integer $k$ is nothing else but $d(q_{F(p)})$. By Lemmas \ref{LEMisotropyoverfunctionfieldsofquadrics} and \ref{LEMdivisibilitybyquasiPfister}, we have the following:

\begin{lemma} \label{LEMdisnonnegative} Let $\phi$ and $\psi$ be anisotropic quasilinear quadratic forms $F$ with $\mydim{\psi} \geq 2$. Then $d(\phi_{F(\psi)}) \geq 0$, and equality holds if and only if $\phi$ is divisible by $\normform{\psi}$. 
\end{lemma}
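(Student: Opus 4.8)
The plan is to reduce the statement to the $n=1$ case of Lemma~\ref{LEMisotropyoverfunctionfieldsofquadrics} and then apply Lemma~\ref{LEMdivisibilitybyquasiPfister}. First I would write $\psi \simeq \form{1} \perp \psi'$ for some subform $\psi' \subset \psi$ (possible after scaling $\psi$ by a similarity factor, which changes neither $F(\psi)$ nor the divisibility condition, since $\normform{(a\psi)} = \normform{\psi}$). By Lemma~\ref{LEMisotropyoverfunctionfieldsofquadrics}, $2\witti{0}{\phi_{F(\psi)}} = \witti{0}{\pfister{\psi'(X)} \otimes \phi_{F(V_{\psi'})}}$, and since $\phi$ remains anisotropic over the separable extension $F(V_{\psi'})$ by Lemma~\ref{LEManisotropyoverseparable}, we have $\mydim{\phi_{F(\psi)}} = \mydim{\phi}$. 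Hence
$$ d(\phi_{F(\psi)}) = \mydim{\phi} - \witti{0}{\pfister{\psi'(X)} \otimes \phi_{F(V_{\psi'})}}. $$
Since $\witti{0}{\pfister{\psi'(X)} \otimes \phi_{F(V_{\psi'})}} \leq \frac{1}{2}\mydim{\pfister{\psi'(X)} \otimes \phi_{F(V_{\psi'})}} = \mydim{\phi}$ by Lemma~\ref{LEMisotropyoverquadratic} (applied over $F(V_{\psi'})$), we immediately get $d(\phi_{F(\psi)}) \geq 0$.

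For the equality case, note that $d(\phi_{F(\psi)}) = 0$ holds exactly when $\witti{0}{\phi_{F(\psi)}} = \frac{\mydim{\phi}}{2}$, which is condition (8) of Lemma~\ref{LEMdivisibilitybyquasiPfister}. That lemma then tells us this is equivalent to $\phi$ being divisible by $\normform{\psi}$ (condition (1)). The only subtlety is that Lemma~\ref{LEMdivisibilitybyquasiPfister} is stated for a $\psi$ of the particular shape $\form{1}\perp\psi'$; but the condition ``$\phi$ divisible by $\normform{\psi}$'' is insensitive to scaling $\psi$, and every anisotropic $\psi$ of dimension $\geq 2$ becomes of this shape after multiplication by a similarity factor (indeed after multiplication by $a^{-1}$ for any $a \in D(\psi)\setminus\{0\}$, since then $a^{-1}\psi$ represents $1$). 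So I would simply open the proof by passing to such a scalar multiple.

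There is essentially no obstacle here: the statement is a direct repackaging of the two cited lemmas, and the proof is three or four lines. The one point requiring a word of care — and the only place where I would slow down — is the reduction to the normalized form $\form{1}\perp\psi'$, making explicit that scaling $\psi$ affects neither side of the claimed equivalence. The concrete write-up:

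\begin{proof}
After scaling $\psi$ by a suitable nonzero scalar — which changes neither $F(\psi)$ nor $\normform{\psi}$ nor the property of $\phi$ being divisible by $\normform{\psi}$ — we may assume that $\psi \simeq \form{1} \perp \psi'$ for some subform $\psi' \subset \psi$. By Lemma~\ref{LEManisotropyoverseparable}, $\phi$ remains anisotropic over the separable extension $F(V_{\psi'})$, so $\witti{0}{\phi_{F(\psi)}} \leq \frac{\mydim{\phi}}{2}$ by Lemma~\ref{LEMisotropyoverfunctionfieldsofquadrics}, and therefore $d(\phi_{F(\psi)}) = \mydim{\phi} - 2\witti{0}{\phi_{F(\psi)}} \geq 0$. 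Equality holds precisely when $\witti{0}{\phi_{F(\psi)}} = \frac{\mydim{\phi}}{2}$, which, by the equivalence of conditions (1) and (8) in Lemma~\ref{LEMdivisibilitybyquasiPfister}, holds if and only if $\phi$ is divisible by $\normform{\psi}$.
\end{proof}
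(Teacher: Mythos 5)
Your proof is correct and follows essentially the same route as the paper, which simply derives the lemma from Lemmas \ref{LEMisotropyoverfunctionfieldsofquadrics} and \ref{LEMdivisibilitybyquasiPfister} (the bound $\witti{0}{\phi_{F(\psi)}} \leq \frac{\mydim{\phi}}{2}$ giving $d(\phi_{F(\psi)}) \geq 0$, and the equivalence of conditions (1) and (8) handling the equality case). Your explicit normalization of $\psi$ to represent $1$, together with the observation that scaling changes neither $F(\psi)$ nor $\normform{\psi}$, is exactly the routine reduction the paper leaves implicit.
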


The proof of Theorem \ref{THMrefinedtensorproducttheorem} will be inductive. To achieve the induction step, we will use a trick with symmetric bilinear forms that was not observed by the author at the time of \cite{Scully1} (and which we already used in the proof of Proposition \ref{PROPstabilityofPrunderseparable} (4) above).

\subsection{Tool for Induction} \label{SUBSECinductivetool}

Recall that if $a \in F \setminus \lbrace 0 \rbrace$, then $\mathbb{M}_a$ denotes the non-degenerate symmetric bilinear form on $F^2$ given by the assignment $\big((x_1,y_1),(x_2,y_2)\big) \mapsto a(x_1y_1 + x_2y_2)$. The non-degenerate symmetric bilinear form on $F^2$ given by the assignment $\big((x_1,y_1),(x_2,y_2)\big) \mapsto x_1y_2 + y_1x_2$ is the hyperbolic plane $\mathbb{H}$. Although both forms are isotropic, they are not isomorphic due to the characteristic assumption on $F$. The Witt decomposition theorem for symmetric bilinear forms therefore admits the following refinement in characteristic 2 (see \cite[(2.1)]{LaghribiMammone}): Let $\mathfrak{b}$ be a non-degenerate symmetric bilinear form over $F$. Then there exist an anisotropic symmetric bilinear form $\anispart{\mathfrak{b}}$ over $F$, non-negative integers $r$ and $s$, and elements $a_1,\hdots,a_r \in F^\times$ such that the following hold:

\begin{itemize} \item[(i)] $\mathfrak{b} \cong \anispart{\mathfrak{b}} \perp \mathbb{M}_{a_1} \perp \cdots \perp \mathbb{M}_{a_r} \perp s \cdot \mathbb{H}$;
\item[(ii)] The quasilinear quadratic form $\phi_{\anispart{b}} \perp \form{a_1,\hdots,a_r}$ is anisotropic.\end{itemize}

Furthermore, the integers $r$ and $s$ are unique, and $\anispart{\mathfrak{b}}$ is unique up to isomorphism. We set $\mathfrak{i}_{h}(\mathfrak{b}) = s$ and $\mathfrak{i}_W(\mathfrak{b}) = r+s$. If $\mathfrak{i}_W(\mathfrak{b})  = \frac{\mydim{\mathfrak{b}}}{2}$, then we say that $\mathfrak{b}$ is \emph{split} (this amounts to saying that $\anispart{\mathfrak{b}}$ has dimension 0, or that $\mathfrak{b}$ represents the zero element in the Witt ring of $F$). Note that the quasilinear quadratic form associated to $\mathbb{H}$ is the form $\langle 0,0 \rangle$. By (ii), it follows that $\anispart{(\phi_\mathfrak{b})} \simeq \phi_{\anispart{\mathfrak{b}}} \perp \form{a_1,\hdots,a_r}$, and so $\witti{0}{\phi_\mathfrak{b}} = 2\mathfrak{i}_h(\mathfrak{b}) + r = \mathfrak{i}_W(\mathfrak{b}) + \mathfrak{i}_h(\mathfrak{b})$. Since $\mydim{\phi_\mathfrak{b}} = \mydim{\mathfrak{b}} = \mydim{\anispart{\mathfrak{b}}} + 2\mathfrak{i}_W(\mathfrak{b})$, this gives:

\begin{lemma} \label{LEMformulafordofdiagonalpartofbilinear} If $\mathfrak{b}$ is a non-degenerate symmetric bilinear form over $F$, then $d(\phi_\mathfrak{b}) = \mydim{\anispart{\mathfrak{b}}} - 2\mathfrak{i}_h(\mathfrak{b})$. 
\end{lemma}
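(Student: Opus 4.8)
The statement to prove is Lemma~\ref{LEMformulafordofdiagonalpartofbilinear}: for a non-degenerate symmetric bilinear form $\mathfrak{b}$ over $F$ (char $2$), we have $d(\phi_\mathfrak{b}) = \mydim{\anispart{\mathfrak{b}}} - 2\mathfrak{i}_h(\mathfrak{b})$. The whole proof is essentially bookkeeping using the refined Witt decomposition recalled just above, so the plan is short and mostly a matter of assembling the pieces already laid out.

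The plan is as follows. First I would invoke the refined Witt decomposition: write $\mathfrak{b} \cong \anispart{\mathfrak{b}} \perp \mathbb{M}_{a_1} \perp \cdots \perp \mathbb{M}_{a_r} \perp s\cdot\mathbb{H}$ with $\phi_{\anispart{\mathfrak{b}}} \perp \form{a_1,\dots,a_r}$ anisotropic, where $r + s = \mathfrak{i}_W(\mathfrak{b})$ and $s = \mathfrak{i}_h(\mathfrak{b})$. Next I would pass to the associated quasilinear quadratic form, using that the diagonal-restriction operation $\mathfrak{c} \mapsto \phi_{\mathfrak{c}}$ respects orthogonal sums, that $\phi_{\mathbb{M}_a} \simeq \form{a,a}$ (so it contributes one isotropic dimension, since $a x_1^2 + a x_2^2 = a(x_1+x_2)^2$), and that $\phi_{\mathbb{H}} \simeq \form{0,0}$ (two isotropic dimensions). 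Combining these with the anisotropy condition (ii), I get the identification $\anispart{(\phi_\mathfrak{b})} \simeq \phi_{\anispart{\mathfrak{b}}} \perp \form{a_1,\dots,a_r}$ and hence $\witti{0}{\phi_\mathfrak{b}} = 2s + r$, exactly as stated in the text preceding the lemma.

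Finally I would do the arithmetic. Since $\mydim{\phi_\mathfrak{b}} = \mydim{\mathfrak{b}} = \mydim{\anispart{\mathfrak{b}}} + 2r + 2s$ and $\witti{0}{\phi_\mathfrak{b}} = r + 2s$, we get
$$ d(\phi_\mathfrak{b}) = \mydim{\phi_\mathfrak{b}} - 2\witti{0}{\phi_\mathfrak{b}} = \big(\mydim{\anispart{\mathfrak{b}}} + 2r + 2s\big) - 2(r + 2s) = \mydim{\anispart{\mathfrak{b}}} - 2s = \mydim{\anispart{\mathfrak{b}}} - 2\mathfrak{i}_h(\mathfrak{b}), $$
which is the claim.

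There is essentially no obstacle here: every ingredient — the refined decomposition, the behaviour of $\phi_{(-)}$ on $\mathbb{M}_a$ and $\mathbb{H}$, and the uniqueness that pins down $\anispart{(\phi_\mathfrak{b})}$ — is either recalled verbatim in the paragraph above the lemma or is an immediate computation with $\form{\cdot}$. The only point requiring any care is making sure that the anisotropy condition (ii) is genuinely used to conclude that the displayed form $\phi_{\anispart{\mathfrak{b}}} \perp \form{a_1,\dots,a_r}$ \emph{is} the anisotropic part of $\phi_\mathfrak{b}$ (rather than merely a form to which $\phi_\mathfrak{b}$ is Witt-equivalent in the quasilinear sense); this is precisely what (ii) guarantees. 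In fact the computation of $d(\phi_\mathfrak{b})$ is already carried out in the running text, so the ``proof'' amounts to recording that paragraph as the justification.
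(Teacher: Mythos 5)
Your proposal is correct and is essentially the paper's own argument: the paper proves this lemma in the running text immediately above it, via the refined Witt decomposition, the identification $\anispart{(\phi_\mathfrak{b})} \simeq \phi_{\anispart{\mathfrak{b}}} \perp \form{a_1,\hdots,a_r}$ guaranteed by condition (ii), and the same dimension count $\witti{0}{\phi_\mathfrak{b}} = r + 2\mathfrak{i}_h(\mathfrak{b})$. Your bookkeeping, including the observation that $\phi_{\mathbb{M}_a} \simeq \form{a,a}$ contributes one isotropic dimension and $\phi_{\mathbb{H}} \simeq \langle 0,0\rangle$ two, matches it exactly.
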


We now observe the following:

\begin{lemma} \label{LEMvalueofdwithnohyperbolicplanes} Let $\mathfrak{b}$ be a non-degenerate symmetric bilinear form over $F$, and let $\nu$ be an anisotropic quasilinear quadratic form of dimension $\geq 2$ over $F$. If $\mathfrak{b}$ is a subform of an anisotropic symmetric bilinear form over $F$ that splits over $F(\nu)$, then $\mathfrak{i}_h(\mathfrak{b}_{F(\nu)}) = 0$ and $d((\phi_\mathfrak{b})_{F(\nu)}) = \mydim{\anispart{(\mathfrak{b}_{F(\nu)})}}$.
\begin{proof} By Lemma \ref{LEMformulafordofdiagonalpartofbilinear}, it suffices to prove the first assertion. Now if $\mathfrak{c}$ is a subform of a non-degenerate symmetric bilinear form $\mathfrak{d}$ over a field of characteristic $2$, then it is clear from the definitions that $\mathfrak{i}_h(\mathfrak{d}) \geq \mathfrak{i}_h(\mathfrak{c})$. To prove what we need, we may therefore assume that $\mathfrak{b}$ splits over $F(\nu)$. Lemma \ref{LEMformulafordofdiagonalpartofbilinear} then tells us that $\mathfrak{i}_h(\mathfrak{b}_{F(\nu)}) = -d((\varphi_\mathfrak{b})_{F(\nu)})$. But since $\mathfrak{b}$ is anisotropic, the integer $d((\varphi_\mathfrak{b})_{F(\nu)})$ is non-negative (Lemma \ref{LEMdisnonnegative}), and so we must then have that $\mathfrak{i}_h(\mathfrak{b}_{F(\nu)}) = 0$. 
\end{proof} \end{lemma}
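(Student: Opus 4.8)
The plan is to reduce the whole statement to the single claim that $\mathfrak{i}_h(\mathfrak{b}_{F(\nu)}) = 0$. Indeed, Lemma~\ref{LEMformulafordofdiagonalpartofbilinear} expresses $d\big((\phi_{\mathfrak{b}})_{F(\nu)}\big) = \mydim{\anispart{(\mathfrak{b}_{F(\nu)})}} - 2\,\mathfrak{i}_h(\mathfrak{b}_{F(\nu)})$, so once the hyperbolic index vanishes the asserted formula for $d$ drops out immediately.

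The first move toward $\mathfrak{i}_h(\mathfrak{b}_{F(\nu)}) = 0$ is to record the monotonicity of $\mathfrak{i}_h$ under enlarging the form: if a non-degenerate symmetric bilinear form $\mathfrak{c}$ is an orthogonal summand of another such form $\mathfrak{d}$ over a field of characteristic $2$, then writing out the refined Witt decompositions of $\mathfrak{c}$ and of its orthogonal complement in $\mathfrak{d}$ and adding them shows that $\mathfrak{d}$ contains at least $\mathfrak{i}_h(\mathfrak{c})$ hyperbolic planes, so $\mathfrak{i}_h(\mathfrak{d}) \geq \mathfrak{i}_h(\mathfrak{c})$. Letting $\mathfrak{a}$ denote the given anisotropic ambient form over $F$ that splits over $F(\nu)$, and applying this to the inclusion $\mathfrak{b}_{F(\nu)} \subseteq \mathfrak{a}_{F(\nu)}$, we get $\mathfrak{i}_h(\mathfrak{b}_{F(\nu)}) \leq \mathfrak{i}_h(\mathfrak{a}_{F(\nu)})$. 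Hence it suffices to prove the claim for the form $\mathfrak{a}$ in place of $\mathfrak{b}$; that is, we may assume from now on that $\mathfrak{b}$ is itself anisotropic over $F$ and splits over $F(\nu)$.

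Under this assumption the endgame is short. Since $\mathfrak{b}$ is anisotropic, its diagonal restriction $\phi_{\mathfrak{b}}$ is an anisotropic quasilinear quadratic form, so Lemma~\ref{LEMdisnonnegative} gives $d\big((\phi_{\mathfrak{b}})_{F(\nu)}\big) \geq 0$. On the other hand, $\mathfrak{b}$ being split over $F(\nu)$ means that $\anispart{(\mathfrak{b}_{F(\nu)})}$ has dimension $0$, so Lemma~\ref{LEMformulafordofdiagonalpartofbilinear} now reads $d\big((\phi_{\mathfrak{b}})_{F(\nu)}\big) = -2\,\mathfrak{i}_h(\mathfrak{b}_{F(\nu)})$. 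Combining the two, $-2\,\mathfrak{i}_h(\mathfrak{b}_{F(\nu)}) \geq 0$, forcing $\mathfrak{i}_h(\mathfrak{b}_{F(\nu)}) = 0$, as required.

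I do not expect a genuine obstacle here; the only points needing a sentence of care are the monotonicity of $\mathfrak{i}_h$ under passage to superforms, which is elementary but relies on the fact that the decomposition recorded in (i)--(ii) is strictly finer than the ordinary Witt decomposition (because $\mathbb{H}$ and the $\mathbb{M}_a$ are pairwise non-isomorphic in characteristic $2$), and the routine observation that an anisotropic symmetric bilinear form has anisotropic diagonal restriction, which is precisely what licenses the appeal to Lemma~\ref{LEMdisnonnegative}.
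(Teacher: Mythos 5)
Your proof is correct and follows essentially the same route as the paper: reduce to the vanishing of $\mathfrak{i}_h(\mathfrak{b}_{F(\nu)})$ via Lemma \ref{LEMformulafordofdiagonalpartofbilinear}, use monotonicity of $\mathfrak{i}_h$ under passage to the anisotropic ambient form to assume $\mathfrak{b}$ itself splits over $F(\nu)$, and then conclude from the non-negativity of $d$ (Lemma \ref{LEMdisnonnegative}). Your version even states the formula $d\big((\phi_{\mathfrak{b}})_{F(\nu)}\big) = -2\,\mathfrak{i}_h(\mathfrak{b}_{F(\nu)})$ with the correct factor of $2$, where the paper's proof has a harmless slip.
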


In particular, we get:

\begin{corollary} \label{CORcomplementarybilinearforms} Let $\mathfrak{b}$ and $\mathfrak{c}$ be non-degenerate symmetric bilinear forms over $F$ such that $\mathfrak{b} \perp \mathfrak{c}$ is anisotropic, and let $\nu$ be an anisotropic quasilinear quadratic form of dimension $\geq 2$ over $F$. If $(\mathfrak{b} \perp \mathfrak{c})_{F(\nu)}$ is split, then $d((\phi_\mathfrak{b})_{F(\nu)}) = d((\phi_\mathfrak{c})_{F(\nu)})$.
\begin{proof} Since $\mathfrak{b} \perp \mathfrak{c}$ splits over $F(\nu)$, the anisotropic forms $\anispart{(\mathfrak{b}_{F(\nu)})}$ and $\anispart{(\mathfrak{c}_{F(\nu)})}$ are Witt equivalent, and hence isomorphic (\cite[Prop. 2.4]{EKM}). In particular, they have the same dimension, and so the claim follows from Lemma \ref{LEMvalueofdwithnohyperbolicplanes} (applied to both $\mathfrak{b}$ and $\mathfrak{c}$). \end{proof} \end{corollary}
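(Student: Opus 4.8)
The plan is to reduce the asserted equality of the two integers $d((\phi_\mathfrak{b})_{F(\nu)})$ and $d((\phi_\mathfrak{c})_{F(\nu)})$ to an equality of dimensions of anisotropic symmetric bilinear forms over $F(\nu)$, which in turn follows from Witt-cancellation considerations. Concretely, since $\mathfrak{b} \perp \mathfrak{c}$ is anisotropic and, by hypothesis, splits over $F(\nu)$, each of $\mathfrak{b}$ and $\mathfrak{c}$ is a subform of an anisotropic symmetric bilinear form that splits over $F(\nu)$ --- namely $\mathfrak{b} \perp \mathfrak{c}$ itself. Thus Lemma \ref{LEMvalueofdwithnohyperbolicplanes} applies to each of them, giving $d((\phi_\mathfrak{b})_{F(\nu)}) = \mydim{\anispart{(\mathfrak{b}_{F(\nu)})}}$ and $d((\phi_\mathfrak{c})_{F(\nu)}) = \mydim{\anispart{(\mathfrak{c}_{F(\nu)})}}$. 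So it suffices to show that the anisotropic parts of $\mathfrak{b}_{F(\nu)}$ and $\mathfrak{c}_{F(\nu)}$ have the same dimension.

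For this, I would argue that $\mathfrak{b}_{F(\nu)}$ and $\mathfrak{c}_{F(\nu)}$ are in fact Witt-equivalent. Their orthogonal sum $(\mathfrak{b} \perp \mathfrak{c})_{F(\nu)}$ is split, hence metabolic, hence trivial in the Witt ring $W(F(\nu))$; since $-1 = 1$ in characteristic $2$, this forces $[\mathfrak{b}_{F(\nu)}] = [\mathfrak{c}_{F(\nu)}]$. Invoking the fact that two non-degenerate symmetric bilinear forms over a field are Witt-equivalent precisely when their anisotropic parts are isomorphic (\cite[Prop. 2.4]{EKM}), we conclude $\anispart{(\mathfrak{b}_{F(\nu)})} \cong \anispart{(\mathfrak{c}_{F(\nu)})}$; in particular these have the same dimension, and the corollary follows.

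I do not expect a serious obstacle here: the content is entirely carried by Lemma \ref{LEMvalueofdwithnohyperbolicplanes} (which already isolated the characteristic-$2$ subtlety that $\mathbb{M}_a$-type summands must be tracked separately from hyperbolic planes) together with the standard Witt-cancellation statement for symmetric bilinear forms. The only point warranting a moment's care is checking that the hypothesis of Lemma \ref{LEMvalueofdwithnohyperbolicplanes} is genuinely met for \emph{both} $\mathfrak{b}$ and $\mathfrak{c}$, which is immediate once one takes the ambient anisotropic form to be $\mathfrak{b} \perp \mathfrak{c}$ in each case.
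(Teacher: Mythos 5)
Your proof is correct and follows essentially the same route as the paper: apply Lemma \ref{LEMvalueofdwithnohyperbolicplanes} to both $\mathfrak{b}$ and $\mathfrak{c}$ (both being subforms of the anisotropic form $\mathfrak{b} \perp \mathfrak{c}$ that splits over $F(\nu)$), and note that splitness of $(\mathfrak{b} \perp \mathfrak{c})_{F(\nu)}$ makes $\mathfrak{b}_{F(\nu)}$ and $\mathfrak{c}_{F(\nu)}$ Witt-equivalent, so their anisotropic parts are isomorphic and in particular of equal dimension. Your extra remark about taking $\mathfrak{b} \perp \mathfrak{c}$ itself as the ambient anisotropic form is exactly the point implicit in the paper's phrase ``applied to both $\mathfrak{b}$ and $\mathfrak{c}$''.
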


Before stating the main consequence, we need the following obvious statement:

\begin{lemma} \label{LEMsubformsofbilinearforms} Let $\psi$ be a quasilinear quadratic form over $F$, and let $\mathfrak{d}$ be a symmetric bilinear form over $F$ such that $\psi \subset \phi_\mathfrak{d}$. Then $\mathfrak{d}$ admits a subform $\mathfrak{b}$ such that $\psi \simeq \phi_\mathfrak{b}$.
\begin{proof} Let $V$ be the $F$-vector space on which $\mathfrak{d}$ is defined. We may assume that $V_\psi$ is a subspace of $V$ and that $\psi$ is the restriction of $\phi_{\mathfrak{d}}$ to this subspace. The restriction of $\mathfrak{d}$ to $V_{\psi}$ then has the desired property.
\end{proof} \end{lemma}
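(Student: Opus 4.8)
The plan is to realize the given embedding $\psi \subset \phi_{\mathfrak{d}}$ on the very space underlying $\mathfrak{d}$, and then take $\mathfrak{b}$ to be the restriction of $\mathfrak{d}$ to the appropriate subspace. Concretely, write $\phi_{\mathfrak{d}} \simeq \psi \perp \sigma$ for some quasilinear quadratic form $\sigma$ over $F$, and fix a linear isomorphism $T \colon V_{\psi} \oplus V_{\sigma} \to V := V_{\mathfrak{d}}$ realizing this isometry, so that $\phi_{\mathfrak{d}}\bigl(T(v,w)\bigr) = \psi(v) + \sigma(w)$ for all $v \in V_{\psi}$ and $w \in V_{\sigma}$. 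Replacing $\mathfrak{d}$ by the pull-back bilinear form $(x,y) \mapsto \mathfrak{d}(Tx,Ty)$, which is isomorphic to $\mathfrak{d}$, one may assume outright that $V = V_{\psi} \oplus V_{\sigma}$ and that $\psi$ is exactly the restriction of $\phi_{\mathfrak{d}}$ to the subspace $V_{\psi}$.

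Next I would set $\mathfrak{b} := \mathfrak{d}|_{V_{\psi} \times V_{\psi}}$. After the reduction above, $\phi_{\mathfrak{b}}$ is by definition the restriction of $\phi_{\mathfrak{d}}$ to $V_{\psi}$, which is $\psi$; so the only point left to verify is that $\mathfrak{b}$ is a subform of $\mathfrak{d}$, i.e.\ that $V_{\psi}$ possesses an orthogonal complement inside $(V,\mathfrak{d})$. Here I would use that $\psi$ is anisotropic (which is the case in all the places this lemma is invoked, $\psi$ occurring there as a subform of an anisotropic quasilinear quadratic form): then $\mathfrak{b}(v,v) = \phi_{\mathfrak{d}}(v) = \psi(v) \neq 0$ for every nonzero $v \in V_{\psi}$, so $\mathfrak{b}$ is anisotropic, hence non-degenerate, hence $V = V_{\psi} \oplus V_{\psi}^{\perp}$ and $\mathfrak{d} \cong \mathfrak{b} \perp \bigl(\mathfrak{d}|_{V_{\psi}^{\perp} \times V_{\psi}^{\perp}}\bigr)$, as required.

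The whole argument is formal bookkeeping, so there is no genuine obstacle. The one step that is not completely automatic is the verification that the restricted form $\mathfrak{b}$ splits off $\mathfrak{d}$ as an orthogonal direct summand, and as indicated this reduces to the anisotropy of $\psi$ (equivalently, to non-degeneracy of $\mathfrak{d}$ on $V_{\psi}$); if one instead reads ``subform'' of a bilinear form simply as ``restriction to a subspace'', even this step is vacuous.
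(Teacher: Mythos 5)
Your proof is correct and is essentially the paper's: the paper likewise identifies $\psi$ with the restriction of $\phi_{\mathfrak{d}}$ to a subspace $V_{\psi} \subseteq V$ and takes $\mathfrak{b} = \mathfrak{d}|_{V_{\psi}}$, reading ``subform'' of a bilinear form simply as a restriction to a subspace, so your extra verification is not needed for the lemma itself. The complement-splitting issue you flag only matters where the lemma is applied (e.g.\ in Proposition \ref{PROPinductivepropositionweak}), and there $\mathfrak{d}$ is anisotropic, exactly as you observe.
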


The result we want is now the following:

\begin{proposition} \label{PROPinductivepropositionweak} Let $\psi$ and $\nu$ be anisotropic quasilinear quadratic forms over $F$ with $\mydim{\nu} \geq 2$. Suppose there exists an anisotropic symmetric bilinear form $\mathfrak{d}$ over $F$ such that $\psi \subset \phi_{\mathfrak{d}}$. If $\mathfrak{d}_{F(\nu)}$ is split, then there exists an anisotropic quasilinear quadratic form $\hat{\psi}$ over $F$ such that $\psi \perp \hat{\psi} \simeq \phi_{\mathfrak{d}}$ and $d(\hat{\psi}_{F(\nu)}) = d(\psi_{F(\nu)})$. 
\begin{proof} By Lemma \ref{LEMsubformsofbilinearforms}, there exists a subform $\mathfrak{b}$ of $\mathfrak{d}$ such that $\psi \simeq \phi_\mathfrak{b}$. Let $\mathfrak{c}$ be the complementary subform (so that $\mathfrak{d} \simeq \mathfrak{b} \perp \mathfrak{c}$). The form $\hat{\psi} := \phi_\mathfrak{c}$ then has the desired properties by Corollary \ref{CORcomplementarybilinearforms}.
\end{proof}
\end{proposition}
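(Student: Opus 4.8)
The plan is to prove the proposition entirely at the level of symmetric bilinear forms, exploiting that $\phi_{(-)}$ turns orthogonal sums of bilinear forms into orthogonal sums of quasilinear quadratic forms. First I would apply Lemma \ref{LEMsubformsofbilinearforms} to the inclusion $\psi \subset \phi_{\mathfrak{d}}$: this yields a subform $\mathfrak{b}$ of $\mathfrak{d}$ with $\psi \simeq \phi_{\mathfrak{b}}$. Let $\mathfrak{c}$ be the complementary subform, so that $\mathfrak{d} \simeq \mathfrak{b} \perp \mathfrak{c}$ and hence $\phi_{\mathfrak{d}} \simeq \phi_{\mathfrak{b}} \perp \phi_{\mathfrak{c}} \simeq \psi \perp \phi_{\mathfrak{c}}$. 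The natural candidate is then $\hat{\psi} := \phi_{\mathfrak{c}}$, and the proof reduces to verifying its two required properties.

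Anisotropy of $\hat{\psi}$ is essentially free: $\mathfrak{c}$ is a subform of the anisotropic form $\mathfrak{d}$, hence anisotropic, and the diagonal restriction of an anisotropic symmetric bilinear form is an anisotropic quasilinear quadratic form (as $\phi_{\mathfrak{c}}(v) = \mathfrak{c}(v,v)$). For the equality $d(\hat{\psi}_{F(\nu)}) = d(\psi_{F(\nu)})$, I would invoke Corollary \ref{CORcomplementarybilinearforms} with the pair $(\mathfrak{b},\mathfrak{c})$: its hypotheses are exactly that $\mathfrak{b} \perp \mathfrak{c} \simeq \mathfrak{d}$ is anisotropic and that $(\mathfrak{b} \perp \mathfrak{c})_{F(\nu)} = \mathfrak{d}_{F(\nu)}$ is split, both of which hold by assumption, and its conclusion reads $d((\phi_{\mathfrak{b}})_{F(\nu)}) = d((\phi_{\mathfrak{c}})_{F(\nu)})$, i.e.\ $d(\psi_{F(\nu)}) = d(\hat{\psi}_{F(\nu)})$.

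So the only real content is buried in Corollary \ref{CORcomplementarybilinearforms}, and beyond it in Lemma \ref{LEMvalueofdwithnohyperbolicplanes}; that is where I expect the genuine difficulty to lie. The subtlety is that $d(\phi_{\mathfrak{e}})$ is \emph{not} simply $\mydim{\anispart{\mathfrak{e}}}$ for a bilinear form $\mathfrak{e}$ --- one has $d(\phi_{\mathfrak{e}}) = \mydim{\anispart{\mathfrak{e}}} - 2\mathfrak{i}_h(\mathfrak{e})$, with a correction coming from the hyperbolic planes, and it is precisely the characteristic-$2$ distinction between the $\mathbb{M}_a$-planes and the hyperbolic plane $\mathbb{H}$ that makes this correction nonzero in general. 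What rescues the argument is that since $\mathfrak{d}$ is anisotropic, $d((\phi_{\mathfrak{b}})_{F(\nu)})$ and $d((\phi_{\mathfrak{c}})_{F(\nu)})$ are both non-negative by Lemma \ref{LEMdisnonnegative}, which forces $\mathfrak{i}_h(\mathfrak{b}_{F(\nu)}) = \mathfrak{i}_h(\mathfrak{c}_{F(\nu)}) = 0$; then $d$ really does compute the dimension of the anisotropic part for each of $\mathfrak{b}_{F(\nu)}$ and $\mathfrak{c}_{F(\nu)}$, and those anisotropic parts are Witt-equivalent --- hence isomorphic --- over $F(\nu)$ because $\mathfrak{b} \perp \mathfrak{c}$ splits there. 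Comparing dimensions then gives the asserted equality, and with it the proposition.
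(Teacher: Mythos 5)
Your proposal is correct and follows essentially the same route as the paper: apply Lemma \ref{LEMsubformsofbilinearforms} to produce $\mathfrak{b}$ with $\psi \simeq \phi_{\mathfrak{b}}$, take the complementary subform $\mathfrak{c}$, set $\hat{\psi} := \phi_{\mathfrak{c}}$, and conclude via Corollary \ref{CORcomplementarybilinearforms}. Your added discussion of the hyperbolic-index correction and Lemma \ref{LEMdisnonnegative} accurately recapitulates the content already packaged in Lemma \ref{LEMvalueofdwithnohyperbolicplanes} and Corollary \ref{CORcomplementarybilinearforms}, so nothing further is needed.
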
 

We will specifically use the following special case:

\begin{corollary} \label{CORinductivepropositionweak} Let $\psi$ and $\eta$ be quasilinear quadratic forms over $F$ such that $\psi \subset \eta$. If $\eta$ is divisible by an anisotropic quasi-Pfister form $\pi$ of dimension $\geq 2$ over $F$, then there exists an anisotropic quasilinear quadratic form $\hat{\psi}$ over $F$ such that $\psi \perp \hat{\psi} \simeq \eta$ and $d(\hat{\psi}_{F(\nu)}) = d(\psi_{F(\nu)})$ for any subform $\nu \subset \pi$ of dimension $\geq 2$. 
\begin{proof} Let $\mathfrak{d}'$ be a bilinear Pfister form over $F$ with $\phi_{\mathfrak{d}'} \simeq \pi$. We then have that $\eta \simeq \phi_{\mathfrak{d}}$ for some anisotropic symmetric bilinear form $\mathfrak{d}$ over $F$ which is divisible by $\mathfrak{d}'$. Since an isotropic bilinear Pfister form is split, $\mathfrak{d}'$ splits over the function field of any subform of $\pi$ having dimension $\geq 2$. Since $\mathfrak{d}$ is divisible by $\mathfrak{d}'$, the same is then true of $\mathfrak{d}$, and so we can apply Proposition \ref{PROPinductivepropositionweak} to get the desired conclusion.
\end{proof}
\end{corollary}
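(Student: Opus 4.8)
The plan is to transport the statement from quasilinear quadratic forms to symmetric bilinear forms and then invoke Proposition~\ref{PROPinductivepropositionweak}. The key point is that divisibility by an anisotropic quasi-Pfister form has a bilinear counterpart: choosing a bilinear Pfister form $\mathfrak{d}'$ over $F$ with $\phi_{\mathfrak{d}'} \simeq \pi$, and writing $\eta \simeq \pi \otimes \eta_0$ for some $\eta_0$, one sets $\mathfrak{d} := \mathfrak{d}' \otimes \mathfrak{e}$, where $\mathfrak{e}$ is a diagonal bilinear form with $\phi_{\mathfrak{e}} \simeq \eta_0$. Then $\mathfrak{d}$ is divisible by $\mathfrak{d}'$ and $\phi_{\mathfrak{d}} \simeq \pi \otimes \eta_0 \simeq \eta$, and since $\phi_{\mathfrak{d}} \simeq \eta$ is anisotropic, $\mathfrak{d}$ is anisotropic. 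As $\psi \subset \eta \simeq \phi_{\mathfrak{d}}$, we are then in the setting of Proposition~\ref{PROPinductivepropositionweak} once we check the splitting hypothesis on $\mathfrak{d}$.

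The splitting check is the next step: for any subform $\nu \subset \pi$ with $\mydim{\nu} \geq 2$, the form $\nu$ — and hence $\pi$ — becomes isotropic over $F(\nu)$, so $(\phi_{\mathfrak{d}'})_{F(\nu)} = \pi_{F(\nu)}$ is isotropic. A symmetric bilinear form in characteristic $2$ is isotropic exactly when its diagonal restriction is, so $(\mathfrak{d}')_{F(\nu)}$ is isotropic; being a bilinear Pfister form, it is then split (metabolic). Since $\mathfrak{d}$ is divisible by $\mathfrak{d}'$, the form $\mathfrak{d}_{F(\nu)}$ is divisible by the split form $(\mathfrak{d}')_{F(\nu)}$, hence represents $0$ in the Witt ring of $F(\nu)$, i.e.\ is split.

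With this in hand I would finish by choosing, via Lemma~\ref{LEMsubformsofbilinearforms}, a subform $\mathfrak{b} \subseteq \mathfrak{d}$ with $\phi_{\mathfrak{b}} \simeq \psi$, letting $\mathfrak{c}$ be the complementary subform, and setting $\hat{\psi} := \phi_{\mathfrak{c}}$. Then $\hat{\psi}$ is anisotropic (a subform of the anisotropic $\mathfrak{d}$), and $\psi \perp \hat{\psi} \simeq \phi_{\mathfrak{b} \perp \mathfrak{c}} \simeq \phi_{\mathfrak{d}} \simeq \eta$; moreover, for every subform $\nu \subset \pi$ of dimension $\geq 2$, the form $\mathfrak{d}_{F(\nu)} = (\mathfrak{b} \perp \mathfrak{c})_{F(\nu)}$ is split by the previous paragraph, so Corollary~\ref{CORcomplementarybilinearforms} gives $d(\hat{\psi}_{F(\nu)}) = d(\psi_{F(\nu)})$. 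The only thing to watch — the ``main obstacle'', such as it is — is that a single form $\hat{\psi}$ must work for all admissible $\nu$ simultaneously; this causes no difficulty because the decomposition $\mathfrak{d} \simeq \mathfrak{b} \perp \mathfrak{c}$ is fixed once and for all before $\nu$ enters, and the splitting of $\mathfrak{d}$ over each $F(\nu)$ is precisely the hypothesis Corollary~\ref{CORcomplementarybilinearforms} needs in each instance. Everything else is routine bookkeeping with the correspondence between quasi-Pfister forms and bilinear Pfister forms.
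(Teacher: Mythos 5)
Your proposal is correct and follows essentially the same route as the paper's proof: realize $\eta$ as the diagonal part of an anisotropic bilinear form $\mathfrak{d}$ divisible by a bilinear Pfister form $\mathfrak{d}'$ lifting $\pi$, observe that $\mathfrak{d}'$ (hence $\mathfrak{d}$) splits over $F(\nu)$ for every subform $\nu \subset \pi$ of dimension $\geq 2$, and then apply Proposition \ref{PROPinductivepropositionweak}. The only difference is cosmetic: you make the choice $\mathfrak{d} = \mathfrak{d}' \otimes \mathfrak{e}$ explicit and inline the proof of Proposition \ref{PROPinductivepropositionweak} via Lemma \ref{LEMsubformsofbilinearforms} and Corollary \ref{CORcomplementarybilinearforms}, where the paper simply cites that proposition.
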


\begin{remark} \label{REMinductivepropositiontensorbyPfister} More generally, the form $\hat{\psi}$ constructed here has the property that $d(\hat{\psi}_{M(\nu)}) = d(\psi_{M(\nu)})$ for every separable field extension $M$ of $F$ and every subform $\nu \subset \pi_M$ of dimension $\geq 2$. Indeed, this is implicit in the proof in view of Lemma \ref{LEManisotropyoverseparable}. 
\end{remark}

We will actually need the following extension of the previous result:

\begin{corollary} \label{CORinductivepropositiontensorbyPfister} Let $\psi$ and $\sigma$ be subforms of an anisotropic quasilinear quadratic form $\eta$ over $F$. If $\eta$ is divisible by an anisotropic quasi-Pfister form $\pi$ of dimension $\geq 2$ over $F$, then there exist anisotropic quasilinear quadratic forms $\hat{\psi}$ and $\hat{\sigma}$ over $F$ such that the following hold:
\begin{enumerate} \item $\psi \perp \hat{\psi} \simeq \eta \simeq \sigma \perp \hat{\sigma}$;
\item $d(\hat{\psi}_{F(\nu)}) = d(\psi_{F(\nu)})$ and $d(\hat{\sigma}_{F(\nu)}) = d(\sigma_{F(\nu)})$ for every subform $\nu \subset \pi$ of dimension $\geq 2$;
\item If $Y$ is an indeterminate, then $d\big((\hat{\psi} \perp Y\hat{\sigma})_{F(Y)(\nu)}\big) = d\big((\sigma \perp Y\psi)_{F(Y)(\nu)}\big)$ for every subform $\nu \subset \pi_{F(Y)} \otimes \pfister{Y}$ of dimension $\geq 2$. 
\end{enumerate}
\begin{proof} As above, let $\mathfrak{d}'$ be a bilinear Pfister form over $F$ with $\phi_{\mathfrak{d}'} \simeq \pi$, and let $\mathfrak{d}$ be an anisotropic symmetric bilinear form over $F$ which is divisible by $\mathfrak{d}'$ and which satisfies $\phi_{\mathfrak{d}} \simeq \eta$. By Lemma \ref{LEMsubformsofbilinearforms}, there exists subforms $\mathfrak{b}$ and $\mathfrak{b}'$ of $\mathfrak{d}$ with $\phi_{\mathfrak{b}} \simeq \psi$ and $\phi_{\mathfrak{b}'} \simeq \sigma$. Let $\mathfrak{c}$ and $\mathfrak{c}'$ be complementary subforms of $\mathfrak{b}$ and $\mathfrak{b}'$ in $\mathfrak{d}$, respectively, and set $\hat{\psi} := \phi_{\mathfrak{c}}$ and $\hat{\sigma}: = \phi_{\mathfrak{c}'}$. The proof of Corollary \ref{CORinductivepropositionweak} then shows that $\hat{\psi}$ and $\hat{\sigma}$ satisfy (1) and (2). Now, replacing $F$ with $F(Y)$, we have that $(\mathfrak{b}' \perp Y\mathfrak{b}) \perp (\mathfrak{c}' \perp Y \mathfrak{b})$ is isomorphic to the (anisotropic) form $\mathfrak{d}\perp Y\mathfrak{d}$, which is divisible by the (anisotropic) Pfister form $\mathfrak{d}' \perp Y \mathfrak{d}$ (for anisotropy, apply Lemmas \ref{LEMisotropyoverquadratic} and \ref{LEManisotropyoverseparable}). The same arguments as above then show that $d\big((\hat{\sigma} \perp Y\hat{\psi})_{F(\nu)}\big) = d\big((\sigma \perp Y\psi)_{F(\nu)}\big)$ for every subform $\nu \subset \pi \otimes \pfister{Y} = \phi_{\mathfrak{d}' \perp Y \mathfrak{d}'}$. Since $\hat{\psi} \perp Y\hat{\sigma} \simeq Y(\hat{\sigma} \perp Y \hat{\psi})$, (3) then also holds. 
\end{proof} \end{corollary}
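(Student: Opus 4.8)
The plan is to translate the entire problem into the theory of symmetric bilinear forms, using the same device as in Proposition~\ref{PROPinductivepropositionweak} and Corollary~\ref{CORinductivepropositionweak}. Since $\pi$ is an anisotropic quasi-Pfister form, write $\pi \simeq \phi_{\mathfrak{d}'}$ for a bilinear Pfister form $\mathfrak{d}'$ over $F$. Expressing the divisibility hypothesis as $\eta \simeq \pi \otimes \mu$ and choosing a symmetric bilinear form $\mathfrak{g}$ with $\mu \simeq \phi_{\mathfrak{g}}$, we get $\eta \simeq \phi_{\mathfrak{d}}$ with $\mathfrak{d} := \mathfrak{d}' \otimes \mathfrak{g}$; this $\mathfrak{d}$ is divisible by $\mathfrak{d}'$, and it is anisotropic because $\phi_{\mathfrak{d}} \simeq \eta$ is.

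Next, using Lemma~\ref{LEMsubformsofbilinearforms}, I would lift $\psi$ and $\sigma$ to subforms $\mathfrak{b}, \mathfrak{b}' \subset \mathfrak{d}$ with $\phi_{\mathfrak{b}} \simeq \psi$ and $\phi_{\mathfrak{b}'} \simeq \sigma$, and take their complements $\mathfrak{c}, \mathfrak{c}'$ in $\mathfrak{d}$, so that $\mathfrak{d} \simeq \mathfrak{b} \perp \mathfrak{c} \simeq \mathfrak{b}' \perp \mathfrak{c}'$. Setting $\hat{\psi} := \phi_{\mathfrak{c}}$ and $\hat{\sigma} := \phi_{\mathfrak{c}'}$, part~(1) is immediate from $\phi_{\mathfrak{b}} \perp \phi_{\mathfrak{c}} \simeq \phi_{\mathfrak{b} \perp \mathfrak{c}}$. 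For part~(2), note that for any dimension-$\geq 2$ subform $\nu \subset \pi$ the bilinear Pfister form $\mathfrak{d}'$ becomes isotropic, hence split, over $F(\nu)$; since $\mathfrak{d}$ is divisible by $\mathfrak{d}'$, the anisotropic form $\mathfrak{d} \simeq \mathfrak{b} \perp \mathfrak{c}$ also splits over $F(\nu)$, and Corollary~\ref{CORcomplementarybilinearforms} gives $d(\psi_{F(\nu)}) = d(\hat{\psi}_{F(\nu)})$; likewise $d(\sigma_{F(\nu)}) = d(\hat{\sigma}_{F(\nu)})$.

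Part~(3) carries the real content. Work over $F(Y)$ and set $\mathfrak{e} := \mathfrak{d} \perp Y\mathfrak{d} \simeq \mathfrak{d} \otimes \bform{1,Y}$. Since $\mathfrak{d}$ is divisible by $\mathfrak{d}'$, the form $\mathfrak{e}$ is divisible by the bilinear Pfister form $\mathfrak{d}' \otimes \bform{1,Y} \simeq \mathfrak{d}' \perp Y\mathfrak{d}'$, whose associated quasilinear form is $\pi_{F(Y)} \otimes \pfister{Y}$; moreover $\mathfrak{e}$ is anisotropic, since $\phi_{\mathfrak{e}} \simeq \pfister{Y} \otimes \eta_{F(Y)}$ is anisotropic by Lemmas~\ref{LEMisotropyoverquadratic} and \ref{LEManisotropyoverseparable} (it reduces to anisotropy of $\eta$ over the purely transcendental extension $F(\sqrt{Y})$). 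Writing $\mathfrak{e} \simeq (\mathfrak{b}' \perp \mathfrak{c}') \perp Y(\mathfrak{b} \perp \mathfrak{c})$, I would single out the \emph{mixed} subform $\mathfrak{b}' \perp Y\mathfrak{b}$ (the $\sigma$-lift in the first copy, the $\psi$-lift in the $Y$-scaled copy), with complement $\mathfrak{c}' \perp Y\mathfrak{c}$; here $\phi_{\mathfrak{b}' \perp Y\mathfrak{b}} \simeq \sigma \perp Y\psi$ and $\phi_{\mathfrak{c}' \perp Y\mathfrak{c}} \simeq \hat{\sigma} \perp Y\hat{\psi}$. For any dimension-$\geq 2$ subform $\nu \subset \pi_{F(Y)} \otimes \pfister{Y}$ the form $\mathfrak{e}$ splits over $F(Y)(\nu)$ exactly as in part~(2), so Corollary~\ref{CORcomplementarybilinearforms} yields $d\big((\sigma \perp Y\psi)_{F(Y)(\nu)}\big) = d\big((\hat{\sigma} \perp Y\hat{\psi})_{F(Y)(\nu)}\big)$. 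Finally, scaling by $Y$ turns $\hat{\sigma} \perp Y\hat{\psi}$ into $Y\hat{\sigma} \perp Y^2\hat{\psi} \simeq \hat{\psi} \perp Y\hat{\sigma}$, and since $d(\phi) = \mydim{\phi} - 2\witti{0}{\phi}$ is manifestly an invariant of the similarity class of $\phi$, this gives $d\big((\hat{\psi} \perp Y\hat{\sigma})_{F(Y)(\nu)}\big) = d\big((\sigma \perp Y\psi)_{F(Y)(\nu)}\big)$, which is~(3).

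The main obstacle --- really the only subtle point --- is the bilinear bookkeeping in part~(3): one must deliberately choose the subform of $\mathfrak{e}$ to be the \emph{cross} combination $\mathfrak{b}' \perp Y\mathfrak{b}$ rather than $\mathfrak{b}' \perp Y\mathfrak{b}'$ or $\mathfrak{b} \perp Y\mathfrak{b}$, so that taking the complement in $\mathfrak{e}$ produces precisely $\hat{\sigma} \perp Y\hat{\psi}$, and then exploit the harmless similarity $\hat{\sigma} \perp Y\hat{\psi} \sim \hat{\psi} \perp Y\hat{\sigma}$ to land on the asymmetric statement in~(3). One must also verify the two structural facts about $\mathfrak{e}$ used above --- that it is divisible by $\mathfrak{d}' \perp Y\mathfrak{d}'$ (immediate from commutativity and associativity of $\otimes$ for bilinear forms) and that it is anisotropic (via Lemma~\ref{LEMisotropyoverquadratic} applied to $\pfister{Y} \otimes \eta$). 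Everything else is a formal consequence of Lemma~\ref{LEMsubformsofbilinearforms} and Corollary~\ref{CORcomplementarybilinearforms}.
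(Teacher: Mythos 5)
Your proposal is correct and follows essentially the same route as the paper: lift $\psi,\sigma$ to bilinear subforms of an anisotropic $\mathfrak{d}$ divisible by $\mathfrak{d}'$ with $\phi_{\mathfrak{d}}\simeq\eta$, take complements for (1)--(2) via Corollary \ref{CORcomplementarybilinearforms}, and for (3) use the mixed subform $\mathfrak{b}'\perp Y\mathfrak{b}$ of $\mathfrak{d}\perp Y\mathfrak{d}$ (divisible by $\mathfrak{d}'\perp Y\mathfrak{d}'$, anisotropic by Lemmas \ref{LEMisotropyoverquadratic} and \ref{LEManisotropyoverseparable}) followed by the similarity $\hat{\psi}\perp Y\hat{\sigma}\simeq Y(\hat{\sigma}\perp Y\hat{\psi})$. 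Your explicit justification of the existence of $\mathfrak{d}$ (via $\eta\simeq\pi\otimes\mu$ and $\mathfrak{d}=\mathfrak{d}'\otimes\mathfrak{g}$) is a detail the paper leaves implicit, but the argument is the same.
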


\subsection{A Corollary of Theorem \ref{THMoldtheorem}} Before proceeding to the key technical result, it will be convenient to record an immediate consequence of Theorem \ref{THMoldtheorem} for the invariant $c$. First, we have the following (which already implies part (1) of Corollary \ref{CORintro}):

\begin{proposition}[{\cite[Cor. 6.14]{Scully1}}] \label{PROPdivisibilityofp1} If $\phi$ is an anisotropic quasilinear quadratic form of dimension $\geq 2$ over $F$, then $\phi_1$ is divisible by an anisotropic quasi-Pfister form of dimension $\geq \witti{1}{\phi}$.
\begin{proof} Applying Theorem \ref{THMoldtheorem} with $q = p = \phi$, we see that there exists an anisotropic quasilinear quadratic form $\tau$ of dimension $\witti{1}{\phi}$ over $F(\phi)$ such that $\anispart{(\tau \otimes \phi_1)} \subset \phi_1$. But $\mydim{\anispart{(\tau \otimes \phi_1)}} \geq \mydim{\phi_1}$ by Lemma \ref{LEMsubformoftensorproduct}, so we must then have that $\anispart{(\tau \otimes \phi_1)} \simeq \phi_1$. By Lemma \ref{LEMdivisibilitybyquasiPfister}, $\phi_1$ is then divisible by $\normform{\tau}$, which has dimension $\geq \mydim{\tau} = \witti{1}{\phi}$. 
\end{proof}
\end{proposition}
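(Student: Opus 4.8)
The plan is to reduce the statement to Theorem~\ref{THMoldtheorem} (the main input from \cite{Scully1} recalled above), applied in the degenerate instance $q = p = \phi$. This instance is legitimate: it requires only $\mydim{\phi} \geq 2$ and that $\phi_{F(\phi)}$ be isotropic, the latter being automatic. Since $\witti{0}{q_{F(p)}} = \witti{0}{\phi_{F(\phi)}} = \witti{1}{\phi}$ and $p_1 = \phi_1$, Theorem~\ref{THMoldtheorem} then furnishes an anisotropic quasilinear quadratic form $\tau$ of dimension exactly $\witti{1}{\phi}$ over $F(\phi)$ such that $\anispart{(\tau \otimes \phi_1)} \subset \phi_1$.

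The heart of the argument is a dimension squeeze. On one hand, $\phi_1$ is anisotropic and $\tau$ has dimension $\geq 1$, so for any nonzero $a \in D(\tau)$ Lemma~\ref{LEMsubformoftensorproduct} gives $a\phi_1 \subset \anispart{(\tau \otimes \phi_1)}$, whence $\mydim{\anispart{(\tau \otimes \phi_1)}} \geq \mydim{\phi_1}$. On the other hand, the subform relation produced by Theorem~\ref{THMoldtheorem} forces the reverse inequality. Thus $\anispart{(\tau \otimes \phi_1)}$ and $\phi_1$ are anisotropic forms of equal dimension with the first a subform of the second, so $\anispart{(\tau \otimes \phi_1)} \simeq \phi_1$.

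To finish, I would read off from $\anispart{(\tau \otimes \phi_1)} \simeq \phi_1$ (equivalently $D(\tau \otimes \phi_1) = D(\phi_1)$) that $D(\phi_1)$ is stable under multiplication by every element of $D(\tau)$, hence under multiplication by the subfield $N(\tau) = D(\normform{\tau})$ that these elements generate; this is precisely criterion~(4) of Lemma~\ref{LEMdivisibilitybyquasiPfister} for $\psi = \normform{\tau}$, so criterion~(1) of that lemma yields that $\phi_1$ is divisible by $\normform{\tau}$. Since $\tau$ is anisotropic, it is similar to a subform of $\normform{\tau}$ by Lemma~\ref{LEMuniversalpropertyofthenormform}, so $\mydim{\normform{\tau}} \geq \mydim{\tau} = \witti{1}{\phi}$, which is exactly the assertion (the case $\witti{1}{\phi} = 1$ is trivial, $\normform{\tau}$ then being $\form{1}$). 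The only delicate point is this last ``dictionary'' passage from an isomorphism of anisotropic parts to a divisibility statement, together with the bookkeeping that $\tau$ need not represent $1$; the genuinely substantive ingredient, Theorem~\ref{THMoldtheorem}, is quoted as a black box, so there is no real obstacle beyond this.
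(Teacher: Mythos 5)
Your proposal is correct and follows essentially the same route as the paper: apply Theorem \ref{THMoldtheorem} with $q=p=\phi$, use Lemma \ref{LEMsubformoftensorproduct} for the dimension squeeze giving $\anispart{(\tau\otimes\phi_1)}\simeq\phi_1$, and then invoke Lemma \ref{LEMdivisibilitybyquasiPfister} together with Lemma \ref{LEMuniversalpropertyofthenormform} to get divisibility by $\normform{\tau}$ and the dimension bound. Your explicit passage through criterion (4) of Lemma \ref{LEMdivisibilitybyquasiPfister} (stability of $D(\phi_1)$ under multiplication by $N(\tau)$) is just a slightly more detailed way of handling the fact that $\tau$ need not represent $1$, which the paper leaves implicit.
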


Now, the value of $c(\phi)$ for an arbitrary quasi-Pfister neighbour $\phi$ was determined in Lemma \ref{LEMbasicpropertiesofc} (4). For non-quasi-Pfister neighbours, the above result gives:

\begin{corollary} \label{CORcvaluefornonquasiPfister} Let $\phi$ be an anisotropic quasilinear quadratic form over $F$ which is not a quasi-Pfister neighbour, and let $n$ be the unique positive integer for which $2^n < \mydim{\phi} \leq 2^{n+1}$. Then:
\begin{enumerate} \item $c(\phi) \leq \Izhdim{\phi} - 2^u$, where $u$ is the smallest integer for which $\witti{1}{\phi} \leq 2^u$;
\item If $\Izhdim{\phi} > 2^n$, then $c(\phi) \geq 2^n$;
\item If $\Izhdim{\phi} = 2^n$, then $c(\phi) = 2^n - 2^m$ for some integer $m \in [0,n-2]$ with $\mydim{\phi} \leq 2^n + 2^m$;
\item $c(\phi) \geq \frac{\mydim{\phi}}{2}$;
\item $\witti{1}{\phi} \leq \frac{\mydim{\phi}}{4}$.
\end{enumerate}
\begin{proof} (1) Since $\phi$ is not a quasi-Pfister neighbour, we have $\mathrm{lndeg}(\phi) \geq n+2 \geq 3$. By definition, we then have $c(\phi)  = \Izhdim{\phi} - 2^m$, where $m = \mathrm{max} \lbrace r\;|\; r \leq \mathrm{lndeg}(\phi)-3 \text{ and } r \in \Delta(\phi)\rbrace$. Now Proposition \ref{PROPdivisibilityofp1} tells us that $u \in \Delta(\phi)$, so to prove the claim, it will be enough to show that $u \leq n-1$. But since $2^n \leq \Izhdim{\phi} < 2^{n+1}$, $u$ is at most $n$. Moreover, if $u$ were equal to $n$, then $\phi_1$ (which has dimension $\Izhdim{\phi}$) would have be similar to an $n$-fold quasi-Pfister form, contradicting the fact that $\phi$ is not a quasi-Pfister neighbour (Lemma \ref{LEMcharacterizationofquasiPfisterneighbours}). The claim therefore holds.

(2) This is part of Lemma \ref{LEMbasicpropertiesofc} (2).

(3) If $\Izhdim{\phi} = 2^n$, then Lemma \ref{LEMbasicpropertiesofc} (3) tells us that $c(\phi) = 2^n - 2^m$ for some $m \in [0, n-2]$. But $\mydim{\phi} = 2^n + \witti{1}{\phi}$ in this case, and so (1) then implies that $\mydim{\phi} \leq 2^n + 2^m$.

(4) This follows immediately from (2) and (3).

(5) By (1), we have $c(\phi)  \leq \mydim{\phi} - 2\witti{1}{\phi}$, and (4) then gives $\witti{1}{\phi} \leq \frac{\mydim{\phi}}{4}$.
\end{proof} \end{corollary}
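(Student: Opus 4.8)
The plan is to make part (1) the crux and to obtain everything else from it together with Lemma \ref{LEMbasicpropertiesofc}. Part (2) is literally Lemma \ref{LEMbasicpropertiesofc} (2), so nothing new is needed there. Part (4) will follow from (2) and (3) by a one-line numerical comparison, and part (5) from (1) and (4). So the real work is part (1), and the one preliminary observation I would record is that, since $\phi$ is not a quasi-Pfister neighbour, Lemma \ref{LEMcharacterizationofquasiPfisterneighbours} forces $\mathrm{lndeg}(\phi) \geq n+2 \geq 3$; in particular $c(\phi)$ is computed by the ``otherwise'' clause of its definition, namely $c(\phi)$ is the largest integer $<\Izhdim{\phi}$ divisible by $2^m$, where $m = \mathrm{max}\lbrace r\;|\; r \leq \mathrm{lndeg}(\phi)-3 \text{ and } r \in \Delta(\phi)\rbrace$ (this set is non-empty since $0 \in \Delta(\phi)$ by Proposition \ref{PROPpropertiesofDelta} (1)).

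For (1), the key input is Proposition \ref{PROPdivisibilityofp1}: $\phi_1$ is divisible by an anisotropic quasi-Pfister form of dimension $\geq \witti{1}{\phi}$, hence of foldness $\geq u$; passing to a $u$-fold quasi-Pfister subform $\pi$ (divisibility by a quasi-Pfister form passes to its quasi-Pfister subforms, by Lemma \ref{LEMdivisibilitybyquasiPfister}), $\phi_1$ is divisible by $\pi$. This gives two facts at once: $\anispart{(\pi \otimes \phi_1)} \simeq \phi_1$, so $[\pi] \in P_u(\phi_1)$ and hence $u \in \Delta(\phi)$ (the requirement $u<\mathrm{lndeg}(\phi)$ being automatic once $u \leq n-1$ is known); and $\mydim{\phi_1} = \Izhdim{\phi}$ is divisible by $2^u$. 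Next I would check $u \leq n-1$: by Lemma \ref{LEMIzhbound}, $\Izhdim{\phi} \geq 2^n$, so $\witti{1}{\phi} = \mydim{\phi} - \Izhdim{\phi} \leq 2^{n+1} - 2^n = 2^n$ and hence $u \leq n$; and if $u = n$ then $\Izhdim{\phi}$ would be a multiple of $2^n$ lying in $[2^n, 2^{n+1})$, i.e. $\Izhdim{\phi} = 2^n$, forcing $\phi_1$ (of dimension $2^n$, divisible by a $2^n$-dimensional quasi-Pfister form) to be similar to an $n$-fold quasi-Pfister form and so $\phi$ to be a quasi-Pfister neighbour by Lemma \ref{LEMcharacterizationofquasiPfisterneighbours} --- a contradiction. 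Therefore $u \leq n-1 \leq \mathrm{lndeg}(\phi)-3$ and $u \in \Delta(\phi)$, so $m \geq u$; since in addition $2^u \mid \Izhdim{\phi}$, the largest multiple of $2^m$ strictly below $\Izhdim{\phi}$ is at most $\Izhdim{\phi} - 2^u$, which is exactly (1).

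With (1) in hand, the rest is bookkeeping. For (3): if $\Izhdim{\phi} = 2^n$, then Lemma \ref{LEMbasicpropertiesofc} (3) already gives $c(\phi) = 2^n - 2^m$ for some $m \in [0,n-2]$; comparing with the bound $c(\phi) \leq 2^n - 2^u$ from (1) yields $2^m \geq 2^u$, so $\witti{1}{\phi} \leq 2^u \leq 2^m$, and since $\mydim{\phi} = 2^n + \witti{1}{\phi}$ here we get $\mydim{\phi} \leq 2^n + 2^m$. For (4): when $\Izhdim{\phi} > 2^n$, part (2) gives $c(\phi) \geq 2^n \geq \mydim{\phi}/2$; when $\Izhdim{\phi} = 2^n$, part (3) gives $2c(\phi) = 2^{n+1} - 2^{m+1} \geq 2^n + 2^m \geq \mydim{\phi}$, using $m \leq n-2$, so again $c(\phi) \geq \mydim{\phi}/2$. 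For (5): (1) gives $c(\phi) \leq \Izhdim{\phi} - 2^u = \mydim{\phi} - \witti{1}{\phi} - 2^u \leq \mydim{\phi} - 2\witti{1}{\phi}$ (using $2^u \geq \witti{1}{\phi}$), and combining with $c(\phi) \geq \mydim{\phi}/2$ from (4) gives $2\witti{1}{\phi} \leq \mydim{\phi}/2$, i.e. $\witti{1}{\phi} \leq \mydim{\phi}/4$.

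The main obstacle is part (1), and specifically matching the rather indirect definition of $c$ with the divisibility statement of Proposition \ref{PROPdivisibilityofp1}: one has to verify both that the foldness $u$ of a quasi-Pfister form dividing $\phi_1$ actually lies in $\Delta(\phi)$ and that $u \leq \mathrm{lndeg}(\phi)-3$, so that $u$ is admissible in the maximum defining $c(\phi)$, and that $2^u$ divides $\Izhdim{\phi}$ so that the ``largest multiple of $2^m$ below $\Izhdim{\phi}$'' is genuinely bounded above by $\Izhdim{\phi} - 2^u$. Everything after (1) is elementary arithmetic with powers of $2$.
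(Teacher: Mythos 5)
Your proposal is correct and follows essentially the same route as the paper's proof: part (1) via Proposition \ref{PROPdivisibilityofp1} together with the observation that $u=n$ would force $\phi_1$ to be similar to an $n$-fold quasi-Pfister form (Lemma \ref{LEMcharacterizationofquasiPfisterneighbours}), and parts (2)--(5) by the same bookkeeping from Lemma \ref{LEMbasicpropertiesofc}. The only difference is cosmetic: you explicitly verify that $2^u$ divides $\Izhdim{\phi}$ and bound the largest multiple of $2^m$ below $\Izhdim{\phi}$ by $\Izhdim{\phi}-2^u$, a small extra care where the paper simply writes $c(\phi)=\Izhdim{\phi}-2^m$.
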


\subsection{A Theorem on Tensor Products and the Proof of Theorem \ref{THMmainresult}} Now, recall that to deduce Theorem \ref{THMmainresult} from Theorem \ref{THMoldtheorem}, we need to understand something about highly isotropic tensor products of quasilinear quadratic forms. The key technical result we shall prove here is the following:

\begin{theorem} \label{THMrefinedtensorproducttheorem} Let $\psi$ and $\phi$ be non-zero anisotropic quasilinear quadratic forms over $F$, and set $\sigma  := \anispart{(\psi \otimes \phi)}$ and $d := \mydim{\sigma} - \mydim{\psi}$. If $d < \mydim{\phi}$, then either $\sigma$ is divisible by $\normform{\phi}$, or there exist anisotropic quasilinear quadratic forms $\alpha_1,\hdots,\alpha_m$ and a non-negative integer $r \leq \mathrm{lndeg}(\phi)-m-1$ such that the following hold:
\begin{enumerate}
\item $\alpha_m$ is an $r$-fold quasi-Pfister form;
\item $\alpha_m$ divides $\alpha_1,\hdots,\alpha_{m-1},\sigma$, and $[\alpha_m] \in P_r(\psi) \cap P_r(\phi)$;
\item Set $\alpha_{-1} := \anispart{(\alpha_m \otimes \psi)}$ and $\alpha_0 : = \anispart{(\alpha_m \otimes \phi)}$. For each $i \in [-1,m-1]$, set $\beta_{i+1} : = \anispart{(\alpha_i \otimes \alpha_{i+1})}$, and let $n_i$ $($resp. $v_i)$ be the largest integer for which $\mydim{\alpha_i} > 2^{n_i}$ $($resp. $\mydim{\alpha_i} > v_i2^{\mathrm{lndeg}(\alpha_{i+1})})$. Then: 
\begin{itemize} \item[$\mathrm{(i)}$] $\beta_0 \simeq \sigma$;
\item[$\mathrm{(ii)}$] For all $i \in [0,m-1]$, $\mydim{\alpha_{i+1}} \leq 2^{n_i-1}$ and $\mathrm{lndeg}(\alpha_{i+1}) < \mathrm{lndeg}(\alpha_{i})$;
\item[$\mathrm{(iii)}$] For all $i \in [-1,m-1]$, $[\normform{(\alpha_{i+1})}] \in P_{\mathrm{lndeg}(\alpha_{i+1})}(\alpha_i)$;
\item[$\mathrm{(iv)}$] For all $i \in [0,m-1]$, $\mydim{\beta_i} = \mydim{\alpha_{i-1}} + \mydim{\alpha_i} -2^r$ and
$$ \mydim{\alpha_{i+1}} = \mathrm{min} \lbrace \mydim{\alpha_{i-1}} - v_{i-1}2^{\mathrm{lndeg}(\alpha_i)}, (v_{i-1}+1)2^{\mathrm{lndeg}(\alpha_i)} - \mydim{\beta_i} \rbrace;$$
\item[$\mathrm{(v)}$]For all $i \in [1,m-1]$, $d\big((\beta_i)_{F(\alpha_{i-1})}\big) \leq \mydim{\alpha_{i-1}} - \mydim{\alpha_i} - 2^r$.
\end{itemize}
\end{enumerate}
\end{theorem}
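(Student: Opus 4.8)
The plan is to prove Theorem \ref{THMrefinedtensorproducttheorem} by induction on $\mydim{\phi}$ (or, equivalently, on $\mathrm{lndeg}(\phi)$), with the base case $\mathrm{lndeg}(\phi) = 1$ being essentially trivial since then $\phi$ is similar to $\normform{\phi}$ and $\sigma = \anispart{(\psi \otimes \phi)}$ is automatically divisible by $\normform{\phi}$ by Lemma \ref{LEMdivisibilitybyquasiPfister}. For the induction step, I first dispose of the easy case: if $\sigma$ is already divisible by $\normform{\phi}$, we are done, so assume it is not. The main structural input is Theorem \ref{THMoldtheorem} applied with the roles of the two forms played by $\phi$ and a suitable form; more precisely, the hypothesis $d < \mydim{\phi}$ together with Lemma \ref{LEMdisnonnegative} tells us that $\phi_{F(V_\psi)}$ — or rather the relevant function-field construction — exhibits controlled isotropy, and I expect the first real step to be extracting, via the $P_r$ machinery of Section \ref{SECDelta}, an anisotropic $r$-fold quasi-Pfister form $\alpha_m$ with $[\alpha_m] \in P_r(\psi) \cap P_r(\phi)$ and $r$ as large as possible subject to $r \leq \mathrm{lndeg}(\phi) - 2$. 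The existence of such an $\alpha_m$ dividing $\sigma$ should follow by combining Lemma \ref{LEManisotropicpartofformsdivisiblebyaquasiPfister}, Proposition \ref{PROPmultiplyingPrs}, and Corollary \ref{CORquasiPfisterinvariant} applied to $\sigma$ itself, using that $\sigma$ is divisible by any common quasi-Pfister factor of $\psi$ and $\phi$.

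Once $\alpha_m$ is fixed, I would set $\alpha_{-1} := \anispart{(\alpha_m \otimes \psi)}$ and $\alpha_0 := \anispart{(\alpha_m \otimes \phi)}$ as in the statement, and observe via Lemma \ref{LEMconcretedescriptionofpineighbour} that $\phi$ is a strong $\alpha_m$-neighbour of $\alpha_0$ (and similarly for $\psi$), so $\mydim{\alpha_0} < \mydim{\phi} + 2^r \leq 2\mydim{\phi}$; crucially $\mathrm{lndeg}(\alpha_0) = \mathrm{lndeg}(\phi)$ by Lemma \ref{LEMknownstablebirationalinvariants}, so I cannot directly recurse on $\alpha_0$. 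Instead the recursion should be applied to the pair $(\alpha_{-1}, \alpha_0)$ but with the Pfister factor $\normform{(\alpha_0)}$ divided out, or — more likely, given the shape of item (3) — one builds the chain $\alpha_1, \ldots, \alpha_{m-1}$ step by step, where at each stage $\alpha_{i+1}$ is obtained from the pair $(\alpha_{i-1}, \alpha_i)$ by a single application of the analysis, the key dimension drop being $\mathrm{lndeg}(\alpha_{i+1}) < \mathrm{lndeg}(\alpha_i)$ recorded in (3)(ii). The dimension formulas in (3)(iv) — $\mydim{\beta_i} = \mydim{\alpha_{i-1}} + \mydim{\alpha_i} - 2^r$ and the $\min$-expression for $\mydim{\alpha_{i+1}}$ — should come from carefully tracking the kernel dimension of the multiplication map $D(\alpha_{i-1}) \otimes_{F^2} D(\alpha_i) \to F$, using that both forms are divisible by $\alpha_m$ (so dimensions are divisible by $2^r$) together with Remark \ref{REMPrdimension}. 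The assertion (3)(iii), that $[\normform{(\alpha_{i+1})}] \in P_{\mathrm{lndeg}(\alpha_{i+1})}(\alpha_i)$, is essentially the statement that $\alpha_{i+1}$ is defined as the anisotropic part of $\normform{(\alpha_{i+1})} \otimes \alpha_i$ divided by the appropriate Pfister form, which should be arranged by construction and verified using Lemma \ref{LEMbasicfactsonPr} (1).

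The hardest part, I expect, will be establishing inequality (3)(v), $d\big((\beta_i)_{F(\alpha_{i-1})}\big) \leq \mydim{\alpha_{i-1}} - \mydim{\alpha_i} - 2^r$, which is the crux that feeds into the eventual dimension bounds in Theorem \ref{THMmainresult} and which does not reduce to pure linear algebra over $F^2$: it genuinely requires the function-field input. Here is where I anticipate using Corollary \ref{CORinductivepropositionweak} and Corollary \ref{CORinductivepropositiontensorbyPfister} — the symmetric-bilinear-form trick — to replace $\beta_i = \anispart{(\alpha_{i-1} \otimes \alpha_i)}$ by a complementary form $\hat{\psi}$ inside the $\alpha_m$-divisible ambient form, whose behaviour over $F(\alpha_{i-1})$ matches that of the appropriate piece, and then to apply the induction hypothesis (the $\mathrm{lndeg}$ having strictly dropped) to control $d$. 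A delicate point will be checking that all the forms remain anisotropic under the relevant scalar extensions — here Lemma \ref{LEManisotropyoverseparable}, Lemma \ref{LEMisotropyoverquadratic}, and Lemma \ref{LEMpineighbourisotropicoverpi} will be invoked repeatedly — and that the choice of $r$ as maximal really does terminate the chain after $m$ steps, i.e. that $P_{r'}(\psi) \cap P_{r'}(\phi) = \emptyset$ for all $r' > r$ forces the divisibility of $\beta_0 \simeq \sigma$ by $\normform{\phi}$ in the excluded case, closing the dichotomy. I would also need to verify the bookkeeping inequality $r \leq \mathrm{lndeg}(\phi) - m - 1$, which should follow from (3)(ii) telling us the sequence $\mathrm{lndeg}(\alpha_0) > \mathrm{lndeg}(\alpha_1) > \cdots > \mathrm{lndeg}(\alpha_m) = r$ is strictly decreasing over $m+1$ terms starting at $\mathrm{lndeg}(\phi)$.
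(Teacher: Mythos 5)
Your plan has the right raw ingredients in view (Theorem \ref{THMoldtheorem}, the $P_r$ machinery, the symmetric-bilinear complementation trick of \S\ref{SUBSECinductivetool}, and an iterative construction of the chain $\alpha_1,\hdots,\alpha_m$ with strictly decreasing norm degree), but the core of the argument is missing, starting with the induction variable. The paper inducts on $d = \mydim{\sigma}-\mydim{\psi}$, not on $\mydim{\phi}$ or $\mathrm{lndeg}(\phi)$, and this is not cosmetic: via Proposition \ref{PROPtensorproducttheoremimpliesmainresult}, the inductive hypothesis ``the theorem holds for smaller $d$'' makes Theorem \ref{THMmainresult} and Corollary \ref{CORmainresult} available for all $k<d$, and these are invoked repeatedly \emph{inside} the proof (e.g.\ in the two cases of Claim \ref{LEMboundsoverfunctionfield} and in Claim \ref{LEMoneofpsihatandsigmahatisclose}). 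Moreover, the inductive hypothesis is applied to pairs such as $(\widetilde{\psi},\phi)$ in Lemma \ref{LEMmaximaldividingform} and $(\tau,\phi_1)$, $(\psi',\phi_1)$ in Proposition \ref{PROPlndegliesinP}, where the second entry has the \emph{same} or only mildly smaller complexity but the dimension defect has strictly dropped; an induction on $\mydim{\phi}$ or $\mathrm{lndeg}(\phi)$ cannot reach these applications, so your scheme would stall exactly where the real work begins. You also do not identify the pivotal step Proposition \ref{PROPlndegliesinP}, namely that $[\normform{\phi}]\in P_{\mathrm{lndeg}(\phi)}(\psi)$; this is what legitimizes forming $\eta=\anispart{(\normform{\phi}\otimes\psi)}$ of controlled dimension and feeding it into Corollary \ref{CORinductivepropositiontensorbyPfister}, and its proof is a genuinely delicate function-field argument (Theorem \ref{THMoldtheorem} plus the invariant $c$ and the inductively available Corollary \ref{CORmainresult}), not something that follows from the $P_r$ formalism alone.

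Two further steps you propose would not go through as described. First, the dimension formulas in (3)(iv) cannot be obtained by ``tracking the kernel of the multiplication map $D(\alpha_{i-1})\otimes_{F^2}D(\alpha_i)\to F$'': the equality $\mydim{\beta_i}=\mydim{\alpha_{i-1}}+\mydim{\alpha_i}-2^r$ is Corollary \ref{CORdvalue}, proved by a contradiction argument that again uses the induction hypothesis against property (5) of Proposition \ref{PROPconstructionofalpha1}, and the $\min$-expression for $\mydim{\alpha_{i+1}}$ comes from the two-case complementation construction (Propositions \ref{PROPinductivestepinproofoftheorem} and \ref{PROPconstructionofalpha1}), whose hardest point is the isomorphism $\anispart{(\hat{\sigma}\otimes\phi)}\simeq\hat{\psi}$ established through the specialization argument with the auxiliary variable $Y$ and Corollary \ref{CORspecialization}; none of this is linear algebra over $F^2$. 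Second, your closing of the dichotomy is off: it is not true that emptiness of $P_{r'}(\psi)\cap P_{r'}(\phi)$ for $r'>r$ ``forces'' divisibility of $\sigma$ by $\normform{\phi}$; in the paper the alternative case is simply the one in which the construction of the chain is carried out, with $\alpha_m=\pi$ chosen maximal as in Lemma \ref{LEMmaximaldividingform}, and the fact that $[\pi]\in P_r(\psi)$ (needed for (2)) is itself an application of the induction on $d$. By contrast, your bookkeeping of $r\leq\mathrm{lndeg}(\phi)-m-1$ from the strictly decreasing chain of norm degrees, and your identification of (3)(ii) as the engine of termination, do match the paper. As it stands, however, the proposal is an outline whose load-bearing steps are either absent or set up so that they cannot be carried out.
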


We make some comments:

\begin{remarks} \label{REMStensorproducttheorem} \begin{enumerate}[leftmargin=*] \item Suppose we are in the case where $\sigma$ is not divisible by $\normform{\phi}$. Let $y_r$ and $y_r'$ be the largest integers for which $\mydim{\phi} > y_r2^r$ and $\mydim{\psi} > y_r'2^r$, respectively. Since $[\alpha_m] \in P_r(\psi) \cap P_r(\phi)$, we then have that $\mydim{\alpha_{-1}} = (y_r'+1)2^r$ and $\mydim{\alpha_0} = (y_r+1)2^r$ (Remark \ref{REMPrdimension}). At the same time, since $r<\mathrm{lndeg}(\phi)$, the first two parts of Lemma \ref{LEMbasicfactsonPr} tell us that $\alpha_0 \stb \phi$. In particular, we have $\normform{(\alpha_0)} \simeq \normform{\phi}$ and $\mathrm{lndeg}(\alpha_0) = \mathrm{lndeg}(\phi)$ (Lemma \ref{LEMknownstablebirationalinvariants}). Part (3) then tells us, in particular, that the following hold:
\begin{itemize} \item $\mydim{\sigma} = (y_r + y_r' + 1)2^r$ and $d \geq y_r2^r$;
\item $[\normform{\phi}] \in P_{\mathrm{lndeg}(\phi)}(\psi)$ (because $\anispart{(\normform{\phi} \otimes \psi)} \simeq \anispart{(\normform{(\alpha_0)} \otimes \alpha_{-1})}$);
\item If $v$ is the largest integer for which $\mydim{\psi} > v2^{\mathrm{lndeg}(\phi)}$, then
$$ \mydim{ \alpha_1} = \mathrm{min} \lbrace (y_r'+1)2^r - v2^{\mathrm{lndeg}(\phi)}, (v + 1)2^{\mathrm{lndeg}(\phi)} - \mydim{\sigma} \rbrace $$ 
(since $r < \mathrm{lndeg}(\phi)$, $v$ coincides with the integer $v_0$ in the statement).
 \end{itemize}

\item If $\mathrm{lndeg}(\phi) \leq 1$ (i.e., if $\mydim{\phi} \in \lbrace 1,2 \rbrace$), then the conclusion of the theorem is that $\sigma$ is divisible by $\phi$, which we have already observed in Lemma \ref{LEManisotropicpartofformsdivisiblebyaquasiPfister}. Suppose now that $\mathrm{lndeg}(\phi) \geq 2$. Let $t$ be the largest integer $\leq \mathrm{lndeg}(\phi)-2$ for which $P_t(\phi) \neq \emptyset$, and let $c$ be the largest integer less than $\mydim{\phi}$ which is divisible by $2^t$. If $d < c$, then $\sigma$ must be divisible by $\normform{\phi}$, since otherwise the first bullet-point in the previous remark would be invalidated. By Lemma \ref{LEMbasicfactsonPr} (1), this applies when $d < 2^n < \mydim{\phi}$ for some non-negative integer $n$. We shall make use of this in the proof (which is inductive). \end{enumerate}
\end{remarks}

Theorem \ref{THMmainresult} is a direct consequence of Theorems \ref{THMoldtheorem} and \ref{THMoldtheorem}. In fact, we have:

\begin{proposition} \label{PROPtensorproducttheoremimpliesmainresult} Let $l$ be a positive integer. If Theorem \ref{THMrefinedtensorproducttheorem} holds whenever $d<l$, then Theorem \ref{THMmainresult} holds whenever $k<l$. 
\begin{proof} Let $p$, $q$ and $k$ and $n$ be as in the statement of Theorem \ref{THMmainresult}, and suppose that $k \leq l$. By Theorem \ref{THMoldtheorem}, there exists an anisotropic quasilinear quadratic form $\tau$ of dimension $\witti{0}{q_{F(p)}}$ over $F(p)$ such that $\anispart{(\tau \otimes p_1)} \subset \anispart{(q_{F(p)})}$. Set $\sigma: = \anispart{(\tau \otimes p_1)}$. By definition, we have 
\begin{equation} \label{eq1} \mydim{q} = 2\witti{0}{q_{F(p)}} + k = 2\mydim{\tau} +k. \end{equation}
On the other hand, since $\mydim{\anispart{(q_{F(p)})}} = \mydim{q} - \witti{0}{q_{F(p)}} = \frac{\mydim{q}+k}{2}$, we also have that
\begin{equation} \label{eq2} \mydim{q} \geq 2\mydim{\anispart{(q_{F(p)})}} -k \geq 2\mydim{\sigma} -k. \end{equation} 
Together, \eqref{eq1} and \eqref{eq2} give that $\mydim{\sigma} - \mydim{\tau} \leq k$. Since $k$ is at most $\Izhdim{p} - 1$ and strictly less than $l$, we are in a position to apply Theorem \ref{THMrefinedtensorproducttheorem} with $\psi = \tau$ and $\phi = p_1$ \big(we are now working over the field $F(p)$\big). If $\sigma$ is divisible by $\normform{(p_1)}$, then $\mydim{\sigma}$ is divisible by $2^{\mathrm{lndeg}(p_1)} = 2^{\mathrm{lndeg}(p)-1}$. Since $\mydim{\tau} \leq \mydim{\sigma}$ (Lemma \ref{LEMsubformoftensorproduct}), \eqref{eq1} and \eqref{eq2} then give that $\mydim{q} = a2^{\mathrm{lndeg}(p)} + \epsilon$ for some positive integer $a$ and integer $\epsilon \in [-k,k]$. Note that this holds when $p$ is a quasi-Pfister neighbour, since $p_1$ is then similar to $\normform{(p_1)}$ (Lemma \ref{LEMcharacterizationofquasiPfisterneighbours}), and the latter then divides $\sigma$ by Lemma \ref{LEManisotropicpartofformsdivisiblebyaquasiPfister}. We can therefore suppose that $p$ is not a quasi-Pfister neighbour, and that we are in the second case of Theorem  \ref{THMrefinedtensorproducttheorem}. Let $\alpha_1,\hdots,\alpha_m$ and $r$ be as in the statement of the latter, and set $\alpha: = \alpha_1$, $\pi: = \alpha_m$ and $\beta:= \anispart{(\alpha \otimes p_1)}$. If $y_r$ the largest integer for which $\Izhdim{p} > y_r2^r$, $y_r'$ the largest integer for which $\mydim{\tau} > y_r'2^r$, and $v$ is the largest integer for which $\mydim{\tau} > v2^{\mathrm{lndeg}(p_1)}= v2^{\mathrm{lndeg}(p)-1}$, the following then hold (we are using Remark \ref{REMStensorproducttheorem} (1) as well as the statement of the theorem here):
\begin{itemize}  \item[(a)] $\pi$ is an anisotropic $r$-fold quasi-Pfister form;
\item[(b)] $\alpha$ is divisible by $\pi$ and $[\pi] \in P_r(p_1)$;
\item[(c)] $\mydim{\alpha} \leq 2^{n-1}$ and $\mathrm{lndeg}(\alpha) < \mathrm{lndeg}(p_1) = \mathrm{lndeg}(p)-1$;
\item[(d)] $[\normform{\alpha}] \in P_{\mathrm{lndeg}(\alpha)}(p_1)$;
\item[(e)] $\mydim{\sigma} = (y_r' + y_r+1)2^r$;
\item[(f)] $\mydim{\alpha} = \mathrm{min}\lbrace (y_r'+1)2^r - v2^{\mathrm{lndeg}(p)-1}, (v+1)2^{\mathrm{lndeg}(p)-1} - \mydim{\sigma} \rbrace$;
\item[(g)] $\mydim{\beta} = \mydim{\alpha} + y_r2^r$;
\item[(h)] $d(\beta_{F(p)(p_1)}) \leq y_r2^r- \mydim{\alpha}$. 
\end{itemize}
Set $r' : = \mathrm{lndeg}(\alpha)$. By (b), (c) and (d), $r$ and $r'$ lie in $\Delta(p)$. By (b), there exists a quasilinear quadratic form $\alpha'$ over $F(p)$ such that $\alpha \simeq \pi \otimes \alpha'$. Set $x: = \mydim{\alpha'} = \frac{\mydim{\alpha}}{2^r}$. By Lemma \ref{LEMisotropyindicesofformsdivisiblebyquasiPfisters} (1), we then have $r' \leq r + x-1$, or $x \geq r'-r+1$. We now claim that the integers $r$, $r'$ and $x$ satisfy all the remaining conditions required in case (2) of our theorem. In other words, we claim the following hold:
\begin{itemize} \item[(i)] $k \geq y_r2^r$ and $\mydim{q} = a2^{\mathrm{lndeg}(p)} \pm \epsilon$ for some non-negative integer $a$ and positive integer $\epsilon \in [(x+y_r)2^{r+1} k, x2^{r+1} + k]$;
\item[(ii)] $r \leq n-1$, $r' \in [r,n]$ and $x \leq \mathrm{min} \lbrace 2^{n-1-r}, (y_{r'} + 1)2^{r'-r} - y_r \rbrace$ ($y_{r'}$ being the largest integer for which $\Izhdim{p} > y_{r'}2^{r'}$);
\item[(iii)] If $\mathrm{lndeg}(p) = n+2$, then $x2^r \leq 2^n - y_r2^{r-1}$. Otherwise, we have $x2^r \leq y_r2^r - c(p_1) \leq y_r2^r - (2^{n-1} + 2^{n-2})$. \end{itemize}
Let us check these one by one.

(i) Statement (f) above gives two possible values for $x2^r=  \mydim{\alpha}$. Suppose first that $x2^r = (y_r'+1)2^r - v2^{\mathrm{lndeg}(p)-1}$. By the definition of $y_r'$, we then have that $\mydim{\tau} \leq v2^{\mathrm{lndeg}(p)-1} + x2^r$, and so $\mydim{q} \leq v2^{\mathrm{lndeg}(p)} + x2^{r+1}+k$ by \eqref{eq1}. On the other hand, (e) gives that 
$$ \mydim{\sigma} = (y_r'+1)2^r + y_r2^r =  v2^{\mathrm{lndeg}(p)-1} + (x+y_r)2^r, $$
and so $\mydim{q} \geq v2^{\mathrm{lndeg}(p)} + (x+y_r)2^{r+1} - k$ by \eqref{eq2}. Setting $a = v$, we then get that $\mydim{q} = a2^{\mathrm{lndeg}(p)} + \epsilon$ for some $\epsilon \in [(x+y_r)2^{r+1}-k,x2^{r+1}+k]$, and so (i) holds (note that $k$ must be $\geq y_r2^r$ in order for the interval containing $\epsilon$ to be non-empty). Now the other possibility is that $x2^r = (v+1)2^{\mathrm{lndeg}(p)-1} - \mydim{\sigma}$. In this case, \eqref{eq2} becomes $\mydim{q} \geq (v+1)2^{\mathrm{lndeg}(p)} - (x2^{r+1}+k)$. At the same time, (e) gives that
$$ \mydim{\tau} \leq (1+y_r')2^r = \mydim{\sigma} - y_r2^r = (v+1)2^{\mathrm{lndeg}(p)-1} - (x+y_r)2^r, $$
and so $\mydim{q} \leq (v+1)2^{\mathrm{lndeg}(p)} - \big((x+y_r2^r)-k\big)$ by \eqref{eq1}. Setting $a = (v+1)$, we then get that $\mydim{q} = a2^{\mathrm{lndeg}(p)} - \epsilon$ for some $\epsilon \in [(x+y_r)2^{r+1}-k,x2^{r+1}+k]$, and so (i) again holds. 

(ii) By (b), $x2^r \leq 2^{n-1}$, so $r \leq n-1$ and $x \leq 2^{n-1-r}$. Since $\alpha$ is divisible by $\pi$, $r'$ is at least $r$. At the same time, $r' < \mathrm{lndeg}(p_1)$, and (d) and Lemma \ref{LEMbasicfactsonPr} (1) then imply that $r' \leq n$. To prove (ii), it then only remains to show that $x \leq (y_r'+1)2^{r'-r} - y_r$, or that $(x+y_r)2^r \leq (y_r'+1)2^{r'}$. But (g) says that $(x+y_r)2^r = \mydim{\beta}$, and $\beta$ is similar to a subform of $\anispart{(\normform{\alpha} \otimes p_1)}$, which has dimension $(y_r'+1)2^{r'}$ by (d) and Remark \ref{REMPrdimension}. 

(iii) Since $\mydim{p_1} = \Izhdim{p} > 2^n$, we have $\mathrm{lndeg}(p) = \mathrm{lndeg}(p_1) +1 \geq n+2$. If $\mathrm{lndeg}(p) = n+2$, then (e) and (f) give that
\begin{eqnarray*} x2^{r+1} =2\mydim{\alpha} & \leq & \big((y_r'+1)2^r - v2^{n+1}\big) + \big((v+1)2^{n+1} - \mydim{\sigma}\big) \\ &=& 2^{n+1} + \big((y_r'+1)2^r - \mydim{\sigma}\big) \\ &=& 2^{n+1} - y_r2^r, \end{eqnarray*}
and so $x2^r \leq 2^n - y_r2^{r-1}$. Suppose now that $\mathrm{lndeg}(p) \geq n+3$. By (h), $d\big(\beta_{F(p)(p_1)}\big) \leq (y_r-x)2^r$. If $(y_r-x)2^r$ were less than $c(p_1)$, then it would follow from the preceding discussion (with $\beta$ replacing $q$ and $p_1$ replacing $p$) that $\mydim{\beta}$ lies with $(y_r-x)2^r$ of an integer multiple of $2^{\mathrm{lndeg}(p_1)} = 2^{\mathrm{lndeg}(p)-1} \geq 2^{n+2}$. But $\mydim{\beta} = (x+y_r)2^r$ by (c), and since $(x+y_r)2^r + (y_r-x)2^r = y_r2^{r+1} < 2\Izhdim{p} \leq 2^{n+2}$, we see that this is not the case. We must therefore have $(y_r - x)2^r \geq c(p_1)$, or $x2^r \leq y_r2^r - c(p_1)$. This proves that (iii) holds, since $c(p_1) \geq 2^{n-1} + 2^{n-2}$ by Lemma \ref{LEMbasicpropertiesofc}. 
\end{proof} \end{proposition}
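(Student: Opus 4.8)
The strategy is to run Theorem \ref{THMoldtheorem} into the hypothesised case of Theorem \ref{THMrefinedtensorproducttheorem} and then read its two possible outputs as the two cases of Theorem \ref{THMmainresult}. So fix $p$, $q$, $k$ and $n$ as in Theorem \ref{THMmainresult}, with $k<l$ (and $k<\Izhdim{p}$). By Theorem \ref{THMoldtheorem} there is an anisotropic quasilinear quadratic form $\tau$ of dimension $\witti{0}{q_{F(p)}}$ over $F(p)$ with $\anispart{(\tau \otimes p_1)} \subset \anispart{(q_{F(p)})}$; set $\sigma := \anispart{(\tau \otimes p_1)}$. The opening step is bookkeeping: the definition of $k$ gives $\mydim{q} = 2\mydim{\tau} + k$, and $\mydim{\anispart{(q_{F(p)})}} = \tfrac12(\mydim{q}+k) \geq \mydim{\sigma}$ gives $\mydim{q} \geq 2\mydim{\sigma} - k$; subtracting, $\mydim{\sigma} - \mydim{\tau} \leq k < l$. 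Since also $\mydim{\sigma} - \mydim{\tau} \leq k \leq \Izhdim{p} - 1 = \mydim{p_1} - 1$, we may apply Theorem \ref{THMrefinedtensorproducttheorem} with $\psi = \tau$ and $\phi = p_1$ over $F(p)$.

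If $\sigma$ is divisible by $\normform{(p_1)}$, then $\mydim{\sigma}$ is a multiple of $2^{\mathrm{lndeg}(p_1)} = 2^{\mathrm{lndeg}(p)-1}$; together with $\mydim{\tau} \leq \mydim{\sigma}$ (Lemma \ref{LEMsubformoftensorproduct}), $\mydim{\sigma} - \mydim{\tau} \leq k$, and the two displayed relations, elementary arithmetic yields $\mydim{q} = a2^{\mathrm{lndeg}(p)} + \epsilon$ with $\epsilon \in [-k,k]$ and $a$ positive (as $\mydim{q} = 2\witti{0}{q_{F(p)}} + k > k$). This is case (1), and it covers the case where $p$ is a quasi-Pfister neighbour, since then $p_1$ is similar to $\normform{(p_1)}$ (Lemma \ref{LEMcharacterizationofquasiPfisterneighbours}), which therefore divides $\sigma$ (Lemma \ref{LEManisotropicpartofformsdivisiblebyaquasiPfister}). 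So from now on $p$ is not a quasi-Pfister neighbour and we are in the second conclusion of Theorem \ref{THMrefinedtensorproducttheorem}, with data $\alpha_1,\dots,\alpha_m$ and $r$.

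Writing $\alpha := \alpha_1$, $\pi := \alpha_m$, $\beta := \anispart{(\alpha \otimes p_1)}$, $r' := \mathrm{lndeg}(\alpha)$, and $x := \mydim{\alpha}/2^r$, I extract from Theorem \ref{THMrefinedtensorproducttheorem} and Remark \ref{REMStensorproducttheorem}(1) the facts: $\pi$ is an anisotropic $r$-fold quasi-Pfister form dividing $\alpha$ with $[\pi] \in P_r(p_1)$; $\mydim{\alpha} \leq 2^{n-1}$ and $r' < \mathrm{lndeg}(p_1)$; $[\normform{\alpha}] \in P_{r'}(p_1)$; $\mydim{\sigma} = (y'_r+y_r+1)2^r$, where $y_r$ (resp.\ $y'_r$) is the largest integer with $\Izhdim{p} > y_r2^r$ (resp.\ $\mydim{\tau} > y'_r2^r$); the two-term minimum formula for $\mydim{\alpha}$ involving $v$, the largest integer with $\mydim{\tau} > v2^{\mathrm{lndeg}(p)-1}$; $\mydim{\beta} = \mydim{\alpha} + y_r2^r$; and $d(\beta_{F(p)(p_1)}) \leq y_r2^r - \mydim{\alpha}$. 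Granting these, $r,r' \in \Delta(p)$ (both are $< \mathrm{lndeg}(p_1) = \mathrm{lndeg}(p)-1$ and the corresponding $P$-sets are nonempty), and $x \geq r'-r+1$ by Lemma \ref{LEMisotropyindicesofformsdivisiblebyquasiPfisters}(1) applied to $\alpha = \pi \otimes \alpha'$. The formula $\mydim{q} = a2^{\mathrm{lndeg}(p)} \pm \epsilon$ with $\epsilon \in [(x+y_r)2^{r+1}-k,\,x2^{r+1}+k]$, together with the forced inequality $k \geq y_r2^r$, follows by substituting each of the two candidate values of $x2^r = \mydim{\alpha}$ into $\mydim{q} = 2\mydim{\tau}+k$ and $\mydim{q} \geq 2\mydim{\sigma}-k$ (taking $a=v$ in one case and $a=v+1$ in the other). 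The bounds $r \leq n-1$ and $x \leq 2^{n-1-r}$ come from $x2^r = \mydim{\alpha} \leq 2^{n-1}$; $r' \in [r,n]$ from $\pi \mid \alpha$ and Lemma \ref{LEMbasicfactsonPr}(1); and $x \leq (y_{r'}+1)2^{r'-r} - y_r$ is the inequality $\mydim{\beta} = (x+y_r)2^r \leq (y_{r'}+1)2^{r'} = \mydim{\anispart{(\normform{\alpha}\otimes p_1)}}$ (Remark \ref{REMPrdimension}), valid since $\beta$ is similar to a subform of $\anispart{(\normform{\alpha}\otimes p_1)}$.

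The remaining bullet of case (2) — namely $x2^r \leq 2^n - y_r2^{r-1}$ when $\mathrm{lndeg}(p) = n+2$, and $x2^r \leq y_r2^r - c(p_1)$ (hence $\leq y_r2^r - (2^{n-1}+2^{n-2})$ by Lemma \ref{LEMbasicpropertiesofc}) otherwise — is the delicate step. First note $\mathrm{lndeg}(p) = \mathrm{lndeg}(p_1)+1 \geq n+2$ because $\mydim{p_1} = \Izhdim{p} > 2^n$. When $\mathrm{lndeg}(p) = n+2$ it is pure arithmetic: summing the two terms of the minimum in (f) cancels $v$ and leaves $2^{n+1} - y_r2^r$, so $2\mydim{\alpha} \leq 2^{n+1} - y_r2^r$. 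When $\mathrm{lndeg}(p) \geq n+3$ one argues by contradiction: $\beta_{F(p)(p_1)}$ is isotropic (as $\beta$ contains a subform similar to $p_1$, Lemma \ref{LEMsubformoftensorproduct}), so if $(y_r-x)2^r < c(p_1)$, then invoking Theorem \ref{THMmainresult} for the pair $(\beta,p_1)$ — available since its parameter $d(\beta_{F(p)(p_1)}) \leq (y_r-x)2^r < y_r2^r \leq k < l$, this threshold structure being precisely what keeps the argument non-circular — together with the definition of $c(p_1)$ (which, being $< \Izhdim{p_1}$, rules out the detailed alternative), forces conclusion (1) for $(\beta,p_1)$; this would place $\mydim{\beta} = (x+y_r)2^r$ within $(y_r-x)2^r$ of a multiple of $2^{\mathrm{lndeg}(p_1)} = 2^{\mathrm{lndeg}(p)-1} \geq 2^{n+2}$; but $(x+y_r)2^r + (y_r-x)2^r = y_r2^{r+1} < 2\Izhdim{p} \leq 2^{n+2}$ and $x \geq 1$, so no such multiple exists. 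Hence $(y_r-x)2^r \geq c(p_1)$, completing case (2). I expect the main obstacle to be exactly this self-referential last step: one must check that $\beta_{F(p)(p_1)}$ is genuinely isotropic, that the instance of Theorem \ref{THMmainresult} being re-used is for a strictly smaller value of the governing parameter (so that no circularity arises with the statement under proof), and one must track the various ``largest integer'' quantities $y_r, y'_r, y_{r'}, v$ so that the identifications of Remark \ref{REMStensorproducttheorem}(1) are applied with the correct arguments.
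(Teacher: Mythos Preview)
Your proposal is correct and follows essentially the same route as the paper's proof: apply Theorem~\ref{THMoldtheorem}, feed the resulting tensor product into Theorem~\ref{THMrefinedtensorproducttheorem}, read off case~(1) when $\sigma$ is divisible by $\normform{(p_1)}$, and otherwise extract $r$, $r'$, $x$ from the data $\alpha_1,\dots,\alpha_m$ and verify the three bullets of case~(2) in the same order and by the same computations. You are in fact slightly more explicit than the paper on two points --- you note that $\beta_{F(p)(p_1)}$ is isotropic (needed to invoke the result for the pair $(\beta,p_1)$), and you flag that the recursive invocation in the last step has governing parameter strictly below $k$, which is what licenses the appeal to the already-established instance.
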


\begin{remark} \label{REMsuboptimalityoftensorproducttheorem} Note that when we were applying Theorem \ref{THMrefinedtensorproducttheorem} here, we made no use of the forms $\alpha_2,\hdots,\alpha_{m-1}$ appearing in the second case. Nevertheless, we have stated Theorem \ref{THMrefinedtensorproducttheorem} as it is because our calculations indicate that these forms can also be made subject to at least some non-trivial constraints coming from the invariant $\Delta(p)$. Furthermore, when we are in this case of the theorem, the given conditions on the $\alpha_i$ show that the dimension of $\sigma = \anispart{(\psi \otimes \phi)}$ is expressible in terms of $\mydim{\alpha_0}$ and $v_i2^{\mathrm{lndeg}(\alpha_{i+1})}$ ($i \in [-1,m-1]$). While the ambiguity in the formula for $\mydim{\alpha_{i+1}}$ prevents us from giving an exact expression, it may be possible to get something more concrete by refining the construction. Any improvements here would of ultimately lead to a strengthening of Theorem \ref{THMmainresult}.
\end{remark}

We now proceed to the proof of Theorem \ref{THMrefinedtensorproducttheorem}:

\begin{proof}[Proof of Theorem \ref{THMrefinedtensorproducttheorem}] We induct on $d$. If $d = 0$, then $\psi$ is similar to $\sigma$ (Lemma \ref{LEMsubformoftensorproduct}). By Lemma \ref{LEMdivisibilitybyquasiPfister}, we then get that both $\psi$ and $\sigma$ are divisible by $\normform{\phi}$ (the case where $\mydim{\phi}=1$ is trivial). Suppose now that $d \geq 1$, and that the result holds whenever the relevant dimension difference is less than $d$. By Proposition \ref{PROPtensorproducttheoremimpliesmainresult}, Theorem \ref{THMmainresult} then holds whenever $k<d$, and the same is then true of Corollary \ref{CORmainresult} (we shall make use of this below). Now, modifying $\psi$ and $\phi$ by scalars if needed, we can assume that $1 \in D(\psi) \cap D(\phi)$. By Lemma \ref{LEMsubformoftensorproduct}, both $\psi$ and $\phi$ are then subforms of $\sigma$. If $\sigma$ is divisible by $\normform{\phi}$, then there is nothing to prove, so assume otherwise. Since $d \geq 1$, we have $\mydim{\phi} \geq 2$, and so the field $F(\phi)$ is defined. We have:
 
 \begin{lemma} \label{LEMnonzerods} Both $d(\psi_{F(\phi)})$ and $d(\sigma_{F(\phi)})$ are non-zero. 
 \begin{proof} Since $\phi \subset \normform{\phi}$, we have $\anispart{(\normform{\phi} \otimes \phi)} \simeq \normform{\phi}$ (Lemma \ref{LEMdivisibilitybyquasiPfister}). In particular, if $\psi$ were divisible by $\normform{\phi}$, then the same would be true of $\sigma$. Since this is not the case, neither $\psi$ nor $\sigma$ are divisible by $\normform{\phi}$, and the statement then follows from Lemma \ref{LEMdisnonnegative}. 
 \end{proof}  \end{lemma}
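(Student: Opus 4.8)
The plan is to reduce the non-vanishing of $d(\psi_{F(\phi)})$ and $d(\sigma_{F(\phi)})$ to a statement about non-divisibility by $\normform{\phi}$, and then invoke the standing hypothesis of the induction together with a propagation property of quasi-Pfister divisibility. The key input is Lemma \ref{LEMdisnonnegative}: for any anisotropic quasilinear quadratic form $\chi$ over $F$ (recall $\mydim{\phi} \geq 2$, so $F(\phi)$ is defined), the integer $d(\chi_{F(\phi)})$ is non-negative and equals $0$ exactly when $\chi$ is divisible by $\normform{\phi}$. Applying this with $\chi = \psi$ and with $\chi = \sigma$, it suffices to show that neither $\psi$ nor $\sigma$ is divisible by $\normform{\phi}$.

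For $\sigma$ this is nothing but the case hypothesis under which we are working in the inductive proof of Theorem \ref{THMrefinedtensorproducttheorem}: we have already reduced to the situation in which $\sigma = \anispart{(\psi \otimes \phi)}$ is not divisible by $\normform{\phi}$. For $\psi$, I would argue by contradiction: if $\psi$ were divisible by $\normform{\phi}$, then tensoring this divisibility relation by $\phi$ shows that $\psi \otimes \phi$ is divisible by $\normform{\phi}$, and hence so is its anisotropic part $\sigma = \anispart{(\psi \otimes \phi)}$, by Lemma \ref{LEManisotropicpartofformsdivisiblebyaquasiPfister}. This contradicts the preceding paragraph. Therefore $\psi$ is not divisible by $\normform{\phi}$ either, and Lemma \ref{LEMdisnonnegative} yields $d(\psi_{F(\phi)}) > 0$ and $d(\sigma_{F(\phi)}) > 0$, as required.

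There is no genuine obstacle here; this is a short deduction, and the only point that is not quite immediate is the implication ``$\psi$ divisible by $\normform{\phi}$ $\Rightarrow$ $\sigma$ divisible by $\normform{\phi}$'', which is precisely Lemma \ref{LEManisotropicpartofformsdivisiblebyaquasiPfister} applied to the (possibly isotropic) form $\psi \otimes \phi$. Alternatively, one can phrase this step via the identity $\anispart{(\normform{\phi} \otimes \phi)} \simeq \normform{\phi}$ (which holds since $\phi \subset \normform{\phi}$ after the normalization $1 \in D(\phi)$, by Lemma \ref{LEMdivisibilitybyquasiPfister}): writing $\psi \simeq \normform{\phi} \otimes \psi'$ would then force $\sigma \simeq \psi$, hence $d = 0$, against the standing assumption $d \geq 1$ in the induction step.
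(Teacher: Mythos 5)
Your proposal is correct and follows essentially the same route as the paper: reduce via Lemma \ref{LEMdisnonnegative} to showing that neither $\psi$ nor $\sigma$ is divisible by $\normform{\phi}$, use the standing case hypothesis for $\sigma$, and rule out divisibility of $\psi$ by propagating it to $\sigma = \anispart{(\psi \otimes \phi)}$ (the paper does this through $\anispart{(\normform{\phi} \otimes \phi)} \simeq \normform{\phi}$, you through Lemma \ref{LEManisotropicpartofformsdivisiblebyaquasiPfister}, which is an equivalent justification of the same step). Your alternative ending, deducing $\sigma \simeq \psi$ and hence $d = 0$ against $d \geq 1$, is also valid and only differs in which hypothesis supplies the contradiction.
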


Now, the first main step of the proof is to show that $[\normform{\phi}] \in P_{\mathrm{lndeg}(\phi)}(\psi)$. For this, we will make us of the following observations:

\begin{lemma} \label{LEMbasicfactsontensorproduct} The following hold:
\begin{enumerate} \item $d(\psi_{F(\phi)}) + d(\sigma_{F(\phi)}) \leq d$;
\item $\mydim{\anispart{(\sigma_{F(\phi)})}} \leq \witti{0}{\psi_{F(\phi)}} + d = \mydim{\anispart{(\psi_{F(\phi)})}} + d - d(\psi_{F(\phi)})$;
\item For any $a \in D(\psi) \setminus \lbrace 0 \rbrace$, there exists a subform $\nu$ of $\phi_1$ such that $\mydim{\nu} \geq \frac{d(\psi_{F(\phi)}) + \mydim{\phi} - d}{2}$ and $a\nu \subset \anispart{(\psi_{F(\phi)})}$. \end{enumerate}

\begin{proof} Let $\phi' \subset \phi$ be such that $\phi \simeq \form{1} \perp \phi'$, let $K = F(V_{\phi'})$, and let $\phi'(X) \in K$ be the generic value of $\phi'$. As an extension of $F$, we can then identity $F(\phi)$ with $K(\sqrt{\phi'(X)})$. Consider the form $\eta: = \anispart{(\psi \otimes \pfister{\phi'(X)})}$ over $K$. Since $\pfister{\phi'(X)}$ is a subform of $\phi_K$, $\eta$ is a subform of $\sigma_K$ (which is anisotropic by Lemma \ref{LEManisotropyoverseparable}). By Lemma \ref{LEMisotropyoverfunctionfieldsofquadrics}, we have $d(\psi_{F(\phi)}) = \mydim{\eta} - \mydim{\psi}$. Since $\eta$ is divisible by $\pfister{\phi'(X)}$ (Lemma \ref{LEManisotropicpartofformsdivisiblebyaquasiPfister}), we then have that
$$ \mydim{\anispart{(\eta_{F(\phi)})}} = \frac{\mydim{\eta}}{2} = \frac{\mydim{\psi} + d(\psi_{F(\phi)})}{2} = \mydim{\anispart{(\psi_{F(\phi)})}}, $$
and so $\anispart{(\eta_{F(\phi)})} \simeq \anispart{(\psi_{F(\phi)})}$. We now verify the three desired assertions.

(1) By Lemma \ref{LEMisotropyoverfunctionfieldsofquadrics}, $d(\sigma_{F(\phi)}) = \mydim{\sigma} - m$, where $m$ is the maximal dimension of a subform of $\sigma_K$ which is divisible by $\pfister{\phi'(X)}$. Since $\eta$ is a subform of $\sigma_K$ divisible by $\pfister{\phi'(X)}$, it follows that 
$$ d(\sigma_{F(\phi)}) \leq \mydim{\sigma} - \mydim{\eta} = \mydim{\sigma} - \big(\mydim{\psi} + d(\psi_{F(\phi)})\big) = d - d(\psi_{F(\phi)}),$$
as desired.

(2) Since $\mydim{\sigma}  = \mydim{\psi} + d = \mydim{\eta} + d - d(\psi_{F(\phi)})$, and since $\eta$ is a subform of $\sigma_K$, we have that $\mydim{\anispart{(\sigma_{F(\phi)})}} \leq \mydim{\anispart{(\eta_{F(\phi)})}} + d - d(\psi_{F(\phi)})$. But we noted above that $\anispart{(\eta_{F(\phi)})} \simeq \anispart{(\psi_{F(\phi)})}$, and so the desired inequality holds (note that $d(\psi_{F(\phi)}) = \mydim{\anispart{(\psi_{F(\phi)})}}  - \witti{0}{\psi_{F(\phi)}}$ by definition, so the equality in the statement is immediate). 

(3) By Lemma \ref{LEMsubformoftensorproduct}, we have $a\phi \subset \sigma$, and so $aD(\phi_K)$ is a $K^2$-linear subspace of $D(\sigma_K)$. By dimension count, it intersects $D(\eta)$ in a $K^2$-linear subspace of dimension at least 
$$ \mydim{\eta} + \mydim{\phi} - \mydim{\sigma} = d(\psi_{F(\phi)}) + \mydim{\phi} - d. $$
By Lemma \ref{LEMsubformtheorem}, this means that $\phi_K$ admits a subform $\rho$ of dimension at least $d(\psi_{F(\phi)}) + \mydim{\phi} - d$ such that $a\rho \subset \eta$. Setting $\nu := \anispart{(\rho_{F(\phi)})}$, we then get that $a\nu \subset \anispart{(\eta_{F(\phi)})} \simeq \anispart{(\psi_{F(\phi)})}$. Finally, since $F(\phi) = K(\sqrt{\phi'(X)})$, Lemma \ref{LEMisotropyoverquadratic} tells us that $\mydim{\nu} \geq \frac{\mydim{\rho}}{2} \geq \frac{d(\psi_{F(\phi)}) + \mydim{\phi} - d}{2}$, and so $\nu$ has the desired property.
\end{proof} \end{lemma}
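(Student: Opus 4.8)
The plan is to transfer the question to the $1$-fold quasi-Pfister form $\pfister{\phi'(X)}$ via the concrete model of $F(\phi)$ from the end of \S\ref{SUBSECbasics}: writing $\phi \simeq \form{1} \perp \phi'$, setting $K := F(V_{\phi'})$ (a purely transcendental, hence separable, extension of $F$) and letting $\phi'(X) \in K$ be the generic value of $\phi'$, we identify $F(\phi)$ with $K(\sqrt{\phi'(X)})$. The auxiliary object is $\eta := \anispart{(\psi \otimes \pfister{\phi'(X)})}$ over $K$. Since $\pfister{\phi'(X)} \subset \phi_K$ and $\sigma$ remains anisotropic over $K$ (Lemma \ref{LEManisotropyoverseparable}), we get $\psi_K \subset \eta \subset \sigma_K$ (Lemmas \ref{LEMsubformoftensorproduct} and \ref{LEMsubformtheorem}), and $\eta$ is divisible by $\pfister{\phi'(X)}$ (Lemma \ref{LEManisotropicpartofformsdivisiblebyaquasiPfister}). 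The first equality of Lemma \ref{LEMisotropyoverfunctionfieldsofquadrics} applied to $\psi$ over $F(\phi)$ yields $\mydim{\eta} = \mydim{\psi} + d(\psi_{F(\phi)})$, while Lemma \ref{LEMdivisibilitybyquasiPfister}(8) applied to $\eta$ over $F(\phi)$ (which is, up to purely transcendental base change, the function field of $\pfister{\phi'(X)}$) gives $\mydim{\anispart{(\eta_{F(\phi)})}} = \tfrac{1}{2}\mydim{\eta}$; a short computation identifies this with $\mydim{\anispart{(\psi_{F(\phi)})}}$, and combined with $\anispart{(\psi_{F(\phi)})} \subseteq \anispart{(\eta_{F(\phi)})}$ (from $\psi_K \subset \eta$) this forces $\anispart{(\eta_{F(\phi)})} \simeq \anispart{(\psi_{F(\phi)})}$. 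This identification is the hinge.

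Granting it, part (1) drops out of the \emph{second} equality in Lemma \ref{LEMisotropyoverfunctionfieldsofquadrics}: $2\witti{0}{\sigma_{F(\phi)}}$ is the largest dimension of a $\pfister{\phi'(X)}$-divisible subform of $\sigma_K$, and $\eta$ is such a subform, so $d(\sigma_{F(\phi)}) = \mydim{\sigma} - 2\witti{0}{\sigma_{F(\phi)}} \leq \mydim{\sigma} - \mydim{\eta} = d - d(\psi_{F(\phi)})$. For part (2) I would write $\sigma_K \simeq \eta \perp \mu$ with $\mydim{\mu} = d - d(\psi_{F(\phi)})$; additivity of value sets gives $\mydim{\anispart{(\sigma_{F(\phi)})}} \leq \mydim{\anispart{(\eta_{F(\phi)})}} + \mydim{\mu}$, and substituting the hinge together with $\mydim{\anispart{(\psi_{F(\phi)})}} = \witti{0}{\psi_{F(\phi)}} + d(\psi_{F(\phi)})$ yields both the bound $\leq \witti{0}{\psi_{F(\phi)}} + d$ and its reformulation. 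For part (3), given $a \in D(\psi) \setminus \lbrace 0 \rbrace$, Lemma \ref{LEMsubformoftensorproduct} gives $a\phi \subset \sigma$, so $aD(\phi_K)$ and $D(\eta)$ are $K^2$-subspaces of the $\mydim{\sigma}$-dimensional space $D(\sigma_K)$; a dimension count forces their intersection to have $K^2$-dimension at least $\mydim{\phi} + \mydim{\eta} - \mydim{\sigma} = \mydim{\phi} + d(\psi_{F(\phi)}) - d$, whence (Lemma \ref{LEMsubformtheorem}) a subform $\rho \subset \phi_K$ of that dimension with $a\rho \subset \eta$. Then $\nu := \anispart{(\rho_{F(\phi)})}$ satisfies $a\nu \subset \anispart{(\eta_{F(\phi)})} \simeq \anispart{(\psi_{F(\phi)})}$ and $\nu \subset \phi_1$, and since $F(\phi) = K(\sqrt{\phi'(X)})$ is inseparable quadratic over $K$, Lemma \ref{LEMisotropyoverquadratic} gives $\mydim{\nu} \geq \tfrac{1}{2}\mydim{\rho}$, which is the asserted bound.

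The only genuinely delicate part is the hinge: keeping the isotropy-index bookkeeping straight along $F \subset K \subset K(\sqrt{\phi'(X)}) = F(\phi)$ and invoking the two halves of Lemma \ref{LEMisotropyoverfunctionfieldsofquadrics} (the one rewriting $\witti{0}{-_{F(\phi)}}$ via $\pfister{\phi'(X)} \otimes (-)$, and the one as a maximum over $\pfister{\phi'(X)}$-divisible subforms) in the right places. Once that is secured, everything else is a routine combination of the value-set dictionary (Lemmas \ref{LEMsubformtheorem}, \ref{LEMsubformoftensorproduct}) with linear algebra over $K^2$, and I do not anticipate a further obstacle.
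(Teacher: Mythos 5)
Your proposal is correct and follows essentially the same route as the paper's proof: the same auxiliary form $\eta = \anispart{(\psi \otimes \pfister{\phi'(X)})}$ over $K = F(V_{\phi'})$, the same identification $\anispart{(\eta_{F(\phi)})} \simeq \anispart{(\psi_{F(\phi)})}$ via the divisibility of $\eta$ by $\pfister{\phi'(X)}$, and the same maximal-divisible-subform, subform-complement, and $K^2$-dimension-count arguments for (1), (2), (3) respectively. No gaps.
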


We note the following consequence of the first part:

\begin{corollary} \label{CORatleastonedissmall} If $\phi$ is not a quasi-Pfister neighbour, then at least one of $d(\psi_{F(\phi)})$ and $d(\sigma_{F(\phi)})$ is less than $c(\phi)$. 
\begin{proof} If this were not the case, then Lemma \ref{LEMbasicfactsontensorproduct} (1) would imply that $2c(\phi) \leq d < \mydim{\phi}$, contradicting Lemma \ref{CORcvaluefornonquasiPfister} (4). 
\end{proof} \end{corollary}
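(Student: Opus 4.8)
The plan is to argue by contradiction, playing off the subadditivity of the $d$-invariant under passage to $F(\phi)$ against the lower bound for $c(\phi)$ available for forms that are not quasi-Pfister neighbours. The two ingredients are already in place: part (1) of Lemma \ref{LEMbasicfactsontensorproduct}, which gives $d(\psi_{F(\phi)}) + d(\sigma_{F(\phi)}) \leq d$, and Corollary \ref{CORcvaluefornonquasiPfister} (4), which asserts $c(\phi) \geq \frac{\mydim{\phi}}{2}$ when $\phi$ is not a quasi-Pfister neighbour. The standing hypothesis of Theorem \ref{THMrefinedtensorproducttheorem}, namely $d < \mydim{\phi}$, ties these two facts together.

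Concretely, I would first recall that $d(\psi_{F(\phi)}) + d(\sigma_{F(\phi)}) \leq d < \mydim{\phi}$, combining Lemma \ref{LEMbasicfactsontensorproduct} (1) with the hypothesis $d < \mydim{\phi}$. Suppose now, contrary to the assertion, that both $d(\psi_{F(\phi)}) \geq c(\phi)$ and $d(\sigma_{F(\phi)}) \geq c(\phi)$. Adding these two inequalities and comparing with the previous display yields $2c(\phi) \leq d(\psi_{F(\phi)}) + d(\sigma_{F(\phi)}) \leq d < \mydim{\phi}$, hence $c(\phi) < \frac{\mydim{\phi}}{2}$. Since $\phi$ is assumed not to be a quasi-Pfister neighbour, this contradicts Corollary \ref{CORcvaluefornonquasiPfister} (4). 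Therefore at least one of $d(\psi_{F(\phi)})$ and $d(\sigma_{F(\phi)})$ is strictly less than $c(\phi)$, which is exactly the claim of Corollary \ref{CORatleastonedissmall}.

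There is essentially no obstacle here; all of the real work sits in the two results cited, namely the splitting-off bound of Lemma \ref{LEMbasicfactsontensorproduct} (1) and the inequality $c(\phi) \geq \frac{\mydim{\phi}}{2}$ from Corollary \ref{CORcvaluefornonquasiPfister} (4), the latter itself resting on Proposition \ref{PROPdivisibilityofp1} and the definition of $c$. The only point worth a moment's care is that $c(\phi)$ need not be an integer in general (it takes the values $\frac{3}{4}$ and $\frac{3}{2}$ when $\mathrm{lndeg}(\phi) \leq 2$), but a form with $\mydim{\phi} \leq 2$, or more generally with $\mathrm{lndeg}(\phi) \leq 2$, is automatically a quasi-Pfister neighbour, so this case does not arise under the hypothesis of the corollary; and in any event the argument above uses only the inequality $2c(\phi) < \mydim{\phi}$, so integrality is never needed.
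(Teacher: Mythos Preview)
Your proof is correct and follows exactly the same route as the paper's: argue by contradiction, use Lemma~\ref{LEMbasicfactsontensorproduct}~(1) together with $d<\mydim{\phi}$ to obtain $2c(\phi)<\mydim{\phi}$, and then invoke Corollary~\ref{CORcvaluefornonquasiPfister}~(4). Your additional remark about the non-integrality of $c(\phi)$ in low-degree cases is a nice sanity check but, as you note, is not needed here.
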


We can now complete the first main step:

\begin{proposition} \label{PROPlndegliesinP} $[\normform{\phi}] \in P_{\mathrm{lndeg}(\phi)}(\psi)$. 

\begin{proof} For ease of notation, let $\psi'$ and $\sigma'$ be the anisotropic parts of $\psi$ and $\sigma$ over $F(\phi)$, respectively. By Theorem \ref{THMoldtheorem}, there exists an anisotropic quasilinear quadratic form $\tau$ of dimension $\witti{0}{\psi_{F(\phi)}}$ over $F(\phi)$ such that $\anispart{(\tau \otimes \phi_1)} \subset \psi'$. We set $\psi'' : = \anispart{(\tau \otimes \phi_1)}$. By Lemma \ref{LEMsubformtheorem}, $\tau$ is a subform of $\psi''$, so $\mydim{\tau} \leq \mydim{\psi''} \leq \mydim{\psi'}$, with $\mydim{\psi'} - \mydim{\tau}$ being equal to $d(\psi_{F(\phi)}) = \mydim{\psi} - 2\witti{0}{\psi_{F(\phi)}}$. 

Now, set $l+1 : = \mathrm{lndeg}(\phi)$, and let $v$ be the largest integer for which $\mydim{\psi} > v2^{l+1}$. To prove what we want, we have to show that $\mydim{\anispart{(\normform{\phi} \otimes \psi)}} < (v+2)2^{l+1}$ (see Remark \ref{REMPrdimension}). Note, however, that the anisotropic part of $\anispart{(\normform{\phi} \otimes \psi)}$ over $F(\phi)$ coincides with $\anispart{(\normform{(\phi_1)} \otimes \psi')}$ (see the remarks preceding Lemma \ref{LEMdropingofnormdegree}). In particular, if we can show that the latter has dimension $< (v+2)2^l = \frac{1}{2}\big((v+2)2^{l+1}\big)$, then the desired assertion will follow from Lemma \ref{LEMisotropyoverfunctionfieldsofquadrics}. We first show:

\begin{claim} \label{LEMboundsoverfunctionfield} In the above situation, we have $v2^l < \mydim{\psi'} \leq (v+1)2^l$. 
\begin{proof} Since $\mydim{\psi} > v2^{l+1}$, the lower bound is again a consequence of Lemma \ref{LEMisotropyoverfunctionfieldsofquadrics}. Suppose now that $\mydim{\psi'} = (v+1)2^l+ y$ for some non-negative integer $y$. We have to show that $y = 0$. Suppose for the sake of contradiction that $y>0$, and let $x\in [0,2^{l+1}-1]$ be such that $\mydim{\psi} = (v+1)2^{l+1} -x$. We then have that $\mydim{\tau} = \witti{0}{\psi_{F(\phi)}} = (v+1)2^l - (x+y)$ and $\mydim{\psi'} - \mydim{\tau} = d(\psi_{F(\phi)}) =x+2y$. We claim that $\psi''$ is divisible by $\normform{(\phi_1)}$ and has dimension $ \geq (v+1)2^l$. Before proving this claim, let us first explain how it gives what we want: Let $a \in D(\psi) \setminus \lbrace 0 \rbrace$. Since $d(\psi_{F(\phi)}) = x+2y$, and since $d < \mydim{\phi}$, Lemma \ref{LEMbasicfactsontensorproduct} (3) tells us that $\phi_1$ admits a subform $\nu$ such that $\mydim{\nu} > y$ and $a \nu \subset \psi'$. Since $\mydim{\psi'} = (v+1)2^l + y$, $a D(\nu) = D(a \nu)$ must then have non-zero intersection with $D(\psi'')$. But since $\psi''$ is divisible by $\normform{(\phi_1)}$, it is closed under multiplication by arbitrary elements of $D(\phi_1)$ (Lemma \ref{LEMdivisibilitybyquasiPfister}), and hence by arbitrary elements of $D(\nu)$. Since the latter is closed under inversion of non-zero elements, it then follows that $a \in D(\psi'')$. But $D(\psi')$ is generated as an $F(\phi)^2$-vector space by $D(\psi)$, so Lemma \ref{LEMsubformtheorem} then gives that $\psi' \simeq \psi''$. In particular, $\psi'$ is divisible by $\normform{(\phi_1)}$. Since $\mathrm{lndeg}(\phi_1) = \mathrm{lndeg}(\phi) - 1 =l$, $\mydim{\psi'}$ is then divisible by $2^l$, so the same must be true of $y$. But Lemma \ref{LEMbasicfactsontensorproduct} (1) gives that $2y \leq x+2y = d(\psi_{F(\phi)}) \leq d < \mydim{\phi} \leq 2^{l+1}$, so this forces $y=0$, contradicting our standing assumption. It remains to prove our claim about $\psi''$. Before proceeding, we note that both $d(\psi_{F(\phi)})$ and $d(\sigma_{F(\phi)})$ are strictly less than $d$ by Lemmas \ref{LEMnonzerods} and \ref{LEMbasicfactsontensorproduct} (1). We now separate two cases.
\vspace{.5 \baselineskip}

\noindent {\it Case 1}. $\phi$ is a quasi-Pfister neighbour, i.e., $\mydim{\phi} > 2^l$. In this case, $\phi_1$ is an $l$-fold quasi-Pfister form (Lemma \ref{LEMcharacterizationofquasiPfisterneighbours}), and so $\normform{(\phi_1)} = \phi_1$. Since $\psi'' = \anispart{(\tau \otimes \phi_1)}$, the divisibility assertion then holds by Lemma \ref{LEManisotropicpartofformsdivisiblebyaquasiPfister}. In particular, $\mydim{\psi''}$ is divisible by $2^l$. Since $\mydim{\psi''} \geq \mydim{\tau}$, the dimension claim will follow if we can show that $\mydim{\tau} > v2^l$. But another application of Lemma \ref{LEManisotropicpartofformsdivisiblebyaquasiPfister} tells us that $\sigma' \simeq \anispart{(\psi' \otimes \phi_1)}$ is also divisible by $\phi_1$, and hence has dimension divisible by $2^l$. Since it contains $\psi'$ as a subform (Lemma \ref{LEMsubformoftensorproduct}), and since $\mydim{\psi'} = (v+1)2^l + y > (v+1)2^l$, it follows that $\mydim{\sigma'} \geq (v+2)2^l$. By Lemma \ref{LEMbasicfactsontensorproduct} (2), this gives that $\mydim{\tau} = \witti{0}{\psi_{F(\phi)}} \geq (v+2)2^l - d$. But since $d < \mydim{\phi} \leq 2^{l+1}$, we then get that $\mydim{\tau} > v2^l$, as desired. \vspace{.5 \baselineskip}

\noindent {\it Case 2}. $\phi$ is not a quasi-Pfister neighbour, i.e., $\mydim{\phi} \leq 2^l$. In this case, we claim that $d(\psi_{F(\phi)}) < c(\phi)$. Suppose otherwise. By Corollary \ref{CORatleastonedissmall}, we then have that $d(\sigma_{F(\phi)}) < c(\phi)$. Since $d(\sigma_{F(\phi)}) < d$, and since Corollary \ref{CORmainresult} holds when $k<d$ (recall that this is implied by the induction hypothesis), it follows that $\mydim{\sigma}$ lies within $d(\sigma_{F(\phi)})$ of an integer multiple of $2^{l+1}$. But both $d(\sigma_{F(\phi)})$ and $\mydim{\sigma} - (v+1)2^{l+1}$ are less than $d$, and since $d < \mydim{\phi} \leq 2^l$, we must then have that $\mydim{\sigma} \leq (v+1)2^{l+1} + d(\sigma_{F(\phi)})$. But Lemma \ref{LEMbasicfactsontensorproduct} (1) tells us that
$$ d(\sigma_{F(\phi)}) \leq d - d(\psi_{F(\phi)}) = \mydim{\sigma} - \big(\mydim{\psi} + d(\psi_{F(\phi)})\big) = \mydim{\sigma} - \big((v+1)2^{l+1}+2y\big), $$
so this contradicts our assumption that $y >0$. The claim therefore holds, i.e., $d(\psi_{F(\phi)}) < c(\phi)$. In particular, $\mydim{\psi''} - \mydim{\tau} \leq \mydim{\psi'} - \mydim{\tau} = d(\psi_{F(\phi)}) < \mathrm{min} \lbrace c(\phi),d \rbrace$, and so an application of the induction hypothesis to the pair $(\tau,\phi_1)$ gives that $\psi''$ is divisible by $\normform{(\phi_1)}$ \big(see Remark \ref{REMStensorproducttheorem} (2)\big). At the same time, we have $x+y < x + 2y = d(\psi_{F(\phi)}) < d < \mydim{\phi} \leq 2^l$, and so $\mydim{\psi''} > \mydim{\tau} = (v+1)2^l - (x+y) > v2^l$. Since $\mathrm{lndeg}(\phi_1) = l$, the divisibility of $\psi''$ by $\normform{(\phi_1)}$ then also gives that $\mydim{\psi''} \geq (v+1)2^l$, as desired.
\end{proof} \end{claim}

Returning to the proof of the proposition, our goal now is to show that $[\normform{(\phi_1)}] \in P_l(\psi')$ (as noted above, we have $\mathrm{lndeg}(\phi_1) = l$). Since $v2^l < \mydim{\psi'} \leq (v+1)2^l$, this will prove that $\anispart{(\normform{(\phi_1)} \otimes \psi')}$ has dimension $(v+1)2^l < (v+2)2^l$, which is exactly what we wanted to show (see the remarks preceding Claim \ref{LEMboundsoverfunctionfield}). But $\sigma' \simeq \anispart{(\psi' \otimes \phi_1)}$, and we have $\mydim{\sigma'} - \mydim{\psi'} \leq d - d(\psi_{F(\phi)}) < d$ by Lemmas \ref{LEMnonzerods} and \ref{LEMbasicfactsontensorproduct} (2). In particular, if $\mydim{\sigma'} - \mydim{\psi'} < \mydim{\phi_1}$, then our claim follows from an application of the induction hypothesis to the pair $(\psi',\phi_1)$. We may therefore assume henceforth that $\mydim{\sigma'} - \mydim{\psi'} \geq \mydim{\phi_1}$. By the preceding remarks, we then have that
$$ d(\psi_{F(\phi)}) \leq d - \mydim{\phi_1} < \mydim{\phi} - \mydim{\phi_1} = \witti{1}{\phi}.$$
Since $\witti{1}{\phi} \leq 2^l$ (Lemma \ref{LEMIzhbound}), it follows that
$$ \mydim{\psi''} \geq \mydim{\tau} = \mydim{\psi'} - d(\psi_{F(\phi)}) > v2^l - 2^l = (v-1)2^l. $$
We claim that $\psi''$ is divisible by $\normform{(\phi_1)}$. If $\phi$ is a quasi-Pfister neighbour, then $\normform{(\phi_1)} = \phi_1$ (Lemma \ref{LEMcharacterizationofquasiPfisterneighbours}) and the claim holds by Lemma \ref{LEManisotropicpartofformsdivisiblebyaquasiPfister}. If not, then $\mydim{\phi} \geq 4$ and $d(\psi_{F(\phi)}) < \witti{1}{\phi} < c(\phi)$ by parts (4) and (5) of Corollary \ref{CORcvaluefornonquasiPfister}. Since $\mydim{\psi''} - \mydim{\tau} \leq \mydim{\psi'} - \mydim{\tau} = d(\psi_{F(\phi)}) <d$, an application of the induction hypothesis to the pair $(\tau,\phi_1)$ then gives the claim. Since $\mydim{\psi''} > (v-1)2^l$, we also get that $\mydim{\psi''}$ is an integer multiple of $2^l$ greater than or equal to $v2^l$. Now $\mydim{\psi'} \leq (v+1)2^l$ (Claim \ref{LEMboundsoverfunctionfield}), so if $\mydim{\psi''} \geq (v+1)2^l$, then $\psi' \simeq \psi''$, and so $\psi'$ is divisible by $\normform{(\phi_1)}$. In particular, $[\normform{(\phi_1)}] \in P_l(\psi')$. Suppose now that $\mydim{\psi''} = v2^l$. Then $\psi''$ is a proper subform of $\psi'$. Since $D(\psi')$ is generated by $D(\psi)$ as an $F(\phi)^2$-vector space, we can find an element $a \in D(\psi)$ such that $a \notin D(\psi'')$. Consider now the form $\psi''' : = \psi'' \perp a \normform{(\phi_1)}$ of dimension $(v+1)2^l$. Since $\psi''$ is divisible by $\normform{(\phi_1)}$, the same is true of $\psi''$. Moreover, $\psi''$ is anisotropic. Indeed, if $\psi''$ were isotropic, then there would exist a non-zero element $b \in D\big(\normform{(\phi_1)}\big)$ such that $ab \in D(\psi'')$ (because $\psi''$ is anisotropic). But since $\psi''$ is divisible by $\normform{(\phi_1)}$, $D(\psi'')$ is closed under multiplication by arbitrary elements of $D\big(\normform{(\phi_1)}\big)$. Since the latter is closed under inversion of non-zero elements, it would then follow that $a \in D(\psi'')$, a contradiction. Now, let $b \in D(\psi')$. By Lemma \ref{LEMsubformtheorem}, the form $\anispart{(\psi'' \perp a\phi_1 \perp b \phi_1)}$ is a subform of $\sigma' \simeq \anispart{(\psi' \otimes \phi_1)}$. By Lemma \ref{LEMbasicfactsontensorproduct} (2), we have
$$ \mydim{\sigma'} \leq \witti{0}{\phi_{F(\phi)}} + d = \mydim{\tau} + d \leq \mydim{\psi''} + d < \mydim{\psi''} + \mydim{\phi}. $$
On the other hand, Lemma \ref{LEMIzhbound} tells us that $2\mydim{\phi_1} = 2\Izhdim{\phi_1} \geq \mydim{\phi}$, and so $\mydim{(\psi'' \perp a\phi_1 \perp b \phi_1)} \geq \mydim{\psi''} + \mydim{\phi}$. The form $\psi'' \perp a\phi_1 \perp b \phi_1$ must therefore be isotropic. Since $\psi'' \perp a \phi_1$ is anisotropic (because $\psi'''$ is), it follows that there exists a non-zero element $c \in D(\phi_1)$ such that $bc \in D(\psi'' \perp a\phi_1) \subseteq D(\psi''')$. But $\psi'''$ is divisible by $\normform{(\phi_1)}$, and $D(\phi_1) \subset D\big(\normform{(\phi_1)}\big)$, so the same reasoning as above then gives that $b \in D(\psi''')$. Since $b$ was an arbitrary element of $D(\psi')$, this shows that $\psi' \subset \psi'''$ (Lemma \ref{LEMsubformtheorem}). Since $\psi'''$ is divisible by $\normform{(\phi_1)}$, we then have that $\anispart{(\normform{(\phi_1)} \otimes \psi''')} \simeq \psi'''$ (Lemma \ref{LEMdivisibilitybyquasiPfister}), and so $\anispart{(\normform{(\phi_1)} \otimes \psi')} \subset \psi'''$. This proves what we want, since $\mydim{\tau'''} = (v+1)2^l < \mydim{\psi'} +2^l$.
\end{proof} \end{proposition}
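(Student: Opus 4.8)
The plan is to run this argument within the inductive proof of Theorem~\ref{THMrefinedtensorproducttheorem}, so that $\psi$, $\phi$, $\sigma=\anispart{(\psi\otimes\phi)}$ and $d=\mydim{\sigma}-\mydim{\psi}$ are as there, with $0<d<\mydim{\phi}$, $\sigma$ not divisible by $\normform{\phi}$, $1\in D(\psi)\cap D(\phi)$, and the induction hypothesis available (Theorem~\ref{THMrefinedtensorproducttheorem}, hence also Theorem~\ref{THMmainresult} and Corollary~\ref{CORmainresult}, for every strictly smaller dimension difference). Write $l+1:=\mathrm{lndeg}(\phi)$ and let $v$ be the largest integer with $\mydim{\psi}>v2^{l+1}$; by Remark~\ref{REMPrdimension} the claim is equivalent to $\mydim{\anispart{(\normform{\phi}\otimes\psi)}}<(v+2)2^{l+1}$. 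The first step is to push this down to $F(\phi)$: since $\normform{\phi}$ is quasi-Pfister, $\anispart{(\normform{\phi}\otimes\psi)}$ is divisible by $\normform{\phi}$ (Lemma~\ref{LEManisotropicpartofformsdivisiblebyaquasiPfister}), so Lemma~\ref{LEMdivisibilitybyquasiPfister} shows its anisotropic part over $F(\phi)$ has exactly half its dimension, and that part is $\anispart{(\normform{(\phi_1)}\otimes\psi')}$, where $\psi':=\anispart{(\psi_{F(\phi)})}$ and $\normform{(\phi_1)}=\anispart{((\normform{\phi})_{F(\phi)})}$ is an $l$-fold quasi-Pfister form. Thus it suffices to show $\mydim{\anispart{(\normform{(\phi_1)}\otimes\psi')}}<(v+2)2^l$; once we also know $\mydim{\psi'}\le(v+1)2^l$ this is exactly the assertion $[\normform{(\phi_1)}]\in P_l(\psi')$.

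The decisive sub-step is the two-sided bound $v2^l<\mydim{\psi'}\le(v+1)2^l$; the lower bound is immediate from Lemma~\ref{LEMisotropyoverfunctionfieldsofquadrics}. For the upper bound I would suppose $\mydim{\psi'}=(v+1)2^l+y$ with $y\ge 1$ and derive a contradiction. Invoking Theorem~\ref{THMoldtheorem} over $F(\phi)$, I pick an anisotropic $\tau$ of dimension $\witti{0}{\psi_{F(\phi)}}$ with $\psi'':=\anispart{(\tau\otimes\phi_1)}\subset\psi'$, and aim to show $\psi''$ is divisible by $\normform{(\phi_1)}$ with $\mydim{\psi''}\ge(v+1)2^l$. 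Divisibility follows from the induction hypothesis applied to $(\tau,\phi_1)$: this is legitimate because $\mydim{\psi''}-\mydim{\tau}\le d(\psi_{F(\phi)})$, which is $<d$ by Lemmas~\ref{LEMnonzerods} and~\ref{LEMbasicfactsontensorproduct}(1), and which is moreover $<c(\phi)$ when $\phi$ is not a quasi-Pfister neighbour---for otherwise Corollary~\ref{CORatleastonedissmall} gives $d(\sigma_{F(\phi)})<c(\phi)\le d$, and Corollary~\ref{CORmainresult} (induction hypothesis) then pins $\mydim{\sigma}$ within $d(\sigma_{F(\phi)})$ of a multiple of $2^{l+1}$, contradicting $y\ge 1$ via Lemma~\ref{LEMbasicfactsontensorproduct}(1); when $\phi$ is a quasi-Pfister neighbour, $\normform{(\phi_1)}=\phi_1$ and divisibility of $\psi''=\anispart{(\tau\otimes\phi_1)}$ is automatic (Lemma~\ref{LEManisotropicpartofformsdivisiblebyaquasiPfister}), while the dimension bound then follows because $\anispart{(\psi'\otimes\phi_1)}$ is also divisible by $\phi_1$ and contains $\psi'$, hence has dimension $\ge(v+2)2^l$, forcing $\mydim{\tau}>v2^l$ through Lemma~\ref{LEMbasicfactsontensorproduct}(2) and $d<2^{l+1}$. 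With $\psi''$ divisible by $\normform{(\phi_1)}$ and of dimension $\ge(v+1)2^l$, I close the loop with the counting argument of Lemma~\ref{LEMbasicfactsontensorproduct}(3): for nonzero $a\in D(\psi)$ there is a subform $\nu\subset\phi_1$ with $\mydim{\nu}>y$ and $a\nu\subset\psi'$, so by dimension count $aD(\nu)$ meets $D(\psi'')$ nontrivially; since $D(\psi'')$ is closed under multiplication by $D(\nu)\subseteq D(\normform{(\phi_1)})$ and the latter is a field, this forces $a\in D(\psi'')$, whence $\psi'\simeq\psi''$ (Lemma~\ref{LEMsubformtheorem}) and $2^l\mid\mydim{\psi'}$, so $2^l\mid y$; but $2y\le d(\psi_{F(\phi)})<d<\mydim{\phi}\le 2^{l+1}$ forces $y=0$, a contradiction.

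With the bound $v2^l<\mydim{\psi'}\le(v+1)2^l$ secured I conclude by splitting on $\mydim{\sigma'}-\mydim{\psi'}$, where $\sigma':=\anispart{(\psi'\otimes\phi_1)}=\anispart{(\sigma_{F(\phi)})}$; this difference is $<d$ by Lemmas~\ref{LEMnonzerods} and~\ref{LEMbasicfactsontensorproduct}(2). If it is $<\mydim{\phi_1}$, the induction hypothesis applied to $(\psi',\phi_1)$ finishes: either $\sigma'$ is divisible by $\normform{(\phi_1)}$, whence $\anispart{(\normform{(\phi_1)}\otimes\psi')}\subset\anispart{(\normform{(\phi_1)}\otimes\sigma')}\simeq\sigma'$ (Lemma~\ref{LEMdivisibilitybyquasiPfister}) has dimension $\le\mydim{\sigma'}<\mydim{\psi'}+\mydim{\phi_1}\le(v+2)2^l$, or the second case of Theorem~\ref{THMrefinedtensorproducttheorem} holds and Remark~\ref{REMStensorproducttheorem}(1) gives $[\normform{(\phi_1)}]\in P_l(\psi')$ outright. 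If instead $\mydim{\sigma'}-\mydim{\psi'}\ge\mydim{\phi_1}$, then Lemma~\ref{LEMbasicfactsontensorproduct}(2) yields $d(\psi_{F(\phi)})\le d-\mydim{\phi_1}<\mydim{\phi}-\mydim{\phi_1}=\witti{1}{\phi}\le 2^l$ (Lemma~\ref{LEMIzhbound}), so $\mydim{\psi''}\ge\mydim{\tau}=\mydim{\psi'}-d(\psi_{F(\phi)})>(v-1)2^l$; as in the second paragraph $\psi''$ is divisible by $\normform{(\phi_1)}$ (induction hypothesis on $(\tau,\phi_1)$, legitimate since $d(\psi_{F(\phi)})<\witti{1}{\phi}<c(\phi)$ by Corollary~\ref{CORcvaluefornonquasiPfister} when $\phi$ is not a quasi-Pfister neighbour, and by Lemma~\ref{LEManisotropicpartofformsdivisiblebyaquasiPfister} otherwise), so $\mydim{\psi''}$ is a multiple of $2^l$ that is $\ge v2^l$. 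If $\mydim{\psi''}\ge(v+1)2^l$, the bound forces $\psi'\simeq\psi''$, so $\psi'$ is divisible by $\normform{(\phi_1)}$ and $\anispart{(\normform{(\phi_1)}\otimes\psi')}\simeq\psi'$ has dimension $\le(v+1)2^l$. If $\mydim{\psi''}=v2^l$, I pick $a\in D(\psi)\setminus D(\psi'')$ and pass to $\psi''':=\psi''\perp a\normform{(\phi_1)}$ of dimension $(v+1)2^l$, which is divisible by $\normform{(\phi_1)}$ and, by the field property of $D(\normform{(\phi_1)})$, anisotropic; then for each $b\in D(\psi')$ the form $\psi''\perp a\phi_1\perp b\phi_1$ has dimension $\ge\mydim{\psi''}+2\mydim{\phi_1}\ge\mydim{\psi''}+\mydim{\phi}>\mydim{\sigma'}$ (using $2\mydim{\phi_1}\ge\mydim{\phi}$ from Lemma~\ref{LEMIzhbound}, $\mydim{\sigma'}\le\witti{0}{\psi_{F(\phi)}}+d$ from Lemma~\ref{LEMbasicfactsontensorproduct}(2), $\witti{0}{\psi_{F(\phi)}}=\mydim{\tau}\le\mydim{\psi''}$ and $d<\mydim{\phi}$), hence is isotropic, while $\psi''\perp a\phi_1$, a subform of $\psi'''$, is anisotropic; so $bc\in D(\psi''\perp a\phi_1)\subseteq D(\psi''')$ for some nonzero $c\in D(\phi_1)$, and the field property of $D(\normform{(\phi_1)})$ gives $b\in D(\psi''')$. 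As $b$ ranges over $D(\psi')$ this shows $\psi'\subset\psi'''$ (Lemma~\ref{LEMsubformtheorem}), so $\anispart{(\normform{(\phi_1)}\otimes\psi')}\subset\anispart{(\normform{(\phi_1)}\otimes\psi''')}\simeq\psi'''$ (Lemma~\ref{LEMdivisibilitybyquasiPfister}) has dimension $(v+1)2^l<(v+2)2^l$. In every case $[\normform{(\phi_1)}]\in P_l(\psi')$, proving the proposition. The main obstacle is the two-sided bound of the second paragraph---in particular the upper bound $\mydim{\psi'}\le(v+1)2^l$---which requires the combined force of Theorem~\ref{THMoldtheorem}, both forms of the induction hypothesis, Corollary~\ref{CORatleastonedissmall}, the quasi-Pfister-neighbour case split, and, in the final subcase above, the auxiliary form $\psi'''$.
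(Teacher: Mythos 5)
Your proposal is correct and follows essentially the same route as the paper: the same reduction of $[\normform{\phi}]\in P_{\mathrm{lndeg}(\phi)}(\psi)$ to a statement over $F(\phi)$, the same two-sided bound $v2^l<\mydim{\psi'}\le(v+1)2^l$ proved by contradiction via Theorem \ref{THMoldtheorem}, the case split on whether $\phi$ is a quasi-Pfister neighbour, Corollaries \ref{CORatleastonedissmall} and \ref{CORmainresult} from the induction hypothesis, and the concluding dichotomy on $\mydim{\sigma'}-\mydim{\psi'}$ including the auxiliary form $\psi'''$. The only differences are cosmetic: you obtain the passage to $F(\phi)$ via divisibility of $\anispart{(\normform{\phi}\otimes\psi)}$ by $\normform{\phi}$ (exact halving) rather than the inequality of Lemma \ref{LEMisotropyoverfunctionfieldsofquadrics}, and you spell out the two outcomes of applying the induction hypothesis to $(\psi',\phi_1)$, which the paper leaves implicit.
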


With this established, we will now be able to use the construction of \S \ref{SUBSECinductivetool} to produce forms $\alpha_1,\hdots,\alpha_m$ satisfying the conditions in the statement of the theorem. First, however, it will be convenient to make a small adjustment to the pair $(\psi,\phi)$. 

\begin{lemma} \label{LEMmaximaldividingform} There exists a unique non-negative integer $r < \mathrm{lndeg}(\phi)$ and an anisotropic $r$-fold quasi-Pfister form $\pi \subset \normform{\phi}$ such that:
\begin{itemize} \item $\pi$ divides $\sigma$;
\item $[\pi] \in P_r(\phi)$;
\item If $\pi'$ is an anisotropic $s$-fold quasi-Pfister form for some integer $r<s<\mathrm{lndeg}(\phi)$, and $\pi'$ divides $\sigma$, then $[\pi'] \notin P_s(\phi)$. 
\end{itemize}
Moreover, the following hold:
\begin{enumerate} \item $[\pi] \in P_r(\psi)$;
\item $\sigma \simeq \anispart{(\widetilde{\psi} \otimes \widetilde{\phi})}$, where $\widetilde{\psi} := \anispart{(\pi \otimes \psi)}$ and $\widetilde{\phi} := \anispart{(\pi \otimes \phi)}$. \end{enumerate}
\begin{proof} Choose an anisotropic quasi-Pfister form $\pi$ of largest possible dimension over $F$ such that $\pi$ divides $\sigma$ and $[\pi] \in P_r(\phi)$, where $r = \mathrm{lndeg}(\phi)$. Since $\sigma$ is not divisible by $\normform{\phi}$, Lemma \ref{LEMbasicfactsonPr} (1) implies that $r < \mathrm{lndeg}(\phi)$. By part (2) of the same lemma, we then have that $\pi \subset \normform{\phi}$. Now the pair $(r,\pi)$ clearly satisfies the first part of the statement, and we just have to check that (1) and (2) hold. Since $\sigma$ is divisible by $\pi$, we have $\anispart{(\pi \otimes \sigma)} \simeq \sigma$ by Lemma \ref{LEMdivisibilitybyquasiPfister}. Thus, if we set $\widetilde{\psi} := \anispart{(\pi \otimes \psi)}$ and $\widetilde{\phi} := \anispart{(\pi \otimes \phi)}$, then
$$ \anispart{(\widetilde{\psi} \otimes \widetilde{\phi})} \simeq \anispart{\big((\pi \otimes \psi) \otimes (\pi \otimes \phi)\big)} \simeq \anispart{\big(\pi \otimes (\pi \otimes \sigma)\big)} \simeq \anispart{(\pi \otimes \sigma)} \simeq \anispart{\sigma}, $$
and (2) holds. For (1), we have to show that $\mydim{\widetilde{\psi}} < \mydim{\psi} + 2^r$. Suppose that this is not the case. Then $\mydim{\sigma} - \mydim{\widetilde{\psi}} < \mydim{\sigma} - \mydim{\psi} - 2^r =d -2^r$. Since $d< \mydim{\phi}$, we have $d - 2^r <y2^r$, where $y$ is the largest integer such that $\mydim{\phi} > y2^r$. By the induction hypothesis, it then follows that there exists an integer $s>r$ and an anisotropic $s$-fold quasi-Pfister form $\pi'$ over $F$ such that $\sigma$ is divisible by $\pi'$ and $[\pi'] \in P_s(\phi)$. But this contradicts our choice of $\pi$, so the claim must in fact hold. \end{proof} \end{lemma}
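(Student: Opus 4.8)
The plan is to single out $\pi$ by a maximality property, verify the three bulleted conditions and the uniqueness of $r$ essentially by inspection, and then settle the two ``moreover'' assertions --- the second by a value-set computation and the first, the genuinely hard point, by feeding an auxiliary pair into the inductive hypothesis.

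First I would let $\pi$ run over all anisotropic quasi-Pfister forms over $F$ that both divide $\sigma$ and satisfy $[\pi]\in P_{\mathrm{lndeg}(\pi)}(\phi)$. Since $\form{1}$ is such a form and any such $\pi$ has dimension dividing $\mydim{\sigma}$, this family is non-empty and finite; I choose $\pi$ of maximal dimension and set $r:=\mathrm{lndeg}(\pi)$. Maximality is precisely the third bullet. To see $r<\mathrm{lndeg}(\phi)$: if a candidate $\rho$ had foldness $\geq\mathrm{lndeg}(\phi)$, then Lemma \ref{LEMbasicfactsonPr}~(1) would force $\normform{\phi}\subset\rho$, whence --- a quasi-Pfister form being divisible by the norm form of any of its subforms --- $\rho$, and so $\sigma$, would be divisible by $\normform{\phi}$, contrary to hypothesis. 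Thus every candidate, in particular $\pi$, has foldness $<\mathrm{lndeg}(\phi)$; Lemma \ref{LEMbasicfactsonPr}~(1) also excludes foldnesses in $[n+1,\mathrm{lndeg}(\phi)-1]$, so $r\leq n$, and then $\pi\subset\normform{\phi}$ by Lemma \ref{LEMbasicfactsonPr}~(2). Uniqueness of $r$ is automatic, the third bullet characterising it as the maximal foldness among candidates.

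For assertion (2) of the ``moreover'': since $\pi$ divides $\sigma$ we have $\anispart{(\pi\otimes\sigma)}\simeq\sigma$ by Lemma \ref{LEMdivisibilitybyquasiPfister}, and $\anispart{(\pi\otimes\pi)}\simeq\pi$ because $D(\pi)$ is a field; commuting $\anispart{(-)}$ past tensor factors by Lemma \ref{LEMsubformtheorem} gives $\anispart{(\widetilde{\psi}\otimes\widetilde{\phi})}\simeq\anispart{(\pi\otimes\psi\otimes\pi\otimes\phi)}\simeq\anispart{(\pi\otimes\sigma)}\simeq\sigma$. The same computation yields $\anispart{(\widetilde{\psi}\otimes\phi)}\simeq\sigma$, which I use next. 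For assertion (1), i.e.\ (Remark \ref{REMPrdimension}) $\mydim{\widetilde{\psi}}<\mydim{\psi}+2^r$: suppose not. Then the dimension defect $d':=\mydim{\sigma}-\mydim{\widetilde{\psi}}$ of the pair $(\widetilde{\psi},\phi)$ satisfies $d'\leq d-2^r<\mathrm{min}(d,\mydim{\phi})$, and since $\anispart{(\widetilde{\psi}\otimes\phi)}\simeq\sigma$ is not divisible by $\normform{\phi}$, the inductive hypothesis places $(\widetilde{\psi},\phi)$ in the second case of Theorem \ref{THMrefinedtensorproducttheorem}, producing an anisotropic $r''$-fold quasi-Pfister form $\pi''$ with $r''<\mathrm{lndeg}(\phi)$, with $\pi''$ dividing $\sigma$, and with $[\pi'']\in P_{r''}(\phi)$. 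One then needs $r''>r$: the first bullet of Remark \ref{REMStensorproducttheorem}~(1) gives $d'\geq\bar{y}2^{r''}$, where $\bar{y}$ is the largest integer with $\mydim{\phi}>\bar{y}2^{r''}$, while $d'\leq d-2^r<y_r2^r$ with $y_r$ the largest integer satisfying $\mydim{\phi}>y_r2^r$; together with $\mydim{\phi}\leq(\bar{y}+1)2^{r''}$ this squeezes $y_r2^r$ strictly between two consecutive multiples of $2^{r''}$, forcing $r''>r$. But then $\pi''$ contradicts the maximality of $\pi$, so $[\pi]\in P_r(\psi)$ after all.

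The main obstacle is this final step. One has to realise that the correct input to the inductive hypothesis is the auxiliary pair $(\widetilde{\psi},\phi)$ rather than $(\psi,\phi)$ itself --- which requires first checking $\anispart{(\widetilde{\psi}\otimes\phi)}\simeq\sigma$ and that its dimension defect has strictly dropped below $d$ --- and then one must extract from the inductive conclusion the sharp lower bound $d'\geq\bar{y}2^{r''}$ recorded in Remark \ref{REMStensorproducttheorem}, since it is exactly this bound, played against the crude upper bound $d<\mydim{\phi}$, that pins down $r''>r$ and so violates maximality. The rest is routine bookkeeping with value sets together with the already-established parts of Lemma \ref{LEMbasicfactsonPr}.
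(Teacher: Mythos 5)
Your proposal is correct and takes essentially the same route as the paper: choose $\pi$ of maximal dimension among anisotropic quasi-Pfister divisors of $\sigma$ with $[\pi]\in P_{\mathrm{lndeg}(\pi)}(\phi)$, get the bullets and $\pi\subset\normform{\phi}$ from Lemma \ref{LEMbasicfactsonPr}, prove (2) by the value-set computation, and prove (1) by applying the induction hypothesis to the pair $(\widetilde{\psi},\phi)$ (using $\anispart{(\widetilde{\psi}\otimes\phi)}\simeq\sigma$ and the dropped defect) to contradict maximality. Your explicit squeeze argument forcing the new foldness to exceed $r$ simply spells out what the paper leaves implicit via Remark \ref{REMStensorproducttheorem}.
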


Now, let $r$ and $\pi$ be as in Lemma \ref{LEMmaximaldividingform}, and set $\widetilde{\psi} : =  \anispart{(\pi \otimes \psi)}$ and $\widetilde{\phi} : = \anispart{(\pi \otimes \phi)}$. When we construct the forms $\alpha_1,\hdots,\alpha_m$ below, the quasi-Pfister form $\alpha_m$ will coincide with $\pi$. Taking this for granted, statement (2) in the lemma now allows us to replace $\psi$ with $\widetilde{\psi}$ and $\phi$ with $\widetilde{\phi}$ without affecting anything in the statement of the theorem. Furthermore, after making this replacement, the pair $(r,\pi)$ still has the properties stated in Lemma \ref{LEMmaximaldividingform}. Indeed, $\sigma$ does not change, and if $s > r$, then it follows from Remark \ref{REMPrdimension} that any element of $P_s(\widetilde{\phi})$ is also an element of $P_s(\phi)$. Making the replacement, we can therefore assume that $\psi$, $\phi$ and $\sigma$ are all divisible by $\pi$. The integer $d = \mydim{\sigma} - \mydim{\psi}$ is then divisible by $2^r$. Since $d< \mydim{\phi}$, this gives:

\begin{lemma} \label{LEMboundond} $d \leq \mydim{\phi} - 2^r$.
\end{lemma}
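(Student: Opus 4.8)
The plan is to read off the inequality directly from the divisibility constraints that have just been imposed. After the replacement described immediately before the lemma, each of $\psi$, $\phi$, and $\sigma$ is divisible by $\pi$, which is an anisotropic $r$-fold quasi-Pfister form, so $\mydim{\pi} = 2^r$. Divisibility of a quasilinear quadratic form by a form of dimension $2^r$ forces its dimension to be a multiple of $2^r$; hence $\mydim{\psi}$, $\mydim{\phi}$, and $\mydim{\sigma}$ are all multiples of $2^r$. Consequently $d = \mydim{\sigma} - \mydim{\psi}$ is also a multiple of $2^r$.

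Now invoke the standing hypothesis of Theorem \ref{THMrefinedtensorproducttheorem} that $d < \mydim{\phi}$. Since $d$ and $\mydim{\phi}$ are distinct multiples of $2^r$ with $d < \mydim{\phi}$, we conclude $d \leq \mydim{\phi} - 2^r$, as claimed. There is no real obstacle here: the content of the lemma is entirely contained in the divisibility bookkeeping set up in Lemma \ref{LEMmaximaldividingform} and the ensuing reduction, and the only thing to check is that the replacement of $(\psi,\phi)$ by $(\widetilde{\psi},\widetilde{\phi})$ genuinely makes $\psi$ and $\phi$ divisible by $\pi$ — which was recorded explicitly in the paragraph preceding the statement.
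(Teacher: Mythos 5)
Your proposal is correct and is essentially the paper's own argument: after the replacement following Lemma \ref{LEMmaximaldividingform}, the forms $\psi$, $\phi$, $\sigma$ are all divisible by $\pi$, so $d$ and $\mydim{\phi}$ are multiples of $2^r$, and the hypothesis $d<\mydim{\phi}$ then forces $d\leq\mydim{\phi}-2^r$. Your explicit remark that $\mydim{\phi}$ is itself a multiple of $2^r$ just spells out what the paper leaves implicit.
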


With this modfication, we now apply the construction of \S \ref{SUBSECinductivetool} to get:

\begin{proposition} \label{PROPinductivestepinproofoftheorem}  Let $n$ be the unique integer for which $2^n< \mydim{\phi} \leq 2^{n+1}$. Then there exist non-zero anisotropic quasilinear quadratic forms $\hat{\psi}$ and $\hat{\sigma}$ over $F$ such that the following hold:
\begin{enumerate} \item $\psi \perp \hat{\psi} \simeq \anispart{(\normform{\phi} \otimes \psi)} \simeq \sigma \perp \hat{\sigma}$;
\item $\mydim{\hat{\psi}} - \mydim{\hat{\sigma}} = d$;
\item $d(\hat{\psi}_{F(\nu)}) = d(\psi_{F(\nu)})$ and $d(\hat{\sigma}_{F(\nu)}) = d(\sigma_{F(\nu)})$ for every subform $\nu \subseteq \normform{\phi}$ of dimension $\geq 2$;
\item If $\pi'$ is a quasi-Pfister subform of $\normform{\phi}$, then $\hat{\psi}$ $($resp. $\hat{\sigma})$ is divisible by $\pi
$ if and only if $\psi$ $($resp. $\sigma)$ is divisible by $\pi'$;
\item $\anispart{(\hat{\sigma} \otimes \phi)} \simeq \hat{\psi}$;
\item $\hat{\psi}$ and $\hat{\sigma}$ are similar to proper subforms of $\normform{\phi}$;
\item If $\mydim{\hat{\psi}} + \mydim{\hat{\sigma}} \leq 2^{\mathrm{lndeg}(\phi)}$, then $\mydim{\hat{\sigma}} \leq 2^{n-1}$ and $\mathrm{lndeg}(\hat{\sigma}) < \mathrm{lndeg}(\phi)$. \end{enumerate}

\begin{proof} Set $\eta: = \anispart{(\normform{\phi} \otimes \psi)}$. By Proposition \ref{PROPlndegliesinP}, we have $\mydim{\eta} - \mydim{\psi} < 2^{\mathrm{lndeg}(\phi)}$. Now, since $\phi \subset \normform{\phi}$, we have $\anispart{(\normform{\phi} \otimes \phi)} \simeq \normform{\phi}$ by Lemma \ref{LEMdivisibilitybyquasiPfister}. Then $\anispart{(\normform{\phi} \otimes \sigma)} \simeq \anispart{(\normform{\phi} \otimes \psi \otimes \phi)} \simeq \eta$, and so both $\psi$ and $\sigma$ are subforms of $\eta$. Applying Corollary \ref{CORinductivepropositiontensorbyPfister}, we obtain quasilinear quadratic forms $\hat{\psi}$ and $\hat{\sigma}$ such that the following hold:
\begin{enumerate} \item[(i)] $\psi \perp \hat{\psi} \simeq \eta \simeq \sigma \perp \hat{\sigma}$;
\item[(ii)] $d(\hat{\psi}_{F(\nu)}) = d(\psi_{F(\nu)})$ and $d(\hat{\sigma}_{F(\nu)}) = d(\sigma_{F(\nu)})$ for every subform $\nu \subseteq \normform{\phi}$ of dimension $\geq 2$;
\item[(iii)] If $Y$ is an indeterminate, then $d\big((\hat{\psi} \perp Y\hat{\sigma})_{F(Y)(\nu)}\big) = d\big((\sigma \perp Y\psi)_{F(Y)(\nu)}\big)$ for every subform $\nu \subset \pi_{F(Y)} \otimes \pfister{Y}$ of dimension $\geq 2$. 
\end{enumerate}
We claim that $\hat{\psi}$ and $\hat{\sigma}$ have the desired properties. Note first that (1) and (3) are just (i) and (ii) above, and that (2) is an immediate consequence of (1). Since $\eta$ is divisible by $\normform{\phi}$ (Lemma \ref{LEManisotropicpartofformsdivisiblebyaquasiPfister}), but $\sigma$ is not, both $\sigma$ and $\psi$ are proper subforms of $\eta$, and hence $\hat{\psi}$ and $\hat{\sigma}$ are non-zero. Moreover, since $\mydim{\eta} - \mydim{\psi} < 2^{\mathrm{lndeg}(\phi)}$, both forms have dimension $<2^{\mathrm{lndeg}(\phi)}$. We now show that (4), (5) and (6) hold.

(4) If $\mydim{\pi'} = 1$, then there is nothing to show. Otherwise, Lemma \ref{LEMdisnonnegative} tells us that an anisotropic quasilinear quadratic form $\rho$ over $F$ is divisible by $\pi'$ if and only if $d(\rho_{F(\pi')}) = 0$, and so the claims hold by (3).

(5) Set $K := F(V_{\phi})$, and let $\phi(X) \in K$ be the generic value of $\phi$. Consider the form $\nu = \form{Y} \perp \phi_{F(Y)}$ over the rational function field $F(Y)$. The field $F(Y)(\nu)$ is $K$-isomorphic to $K(Y)(\sqrt{Y^{-1}\phi(X)})$, which is a purely transcendental extension of $K$, and hence $F$. By Lemma \ref{LEManisotropyoverseparable}, $\sigma$ then remains anisotropic over $F(Y)(\nu)$. On the other hand, since $Y^{-1}\phi(X)$ is a square in $F(Y)(\nu)$, we have $(\sigma \perp Y\psi)_{F(Y)(\nu)} \simeq (\sigma_K \perp \phi(X)\psi_K)_{F(Y)(\nu)}$. But $\phi(X)\psi_K \subset \sigma_K$ by the definition of $\sigma$, and so the isotropy index of the form $(\sigma \perp Y\psi)_{F(Y)(\nu)}$ is equal to $\mydim{\psi}$. In particular, we have $d\big((\sigma \perp Y\psi)_{F(Y)(\nu)}\big) = \mydim{\sigma} - \mydim{\psi} = d$. Since $\nu \subset \phi_{F(Y)} \otimes \pfister{Y} \subset (\normform{\phi})_{F(Y)} \otimes \pfister{Y}$, we must then also have that $d\big((\hat{\psi} \perp Y\hat{\sigma})_{F(Y)(\nu)}\big) = d$ by statement (iii) above. By (2), this means that the isotropy index of $(\hat{\psi} \perp Y\hat{\sigma})_{F(Y)(\nu)}$ is equal to $\mydim{\hat{\sigma}}$. But since $F(Y)(\nu)$ is purely transcendental over $F$, $\hat{\psi}$ remains anisotropic over $F(Y)(\nu)$ (Lemma \ref{LEManisotropyoverseparable}), so we must then have that $Y\hat{\sigma}_{F(Y)(\nu)} \subset \hat{\psi}_{F(Y)(\nu)}$. As above, this may be rexpressed as the subform containment $\phi(X)\hat{\sigma}_{F(Y)(\nu)} \subset \hat{\psi}_{F(Y)(\nu)}$. Since $F(Y)(\nu)$ is also purely transcendental over $K$, we then deduce that $\phi(X)\hat{\sigma}_K \subset \hat{\psi}_K$. By Corollary \ref{CORspecialization}, it follows that $D(\hat{\psi})$ contains all products of the form $ab$ with $a \in D(\hat{\sigma})$ and $b \in D(\phi)$. Since these products generate $D(\hat{\sigma} \otimes \phi)$ as an $F$-vector space (see the remarks preceding Lemma \ref{LEMsubformoftensorproduct}), Lemma \ref{LEMsubformtheorem} then gives that $\anispart{(\hat{\sigma} \otimes \phi)} \subset \hat{\psi}$. Let us now suppose, for the sake of contradiction, that this containment is strict. By (2) and Lemma \ref{LEMboundond}, we then have that $\mydim{\anispart{(\hat{\sigma} \otimes \phi)}} - \mydim{\hat{\sigma}} < d$. Choose a quasi-Pfister subform $\pi' \subset \normform{\phi}$ of largest possible dimension such that $\pi'$ divides $\anispart{(\hat{\sigma} \otimes \phi)}$ and $[\pi'] \in P_{s}(\phi)$, where $s = \mathrm{lndeg}(\pi')$. Since $\mydim{\anispart{(\hat{\sigma} \otimes \phi)}} - \mydim{\hat{\sigma}} < d$, it follows from Lemma \ref{LEMboundond} and the induction hypothesis that $s>r$. On the other hand, since $\mydim{\anispart{(\hat{\sigma} \otimes \phi)}} < \mydim{\hat{\psi}} < 2^{\mathrm{lndeg}(\phi)}$, we have $s < \mathrm{lndeg}(\phi)$. By our choice of $\pi$, $\sigma$ cannot now be divisible by $\pi$. By (4), the same is then true of $\hat{\sigma}$ (see the proof of (4) above). Consider now the form $\rho : = \anispart{(\pi' \otimes \hat{\sigma})}$. Since $\hat{\sigma}$ is not divisible by $\pi'$, Lemmas \ref{LEMsubformoftensorproduct} and \ref{LEMdivisibilitybyquasiPfister} tell us that $\mydim{\rho} > \mydim{\hat{\sigma}} = \mydim{\eta} - \mydim{\sigma}$. Now, by Lemma \ref{LEMmaximaldividingform} (2), we have $\anispart{(\rho \otimes \phi)} \simeq \anispart{(\hat{\sigma} \otimes \phi)} \subset \hat{\psi} \subset \eta$. If we now apply Corollary \ref{CORinductivepropositiontensorbyPfister} to the triple $(\rho, \hat{\psi}, \eta)$, and then repeat the arguments above, we find anisotropic quasilinear quadratic forms $\hat{\rho}$ and $\hat{\hat{\psi}}$ such that $\rho \perp \hat{\rho} \simeq \eta \simeq \hat{\psi} \perp \hat{\hat{\psi}}$ and $\anispart{(\hat{\hat{\psi}} \otimes \phi)} \subset \hat{\rho}$. In fact, it is clear from the proof of Corollary \ref{CORinductivepropositiontensorbyPfister} that we can take $\hat{\hat{\psi}} = \psi$ here, and so we have that $\sigma = \anispart{(\psi \otimes \phi)} \subset \hat{\rho}$. But $\mydim{\hat{\rho}} = \mydim{\eta} - \mydim{\rho} < \mydim{\eta} - (\mydim{\eta} - \mydim{\sigma}) = \mydim{\sigma}$, so this is impossible. Our assumption was therefore incorrect, and so we indeed have that $\anispart{(\hat{\sigma} \otimes \phi)} \simeq \hat{\psi}$. 

(6) Since (2) and (5) hold, Proposition \ref{PROPlndegliesinP} implies that $[\normform{\phi}] \in P_{\mathrm{lndeg}(\phi)}(\hat{\sigma})$. Since $\mydim{\hat{\sigma}} < 2^{\mathrm{lndeg}(\phi)}$, this means that $\mydim{\anispart{(\normform{\phi} \otimes \hat{\sigma})}} = 2^{\mathrm{lndeg}(\phi)}$ (Remark \ref{REMPrdimension}), and so $\anispart{(\normform{\phi} \otimes \hat{\sigma})}$ is similar to $\normform{\phi}$. Since both $\hat{\sigma}$ and $\hat{\psi} \simeq \anispart{(\hat{\sigma} \otimes \phi)}$ are subforms of $\anispart{(\normform{\phi} \otimes \hat{\sigma})}$ (Lemma \ref{LEMsubformoftensorproduct}), and have dimension $< 2^{\mathrm{lndeg}(\phi)}$, the claim then follows.

(7) Taking $\nu = \phi$ in (3), we get that $d(\hat{\psi}_{F(\phi)}) = d(\psi_{F(\phi)})$ and $d(\hat{\sigma}_{F(\phi)}) = d(\sigma_{F(\phi)})$. By Lemmas \ref{LEMnonzerods} and \ref{LEMbasicfactsontensorproduct} (1), it follows that $d(\hat{\psi}_{F(\phi)})$ and $d(\hat{\sigma}_{F(\phi)})$ are positive integers whose sum is at most $d$. In particular, both are strictly less than $d$. Suppose now that $\mydim{\hat{\psi}} + \mydim{\hat{\sigma}} \leq 2^{\mathrm{lndeg}(\phi)}$. We first note:

\begin{claim} \label{LEMoneofpsihatandsigmahatisclose} In the above situation, $\mydim{\hat{\sigma}} < \mydim{\phi}$.
\begin{proof} Suppose otherwise. By (2) and our standing hypothesis, we then have
$$ 2^{\mathrm{lndeg}(\phi)} \geq \mydim{\hat{\psi}} + \mydim{\hat{\sigma}} = d + 2\mydim{\hat{\sigma}} > 2 \mydim{\phi} > 2^{n+1}, $$
and so $\mathrm{lndeg}(\phi) \geq n+2$. In other words, $\phi$ is not a quasi-Pfister neighbour (Lemma \ref{LEMcharacterizationofquasiPfisterneighbours}). By Corollary \ref{CORatleastonedissmall} one of $d(\hat{\sigma}_{F(\phi)}) = d(\sigma_{F(\phi)})$ and $d(\hat{\psi}_{F(\phi)}) = d(\psi_{F(\phi)})$ is then less than $c(\phi)$. Since both these integers are less than $d$, and since Corollary \ref{CORmainresult} holds when $k<d$ (we remind the reader that this is implied by the induction hypothesis), it then follows that at least one of $\mydim{\hat{\sigma}}$ and $\mydim{\hat{\psi}}$ lies within $c(\phi)$ of an integer multiple of $2^{\mathrm{lndeg}(\phi)}$. But since $\mydim{\phi} > c(\phi)$, our standing assumptions give that
$$ c(\phi) < \mydim{\phi} \leq \mydim{\hat{\sigma}} < \mydim{\hat{\psi}} \leq 2^{\mathrm{lndeg}(\phi)} - \mydim{\hat{\sigma}}  \leq 2^{\mathrm{lndeg}(\phi)} - \mydim{\phi} < 2^{\mathrm{lndeg}(\phi)} - c(\phi), $$
and so this is impossible. \end{proof}
\end{claim}

Now, since $\mydim{\hat{\sigma}} < \mydim{\phi}$, we have 
$$\mydim{\hat{\psi}} - \mydim{\phi} = (\mydim{\hat{\psi}} - \mydim{\hat{\sigma}}) - (\mydim{\phi} - \mydim{\hat{\sigma}}) =  d - (\mydim{\phi} - \mydim{\hat{\sigma}})<d. $$
At the same time, since $d< \mydim{\phi}$, the same inequalities show that $\mydim{\hat{\psi}} - \mydim{\phi} < \mydim{\hat{\sigma}}$. We are therefore in a position to apply the induction hypothesis with $\hat{\sigma}$ replacing $\phi$ and $\phi$ replacing $\psi$. Specifically, let $m\leq n$ be the unique non-negative integer with $2^m < \mydim{\hat{\sigma}} \leq 2^{m+1}$. The induction hypothesis then tells us that $[\normform{\hat{\sigma}}] \in P_{\mathrm{lndeg}(\hat{\sigma})}(\phi)$, and either $\mydim{\phi}$ or $\mydim{\hat{\psi}}$ lies within $2^{m-1}$ of $2^{\mathrm{lndeg}(\hat{\sigma})}$. Note, however, that $2^n < \mydim{\phi} < \mydim{\hat{\psi}} \leq 2^{\mathrm{lndeg}(\phi)} - \mydim{\hat{\sigma}}$ (the second inequality being valid by Lemma \ref{LEMsubformoftensorproduct}), so neither $\mydim{\phi}$ nor $\mydim{\hat{\psi}}$ lie within $2^{m-1}$ of a multiple of $2^{\mathrm{lndeg}(\phi)}$. We must therefore have that $\mathrm{lndeg}(\hat{\sigma}) < \mathrm{lndeg}(\phi)$, and so it now only remains to show that $\mydim{\hat{\sigma}} \leq 2^{n-1}$. If $\mathrm{lndeg}(\hat{\sigma}) \leq n-1$, then this is automatically the case. Note that this covers the case where $\phi$ is not a quasi-Pfister neighbour, since we then have that $P_i(\phi) = \emptyset$ for all $i \in [n,\mathrm{lndeg}(\phi)-1]$ by parts (3) and (4) of Lemma \ref{LEMbasicfactsonPr}. To complete the proof, it therefore only remains to consider the case where $\mathrm{lndeg}(\phi) = n+1$ and $\mathrm{lndeg}(\hat{\sigma}) = n$. Suppose, for the sake of contradiction, that $\mydim{\hat{\sigma}} > 2^{n-1}$. Then $\mydim{\hat{\psi}} \leq 2^{\mathrm{lndeg}(\phi)} - \mydim{\hat{\sigma}} < 2^{n+1} - 2^{n-1} = 2^n + 2^{n-1}$. In particular, $\mydim{\hat{\psi}} - \mydim{\phi} < 2^{n-1}$. If we now apply the full force of the induction hypothesis (again, with $\hat{\sigma}$ replacing $\phi$ and $\phi$ replacing $\psi$), then we get that $\hat{\psi}$ is divisible by $\normform{\hat{\sigma}}$ \big(see Remark \ref{REMStensorproducttheorem} (2)\big). In particular, $\mydim{\psi}$ is divisible by $2^{\mathrm{lndeg}(\hat{\sigma})} = 2^n$. By (6), however, we have $2^n < \mydim{\phi} < \mydim{\hat{\psi}} < 2^{\mathrm{lndeg}(\phi)} = 2^{n+1}$, so this is impossible. \end{proof} \end{proposition}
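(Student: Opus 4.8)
The idea is to realise $\psi$ and $\sigma$ as proper subforms of the common ambient form $\eta := \anispart{(\normform{\phi} \otimes \psi)}$ and then apply the bilinear-complement machinery of \S\ref{SUBSECinductivetool} to produce their ``cocomplements'' $\hat{\psi},\hat{\sigma}$ inside $\eta$. Since $\phi \subset \normform{\phi}$ forces $\anispart{(\normform{\phi}\otimes\phi)} \simeq \normform{\phi}$ (Lemma \ref{LEMdivisibilitybyquasiPfister}), we get $\anispart{(\normform{\phi}\otimes\sigma)} \simeq \eta$ and hence $\psi,\sigma \subset \eta$ by Lemma \ref{LEMsubformoftensorproduct}; as $\eta$ is divisible by $\normform{\phi}$ (Lemma \ref{LEManisotropicpartofformsdivisiblebyaquasiPfister}) while $\psi$ and $\sigma$ are not (for $\psi$, this uses the reasoning of Lemma \ref{LEMnonzerods}), both inclusions are proper. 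Writing $\eta \simeq \phi_{\mathfrak{d}}$ for an anisotropic symmetric bilinear $\mathfrak{d}$ divisible by the bilinear Pfister form attached to $\normform{\phi}$ and applying Corollary \ref{CORinductivepropositiontensorbyPfister} to the subforms $\psi,\sigma$ yields non-zero anisotropic $\hat{\psi},\hat{\sigma}$ with $\psi \perp \hat{\psi} \simeq \eta \simeq \sigma \perp \hat{\sigma}$, which is (1); (2) follows by comparing dimensions (and $\mydim{\hat{\psi}},\mydim{\hat{\sigma}} < 2^{\mathrm{lndeg}(\phi)}$ by Proposition \ref{PROPlndegliesinP}, giving $\mydim{\eta}-\mydim{\psi} < 2^{\mathrm{lndeg}(\phi)}$). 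The corollary also supplies the $d$-preservation statement (3) for all subforms $\nu \subseteq \normform{\phi}$, together with a ``mixed'' version involving an extra variable $Y$; and (4) then drops out by rewriting ``divisible by $\pi'$'' as ``$d(-_{F(\pi')}) = 0$'' (Lemma \ref{LEMdisnonnegative}) and invoking (3) with $\nu = \pi'$.

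The heart of the argument is (5), $\anispart{(\hat{\sigma}\otimes\phi)} \simeq \hat{\psi}$. The containment $\anispart{(\hat{\sigma}\otimes\phi)} \subset \hat{\psi}$ comes from a generic-value argument: with $K := F(V_{\phi})$, $\phi(X)$ the generic value, and $\nu := \form{Y}\perp\phi_{F(Y)}$ over $F(Y)$, the field $F(Y)(\nu)$ is purely transcendental over both $F$ and $K$; over it $Y^{-1}\phi(X)$ is a square, so $(\sigma \perp Y\psi)_{F(Y)(\nu)} \simeq (\sigma_K \perp \phi(X)\psi_K)_{F(Y)(\nu)}$, which has $d$-value exactly $d$ because $\phi(X)\psi_K \subset \sigma_K$. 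The mixed version of (3) together with (2) then forces the isotropy index of $(\hat{\psi}\perp Y\hat{\sigma})_{F(Y)(\nu)}$ to be $\mydim{\hat{\sigma}}$, i.e.\ $\phi(X)\hat{\sigma}_K \subset \hat{\psi}_K$; specialising $X$ to points of $V_{\phi}$ via Corollary \ref{CORspecialization} gives $D(\hat{\sigma})\,D(\phi) \subseteq D(\hat{\psi})$, hence $\anispart{(\hat{\sigma}\otimes\phi)} \subset \hat{\psi}$ by Lemma \ref{LEMsubformtheorem}. To exclude strictness, suppose the inclusion were strict; then $\mydim{\anispart{(\hat{\sigma}\otimes\phi)}} - \mydim{\hat{\sigma}} < d \leq \mydim{\phi} - 2^r$ (Lemma \ref{LEMboundond}), so the induction hypothesis applies to $(\hat{\sigma},\phi)$, and — since no larger quasi-Pfister form can divide $\sigma$ by Lemma \ref{LEMmaximaldividingform} — produces a quasi-Pfister $\pi' \subset \normform{\phi}$ with $r < \mathrm{lndeg}(\pi') = s < \mathrm{lndeg}(\phi)$ dividing $\anispart{(\hat{\sigma}\otimes\phi)}$ and $[\pi'] \in P_s(\phi)$, with $\hat{\sigma}$ (hence $\sigma$, by (4)) \emph{not} divisible by $\pi'$. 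Putting $\rho := \anispart{(\pi'\otimes\hat{\sigma})}$ (so $\mydim{\rho} > \mydim{\hat{\sigma}}$ by Lemmas \ref{LEMsubformoftensorproduct} and \ref{LEMdivisibilitybyquasiPfister}), one has $\anispart{(\rho\otimes\phi)} \simeq \anispart{(\hat{\sigma}\otimes\phi)} \subset \hat{\psi} \subset \eta$, and applying Corollary \ref{CORinductivepropositiontensorbyPfister} a second time to the subforms $\rho,\hat{\psi}$ of $\eta$ — choosing, as the bilinear construction permits, the complement of $\hat{\psi}$ to be the one representing $\psi$ — and rerunning the generic-value argument gives $\sigma = \anispart{(\psi\otimes\phi)} \subset \hat{\rho}$ with $\mydim{\hat{\rho}} = \mydim{\eta}-\mydim{\rho} < \mydim{\eta}-\mydim{\hat{\sigma}} = \mydim{\sigma}$, a contradiction.

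Properties (6) and (7) then follow from (2) and (5). For (6): by (2) and (5) the pair $\bigl(\hat{\sigma},\anispart{(\hat{\sigma}\otimes\phi)} = \hat{\psi}\bigr)$ meets the hypotheses of Proposition \ref{PROPlndegliesinP} (with $\hat{\psi}$ not divisible by $\normform{\phi}$ by (4)), so $[\normform{\phi}] \in P_{\mathrm{lndeg}(\phi)}(\hat{\sigma})$; since $\mydim{\hat{\sigma}} < 2^{\mathrm{lndeg}(\phi)}$, Remark \ref{REMPrdimension} makes $\anispart{(\normform{\phi}\otimes\hat{\sigma})}$ similar to $\normform{\phi}$, and it contains $\hat{\sigma}$ and $\hat{\psi}$ as subforms of strictly smaller dimension. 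For (7), assuming $\mydim{\hat{\psi}}+\mydim{\hat{\sigma}} \leq 2^{\mathrm{lndeg}(\phi)}$: taking $\nu = \phi$ in (3) and using Lemmas \ref{LEMnonzerods}, \ref{LEMbasicfactsontensorproduct}(1) makes $d(\hat{\psi}_{F(\phi)}),d(\hat{\sigma}_{F(\phi)})$ positive with sum $\leq d$; one first checks $\mydim{\hat{\sigma}} < \mydim{\phi}$ (otherwise $2^{\mathrm{lndeg}(\phi)} \geq d + 2\mydim{\hat{\sigma}} > 2^{n+1}$ forces $\phi$ to be a non-quasi-Pfister neighbour, whereupon Corollary \ref{CORatleastonedissmall} and Corollary \ref{CORmainresult} for $k<d$ — available by the induction hypothesis — place one of $\mydim{\hat{\sigma}},\mydim{\hat{\psi}}$ within $c(\phi)$ of a multiple of $2^{\mathrm{lndeg}(\phi)}$, impossible since $c(\phi) < \mydim{\phi} \leq \mydim{\hat{\sigma}} < \mydim{\hat{\psi}} \leq 2^{\mathrm{lndeg}(\phi)}-\mydim{\phi}$), and then applies the induction hypothesis to $(\phi,\hat{\sigma})$ — using (5) as $\hat{\psi} \simeq \anispart{(\phi\otimes\hat{\sigma})}$, with $\mydim{\hat{\psi}}-\mydim{\phi} < \min\{d,\mydim{\hat{\sigma}}\}$ — to get $\mathrm{lndeg}(\hat{\sigma}) < \mathrm{lndeg}(\phi)$; the bound $\mydim{\hat{\sigma}} \leq 2^{n-1}$ follows by a short case split ($\mathrm{lndeg}(\hat{\sigma}) \leq n-1$ being automatic, and in particular when $\phi$ is not a quasi-Pfister neighbour by Lemma \ref{LEMbasicfactsonPr}; the residual case $\mathrm{lndeg}(\phi) = n+1$, $\mathrm{lndeg}(\hat{\sigma}) = n$, $\mydim{\hat{\sigma}} > 2^{n-1}$ being excluded because the full induction hypothesis would then force $2^n \mid \mydim{\hat{\psi}}$ with $2^n < \mydim{\phi} < \mydim{\hat{\psi}} < 2^{n+1}$).

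\textbf{Main obstacle.} The delicate step is upgrading the containment $\anispart{(\hat{\sigma}\otimes\phi)} \subset \hat{\psi}$ to the exact equality in (5): ruling out strictness forces one to invoke the bilinear-complement construction a second time, on the enlarged pair $(\rho,\hat{\psi})$ inside $\eta$, and to arrange that this second application recovers $\psi$ as the relevant complement so as to produce the dimension collision $\mydim{\hat{\rho}} < \mydim{\sigma}$. Keeping the maximality from Lemma \ref{LEMmaximaldividingform} consistent across the two applications, and tracking precisely which instances of the induction hypothesis (Theorem \ref{THMrefinedtensorproducttheorem} and Corollary \ref{CORmainresult}, both only for dimension-difference $< d$) are legitimately in scope, is where the bookkeeping is heaviest.
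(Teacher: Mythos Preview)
Your proposal is correct and follows essentially the same route as the paper's proof: construct $\hat\psi,\hat\sigma$ as bilinear complements of $\psi,\sigma$ inside $\eta=\anispart{(\normform{\phi}\otimes\psi)}$ via Corollary~\ref{CORinductivepropositiontensorbyPfister}, read off (1)--(4) directly, obtain the inclusion in (5) from the generic-value/specialisation argument with $\nu=\form{Y}\perp\phi_{F(Y)}$, and exclude strictness by a second pass through Corollary~\ref{CORinductivepropositiontensorbyPfister} (arranging $\hat{\hat\psi}=\psi$) to force $\sigma\subset\hat\rho$ with $\mydim{\hat\rho}<\mydim{\sigma}$; then (6) via Proposition~\ref{PROPlndegliesinP} applied to $(\hat\sigma,\phi)$, and (7) via the claim $\mydim{\hat\sigma}<\mydim{\phi}$ followed by the induction hypothesis with the roles of $\psi,\phi$ swapped to $\phi,\hat\sigma$. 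The only cosmetic wrinkle is in your sketch of the strictness contradiction in (5): the clause ``since no larger quasi-Pfister form can divide $\sigma$ by Lemma~\ref{LEMmaximaldividingform}'' is not what yields $s>r$ --- that inequality comes from the induction hypothesis combined with Lemma~\ref{LEMboundond}; the maximality in Lemma~\ref{LEMmaximaldividingform} enters afterwards, to conclude that $\sigma$ (and hence $\hat\sigma$, by (4)) is \emph{not} divisible by the newly found $\pi'$.
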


We can now prove the following proposition, which provides us with what we need to prove the theorem:

\begin{proposition} \label{PROPconstructionofalpha1} Let $n$ be the unique integer for which $2^n < \mydim{\phi} \leq 2^{n+1}$, and let $v$ be the largest integer for which $\mydim{\psi} > v2^{\mathrm{lndeg}(\phi)}$. Then there exists an anisotropic quasilinear quadratic form $\alpha$ over $F$ such that if we set $\beta: = \anispart{(\phi \otimes \alpha)}$, then:

\begin{enumerate} \item $\mydim{\beta} = \mydim{\alpha} + d < \mydim{\phi} + \mydim{\alpha};$
\item $\mydim{\alpha} = \mathrm{min}\lbrace \mydim{\psi} - v2^{\mathrm{lndeg}(\phi)}, (v+1)2^{\mathrm{lndeg}(\phi)} - \mydim{\sigma}\rbrace \leq 2^{n-1}$;
\item $\mathrm{lndeg}(\alpha) < \mathrm{lndeg}(\phi)$;
\item If $\pi'$ is a quasi-Pfister subform of $\normform{\phi}$, then the following are equivalent: 
\begin{enumerate} \item[$\mathrm{(i)}$] $\alpha$ is divisible by $\pi'$;
\item[$\mathrm{(ii)}$] $\beta$ is divisible by $\pi'$;
\item[$\mathrm{(iii)}$] $\psi$ is divisible by $\pi'$;
\item[$\mathrm{(iv)}$] $\sigma$ is divisible by $\pi'$. \end{enumerate}
In particular, $\alpha$ and $\beta$ are divisible by $\pi$;
\item If $\pi'$ is an $s$-fold quasi-Pfister subform of $\normform{\alpha}$ for some integer $r<s$, and $\pi'$ divides $\beta$, then $[\pi'] \notin P_s(\alpha)$;
\item $\beta$ is not divisible by $\normform{\alpha}$ unless $\alpha$ is similar to $\pi$ $($in which case $\normform{\alpha} \simeq \pi)$;
\item $d\big(\beta_{F(\phi)}\big) \leq d-\mydim{\alpha}$.
\end{enumerate}
\begin{proof} Let us first observe that (5) and (6) and (7) follow from the other parts:

(5) Suppose that this is false, and choose a counterexample where $\mydim{\pi'} = 2^s$ is as large as possible. By the proof of Lemma \ref{LEMmaximaldividingform}, we must then have that $[\pi'] \in P_s(\phi)$. At the same time, since $\pi'$ divides $\beta$, (4) tells us that it also divides $\sigma$. Since $s>r$, however, this now contradicts our choice of the pair $(r,\pi)$ (see the third condition in Lemma \ref{LEMmaximaldividingform}). 

(6) By (4), $\alpha$ and $\beta$ are divisible by $\pi$. If $\alpha$ is similar to $\pi$, then $\normform{\alpha} \simeq \pi$ and so $\beta$ is divisible by $\normform{\alpha}$. Conversely, suppose that $\beta$ is divisible by $\normform{\alpha}$. Since $[\normform{\alpha}] \in P_{\mathrm{lndeg}(\alpha)}(\alpha)$ \big(Lemma \ref{LEMbasicfactsonPr} (1)\big), (5) then implies that $\mathrm{lndeg}(\alpha) \leq r$. Since $\alpha$ is divisible by $\pi$ \big(part (4)\big), however, this holds if and only if $\alpha$ is similar to $\pi$. 

(7) By (2), we have $\mydim{\alpha} \leq 2^{n-1} < \mydim{\phi}$. By the separation theorem (or the induction hypothesis), it follows that $\alpha_{F(\phi)}$ is anisotropic. Then $d(\alpha_{F(\phi)}) = \mydim{\alpha}$, and so the claim follows from the first part of Lemma \ref{LEMbasicfactsontensorproduct} (with $\psi$ replaced by $\alpha$). 

It now remains to construct a form $\alpha$ satisfying (1)-(4). Consider the pair $(\hat{\psi},\hat{\sigma})$ constructed in Proposition \ref{PROPinductivestepinproofoftheorem}. By statements (1) and (6) of the latter, the equality in (2) may be rewritten as $\mathrm{dim}(\alpha) = \mathrm{min}\lbrace 2^{\mathrm{lndeg}(\phi)} - \mydim{\hat{\psi}}, \mydim{\hat{\sigma}} \rbrace$. We now separate two cases: \vspace{.5 \baselineskip}

\noindent {\it Case 1}. $\mydim{\hat{\psi}} + \mydim{\hat{\sigma}} \leq 2^{\mathrm{lndeg}(\phi)}$. In this case, the equality in (2) becomes $\mydim{\alpha} = \mydim{\hat{\sigma}}$. Set $\alpha: = \hat{\sigma}$. By part (5) of Proposition \ref{PROPinductivestepinproofoftheorem}, we then have that $\anispart{(\phi \otimes \alpha}) \simeq \hat{\psi}$. Moreover, (1) holds by part (2) of the proposition, and (2) and (3) hold by part (7) of the proposition \ref{PROPinductivestepinproofoftheorem}. For (4), part (4) of the proposition tells us that $\alpha$ is divisible by $\pi'$ if and only if $\sigma$ is, and that $\anispart{(\phi \otimes \alpha)}$ is divisible by $\pi'$ if and only if $\psi$ is. At the same time, it follows from Lemma \ref{LEManisotropicpartofformsdivisiblebyaquasiPfister} that any quasi-Pfister divisor of $\alpha$ divides $\anispart{(\phi \otimes \alpha)}$, and any quasi-Pfister divisor of $\psi$ divisor of $\sigma$. Statements (i)-(iv) are therefore equivalent. \vspace{.5 \baselineskip} 

\noindent {\it Case 2}. $\mydim{\hat{\psi}} + \mydim{\hat{\sigma}} > 2^{\mathrm{lndeg}(\phi)}$. In this case, the equality in (2) becomes $\mydim{\alpha}  = 2^{\mathrm{lndeg}(\phi)} - \mydim{\hat{\psi}}$. To achieve this, we apply the construction of Proposition \ref{PROPinductivestepinproofoftheorem} to the pair $(\hat{\sigma},\hat{\psi})$ to obtain anisotropic quasilinear quadratic forms $\hat{\hat{\sigma}}$ and $\hat{\hat{\psi}}$ satisfying the following:
\begin{itemize} \item[(a)] $\hat{\sigma} \perp \hat{\hat{\sigma}}$ and $\hat{\psi} \perp \hat{\hat{\psi}}$ are similar to $\normform{\phi}$;
\item[(b)] $\mydim{\hat{\hat{\sigma}}} - \mydim{\hat{\hat{\psi}}} = d$;
\item[(c)] If $\pi'$ is a quasi-Pfister subform of $\normform{\phi}$, then $\pi'$ divides $\hat{\hat{\sigma}}$ $($resp. $\hat{\hat{\psi}})$ if and only if $\pi'$ divides $\hat{\sigma}$ $($resp. $\hat{\psi})$. 
\item[(d)] $\anispart{(\hat{\hat{\psi}} \otimes \phi)} \simeq \hat{\hat{\sigma}}$;
\item[(e)] $\mydim{\hat{\hat{\psi}}} \leq 2^{n-1}$ and $\mathrm{lndeg}(\hat{\hat{\psi}}) < \mathrm{lndeg}(\phi)$.
\end{itemize}
Indeed, the only thing to be remarked on here is (e). But since $\mydim{\hat{\psi}} + \mydim{\hat{\sigma}} > 2^{\mathrm{lndeg}(\phi)}$, (a) gives that
$$ \mydim{\hat{\hat{\sigma}}} + \mydim{\hat{\hat{\psi}}} = 2^{\mathrm{lndeg}(\phi)+1} - (\mydim{\hat{\psi}} + \mydim{\hat{\sigma}}) < 2^{\mathrm{lndeg}(\phi)}, $$
and so (e) is just part (7) of Proposition \ref{PROPinductivestepinproofoftheorem} applied to the pair $(\hat{\sigma}, \hat{\psi})$. We now set $\alpha: = \hat{\hat{\psi}}$. Then (1) holds by (b) and (d), (2) and (3) hold by (e), and (4) holds by (c) together with part (4) of Proposition \ref{PROPinductivestepinproofoftheorem} (the argument here is essentially identical to that given in Case 1, but with a couple of additional steps to account for the double application of Proposition \ref{PROPinductivestepinproofoftheorem}).  \end{proof}
\end{proposition}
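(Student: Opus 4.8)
The plan is to carry out the construction as the concluding step of the induction on $d := \mydim{\sigma} - \mydim{\psi}$ in the proof of Theorem \ref{THMrefinedtensorproducttheorem}, so I may freely use the induction hypothesis — hence also Corollary \ref{CORmainresult} for dimension differences $<d$ — together with the two intermediate results already in hand: Proposition \ref{PROPlndegliesinP} (which gives $[\normform{\phi}] \in P_{\mathrm{lndeg}(\phi)}(\psi)$) and, above all, Proposition \ref{PROPinductivestepinproofoftheorem} (the complementary-subform construction). I continue to assume, as arranged earlier, that $1 \in D(\psi) \cap D(\phi)$, that $\sigma$ is not divisible by $\normform{\phi}$, that $\psi$, $\phi$ and $\sigma$ are all divisible by the $r$-fold quasi-Pfister form $\pi$ of Lemma \ref{LEMmaximaldividingform}, and that $d \leq \mydim{\phi} - 2^r$ by Lemma \ref{LEMboundond}.

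First I would reduce parts (5), (6), (7) to parts (1)--(4). Part (7) is immediate: (2) gives $\mydim{\alpha} \leq 2^{n-1} < \mydim{\phi}$, so $\alpha_{F(\phi)}$ is anisotropic by the separation theorem and $d(\alpha_{F(\phi)}) = \mydim{\alpha}$; Lemma \ref{LEMbasicfactsontensorproduct}(1), applied with $\alpha$ in place of $\psi$ and with the dimension difference $\mydim{\beta}-\mydim{\alpha}=d$ supplied by (1), then yields $d(\beta_{F(\phi)}) \leq d - \mydim{\alpha}$. For (6): by (4) both $\alpha$ and $\beta$ are divisible by $\pi$, so if $\alpha \sim \pi$ then $\normform{\alpha} \simeq \pi$ divides $\beta$; conversely, if $\normform{\alpha}$ divides $\beta$, then since $[\normform{\alpha}] \in P_{\mathrm{lndeg}(\alpha)}(\alpha)$ (Lemma \ref{LEMbasicfactsonPr}(1)), part (5) forces $\mathrm{lndeg}(\alpha) \leq r$, which with $\pi \mid \alpha$ gives $\normform{\alpha} \simeq \pi$ and $\alpha \sim \pi$. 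For (5): assuming it fails, one takes a counterexample $\pi'$ of maximal dimension and runs an argument parallel to the one in the proof of Lemma \ref{LEMmaximaldividingform} — writing $\beta \simeq \anispart{(\phi \otimes \anispart{(\pi' \otimes \alpha)})}$, invoking the induction hypothesis on the resulting pair, and using $P_s(\anispart{(\pi \otimes \phi)}) = P_s(\phi)$ for $s>r$ — to deduce $[\pi'] \in P_s(\phi)$; since $\pi' \mid \beta$ forces $\pi' \mid \sigma$ by part (4), this contradicts the maximality built into the choice of $(r,\pi)$ in Lemma \ref{LEMmaximaldividingform}.

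The main work is the construction of $\alpha$ and the verification of (1)--(4). Let $\hat{\psi},\hat{\sigma}$ be the forms produced by Proposition \ref{PROPinductivestepinproofoftheorem}, so that $\psi \perp \hat{\psi} \simeq \anispart{(\normform{\phi} \otimes \psi)} \simeq \sigma \perp \hat{\sigma}$, $\mydim{\hat{\psi}} - \mydim{\hat{\sigma}} = d$, $\anispart{(\hat{\sigma} \otimes \phi)} \simeq \hat{\psi}$, both $\hat{\psi},\hat{\sigma}$ are similar to proper subforms of $\normform{\phi}$, and the $d$-invariants over $F(\nu)$ ($\nu \subseteq \normform{\phi}$, $\mydim{\nu}\ge 2$) and the divisibility relations by quasi-Pfister subforms of $\normform{\phi}$ are transported from $\psi,\sigma$. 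By Proposition \ref{PROPlndegliesinP} and Remark \ref{REMPrdimension}, $\mydim{\anispart{(\normform{\phi} \otimes \psi)}} = (v+1)2^{\mathrm{lndeg}(\phi)}$, hence $\mydim{\hat{\psi}} = (v+1)2^{\mathrm{lndeg}(\phi)} - \mydim{\psi}$ and $\mydim{\hat{\sigma}} = (v+1)2^{\mathrm{lndeg}(\phi)} - \mydim{\sigma}$, so the minimum appearing in (2) equals $\min\{2^{\mathrm{lndeg}(\phi)} - \mydim{\hat{\psi}},\, \mydim{\hat{\sigma}}\}$. If $\mydim{\hat{\psi}} + \mydim{\hat{\sigma}} \leq 2^{\mathrm{lndeg}(\phi)}$, this minimum is $\mydim{\hat{\sigma}}$ and I set $\alpha := \hat{\sigma}$, so $\beta = \anispart{(\phi \otimes \alpha)} \simeq \hat{\psi}$; then (1) comes from $\mydim{\hat{\psi}} - \mydim{\hat{\sigma}} = d$, (2) and (3) from Proposition \ref{PROPinductivestepinproofoftheorem}(7), and (4) from its part (4) together with Lemma \ref{LEManisotropicpartofformsdivisiblebyaquasiPfister}. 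Otherwise the minimum is $2^{\mathrm{lndeg}(\phi)} - \mydim{\hat{\psi}}$, and I apply Proposition \ref{PROPinductivestepinproofoftheorem} a second time to the pair $(\hat{\sigma},\hat{\psi})$ — legitimate because $\anispart{(\hat{\sigma} \otimes \phi)} \simeq \hat{\psi}$ — obtaining $\hat{\hat{\sigma}},\hat{\hat{\psi}}$ with $\hat{\sigma} \perp \hat{\hat{\sigma}}$ and $\hat{\psi} \perp \hat{\hat{\psi}}$ similar to $\normform{\phi}$, $\mydim{\hat{\hat{\sigma}}} - \mydim{\hat{\hat{\psi}}} = d$, $\anispart{(\hat{\hat{\psi}} \otimes \phi)} \simeq \hat{\hat{\sigma}}$, and — since $\mydim{\hat{\hat{\sigma}}} + \mydim{\hat{\hat{\psi}}} = 2^{\mathrm{lndeg}(\phi)+1} - (\mydim{\hat{\psi}}+\mydim{\hat{\sigma}}) < 2^{\mathrm{lndeg}(\phi)}$ — the bounds $\mydim{\hat{\hat{\psi}}} \leq 2^{n-1}$ and $\mathrm{lndeg}(\hat{\hat{\psi}}) < \mathrm{lndeg}(\phi)$ from part (7). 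Setting $\alpha := \hat{\hat{\psi}}$ gives $\beta \simeq \hat{\hat{\sigma}}$, and (1)--(4) follow exactly as in the first case.

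The main obstacle, I expect, lies beneath this bookkeeping, in the results it rests on and in the fact that the proposition is not really provable in isolation but only as one strand of the interlocking induction for Theorem \ref{THMrefinedtensorproducttheorem}. Specifically, the identity $\anispart{(\hat{\sigma} \otimes \phi)} \simeq \hat{\psi}$ in Proposition \ref{PROPinductivestepinproofoftheorem}(5) — which is exactly what makes the second application above legitimate — is proved by a generic-scalar argument over $F(Y)$ together with an internal double use of the induction hypothesis, and the bound $\mydim{\hat{\sigma}} \leq 2^{n-1}$ in part (7) there again invokes the induction hypothesis with $\hat{\sigma}$ in place of $\phi$. Within the present proposition, the most delicate point is part (5): one must preclude $\beta$ acquiring a quasi-Pfister divisor that is efficient for $\alpha$ (lies in some $P_s(\alpha)$) but was not already detected for $\phi$, and this is precisely where the maximality designed into the choice of $\pi$ in Lemma \ref{LEMmaximaldividingform}, combined with the inductive validity of Theorem \ref{THMrefinedtensorproducttheorem} at smaller dimension differences, does the essential work.
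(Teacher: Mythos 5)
Your proposal is correct and follows essentially the same route as the paper: reduce parts (5)--(7) to (1)--(4) exactly as in the text, then build $\alpha$ from the pair $(\hat{\psi},\hat{\sigma})$ of Proposition \ref{PROPinductivestepinproofoftheorem}, taking $\alpha:=\hat{\sigma}$ when $\mydim{\hat{\psi}}+\mydim{\hat{\sigma}}\leq 2^{\mathrm{lndeg}(\phi)}$ and otherwise $\alpha:=\hat{\hat{\psi}}$ after a second application of that proposition to $(\hat{\sigma},\hat{\psi})$. The only cosmetic differences (computing $\mydim{\hat{\psi}},\mydim{\hat{\sigma}}$ via Proposition \ref{PROPlndegliesinP} and Remark \ref{REMPrdimension} rather than via statements (1) and (6) of Proposition \ref{PROPinductivestepinproofoftheorem}, and your slightly different fleshing-out of the appeal to the proof of Lemma \ref{LEMmaximaldividingform} in part (5)) do not change the argument.
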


We are now ready to finish the proof. First:

\begin{corollary} \label{CORdvalue} $d = \mydim{\phi} - 2^r$.
\begin{proof} Suppose that this is not the case, and let $\alpha$ be the form constructed in Proposition \ref{PROPconstructionofalpha1}. By part (1) of the latter, we then have that $\mydim{\anispart{(\phi \otimes \alpha)}} = \mydim{\alpha} + d < \mydim{\phi} + (\mydim{\alpha} - 2^r)$. By the induction hypothesis (with $\alpha$ replacing $\phi$ and $\phi$ replacing $\psi$), it follows that there exists an integer $r<s \leq \mathrm{lndeg}(\alpha)$ and an anisotropic $s$-fold quasi-Pfister form $\pi'$ over $F$ such that $\pi'$ divides $\anispart{(\alpha \otimes \phi)}$ and $[\pi'] \in P_s(\alpha)$. Since $s \leq \mathrm{lndeg}(\alpha)$, Lemma \ref{LEMbasicfactsonPr} tells us that $\pi' \subset \normform{\alpha}$. But the preceding statement then contradicts one of the properties of $\alpha$ (see part (5) of Proposition \ref{PROPconstructionofalpha1}) so we must in fact have that $d = \mydim{\phi} - 2^r$. 
\end{proof}
\end{corollary}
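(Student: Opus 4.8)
The plan is to argue by contradiction, feeding the form $\alpha$ supplied by Proposition~\ref{PROPconstructionofalpha1} back into the inductive hypothesis for Theorem~\ref{THMrefinedtensorproducttheorem}. Suppose $d \neq \mydim{\phi} - 2^r$. Since $\psi$, $\phi$ and $\sigma$ have all been arranged to be divisible by $\pi$, the integer $d$ is divisible by $2^r$, so Lemma~\ref{LEMboundond} improves to the strict inequality $d < \mydim{\phi} - 2^r$. Writing $\beta := \anispart{(\phi \otimes \alpha)}$ as in Proposition~\ref{PROPconstructionofalpha1}, part~(1) of that proposition gives $\mydim{\beta} = \mydim{\alpha} + d$, hence
$$ \mydim{\beta} - \mydim{\phi} \;=\; \mydim{\alpha} + d - \mydim{\phi} \;<\; \mydim{\alpha} - 2^r. $$
By part~(2) of the same proposition, $\mydim{\alpha} \leq 2^{n-1} < \mydim{\phi}$, so $\mydim{\beta} - \mydim{\phi}$ is strictly less than both $d$ and $\mydim{\alpha}$, while $\mydim{\beta} \geq \mydim{\phi}$ by Lemma~\ref{LEMsubformoftensorproduct}. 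Thus the inductive hypothesis of Theorem~\ref{THMrefinedtensorproducttheorem} is available for the pair $(\phi,\alpha)$, with $\phi$ playing the role of ``$\psi$'' and $\alpha$ playing the role of ``$\phi$''.

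Applying it, I would distinguish two cases. If $\beta$ is divisible by $\normform{\alpha}$, then Proposition~\ref{PROPconstructionofalpha1}~(6) forces $\alpha$ to be similar to $\pi$, say $\alpha \simeq a\pi$; since $\phi$ is divisible by $\pi$, Lemma~\ref{LEMdivisibilitybyquasiPfister} gives $\beta \simeq a\,\anispart{(\pi \otimes \phi)} \simeq a\phi$, so $\mydim{\phi} = \mydim{\beta} = \mydim{\alpha} + d = 2^r + d$, contradicting the assumption. Otherwise we are in the second case of Theorem~\ref{THMrefinedtensorproducttheorem} for $(\phi,\alpha)$, which yields a non-negative integer $s < \mathrm{lndeg}(\alpha)$ and an anisotropic $s$-fold quasi-Pfister form $\pi'$ dividing $\beta$ with $[\pi'] \in P_s(\phi) \cap P_s(\alpha)$. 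The first bullet point of Remark~\ref{REMStensorproducttheorem}~(1), applied to $(\phi,\alpha)$, gives $\mydim{\beta} - \mydim{\phi} \geq y2^s$, where $y2^s$ is the largest multiple of $2^s$ below $\mydim{\alpha}$; combining $\mydim{\alpha} - 2^s \leq \mydim{\beta} - \mydim{\phi} < \mydim{\alpha} - 2^r$ forces $s > r$. Since $[\pi'] \in P_s(\alpha)$ and $s < \mathrm{lndeg}(\alpha)$, Lemma~\ref{LEMbasicfactsonPr} shows $\pi' \subset \normform{\alpha}$.

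This contradicts property~(5) of Proposition~\ref{PROPconstructionofalpha1}, which forbids exactly such an $s$-fold quasi-Pfister subform of $\normform{\alpha}$ with $s > r$ dividing $\beta$ and lying in $P_s(\alpha)$. As both cases are impossible, $d = \mydim{\phi} - 2^r$. I expect the only delicate point to be the bookkeeping that legitimizes invoking the inductive hypothesis for $(\phi,\alpha)$ — specifically the strict inequalities $\mydim{\beta} - \mydim{\phi} < d$ and $\mydim{\beta} - \mydim{\phi} < \mydim{\alpha}$, which hinge on $\mydim{\alpha} < \mydim{\phi}$ from Proposition~\ref{PROPconstructionofalpha1}~(2) together with the standing hypothesis $d < \mydim{\phi}$ — after which Remark~\ref{REMStensorproducttheorem}~(1) does the essential work of pinning $s$ strictly above $r$ so that the maximality built into $\alpha$ is violated.
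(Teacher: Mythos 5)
Your proof is correct and follows essentially the same route as the paper's: feed the pair $(\phi,\alpha)$ into the induction hypothesis, extract an $s$-fold quasi-Pfister form with $s>r$ lying in $P_s(\alpha)$ and dividing $\anispart{(\phi\otimes\alpha)}$, and contradict part (5) of Proposition \ref{PROPconstructionofalpha1}. You merely make explicit two steps the paper leaves implicit, namely disposing of the case where $\anispart{(\phi\otimes\alpha)}$ is divisible by $\normform{\alpha}$ via part (6), and deriving $s>r$ from the first bullet point of Remark \ref{REMStensorproducttheorem} (1), both of which are handled correctly.
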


We now conclude as follows: Set $\alpha_{-1} = \psi$, $\alpha_0 : = \phi$, and $\alpha_1: = \alpha$, where $\alpha$ is the form constructed in Proposition \ref{PROPconstructionofalpha1}. If $\alpha_1$ is similar to $\pi$, then we stop here. If not, then parts (1), (4), (5) and (6) of Proposition \ref{PROPconstructionofalpha1} show that the pair $(\phi,\alpha_1)$ enjoys all the properties of the pair $(\psi,\phi)$ that were used to construct $\alpha_1$ (note, in particular, that (4) and (5) tell us that $\pi$ plays the same role for both pairs). We can therefore repeat the procedure to pass from $(\phi,\alpha_1)$ to a new pair $(\alpha_1,\alpha_2)$. If $\alpha_2$ is similar to $\pi$, then we again stop. Otherwise we repeat. Since the $\alpha_i$ are of strictly decreasing dimension (part (2) of the proposition) and divisible by $\pi$ (part (4) of the proposition), this process must eventually lead us to a form $\alpha_m$ which is similar to $\pi$. Replacing $\alpha_m$ with $\pi$, the forms $\alpha_1,\hdots,\alpha_m$ then satisfy all the conditions in the statement of the theorem. Indeed, (2) holds since $\alpha_1,\hdots,\alpha_m,\psi,\phi$ and $\sigma$ are all divisible by $\alpha_m = \pi$, and:
\begin{itemize} \item (3)(i) holds trivially;
\item (3)(ii) holds by applying parts (2) and (3) of Proposition \ref{PROPconstructionofalpha1} to the pairs $(\alpha_{i-1},\alpha_i)$;
\item (3)(iii) holds by applying Proposition \ref{PROPlndegliesinP} to the pairs $(\alpha_{i-1},\alpha_i)$;
\item The first statement of (3)(iv) holds by applying Proposition \ref{PROPconstructionofalpha1} (1) and Corollary \ref{CORdvalue} to the pairs $(\alpha_{i-1},\alpha_i)$;
\item The second statement of (3)(iv) holds by applying Proposition \ref{PROPconstructionofalpha1} (2) to the pairs $(\alpha_{i-1},\alpha_i)$;
\item (3)(v) holds by applying Proposition \ref{PROPconstructionofalpha1} (7) to the pairs $(\alpha_{i-1},\alpha_i)$. 
\end{itemize}
This completes the proof. \end{proof}

\section{Optimality of Theorem \ref{THMmaintheoremcorollary} and Examples} \label{SECoptimality}

While Theorem \ref{THMmainresult} can surely be refined (see Remark \ref{REMsuboptimalityoftensorproducttheorem}), the improvements to be made here should be somewhat marginal. In fact, the proof of the theorem shows that the main issue at play is to understand the possible dimensions of anisotropic parts of tensor products of quasilinear quadratic forms, and a quick analysis here reveals that many of the scenarios allowed by Theorem \ref{THMmainresult} are in fact realizable. In this final section, we illustrate this by demonstrating the optimality of Theorem \ref{THMmaintheoremcorollary} (recall that this result takes the Izhboldin dimension into account, but not $\Delta$). For completeness, we also provide some examples illustrating what the latter result gives us beyond the original Conjecture \ref{CONJfirstconjecture}.

\subsection{Optimality} \label{SUBSECoptimality} In this subsection, we fix non-negative integers $s$ and $k$, and an integer $d \in [2^s,2^{s+1})$. If $n \in k + 2\mathbb{N}$, then we shall say that $n$ is \emph{realizable} if there exist a field $F$ of characteristic 2 and anisotropic quasilinear quadratic forms $p$ and $q$ over $F$ such that $2^s < \mydim{p} \leq 2^{s+1}$, $\Izhdim{p} = d$, $\mydim{q} = n$ and $d(q_{F(p)}) = k$. If such a triple exists with $p$ being a quasi-Pfister neighbour (resp. not a quasi-Pfister neighbour), then we will say that $n$ is \emph{realizable with $p$ a quasi-Pfister neighbour} (resp. \emph{realizable with $p$ not a quasi-Pfister neighbour}). This distinction is only relevant for the case where $d = 2^s$, since otherwise $p$ cannot be a quasi-Pfister neighbour (Lemma \ref{LEMcharacterizationofquasiPfisterneighbours}). For each non-negative integer $r$, let $y_r$ be the largest integer for which $d > y_r2^r$. Our goal is then to show the following:

\begin{proposition} \label{PROPoptimality} Let $a$ be a non-negative integer, and $\epsilon$ an integer with $\epsilon \equiv k \pmod{2}$.
\begin{enumerate} \item If $k \geq d$, then all elements of $k+2\mathbb{N}$ are realizable. Moreover, if $d = 2^s$ and $s \geq 2$, then all elements of $k+2\mathbb{N}$ are realizable with $p$ a quasi-Pfister neighbour or $p$ not a quasi-Pfister neighbour.
\item If $k<d = 2^s$, $a \geq 1$ and $\epsilon \in [-k,k]$, then $a2^{s+1} + \epsilon$ is realizable with $p$ a quasi-Pfister neighbour.
\item If $d > \mathrm{max}\lbrace k,2^s \rbrace$, $a \geq 1$ and $\epsilon \in [-k,k]$, then $a2^{s+2} + \epsilon$ is realizable.
\item Let $r \leq s-2$ be a non-negative integer, and $x$ a positive integer $\leq 2^{s-2-r}$. If $d = 2^s$, $k \in [2^s-2^r,2^s)$ and $\epsilon \in [(x-1)2^{r+1} + 2^{s+1} - k,x2^{r+1} + k]$, then $a2^{s+2} + \epsilon$ and $a2^{s+2} - \epsilon$ are realizable with $p$ not a quasi-Pfister neighbour.
\item Let $r \leq s-1$ be a non-negative integer, and $x$ a positive integer $<2^{s+1-r}-y_r$. If $d > 2^s$, $k \in [y_r2^r,d)$ and $\epsilon \in [(x+y_r)2^{r+1} - k,x2^{r+1} +k]$, then $a2^{s+2} + \epsilon$ is realizable.\end{enumerate}
\end{proposition}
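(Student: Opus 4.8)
The plan is to prove all five parts by explicit construction. Fix a field $F_0$ of characteristic $2$, and work throughout over $F = F_0(t_1,t_2,\dots)$, a rational function field in as many independent indeterminates as are needed; every form below is a diagonal form whose entries are square-free monomials in the $t_i$, or an anisotropic part or subform of such a form. The purpose of such ``generic'' entries is that orthogonal sums and tensor products of the resulting forms have anisotropic parts of exactly the dimension predicted by the crude $F^2$-linear-algebra bound, with no collateral isotropy; this follows uniformly from repeated specialization (Lemma \ref{LEMspecializationlemma}, Corollary \ref{CORspecialization}). Granting this, the verification that a given pair $(p,q)$ has the prescribed invariants $\mydim{q}$, $\Izhdim{p}$ and $d(q_{F(p)})$ is bookkeeping, the essential input being Lemma \ref{LEMisotropyoverfunctionfieldsofquadrics}, which converts $\witti{0}{q_{F(p)}}$ into the isotropy index of a tensor product with a binary Pfister form over a rational extension.

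Two reductions streamline things. First, a padding operation: if $(p,q_0)$ realizes a value $m$ with $d((q_0)_{F(p)}) = k_0 \le k$ and $\zeta$ is a diagonal form of dimension $k-k_0$ in brand new indeterminates, then $\zeta$ stays anisotropic over $F(p)$ and contributes no value already carried by $\anispart{((q_0)_{F(p)})}$ (a fresh transcendental is never a value of a form defined over the subfield generated by the earlier data), so $(p,q_0\perp\zeta)$ realizes $m+(k-k_0)$ with $d(\cdot_{F(p)})=k$; hence for each admissible $\epsilon$ it suffices to build a ``raw'' pair with $d(\cdot_{F(p)}) = \tfrac{k-\epsilon}{2}\le k$. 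Second, the choice of $p$: when $p$ is required to be a quasi-Pfister neighbour we let $p$ be a neighbour of $\pfister{t_1,\dots,t_{s+1}}$, so that $p\stb\pfister{t_1,\dots,t_{s+1}}$, $\Izhdim{p}=2^s$, $\mathrm{lndeg}(p)=s+1$, and $\anispart{(p_{F(p)})}$ is similar to an anisotropic $s$-fold quasi-Pfister form (Example \ref{EXsplittingofquasiPfister}, Lemmas \ref{LEMcharacterizationofquasiPfisterneighbours} and \ref{LEMknownstablebirationalinvariants}); when $p$ is required not to be a quasi-Pfister neighbour we take $p$ to be a neighbour of $\anispart{(\pi\otimes\mu)}$ for an anisotropic $r$-fold quasi-Pfister form $\pi$ in fresh indeterminates and a suitably chosen generic form $\mu$, rescaling a subform at the end to pin $\Izhdim{p}$ to $d$ (and, where needed, $\mathrm{lndeg}(p)$ to $s+2$) --- in the extreme case $r=0$ this is simply a generic form of dimension $d+1$.

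For parts (1)--(3), where $\epsilon$ ranges freely over $[-k,k]$, the raw form will be $q_0 := \anispart{(\rho\otimes\pi)}\perp a\theta_0$, where $\pi$ is the pertinent quasi-Pfister form ($\pfister{t_1,\dots,t_{s+1}}$ in (1),(2); $\normform{p}$, of foldness $\mathrm{lndeg}(p)\ge s+2$, in (3)), $\rho=\form{u_1,\dots,u_{a-1}}$ is generic, $\theta_0\subseteq\pi$ is a subform of dimension $\mydim{\pi}-\tfrac{k-\epsilon}{2}$ --- still a neighbour of $\pi$, since $\tfrac{k-\epsilon}{2}\le k<\tfrac{\mydim{\pi}}{2}$ --- and $a$ is a fresh indeterminate. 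Over $F(p)$, which has the same isotropy behaviour as $F(\pi)$ because $p\stb\pi$ and both embed $F$-linearly into a common purely transcendental extension, the first summand has isotropy index $(a-1)\tfrac{\mydim{\pi}}{2}$ while $a\theta_0$ has anisotropic part similar to $\anispart{(\pi_{F(\pi)})}$; genericity keeps the orthogonal sum as anisotropic as possible, giving $d((q_0)_{F(p)})=\tfrac{k-\epsilon}{2}$ and $\mydim{q_0}=a\,\mydim{\pi}-\tfrac{k-\epsilon}{2}$. Padding by $\tfrac{k+\epsilon}{2}$ produces $\mydim{q}=a\,\mydim{\pi}+\epsilon$ with $d(q_{F(p)})=k$, which is part (2) (and part (3) with $2^{s+2}$ in place of $2^{s+1}$); part (1) is the regime $k\ge d$, where the same recipe, with much more slack, lets $\epsilon$ leave $[-k,k]$ altogether. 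For parts (4) and (5) one inserts a second generic quasi-Pfister factor: with $p$ arranged so that $p_1$ is a strong $\pi$-neighbour for an $r$-fold quasi-Pfister $\pi$, one builds $q_0$ from $\rho\otimes\pi\otimes\vartheta$ with $\vartheta$ suitably related to $\normform{p}$, so that $\anispart{((q_0)_{F(p)})}$ is the anisotropic part of a two-step tensor product governed by the analysis in the proof of Theorem \ref{THMrefinedtensorproducttheorem}; the two branches of the ``$\min$'' in the dimension formula recorded in Remark \ref{REMStensorproducttheorem}(1) account for the $a2^{s+2}+\epsilon$ versus $a2^{s+2}-\epsilon$ alternatives in part (4), and varying $\mydim{\rho}$ together with the padding sweeps out each prescribed $\epsilon$-interval.

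The main obstacle is the genericity principle itself: one must prove that the concrete tensor products and orthogonal sums above genuinely have anisotropic parts of the combinatorially expected dimension. I would isolate this as a lemma --- for diagonal forms whose entries are algebraically independent over a base subfield, the value set of a tensor product attains the maximal $F^2$-dimension allowed by the dimensions of the relevant norm fields --- and prove it by induction on the number of generic entries using the specialization results of Section \ref{SECpreliminaries}. With that lemma, parts (1)--(3) are routine. The delicate work is in (4)--(5): there $p$ must be engineered so that $\Izhdim{p}$, $\mathrm{lndeg}(p)$ and $\Delta(p)$ are simultaneously at their prescribed values (which dictates the precise shape of $\mu$), and one must then check that the ``$\min$'' in the tensor-product dimension formula is attained exactly at the value needed for each $\epsilon$ in the asserted interval --- a computation that traces, in reverse, the dimension estimates in the proof of Theorem \ref{THMrefinedtensorproducttheorem}.
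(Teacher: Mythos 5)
Your program has a genuine gap, and it sits exactly where the real work of this proposition lies. The ``genericity principle'' you isolate (generic diagonal entries force anisotropic parts of maximal dimension, with no collateral isotropy) is not the statement the hard cases need: parts (3)--(5) require producing pairs $(\tau,\phi)$ with $\mydim{\phi}=d$, with $\mathrm{lndeg}(\phi)$ pinned to exactly $s+1$ (so that $p=\form{X}\perp\phi_F$ fails to be a quasi-Pfister neighbour and the relevant $2$-power is $2^{s+2}$ rather than $2^{\mathrm{lndeg}(p)}$), and with $\mydim{\anispart{(\tau\otimes\phi)}}$ equal to a prescribed \emph{small} value such as $(x+y_r)2^r$ --- i.e.\ highly isotropic, deliberately non-generic tensor products. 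Genericity gives the opposite, and your fallback ``$p$ generic of dimension $d+1$'' in case (3) has $\mathrm{lndeg}(p)=d$, so your own recipe (with $\pi=\normform{p}$) produces dimensions clustered near multiples of $2^{\mathrm{lndeg}(p)}$, not $a2^{s+2}+\epsilon$; you acknowledge needing to pin $\mathrm{lndeg}(p)$ to $s+2$ ``where needed'' but give no mechanism. This construction problem is precisely the content of the paper's Lemmas \ref{LEMexistenceoftensorproducts} and \ref{LEMexistenceoftensorproducts2}: an induction on $v$ building $\phi=(\form{X_1,\hdots,X_x}\otimes\normform{\tau})\perp\psi$ after first making $\tau$ a quasi-Pfister neighbour by climbing the Knebusch tower of $\normform{\tau}$ (anisotropy of the other forms being preserved by the separation theorem), with the norm degree of $\phi$ controlled at the end. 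Deferring this to ``a computation that traces, in reverse, the dimension estimates in the proof of Theorem \ref{THMrefinedtensorproducttheorem}'' is not an argument, and the necessity direction of that theorem does not by itself yield existence.

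There are also exactness problems in the bookkeeping you call routine. Realizability demands the exact value of $\witti{0}{q_{F(p)}}$, not bounds, and your justification ``$F(p)$ has the same isotropy behaviour as $F(\pi)$ because $p\stb\pi$'' is false in case (3) (and in (4),(5)): there $\pi=\normform{p}$, and $p\stb\normform{p}$ would make $p$ a quasi-Pfister neighbour (Lemma \ref{LEMcharacterizationofquasiPfisterneighbours}), which is excluded. So the isotropy over $F(p)$ of the structured piece $\theta_0\subset\normform{p}$ is not controlled by genericity of the outer coefficients and needs a separate argument. The paper sidesteps all of this with one exact computation, Lemma \ref{LEMconstructionofformsforoptimality}: taking $q=\sigma_F\perp X\tau_F$ and $p=\form{X}\perp\phi_F$ makes $F(p)$ purely transcendental over $E(V_\phi)$ and identifies $X$ with the generic value of $\phi$, so $\witti{0}{q_{F(p)}}=\mydim{\tau}$ on the nose; the two signs $a2^{s+2}\pm\epsilon$ in part (4) then come from the bilinear complement trick (Corollary \ref{CORinductivepropositionweak}) applied inside $\form{1,X_1,\hdots,X_{a-1}}\otimes\normform{p}$ and $\form{1,X_1,\hdots,X_a}\otimes\normform{p}$ --- not, as you suggest, from the two branches of the minimum in Remark \ref{REMStensorproducttheorem}, which is a constraint on invariants rather than a construction device. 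As it stands, your proposal establishes (1) and (2) modulo a provable genericity lemma, but the core of (3)--(5) is missing.
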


Concretely, parts (2)-(5) show the statement of Theorem \ref{THMmaintheoremcorollary} cannot be improved without taking further information into account, while (1) shows that there is nothing to be gained by relaxing the condition that $k<\Izhdim{p}$. To prove the proposition, we shall make use of the following lemma which was already established in the course of proving Proposition \ref{PROPinductivestepinproofoftheorem} (the proof of statement (5), specifically):

\begin{lemma} \label{LEMconstructionofformsforoptimality} Let $E$ be a field of characteristic $2$, and let $\tau, \phi$ and $\sigma$ be anisotropic quasilinear quadratic forms over $E$ such that $\anispart{(\tau \otimes \phi)} \subset \sigma$. Set $F: = E(X)$, where $X$ is an indeterminate, and set $p : = \form{X} \perp \phi_F$ and $q: = \sigma_F \perp X\tau_F$. Then $p$ and $q$ are anisotropic and $d\big(q_{F(p)}\big) = \mydim{\sigma} - \mydim{\tau}$.
\end{lemma}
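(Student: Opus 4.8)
\textbf{Proof strategy for Lemma \ref{LEMconstructionofformsforoptimality}.}
The plan is to verify the three assertions — anisotropy of $p$, anisotropy of $q$, and the formula $d(q_{F(p)}) = \mydim{\sigma} - \mydim{\tau}$ — in that order, using the concrete description of function fields of quasilinear quadrics from \S\ref{SUBSECbasics} together with the standard specialization tools (Lemma \ref{LEMspecializationlemma}, Corollary \ref{CORspecialization}).

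First I would establish that $p = \form{X} \perp \phi_F$ is anisotropic over $F = E(X)$. Since $\phi$ is anisotropic over $E$, it stays anisotropic over the separable extension $F/E$ by Lemma \ref{LEManisotropyoverseparable}; so any isotropy of $p$ would force $X \in D(\phi_F)$. But $D(\phi_F)$ is the image of the $F^2$-linear multiplication map $D(\phi) \otimes_{E^2} F^2 \to F$, and since $\phi$ is anisotropic over $E$, this map is injective, so $D(\phi_F) = D(\phi) \cdot F^2$; as $D(\phi) \subseteq E$, every element of $D(\phi_F)$ can be written with "even" denominators and numerators in the $X$-adic sense, so $X \notin D(\phi_F)$ (a degree/parity argument on $X$, exactly as in the proof of Lemma \ref{LEMdescendingoverrationalextension}, or simply: $\form{X}$ together with a form defined over $E(X^2)$ is anisotropic). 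Hence $p$ is anisotropic. For $q = \sigma_F \perp X\tau_F$: again $\sigma_F$ and $\tau_F$ are anisotropic over $F$, so isotropy of $q$ would mean some nonzero value of $\sigma_F$ equals $X$ times a nonzero value of $\tau_F$, i.e. $X \in D(\sigma_F) \cdot D(\tau_F)^{-1} \subseteq D(\sigma_F \otimes \tau_F)$'s ratio set — more cleanly, $D(\sigma_F)$ and $X \cdot D(\tau_F)$ are $F^2$-subspaces of $F$ with $D(\sigma_F) \subseteq E \cdot F^2$ and $X \cdot D(\tau_F) \subseteq XE \cdot F^2$, and these meet only in $0$ by the parity argument on the exponent of $X$. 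So $q$ is anisotropic.

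Next, compute $d(q_{F(p)})$. Write $\phi \simeq \form{1} \perp \phi'$ (replacing $\phi$ by a similar form changes nothing, or handle the scalar), so $p_F \simeq \form{X} \perp \form{1} \perp \phi'_F$ and, by the description at the end of \S\ref{SUBSECbasics}, $F(p)$ is $F$-isomorphic to $F(V_{\phi'})\big(\sqrt{X^{-1}(1 + \phi'(\mathbf{Y}))}\big)$ where $\phi'(\mathbf{Y})$ is the generic value of $\phi'$; the key point is that $X^{-1} \cdot \phi_F(\mathbf{Z})$ becomes a square in $F(p)$ for a suitable generic vector $\mathbf{Z}$, so that over $F(p)$ we may substitute $X \equiv \phi_F(\mathbf{Z})$ up to squares. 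Concretely: over $F(p)$, $X$ lies in $D\big(\phi_{F(p)}\big)$. Therefore $X\tau_{F(p)} \simeq \phi_{F(p)}(\mathbf{Z})\,\tau_{F(p)}$, and since $\phi_{F(p)}(\mathbf{Z}) \in D(\phi_{F(p)})$ we get $X\tau_{F(p)} \subset \anispart{(\phi \otimes \tau)_{F(p)}}$ by Lemma \ref{LEMsubformoftensorproduct}. Combined with the hypothesis $\anispart{(\tau \otimes \phi)} \subset \sigma$, we obtain $D\big((X\tau)_{F(p)}\big) \subseteq D\big((\anispart{(\tau \otimes \phi)})_{F(p)}\big) \subseteq D(\sigma_{F(p)})$, whence $\anispart{(q_{F(p)})} \simeq \anispart{(\sigma_{F(p)} \perp X\tau_{F(p)})} \simeq \anispart{(\sigma_{F(p)})}$ by Lemma \ref{LEMsubformtheorem}. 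Since $\sigma$ is defined over $E$ and $F(p)/E$ restricted to $\sigma$'s coefficient field is separable... — more carefully: I must still argue $\sigma_{F(p)}$ remains anisotropic. This is where I expect the only real subtlety: $F(p)/F$ is an inseparable quadratic extension, so anisotropy is not automatic. The resolution is to use the DVR specialization (Lemma \ref{LEMspecializationlemma} / Corollary \ref{CORspecialization}) with the valuation on $F = E(X)$ at $X = 0$: if $\sigma_{F(p)}$ were isotropic then, chasing through the explicit model $F(p) = F(V_{\phi'})(\sqrt{X^{-1}(1+\phi'(\mathbf{Y}))})$ and clearing denominators in $X$, one produces an isotropy relation for $\sigma$ over $E$ (the point being that $\sigma$ involves no $X$, and specializing $X \to 0$ — after an appropriate scaling to make the $X^{-1}$ harmless — kills the quadratic extension). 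Granting $\sigma_{F(p)}$ anisotropic, we conclude $\anispart{(q_{F(p)})} \simeq \sigma_{F(p)}$, so $\witti{0}{q_{F(p)}} = \mydim{q} - \mydim{\sigma} = \mydim{\tau}$, i.e. $d(q_{F(p)}) = \mydim{q} - 2\witti{0}{q_{F(p)}} = (\mydim{\sigma} + \mydim{\tau}) - 2\mydim{\tau} = \mydim{\sigma} - \mydim{\tau}$, as claimed.

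The main obstacle, then, is the anisotropy of $\sigma$ (equivalently of $\sigma_{F(p)}$, equivalently the exactness of the computation of $\witti{0}{q_{F(p)}}$): one must show the inseparable quadratic extension $F(p)/F$ does not create new isotropy in the $X$-free form $\sigma$. I would handle this by the valuation-theoretic specialization argument sketched above — realize $F(p)$ inside a field on which the valuation $v_X$ of $E(X)$ at $X=0$ extends with $\sigma$ in the residue form, and invoke Lemma \ref{LEMspecializationlemma}; since this lemma and its corollary are exactly designed for such "no new isotropy over a DVR" statements, the argument is routine once set up, which is why the excerpt states the lemma was already proved in passing during Proposition \ref{PROPinductivestepinproofoftheorem}. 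Everything else is bookkeeping with $D(-)$ and Lemma \ref{LEMsubformtheorem}.
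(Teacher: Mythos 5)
Your proposal is correct, and its skeleton matches the paper's argument (which is exactly the computation carried out in the proof of Proposition \ref{PROPinductivestepinproofoftheorem}, statement (5)): anisotropy of $p$ and $q$ over $E(X)$ by a parity argument on the $X$-adic valuation, then the observation that $X$ becomes a value of $\phi$ modulo squares over $F(p)$, so that $q_{F(p)} \simeq \sigma_{F(p)} \perp \phi(\mathbf{Z})\tau_{F(p)}$ has the same value set as $\sigma_{F(p)}$, whence $\witti{0}{q_{F(p)}} = \mydim{\tau}$ and $d(q_{F(p)}) = \mydim{\sigma} - \mydim{\tau}$, \emph{provided} $\sigma_{F(p)}$ is anisotropic. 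The only point where you genuinely diverge is that last anisotropy statement, which you rightly single out as the crux but then handle by a different and only sketched route. The paper's route is a one-liner: writing $F(p) \cong E(V_{\phi})(X)\big(\sqrt{X^{-1}\phi(\mathbf{Z})}\big)$ and setting $T := \sqrt{X^{-1}\phi(\mathbf{Z})}$, one has $X = \phi(\mathbf{Z})T^{-2}$, so $F(p) = E(V_{\phi})(T)$ is \emph{purely transcendental over $E$}, and Lemma \ref{LEManisotropyoverseparable} gives anisotropy of $\sigma_{F(p)}$ (and $\tau_{F(p)}$) immediately; note that your own explicit model already exhibits this, since the generator $X$ can be eliminated in favour of the square root, so the ``subtlety'' you flag evaporates. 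Your valuation-theoretic alternative does work if carried out: the $X$-adic valuation extends (ramified, as $X^{-1}\phi(\mathbf{Z})$ has odd valuation) to a discrete valuation on $F(p)$ whose residue field is $E(V_{\phi})$ --- not $E$ as you wrote, though that is harmless since $E(V_{\phi})/E$ is purely transcendental --- and Lemma \ref{LEMspecializationlemma} then bounds $\witti{0}{\sigma_{F(p)}}$ by $\witti{0}{\sigma_{E(V_{\phi})}} = 0$. So what your approach buys is nothing beyond robustness of technique (it would survive in settings where $F(p)$ is not purely transcendental over the field of definition of $\sigma$), at the cost of setting up an extension of valuations that the paper avoids entirely; the remaining small slips (the model $F(V_{\phi'})(\sqrt{X^{-1}(1+\phi'(\mathbf{Y}))})$ differs from $F(p)$ by a purely transcendental extension, and the subform statement of Lemma \ref{LEMsubformoftensorproduct} should be invoked at the level of value sets unless one first knows $\tau_{F(p)}$ anisotropic) do not affect the substance.
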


This gives us the following:

\begin{corollary} \label{CORrealizableintegers} Let $i$ be a positive integer. Suppose there exist a field $E$ of characteristic 2 and anisotropic quasilinear quadratic forms $\tau$ and $\phi$ over $E$ such that $\mydim{\tau} = i$, $\mydim{\phi} = d$ and $\mydim{\anispart{(\tau \otimes \phi)}} \leq k+i$. Then:
\begin{enumerate} \item $k+2i$ is realizable;
\item If $k+i <\mathrm{min}\lbrace 2d, 2^{\mathrm{lndeg}(\phi)} \rbrace$, then $a2^{\mathrm{lndeg}(\phi)+1} + (k+2i)$ and $a2^{\mathrm{lndeg}(\phi)+1} - (k+2i)$ are realizable for every positive integer $a$. 
\end{enumerate}
Moreover, if $\mathrm{lndeg}(\phi) = s$ $($resp. $\mathrm{lndeg}(\phi)>s)$, then ``realizable'' may be replaced in all cases by ``realizable with $p$ a quasi-Pfister neighbour'' $($resp. ``realizable with $p$ not a quasi-Pfister neighbour''$)$.
\begin{proof} We can assume that $1 \in D(\tau) \cap D(\phi)$. Replace $E$ with $E(X_1,\hdots,X_j)$, where $j$ is such that $\mydim{\anispart{(\tau \otimes \phi)}} = k+i-j$ and $X_1,\hdots,X_j$ are indeterminates. By Lemma \ref{LEManisotropyoverseparable}, this doesn't affect the anisotropy of the forms $\tau$ and $\phi$, nor the integer $\mathrm{lndeg}(\phi)$. Moreover, there exists an anisotropic quasilinear quadratic form of dimension $k+i$ over $E$ containing $\anispart{(\tau \otimes \phi)}$ as a subform (e.g., $\form{X_1,\hdots,X_j} \perp \anispart{(\tau \otimes \phi)}$). Let $\sigma$ be any such form. Set $F:= E(X)$, where $X$ is an indeterminate, and set $p: = \form{X} \perp \phi_F$ and $q: = \sigma_F \perp X\tau_F$. Then $\mydim{p} = d+1 \in (2^s,2^{s+1}]$, $\mydim{q} = k+2i$, and Lemma \ref{LEMconstructionofformsforoptimality} gives that $d\big(q_{F(p)}\big) = k$. At the same time, since $F(p)$ is a purely transcendental extension of $E$ (see the proof of Proposition \ref{PROPinductivestepinproofoftheorem}), $\phi_{F(p)}$ is anisotropic by Lemma \ref{LEManisotropyoverseparable}, and so $\Izhdim{p} = d$. This proves that $k+2i$ is realizable. At the same time, since $X$ is transcendental over $E$, we have $\normform{p} \simeq \pfister{X} \otimes (\normform{\phi})_F$, and so $\mathrm{lndeg}(p) = \mathrm{lndeg}(\phi) + 1$. Since $\mydim{\phi} =d \in [2^s, 2^{s+1})$, it follows that $p$ is a quasi-Pfister neighbour if and only if $\mathrm{lndeg}(\phi) = s$. We have now proven statement (1) with the additional qualification on $p$. Going further, suppose now that $k+i < 2^{\mathrm{lndeg}(\phi)}$ and that $\tau \subset \normform{\phi}$. By the second condition, $\anispart{(\tau \otimes \phi)}$ is a subform of $\anispart{(\normform{\phi} \otimes \normform{\phi})} \simeq \normform{\phi}$ (Lemma \ref{LEMdivisibilitybyquasiPfister}), and the first condition then implies that the form $\sigma$ considered above may also be chosen to be a subform of $\normform{\phi}$. We then have that $q = \sigma_F \perp X\tau_F \subset \pfister{X} \otimes (\normform{\phi})_F \simeq \normform{p}$. Choose now a positive integer $a$, and replace $F$ with $F(X_1,\hdots,X_a)$, where $X_1,\hdots,X_a$ are indeterminates (again, this changes nothing in the preceding discussion by Lemma \ref{LEManisotropyoverseparable}). Since $q \subset \normform{p}$, we may view $q$ as a subform of $\form{1,X_1,\hdots,X_{a-1}} \otimes \normform{p}$, which is anisotropic of dimension $a2^{\mathrm{lndeg}(\phi)+1}$. Applying Corollary \ref{CORinductivepropositionweak} with $\psi = q$, $\eta = \form{1,X_1,\hdots,X_{a-1}} \otimes \normform{p}$ and $\pi = \normform{p}$, we obtain a subform $q' \subset \form{1,X_1,\hdots,X_{a-1}} \otimes \normform{p}$ such that $\mydim{q'} = a2^{\mathrm{lndeg}(\phi)+1} - (k+2i)$ and $d\big(q'_{F(p)}\big) = k$. At the same time, $q'$ may in turn be viewed as a subform of the form $\form{1,X_1,\hdots,X_a} \otimes \normform{p}$, which is anisotropic of dimension $(a+1)2^{\mathrm{lndeg}(\phi)+1}$. If we now apply Corollary \ref{CORinductivepropositionweak} with $\psi = q'$, $\eta = \form{1,X_1,\hdots,X_a} \otimes \normform{p}$ and $\pi = \normform{p}$, then we get an anisotropic form $q''$ with $\mydim{q''} = a2^{\mathrm{lndeg}(\phi)+1} + (k+2i)$ and $d\big(q''_{F(p)}\big) = k$. Thus, in this case, both $a2^{\mathrm{lndeg}(\phi)+1} +(k+2i)$ and $a2^{\mathrm{lndeg}(\phi)+1} +(k+2i)$ are realizable, and the additional qualification on $p$ still applies. To complete the proof, it now only remains to show that $\tau \subset \normform{\phi}$ in the case where $k+i<2d$. We may assume here that $\mydim{\tau} \geq 2$. Let $\tau' \subset \tau$ be such that $\tau \simeq \form{1} \perp \tau'$, let $K = E(V_{\tau'})$, and let $\tau'(X) \in K$ be the generic value of $\tau$. Then $\rho: = \pfister{\tau'(X)} \otimes \phi_K$ is a $2d$-dimensional subform of $(\tau \otimes \phi)_K$. Since $\mydim{\anispart{(\tau \otimes \phi)}} \leq k+i < 2d$, $\rho$ must be isotropic. By Lemma \ref{LEMisotropyoverfunctionfieldsofquadrics}, this means that $\phi_{E(\tau)}$ is isotropic, and we then have that $\tau \subset \normform{\phi}$ by Lemma \ref{LEMdropingofnormdegree}, as desired. \end{proof}
\end{corollary}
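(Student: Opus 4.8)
The plan is to produce witnessing triples $(F,p,q)$ by an explicit ``generic scalar'' construction attached to $E$, $\tau$, $\phi$, governed by Lemma~\ref{LEMconstructionofformsforoptimality}. After scaling $\tau$ and $\phi$ we may assume $1 \in D(\tau) \cap D(\phi)$, so that $\tau$ and $\phi$ are subforms of $\anispart{(\tau \otimes \phi)}$. To have room to make $\sigma$ of dimension exactly $k+i$, first replace $E$ by the purely transcendental extension $E(X_1,\hdots,X_j)$ with $j := (k+i) - \mydim{\anispart{(\tau \otimes \phi)}} \geq 0$; by Lemma~\ref{LEManisotropyoverseparable} this alters neither the anisotropy of $\tau,\phi$, nor $\mathrm{lndeg}(\phi)$, nor $\normform{\phi}$. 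Choose an anisotropic $\sigma$ of dimension $k+i$ with $\anispart{(\tau \otimes \phi)} \subseteq \sigma$ (for instance $\form{X_1,\hdots,X_j} \perp \anispart{(\tau \otimes \phi)}$), put $F := E(X)$, $p := \form{X} \perp \phi_F$ and $q := \sigma_F \perp X\tau_F$.

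For part~(1): Lemma~\ref{LEMconstructionofformsforoptimality} gives that $p$ and $q$ are anisotropic with $d(q_{F(p)}) = \mydim{\sigma} - \mydim{\tau} = k$, while clearly $\mydim{p} = d+1 \in (2^s,2^{s+1}]$ and $\mydim{q} = k+2i$. It remains to compute $\Izhdim{p}$ and $\mathrm{lndeg}(p)$. By the concrete description of function fields of quasilinear quadrics in \S\ref{SUBSECbasics} (substitute $X \mapsto t^{-2}\phi(Y)$), the field $F(p)$ is purely transcendental over $E$; hence $\phi$ remains anisotropic over $F(p)$ (Lemma~\ref{LEManisotropyoverseparable}), and since $X$ becomes a square multiple of a value of $\phi$ there, $D(p_{F(p)}) = D(\phi_{F(p)})$, so $\anispart{(p_{F(p)})} \simeq \phi_{F(p)}$ by Lemma~\ref{LEMsubformtheorem}; thus $\Izhdim{p} = d$. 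Moreover $\normform{p} \simeq \pfister{X} \otimes (\normform{\phi})_F$ since $X$ is transcendental, so $\mathrm{lndeg}(p) = \mathrm{lndeg}(\phi) + 1$, and because $d+1 \in (2^s,2^{s+1}]$, Lemma~\ref{LEMcharacterizationofquasiPfisterneighbours} shows $p$ is a quasi-Pfister neighbour precisely when $\mathrm{lndeg}(\phi) = s$. This proves that $k+2i$ is realizable, with the stated refinement.

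For part~(2): assume $k+i < \mathrm{min}\lbrace 2d, 2^{\mathrm{lndeg}(\phi)} \rbrace$. First I would show $\tau \subseteq \normform{\phi}$: writing $\tau \simeq \form{1} \perp \tau'$ and $K := E(V_{\tau'})$, the subform $\pfister{\tau'(X)} \otimes \phi_K$ of $(\tau \otimes \phi)_K$ has dimension $2d > k+i \geq \mydim{\anispart{(\tau \otimes \phi)}}$, hence is isotropic, so $\phi_{E(\tau)}$ is isotropic by Lemma~\ref{LEMisotropyoverfunctionfieldsofquadrics} and $\tau \subseteq \normform{\tau} \subseteq \normform{\phi}$ by Lemma~\ref{LEMdropingofnormdegree}. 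Then $\anispart{(\tau \otimes \phi)} \subseteq \anispart{(\normform{\phi} \otimes \normform{\phi})} \simeq \normform{\phi}$ by Lemma~\ref{LEMdivisibilitybyquasiPfister}, and as $k+i < 2^{\mathrm{lndeg}(\phi)} = \mydim{\normform{\phi}}$ the form $\sigma$ above may be taken to be a subform of $\normform{\phi}$, so $q = \sigma_F \perp X\tau_F \subseteq \pfister{X} \otimes (\normform{\phi})_F \simeq \normform{p}$ with $\mydim{q} = k+2i < 2^{\mathrm{lndeg}(p)}$. Now fix $a \geq 1$ and pass to $F(X_1,\hdots,X_a)$ (harmless by Lemma~\ref{LEManisotropyoverseparable}). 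The forms $\form{1,X_1,\hdots,X_m} \otimes \normform{p}$ are anisotropic and divisible by $\normform{p}$ (anisotropy via Lemmas~\ref{LEMisotropyoverquadratic} and \ref{LEManisotropyoverseparable}, as in the proof of Corollary~\ref{CORinductivepropositiontensorbyPfister}), and since $p \subseteq \normform{p}$, applying Corollary~\ref{CORinductivepropositionweak} with $\pi = \normform{p}$ and $\nu = p$ to the containment $q \subseteq \form{1,X_1,\hdots,X_{a-1}} \otimes \normform{p}$ produces a complement of dimension $a\,2^{\mathrm{lndeg}(p)} - (k+2i)$ with the same $d$-value at $F(p)$, giving the ``$-$'' case; applying it first to $q \subseteq \normform{p}$ and then to the resulting complement inside $\form{1,X_1,\hdots,X_a} \otimes \normform{p}$ gives a form of dimension $a\,2^{\mathrm{lndeg}(p)} + (k+2i)$, again with $d$-value $k$, which is the ``$+$'' case. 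Since $p$ is unchanged throughout, the quasi-Pfister-neighbour dichotomy from part~(1) carries over. The only genuinely delicate point is the dimension bookkeeping that makes \emph{all} $a$ (not just $a=1$) appear in the ``$+$'' case and keeps the intermediate complementary forms non-zero; this is precisely where $k+i < \mathrm{min}\lbrace 2d, 2^{\mathrm{lndeg}(\phi)}\rbrace$ is used, since it forces $k+2i < 2^{\mathrm{lndeg}(\phi)+1}$.
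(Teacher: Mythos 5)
Your proposal is correct and follows essentially the same route as the paper: the same construction $p = \form{X} \perp \phi_F$, $q = \sigma_F \perp X\tau_F$ governed by Lemma \ref{LEMconstructionofformsforoptimality}, the same verification that $\Izhdim{p} = d$ and $\mathrm{lndeg}(p) = \mathrm{lndeg}(\phi)+1$, the same argument that $\tau \subset \normform{\phi}$ when $k+i < 2d$, and the same double application of Corollary \ref{CORinductivepropositionweak} inside multiples of $\normform{p}$. Your bookkeeping for the ``$+$'' case (taking the complement of $q$ first inside $\normform{p}$ and then inside $\form{1,X_1,\hdots,X_a} \otimes \normform{p}$) is a harmless rearrangement of the paper's two steps that yields the stated dimension $a2^{\mathrm{lndeg}(\phi)+1} + (k+2i)$ for every $a \geq 1$ cleanly.
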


Now, to apply this, we will need two lemmas. First:

\begin{lemma} \label{LEMexistenceoftensorproducts} Let $r$, $u$ and $v$ be non-negative integers with $1 \leq u \leq v$. There then exist a field $E$ of characteristic $2$ and anisotropic quasilinear quadratic forms $\pi$, $\tau$ and $\phi$ over $E$ such that:
\begin{enumerate} \item $\pi$ is an $r$-fold quasi-Pfister form that divides $\tau$ and $\phi$;
\item $\mydim{\tau} = u2^r$ and $\mydim{\phi} = v2^r$;
\item $\mydim{\anispart{(\tau \otimes \phi)}} \leq (u+v-1)2^r$;
\item If $n$ is the unique integer for which $2^n \leq v2^r < 2^{n+1}$, then $\mathrm{lndeg}(\phi) \leq n+1$, with equality holding unless $v2^r = 2^n$ and $u2^r > 2^{n-2}$ $($in which case $\mathrm{lndeg}(\phi)=n)$.
\end{enumerate}
\begin{proof} Let $t$ be the unique integer for which $2^{t-1} < u2^r \leq 2^t$. 

\begin{claim} \label{CLAIMtensorclaim} The lemma holds when $ v2^r \leq 2^t$.
\begin{proof} Set $E:=\mathbb{F}_2(X_1,\hdots,X_t)$, where $X_1,\hdots,X_t$ are indeterminates, and set $\pi: = \pfister{X_1,\hdots,X_r}$ and $\sigma: = \pfister{X_1,\hdots,X_t}$. Let $\tau$ and $\phi$ be subforms of $\sigma$ which are divisible by $\pi$ and have dimensions $u2^r$ and $v2^r$, respectively. Then (1) and (2) are satisfied, and the same is true of (4) since $\mathrm{lndeg}(\phi) = t$ ($\phi$ is a neighbour of $\sigma$). At the same time, $\anispart{(\tau \otimes \phi)}  \subset \anispart{(\sigma \otimes \sigma)} \simeq \sigma$ by Lemma \ref{LEMdivisibilitybyquasiPfister}, and so $\mydim{\anispart{(\tau \otimes \sigma)}} \leq 2^t$. But since $v2^r \geq u2^r > 2^{t-1}$, we have $(u+v-1)2^r \geq 2^t$, so (3) is also satisfied.
\end{proof} \end{claim}

In general, we argue by induction on $v$. The case where $v = 1$ is covered by Claim \ref{CLAIMtensorclaim}. Suppose now that $v \geq 2$, and that the statement holds for smaller values of $v$. Let $x$ be the largest integer for which $v2^r > x2^t$. If $x = 0$, we are done by Claim \ref{CLAIMtensorclaim}. Suppose now that $x \geq 1$, and set $m : = v2^r - x2^t \in [1,2^t]$. There then exist a field $E$ of characteristic 2 and anisotropic quasilinear quadratic forms $\pi$, $\tau$ and $\psi$ over $F$ such that:
\begin{itemize} \item[$\mathrm{(i)}$] $\pi$ is an $r$-fold quasi-Pfister form that divides $\tau$ and $\psi$;
\item[$\mathrm{(ii)}$] $\mydim{\tau} = u2^r$ and $\mydim{\psi} = m$;
\item[$\mathrm{(iii)}$] $\mydim{\anispart{(\tau \otimes \psi)}} \leq m + (u-1)2^r$. \end{itemize}
Indeed, if $m \geq u2^r$, then this follows from Claim \ref{CLAIMtensorclaim}, while if $m < u2^r$, then it follows from the induction hypothesis. Scaling if necessary, we can assume that $1 \in D(\tau)$. Now, if $\mathrm{lndeg}(\tau) \geq t+1$, then the separation theorem (or Theorem \ref{THMmaintheoremcorollary}) tells us that $\pi$, $\tau$ and $\psi$ all remain anisotropic over $E(\normform{\tau})$. At the same time, we have $\mathrm{lndeg}(\tau_{E(\normform{\tau})}) = \mathrm{lndeg}(\tau) - 1$ (see the remarks preceding Lemma \ref{LEMdropingofnormdegree}). Thus, by replacing $E$ with $E(\normform{\tau})$, then we can lower the value of $\mathrm{lndeg}(\tau)$ by $1$ without changing anything else. Repeating this a finite number of times, we eventually come to the case where $\mathrm{lndeg}(\tau) = t$, i.e., $\tau$ is a quasi-Pfister neighbour. Let us now replace $E$ with $E(X_1,\hdots,X_x)$ for indeterminates $X_1,\hdots,X_x$ (this doesn't affect the anisotropy of any of our forms by Lemma \ref{LEManisotropyoverseparable}), and set $\phi: = (\form{X_1,\hdots,X_x} \otimes \normform{\tau}) \perp \psi$. Since $\tau$ is a quasi-Pfister neighbour, $\phi$ is an anisotropic form of dimension $x2^t + m = v2^r$, and is divisible by $\pi$ by (i) (since $\tau$ is divisible by $\pi$, the same is true of $\normform{\tau}$). Now $\anispart{(\tau \otimes \normform{\tau})} \simeq \normform{\tau}$ by Lemma \ref{LEMdivisibilitybyquasiPfister}, and so
$$ \anispart{(\tau \otimes \phi)} \subset \big(\form{X_1,\hdots,X_x} \otimes \normform{\tau} \big) \perp \anispart{(\tau \otimes \psi)}. $$
By (iii), it then follows that $\mydim{\anispart{(\tau \otimes \phi)}} \leq x2^t + m + (u-1)2^r = (u+v-1)2^r$. The triple $(\pi, \tau, \phi)$ therefore satisfies (1), (2) and (3). Finally, let $n$ be as in (4). Since $x \neq 0$, we have $n>t$. Now the same argument as that used above to make $\tau$ a quasi-Pfister neighbour allows to us to make $\mathrm{lndeg}(\phi)$ at most $n+1$ without affecting the anisotropy of $\pi$, $\tau$ and $\phi$ (replace $E$ with the appropriate field in the Knebusch splitting tower of $\normform{\phi}$). If $v2^r>2^n$, we must then have that $\mathrm{lndeg}(\phi) = n+1$ (since $\mydim{\phi} = v2^r$). If not, then $v2^r = 2^n$, and so $x = 2^{n-t} - 1$. Since the $X_i$ are algebraically independent over the original field of definition of $\normform{\tau}$, we then have that $\mathrm{lndeg}(\phi) \geq \mathrm{lndeg}(\tau) + 2^{n-t}-1 = t + 2^{n-t}-1$. Observe now that $t + 2^{n-t} - 1$ can equal $n$ only if $t>n-2$. Thus, unless $v2^r = 2^n$ and $u2^r > 2^{n-2}$, we must have that $\mathrm{lndeg}(\phi) = n+1$. This completes the proof. \end{proof} \end{lemma}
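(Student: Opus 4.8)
The plan is to prove the lemma by first handling a ``building block'' case in which $\pi$, $\tau$ and $\phi$ all live inside a single generic anisotropic quasi-Pfister form, and then assembling the general case from such blocks by induction on $v$. Throughout, let $t$ be the unique integer with $2^{t-1} < u2^r \leq 2^t$ (so $r \leq t$).

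\emph{Base case: $v2^r \leq 2^t$.} Here I would take $E := \mathbb{F}_2(X_1,\hdots,X_t)$, set $\sigma := \pfister{X_1,\hdots,X_t}$ (anisotropic of dimension $2^t$), put $\pi := \pfister{X_1,\hdots,X_r}$, and choose $\tau$ and $\phi$ to be subforms of $\sigma$ divisible by $\pi$ of dimensions $u2^r$ and $v2^r$, respectively. Conditions (1) and (2) are then immediate, and $\mathrm{lndeg}(\phi) = t$ because $\phi$ is a quasi-Pfister neighbour of $\sigma$; since $2^{t-1} < v2^r \leq 2^t$ this agrees with (4), the exceptional case being exactly $v2^r = 2^t$ (where $u2^r > 2^{t-1} > 2^{t-2}$). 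Finally, $\anispart{(\tau \otimes \phi)} \subset \anispart{(\sigma \otimes \sigma)} \simeq \sigma$ by Lemma \ref{LEMdivisibilitybyquasiPfister}, so $\mydim{\anispart{(\tau \otimes \phi)}} \leq 2^t \leq (u+v-1)2^r$, the last inequality following from $u2^r + v2^r > 2^t$ (both summands exceed $2^{t-1}$) together with divisibility of everything in sight by $2^r$.

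\emph{Inductive step: $v2^r > 2^t$.} I write $v2^r = x2^t + m$ with $x \geq 1$ maximal and $m \in [1,2^t]$, and apply the base case or the inductive hypothesis (according as $m \geq u2^r$ or not) to obtain a field $E$ and anisotropic forms $\pi, \tau, \psi$ with $\pi$ an $r$-fold quasi-Pfister form dividing $\tau$ and $\psi$, $\mydim{\tau} = u2^r$, $\mydim{\psi} = m$, and $\mydim{\anispart{(\tau \otimes \psi)}} \leq m + (u-1)2^r$. The crucial maneuver is now to pass repeatedly to the next field in the Knebusch splitting tower of $\normform{\tau}$ until $\mathrm{lndeg}(\tau) = t$, i.e.\ until $\tau$ is a quasi-Pfister neighbour; at each stage the separation theorem (equivalently Theorem \ref{THMmaintheoremcorollary}) guarantees that $\pi$, $\tau$ and $\psi$ remain anisotropic. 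After adjoining fresh indeterminates $X_1,\hdots,X_x$ to $E$, I set $\phi := (\form{X_1,\hdots,X_x} \otimes \normform{\tau}) \perp \psi$, which is anisotropic of dimension $x2^t + m = v2^r$ and divisible by $\pi$; and since $\anispart{(\tau \otimes \normform{\tau})} \simeq \normform{\tau}$, the containment $\anispart{(\tau \otimes \phi)} \subset (\form{X_1,\hdots,X_x} \otimes \normform{\tau}) \perp \anispart{(\tau \otimes \psi)}$ gives $\mydim{\anispart{(\tau \otimes \phi)}} \leq x2^t + m + (u-1)2^r = (u+v-1)2^r$, establishing (1), (2) and (3).

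The main obstacle, and the last point to verify, is the precise value of $\mathrm{lndeg}(\phi)$ in (4). One final descent through the Knebusch tower of $\normform{\phi}$ brings $\mathrm{lndeg}(\phi)$ down to at most $n+1$, where $2^n \leq v2^r < 2^{n+1}$; when $v2^r > 2^n$ this forces $\mathrm{lndeg}(\phi) = n+1$ since $\mydim{\phi} = v2^r$. The delicate case is $v2^r = 2^n$, where $x = 2^{n-t} - 1$: here the algebraic independence of $X_1,\hdots,X_x$ over the field of definition of $\normform{\tau}$ forces $\mathrm{lndeg}(\phi) \geq \mathrm{lndeg}(\tau) + x = t + 2^{n-t} - 1$, and this quantity equals $n$ exactly when $n - t \in \{0,1\}$, i.e.\ when $t \geq n-1$, which in turn forces $u2^r > 2^{t-1} \geq 2^{n-2}$. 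Thus outside the stated exception $\mathrm{lndeg}(\phi) = n+1$, completing the argument. The only bookkeeping requiring genuine care is ensuring that each Knebusch-tower descent preserves the anisotropy of all three forms simultaneously, and that the inductive hypothesis is always invoked with the constraint ``smaller dimension $\leq$ larger dimension'' respected.
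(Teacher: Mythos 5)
Your proposal is correct and follows essentially the same route as the paper: the identical base case inside $\pfister{X_1,\hdots,X_t}$, the same decomposition $v2^r = x2^t + m$ with induction on $v$, the same descent through the Knebusch tower of $\normform{\tau}$ to make $\tau$ a quasi-Pfister neighbour before forming $\phi = (\form{X_1,\hdots,X_x} \otimes \normform{\tau}) \perp \psi$, and the same algebraic-independence argument (including the computation $t + 2^{n-t}-1 = n$ iff $t \geq n-1$) for part (4). No substantive differences from the paper's own proof.
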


Second:

\begin{lemma} \label{LEMexistenceoftensorproducts2} Let $i$ and $a$ be positive integers.
\begin{enumerate} \item If $i \leq a2^s$, then there exist a field $E$ of characteristic $2$, an anisotropic quasilinear quadratic form $\tau$ of dimension $i$ over $E$, and an anisotropic $s$-fold quasi-Pfister form $\pi$ over $E$ such that $\mydim{\anispart{(\tau \otimes \pi)}} \leq a2^s$. 
\item If $i \leq a2^{s+1}$, then there exist a field $E$ of characteristic $2$, and anisotropic quasilinear quadratic forms $\tau$ and $\phi$ over $E$ such that $\mydim{\tau} = i$, $\mydim{\phi} = d$, $\mathrm{lndeg}(\phi) = s+1$, and $\mydim{\anispart{(\tau \otimes \phi)}} \leq a2^{s+1}$. Moreover, if $d = 2^s$ and $s \geq 2$, then we can also arrange it so that $\mydim{\anispart{(\tau \otimes \phi)}} \leq 2^s +i $.  
\end{enumerate}

\begin{proof} By the $r=s$ case of Lemma \ref{LEMexistenceoftensorproducts}, there exists a field $E$ of characteristic $2$ and an anisotropic $s$-fold quasi-Pfister form $\pi$ over $F$. Replace $E$ with $E(X_1,\hdots,X_a)$, where $X_1,\hdots,X_a$ are indeterminates. By Lemma \ref{LEManisotropyoverseparable}, this doesn't affect the anisotropy of $\pi$.

(1) The form $\eta: = \form{X_1,\hdots,X_a} \otimes \pi$ is anisotropic of dimension $a2^s$. Since $i\leq a2^s$, we can choose an $i$-dimensional subform $\tau$ of $\eta$. Since $\eta$ is divisible by $\pi$, we have $\anispart{(\eta \otimes \pi)} \simeq \eta$ by Lemma \ref{LEMdivisibilitybyquasiPfister}, and so $\anispart{(\tau \otimes \pi)} \subset \eta$. In particular, $\mydim{\anispart{(\tau \otimes \pi)}} \leq \mydim{\eta} = a2^s$. 

(2) Let us now consider the (anisotropic) $(s+1)$-fold quasi-Pfister form $\sigma := \pfister{X_a} \otimes \pi$. To prove the desired assertion, we can assume that $i>(a-1)2^{s+1}$. Let $j \in [1,2^{s+1}]$ be such that $i = (a-1)2^{s+1} + j$, and set $\tau: = (\form{X_1,\hdots,X_{a-1}} \otimes \sigma) \perp \psi$, where $\psi$ is a $j$-dimensional subform of $\sigma$ such that $\psi \subset \pi$ in the case where $j \leq 2^s$. Then $\tau$ is anisotropic of dimension $i$. Now, let $\pi' \subset \pi$ be such that $\pi \simeq \form{1} \perp \pi'$, and let $\phi$ be any $d$-dimensional subform of $\sigma$ containing $\form{X_a} \perp \pi'$ as a subform. By Lemma \ref{LEMdivisibilitybyquasiPfister}, we have $\anispart{(\sigma \otimes \phi)} \simeq \sigma \simeq \anispart{(\psi \otimes \sigma)}$, and so
\begin{equation} \label{eq3} \anispart{(\tau \otimes \phi)} \simeq (\form{X_1,\hdots,X_{a-1}} \otimes \sigma) \perp \anispart{(\psi \otimes \phi)} \subset \form{1,X_1,\hdots,X_{a-1}} \otimes \sigma. \end{equation}
In particular, $\mydim{\anispart{(\tau \otimes \phi)}} \leq a2^{s+1}$. If $d>2^s$, then $\mathrm{lndeg}(\phi) = s+1$ (because $\phi \subset \sigma$), and we are done. Suppose now that $d = 2^s$ and $s \geq 2$. Since $d = 2^s$, we have $\phi = \form{X_a} \perp \pi'$. Since $d \geq 2$, $\pi'$ is a neighbour of $\pi$, and so $\normform{\pi'} \simeq \pi $ by Lemma \ref{LEMknownstablebirationalinvariants} (1). As $X_a$ is transcendental over the original field of definition of $\pi$, we then have that $\normform{\phi} \simeq \pfister{X_a} \otimes \pi$, and hence $\mathrm{lndeg}(\phi) = s+1$. It remains to check that $\mydim{\anispart{(\tau \otimes \phi)}} \leq 2^s +i$ in this case. If $j \geq 2^s$, then this follows from the inequality $\mydim{\anispart{(\tau \otimes \phi)}} \leq a2^{s+1}$. If $j < 2^s$, then we have $\psi \subset \pi$ by construction, and so $\anispart{(\psi \otimes \pi')} \subset \anispart{(\pi \otimes \pi)} \simeq \pi$ by Lemma \ref{LEMdivisibilitybyquasiPfister}. Since $\phi = \form{X_a} \perp \pi'$, we then have that $\anispart{(\psi \otimes \phi)} \subset (X_a\psi) \perp \pi$, and so
$$ \mydim{\anispart{(\psi \otimes \phi)}} \leq (a-1)2^{s+1} +  \mydim{\psi} + \mydim{\pi} = (a-1)2^{s+1} + j + 2^s = 2^s + i $$
by \eqref{eq3}. This proves the lemma.
\end{proof}
\end{lemma}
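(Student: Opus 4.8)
The plan is to build everything from a single anisotropic quasi-Pfister form together with a pile of independent indeterminates, exploiting the absorbing property of quasi-Pfister forms: if $\rho$ is a subform of a quasi-Pfister form $\pi$, then $\anispart{(\pi\otimes\rho)}\simeq\pi$ (Lemma \ref{LEMdivisibilitybyquasiPfister}). Concretely, I would start from the $r=s$ case of Lemma \ref{LEMexistenceoftensorproducts}, which supplies a field $E_0$ carrying an anisotropic $s$-fold quasi-Pfister form $\pi$, and then pass to $E:=E_0(X_1,\hdots,X_a)$ for independent indeterminates $X_1,\hdots,X_a$; $\pi$ stays anisotropic by Lemma \ref{LEManisotropyoverseparable}. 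Throughout I would use, without further comment, the standard fact that $\rho\perp X\rho'$ is anisotropic over $F(X)$ iff $\rho$ and $\rho'$ are anisotropic over $F$, so that forms such as $\form{X_1,\hdots,X_a}\otimes\pi$ are anisotropic.

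For part (1): set $\eta:=\form{X_1,\hdots,X_a}\otimes\pi$, which is anisotropic of dimension $a2^s$ and divisible by $\pi$, so $\anispart{(\eta\otimes\pi)}\simeq\eta$. Since $i\le a2^s=\mydim{\eta}$, pick any $i$-dimensional subform $\tau\subset\eta$. Then $D(\tau\otimes\pi)\subseteq D(\eta\otimes\pi)=D(\eta)$, so $\anispart{(\tau\otimes\pi)}$ is a subform of $\eta$ by Lemma \ref{LEMsubformtheorem}, and hence $\mydim{\anispart{(\tau\otimes\pi)}}\le a2^s$, as required.

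For part (2): replace $\pi$ by the $(s+1)$-fold quasi-Pfister form $\sigma:=\pfister{X_a}\otimes\pi$ and, after shrinking $a$ if needed, assume $i>(a-1)2^{s+1}$; write $i=(a-1)2^{s+1}+j$ with $j\in[1,2^{s+1}]$. Put $\tau:=(\form{X_1,\hdots,X_{a-1}}\otimes\sigma)\perp\psi$, where $\psi$ is a $j$-dimensional subform of $\sigma$, taken inside $\pi$ when $j\le 2^s$; then $\tau$ is anisotropic of dimension $i$. Write $\pi\simeq\form{1}\perp\pi'$ and let $\phi$ be a $d$-dimensional subform of $\sigma$ containing $\form{X_a}\perp\pi'$ (possible since $2^s\le d<2^{s+1}=\mydim{\sigma}$ and $\mydim{\form{X_a}\perp\pi'}=2^s$). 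Because $\phi$ and $\psi$ are subforms of the quasi-Pfister form $\sigma$, Lemma \ref{LEMdivisibilitybyquasiPfister} gives $\anispart{(\sigma\otimes\phi)}\simeq\sigma\simeq\anispart{(\sigma\otimes\psi)}$; expanding $\tau\otimes\phi$ and applying Lemma \ref{LEMsubformtheorem} then yields
$$\anispart{(\tau\otimes\phi)}\simeq(\form{X_1,\hdots,X_{a-1}}\otimes\sigma)\perp\anispart{(\psi\otimes\phi)},$$
so $\mydim{\anispart{(\tau\otimes\phi)}}\le(a-1)2^{s+1}+2^{s+1}=a2^{s+1}$. To get $\mathrm{lndeg}(\phi)=s+1$: if $d>2^s$, then $\phi$ is a quasi-Pfister neighbour of $\sigma$ (its dimension exceeds $\Izhdim{\sigma}=2^s$, by Example \ref{EXsplittingofquasiPfister}), so $\normform{\phi}\simeq\sigma$; if $d=2^s$, then $\phi=\form{X_a}\perp\pi'$, and since $\pi'$ is a neighbour of $\pi$ when $s\ge 2$ one has $\normform{\pi'}\simeq\pi$ (Lemma \ref{LEMknownstablebirationalinvariants}), whence a short computation using the transcendence of $X_a$ gives $N(\phi)=D(\pi)(X_a)=D(\sigma)$, i.e. $\normform{\phi}\simeq\sigma$ again.

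Finally, the ``moreover'' clause ($d=2^s$, $s\ge 2$): when $j\ge 2^s$ the estimate $a2^{s+1}=(a-1)2^{s+1}+2^{s+1}\le(a-1)2^{s+1}+j+2^s=i+2^s$ already suffices; when $j<2^s$ we have $\psi\subset\pi$ by construction, so $\anispart{(\psi\otimes\pi')}\subset\pi$ and hence $\anispart{(\psi\otimes\phi)}\subset(X_a\psi)\perp\pi$, giving $\mydim{\anispart{(\psi\otimes\phi)}}\le j+2^s$, which plugged into the displayed isomorphism yields $\mydim{\anispart{(\tau\otimes\phi)}}\le(a-1)2^{s+1}+j+2^s=i+2^s$. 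I expect the only genuinely delicate point to be the $d=2^s$ sub-case of part (2): one must identify the norm form of $\form{X_a}\perp\pi'$ exactly (not merely bound its dimension), which forces the appeal to $\pi'$ being a neighbour of $\pi$, while simultaneously preserving the sharper $i+2^s$ bound. Everything else is routine bookkeeping with value sets and the absorbing behaviour of $\sigma$.
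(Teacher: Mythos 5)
Your proposal is correct and follows essentially the same route as the paper's own proof: the same base construction from the $r=s$ case of Lemma \ref{LEMexistenceoftensorproducts} plus indeterminates, the same $\tau$, $\sigma$, $\psi$, $\phi$ in part (2), the same identification $\normform{\phi}\simeq\pfister{X_a}\otimes\pi$ via $\normform{\pi'}\simeq\pi$, and the same case split on $j$ for the sharper bound.
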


We can now prove Proposition \ref{PROPoptimality}:

\begin{proof}[Proof of Proposition \ref{PROPoptimality}] (1) Suppose that $k\geq d$, and let $i$ be a positive integer. By the $r=0$ case of Lemma \ref{LEMexistenceoftensorproducts}, there exists a field $E$ of characteristic 2 and anisotropic quasilinear quadratic forms $\tau$ and $\phi$ over $E$ such that $\mydim{\tau} = i$, $\mydim{\phi} = d$ and $\mydim{\anispart{(\tau \otimes \phi)}} \leq d+i$ (note that when $i>d$, we are switching the roles of $\tau$ and $\phi$ in the statement of Lemma \ref{LEMexistenceoftensorproducts}). Since $k \geq d$, Corollary \ref{CORrealizableintegers} then implies that $k+2i$ is realizable. Moreover, to justify the additional remarks regarding the case where $d = 2^s$, we just have to show that $\phi$ can be chosen so that $\mathrm{lndeg}(\phi) = s$, or $\mathrm{lndeg}(\phi) = s+1$ when $s \geq 2$. But this is covered by Lemma \ref{LEMexistenceoftensorproducts2}, and so the result holds.

(2) Suppose that $k<2^s = d$ and $\epsilon \in [-k,k]$. Set $i: = a2^s + \frac{\epsilon - k}{2}$. Then $i< a2^s$, so Lemma \ref{LEMexistenceoftensorproducts2} (1) tells us that there exist a field $E$ of characteristic $2$, an anisotropic quasilinear quadratic form $\tau$ of dimension $i$ over $E$, and an anisotropic $s$-fold quasi-Pfister form $\phi$ over $E$ such that $\mydim{\anispart{(\tau \otimes \phi)}} \leq a2^s$. Now $k+i = a2^s + \frac{\epsilon + k}{2} \geq a2^s$, and since $\mathrm{lndeg}(\phi) = s$, Corollary \ref{CORrealizableintegers} then tells us that $k+2i = a2^{s+1} + \epsilon$ is realizable with $p$ a quasi-Pfister neighbour.

(3) Suppose that $d> \mathrm{max}\lbrace k,2^s \rbrace$, $a \geq 1$ and $\epsilon \in [-k,k]$. Set $i: = a2^{s+1} + \frac{\epsilon - k}{2}$. Then $i < a2^{s+1}$, so Lemma \ref{LEMexistenceoftensorproducts2} (2) tells us that there exist a field $E$ of characteristic $2$, and anisotropic quasilinear quadratic forms $\tau$ and $\phi$ over $E$ such that $\mydim{\tau} = i$, $\mydim{\phi} = d$ and $\mydim{\anispart{(\tau \otimes \phi)}} \leq a2^{s+1}$. Since $k+i = a2^{s+1} + \frac{\epsilon +k}{2} \geq a2^{s+1}$, Corollary \ref{CORrealizableintegers} then tells us that $k+2i = a2^{s+2} + \epsilon$ is realizable.

(4) Let $r$ be a non-negative integer $\leq s-2$, and $x$ a positive integer $\leq 2^{s-2-r}$. Suppose that $d = 2^s$, $k \in [2^s-2^r,2^s)$ and $\epsilon \in [(x-1)2^{r+1} + 2^{s+1} - k,x2^{r+1} + k]$.  Set $i: = \frac{\epsilon - k}{2} \in [(x-1)2^r + 2^s - k, x2^r]$. Since $x2^r \leq 2^{s-2}$, we can apply Lemma \ref{LEMexistenceoftensorproducts} with $u = x$ and $v = 2^{s-r}$ to get a field $E$ of characteristic 2 and anisotropic quasilinear quadratic forms $\tau$ and $\phi$ over $E$ such that $\mydim{\tau} = i$, $\mydim{\phi} = 2^s$, $\mathrm{lndeg}(\phi) = s+1$ and $\mydim{\anispart{(\tau \otimes \phi)}} \leq (x-1)2^r + 2^s$. The second and third points tell us that $\phi$ is not similar to a quasi-Pfister form, and since $\epsilon \geq (x-1)2^{r+1} + 2^{s+1} - k$, the fourth tells us that $\mydim{\anispart{(\tau \otimes \phi)}} \leq \frac{\epsilon + k}{2} = k+i$. By Corollary \ref{CORrealizableintegers} (1), it follows that $k + 2i = \epsilon$ is realizable with $p$ not a quasi-Pfister neighbour. At the same time, since $\epsilon \leq x2^{r+1} + k$, we have 
$$ k+i = \frac{\epsilon + k}{2} \leq x2^r + k < 2^{s-2} + 2^s < 2^{s+1}= \mathrm{min}\lbrace 2d, 2^{\mathrm{lndeg}(\phi)} \rbrace,$$
and so Corollary \ref{CORrealizableintegers} (2) tells us that $a2^{s+2} + \epsilon$ and $a2^{s+2} - \epsilon$ are also realizable with $p$ not a quasi-Pfister neighbour when $a \geq 1$. 

(5) Let $r$ be a non-negative integer $\leq s-1$, and let $x$ be a positive integer $< 2^{s+1-r}- y_r$. Suppose that $d>2^s$, $k \in [y_r2^r,d)$ and $\epsilon \in [(x+y_r)2^{r+1} - k, x2^{r+1} + k]$. As in (4), set $i: = \frac{\epsilon -k}{2} \in [(x+y_r)2^r-k,x2^r$]. By hypothesis, we have $x2^r \leq 2^{s+1} - (y_r+1)2^{r}$. Since $r \leq s-1$ and $d>2^s$, however, $y_r2^r \geq 2^s$, and so $x2^r \leq 2^s - 2^r$. We can therefore apply Lemma \ref{LEMexistenceoftensorproducts} with $u = x$ and $v = y_r+1$ to get a field $E$ of characteristic 2 and anisotropic quasilinear quadratic forms $\tau$ and $\phi$ over $E$ such that $\mydim{\tau} = i$, $\mydim{\phi} = d$, $\mathrm{lndeg}(\phi)= s+1$ and $\mydim{\anispart{(\tau \otimes \phi)}} \leq (x+y_r)2^r$. As in (4), $\phi$ is then not similar to a quasi-Pfister form, and the inequality $\epsilon \geq (x+y_r)2^{r+1} - k$ gives that $\mydim{\anispart{(\tau \otimes \phi)}} \leq k+i$. By Corollary \ref{CORrealizableintegers} (1), this shows that $k + 2i = \epsilon$ is realizable with $p$ not a quasi-Pfister neighbour. At the same time, the inequality $\epsilon \leq x2^{r+1} +k$ again gives that $k+i \leq x2^r + k$. Since $x2^r \leq 2^{s+1} - (y_r+1)2^{r} \leq 2^{s+1} - d < 2^{s+1} - k$, it follows that $k+i < 2^{s+1} = \mathrm{min}\lbrace 2d,2^{\mathrm{lndeg}(\phi)}\rbrace$, and so Corollary \ref{CORrealizableintegers} (2) tells that $a2^{s+2}  + \epsilon$ is also realizable with $p$ not a quasi-Pfister neighbour when $a \geq 1$. This completes the proof. \end{proof}

\subsection{Examples} \label{SUBSECexamples} To conclude, we give some examples illustrating what Theorem \ref{THMmaintheoremcorollary} gives beyond Corollary \ref{CORmaintheoremcorollarycritical}. In all cases, $p$ will denote an anisotropic quasilinear quadratic form over a field $F$ of characteristic $2$ which is not a quasi-Pfister neighbour and has dimension in the interval $[17,32]$. We consider an anisotropic quasilinear quadratic form $q$ of dimension $\geq 17$ over $F$ such that $q_{F(p)}$ is isotropic, and consider the possible values of $\mydim{q}$ modulo 64 in terms of the integer $k = \mydim{q} - 2\witti{0}{q_{F(p)}}$. For any given $k$, the values $-k,-k+2,\hdots,k-2,k$ are always possible (subject to the requirement that $\mydim{q}>k$). The tables below list the other possible values in the given situations.

\begin{example} If $\Izhdim{p} = 16$, we have: \vspace{.5 \baselineskip}

\begin{center}
\begin{tabular}{c|c} $k$ & Additional possible values of $\mydim{q}$ modulo 64 \\ \hline
$<12$ & None \\
$ 12$ & $\pm 20$ \\
$13$ & $\pm 19, \pm 21$ \\
$14$ & $\pm 18, \pm 20, \pm 22$ \\
$15$ & $\pm 17, \pm 19, \pm 21, \pm 23$ \\
$\geq 16$ & Any additional value $\equiv k \pmod{2}$ \\
\end{tabular} \vspace{.5 \baselineskip}
\end{center}
Note that in this case, we must have $\mydim{p} \leq 20$, since otherwise Proposition \ref{PROPdivisibilityofp1} would tell us that the $16$-dimensional anisotropic form $p_1$ would be divisible by a quasi-Pfister form of foldness $\geq 3$, forcing it to be similar to a quasi-Pfister form. But $p$ would then be a quasi-Pfister neighbour by Lemma \ref{LEMcharacterizationofquasiPfisterneighbours}, contradicting our hypothesis.
\end{example}

\begin{example} If $\Izhdim{p} = 23$, we have: \vspace{.5 \baselineskip}

\begin{center}
\begin{tabular}{c|c} $k$ & Additional possible values of $\mydim{q}$ modulo 64 \\ \hline
$<16$ & None \\
$ 16$ & $32$ \\
$17$ & $\pm 31$ \\
$18$ & $\pm 30, 32$ \\
$19$ & $\pm 29, \pm 31$ \\
$20$ & $\pm 28, \pm 30, 32$ \\
$21$ & $\pm 27, \pm 29, \pm 31$ \\
$22$ & $\pm 26, \pm 28, \pm 30, 32$ \\
$\geq 23$ & Any additional value $\equiv k \pmod{2}$
\end{tabular} \vspace{.5 \baselineskip}
\end{center}
Note that in this case, we must have $\mydim{p} = 24$ by Corollary \ref{CORintro} (1).
\end{example}

\begin{example} If $\Izhdim{p} = 28$, then we have: \vspace{.5 \baselineskip}

\begin{center}
\begin{tabular}{c|c} $k$ & Additional possible values of $\mydim{q}$ modulo 64 \\ \hline
$< 24$ & None \\
$24$ &  $32$ \\
$25$ & $\pm 31$ \\
$26$ & $\pm 30$, $32$ \\
$ \geq 27$ & Any additional value $\equiv k \pmod{2}$
\end{tabular} \vspace{.5 \baselineskip}
\end{center}
In this case, $\mydim{p}$ can take any value between $29$ and $32$.
\end{example}
\vspace{.5 \baselineskip}

\noindent {\bf Acknowledgements.} This work was supported by NSERC Discovery Grant No. RGPIN-2019-05607.

\bibliographystyle{alphaurl}

\begin{thebibliography}{1}

\bibitem[1]{EKM}
R. Elman, N. Karpenko, and A. Merkurjev.
\newblock {\em The algebraic and geometric theory of quadratic forms}.
\newblock AMS Colloquium Publications {\bf 56}, American Mathematical Society, 2008.

\bibitem[2]{Hoffmann1}
D.W. Hoffmann.
\newblock {\em Isotropy of quadratic forms over the function field of a quadric}.
\newblock {Math. Z.} {\bf 220} (1995), no. 3, 461--476.

\bibitem[3]{Hoffmann2}
D.W. Hoffmann.
\newblock {\em Diagonal forms of degree $p$ in characteristic $p$}.
\newblock Algebraic and arithmetic theory of quadratic forms, Contemp. Math., {\bf 344}, Amer. Math. Soc., 2004, pp. 135--183.
  
\bibitem[4]{HoffmannLaghribi1}
D.W. Hoffmann and A. Laghribi.
\newblock {\em Quadratic forms and Pfister neighbors in characteristic 2}.
\newblock Trans. Amer. Math. Soc., {\bf 356} (2004), no. 10, 4019--4053.

\bibitem[5]{HoffmannLaghribi2}
D.W. Hoffmann and A. Laghribi.
\newblock {\em Isotropy of quadratic forms over the function field of a quadric in characteristic 2}.
\newblock J. Algebra, {\bf 295} (2006), no. 2, 362--386.  

\bibitem[6]{Karpenko1}
N.A. Karpenko.
\newblock {\em On the first Witt index of quadratic forms}.
\newblock Invent. Math., {\bf 153} (2003), no.2, 455--462.

\bibitem[7]{Karpenko2}
N.A. Karpenko.
\newblock{\em Holes in $I^n$}.
\newblock Ann. Sci. \'{E}cole Norm. Sup. (4) {\bf 37} (2004), no. 6, 973--1002.

\bibitem[8]{Karpenko3}
N.A. Karpenko.
\newblock {\em An ultimate proof of Hoffmann-Totaro's conjecture}.
\newblock J. Eur. Math. Soc (JEMS) {\bf 24} (2022), no. 12, 4385--4398.

\bibitem[9]{KarpenkoMerkurjev}
N.A. Karpenko and A.S. Merkurjev.
\newblock {\em Essential dimension of quadrics}.
\newblock Invent. Math. {\bf 153} (2003), no. 2, 361--372.

\bibitem[10]{Laghribi}
A. Laghribi.
\newblock {\em Sur le d\'{e}ploiement des formes bilin\'{e}aires en caract\'{e}ristique 2}.
\newblock Pacific J. Math. {\bf 232} (2007), no. 1, 207--232.

\bibitem[11]{LaghribiMammone}
A. Laghribi and P. Mammone.
\newblock {\em Hyper-isotropy of bilinear forms in characteristic 2}.
\newblock Algebraic and arithmetic theory of quadratic forms, Contemp. Math., {\bf 344}, Amer. Math. Soc., 2004, pp. 249--269.
 
\bibitem[12]{Primozic}
E. Primozic.
\newblock {Motivic Steenrod operations in characteristic $p$}.
\newblock Forum Math. Sigma. (2020), Paper No. e52, 25 pp.
    
\bibitem[13]{Scully1}
S. Scully.
\newblock {\em Hoffmann's conjecture for totally singular forms of prime degree}.
\newblock Algebra Number Theory {\bf 10} (2016), no. 5, 1091--1132.

\bibitem[14]{Scully2}
S. Scully.
\newblock {\em Hyperbolicity and near hyperbolicity of quadratic forms over function fields of quadrics}.
\newblock Math. Ann. {\bf 372} (2018), no. 3--4, 1437--1458.

\bibitem[15]{Scully3}
S. Scully.
\newblock {\em A bound for the index of a quadratic form after scalar extension to the function field of a quadric}.
\newblock J. Inst. Math. Jussieu {\bf 19} (2020), no. 2, 421--450.

\bibitem[16]{Scully4}
S. Scully.
\newblock {\em Quasilinear quadratic forms and function fields of quadrics}.
\newblock Math. Z. {\bf 294} (2020), no. 3--4, 1107--1126.

\bibitem[17]{Scully5}
S. Scully.
\newblock {\em On an extension of Hoffmann's separation theorem for quadratic forms}.
\newblock Int. Math. Res. Not. IMRN (2022), no. 14, 11092--11147. 

\bibitem[18]{ScullyZhu}
S. Scully and G. Zhu.
\newblock {\em Rationality of cycles on products of generically smooth quadrics}.
\newblock In preparation.

\bibitem[19]{Totaro}
B Totaro.
\newblock {\em Birational geometry of quadrics in characteristic 2}.
\newblock J. Algebraic Geom., {\bf 17} (2008), no. 3, 577--597.      

\end{thebibliography}

\end{document}